\title{\TITLE}
\author{Hoang Kim Nguyen \and Taichi Uemura}
\begin{document}

\maketitle

\begin{abstract}
  We introduce $\infty$-type theories as an $\infty$-categorical
generalization of the categorical definition of type theories introduced by the second named author.
We establish analogous results to the previous work including the
construction of initial models of $\infty$-type theories, the
construction of internal languages of models of $\infty$-type theories, and the
theory-model correspondence for $\infty$-type theories.
Some structured $(\infty,1)$-categories are naturally regarded as
models of some $\infty$-type theories.
Thus, since every (1-categorical) type theory is in particular an $\infty$-type theory,
$\infty$-type theories provide a unified framework for
connections between type theories and $(\infty,1)$-categorical structures.
As an application we prove Kapulkin
and Lumsdaine's conjecture that the dependent type theory with
intensional identity types gives internal languages for
$(\infty,1)$-categories with finite limits.


\end{abstract}


\section{Introduction}
\label{sec:introduction}

\emph{Type theory} and \emph{higher category theory} are closely
related: dependent type theories with intensional identity types
provide a syntactic way of reasoning about
\((\infty,1)\)-categories. This is known as the family of
\emph{internal language conjectures} and has led for example to syntactic developments of classical material in homotopy theory such as the homotopy groups of spheres \parencite{brunerie2016thesis, brunerie2019spheres, licata2013spheres} and the Blakers-Massey Theorem \parencite{hou2016blakersmassey}, just to name a few. These proofs often lead to new perspectives on classical material and their nature makes them applicable to a wider class of \((\infty,1)\)-categories, importing ideas from the homotopy theory of spaces to other \((\infty,1)\)-categories, see for example \parencite{anel2018goodwillie} and \parencite{anel2020blakersmassey}. One of the main appeals of type theory for higher category theory and homotopy theory is thus the usage of this type theoretic language to reason in a synthetic way.
On the other hand, higher categories will be useful for the
study of type theories. For example, one can expect a conceptual proof
of Voevodsky's homotopy canonicity conjecture that any closed term of the type of natural
numbers is homotopic to a numeral using a higher dimensional analogue of
the Freyd cover \parencite{lambek1986higher}.

However, internal language conjectures are still open problems in \emph{homotopy type theory}
\parencite{hottbook}.
The advantage of type-theoretic languages, that a lot of equations \emph{strictly} hold in type theories so that
a lot of trivial homotopies in \((\infty,1)\)-categories can be
eliminated, is at the same time the main difficulty of
internal language conjectures. One has to justify interpreting strict
equality in type theories as homotopies in
\((\infty,1)\)-categories. This is an \(\infty\)-dimensional version
of the \emph{coherence problem} in the categorical semantics of type
theories.

An internal language conjecture should be formulated as an equivalence
between an \((\infty,1)\)-category of theories and an
\((\infty,1)\)-category of structured
\((\infty,1)\)-categories. Currently, only a few internal language
conjectures have been made precise. \Textcite{kapulkin2018homotopy}
made precise formulations of the simplest cases and conjectured that
the \((\infty,1)\)-category of theories over Martin-L{\"o}f type
theory with intensional identity types (and dependent function types
with function extensionality) is equivalent to the
\((\infty,1)\)-category of small \((\infty,1)\)-categories with finite
limits (and pushforwards). In this paper, we prove
\citeauthor{kapulkin2018homotopy}'s conjecture by introducing a novel
\(\infty\)-dimensional generalization of type theories which we call
\emph{\(\infty\)-type theories}.

The basic strategy for proving \citeauthor{kapulkin2018homotopy}'s
conjecture is to decompose the equivalence to be proved into smaller
pieces. An existing approach is to introduce \(1\)-categorical
presentations of \((\infty,1)\)-categories with finite limits. It had
already been shown by \textcite{szumilo2014two} that
\((\infty,1)\)-categories with finite limits are equivalent to
categories of fibrant objects in the sense of
\textcite{brown1973abstract}. \Textcite{kapulkin2019internal} then
proved that categories of fibrant objects are equivalent to
\citeauthor{joyal2017clans}'s tribes
\parencite{joyal2017clans}. Tribes are considered as \(1\)-categorical
models of the type theory, but a full proof of the equivalence between
tribes and theories has not yet been achieved.

Although this approach is natural for those who know the homotopical
interpretation of intensional identity types
\parencite{awodey2009homotopy,arndt2011homotopy,shulman2015inverse},
\(1\)-categorical models of intensional identity types are not
convenient to work with. A problem is that \(1\)-categories of
\(1\)-categorical models of type theories need not be rich enough to
calculate the \((\infty,1)\)-categories they present. It is also
unclear if this approach can be generalized to internal language
conjectures for richer type theories.

In this paper we seek another path. The key idea is to introduce a
notion of \emph{\(\infty\)-type theories}, an \(\infty\)-dimensional
generalization of type theories. Intuitively, an \(\infty\)-type
theory is a kind of type theory but equality is like homotopies rather
than strict one. Ordinary type theories are considered as truncated
\(\infty\)-type theories in the sense that all homotopies are
trivial.

Our proof strategy is as follows. Let \(\itth\) denote the type theory
with intensional identity types. We introduce an \(\infty\)-type
theory \(\itth_{\infty}\) which is analogous to \(\itth\) but without
truncation. Because \(\itth_{\infty}\) is already a higher dimensional
object, it is straightforward to interpret \(\itth_{\infty}\) in
\((\infty,1)\)-categories with finite limits. The internal language
conjecture is then reduced to a coherence problem between \(\itth\)
and \(\itth_{\infty}\): how to interpret \(\itth\) in models of
\(\itth_{\infty}\). Although this coherence problem is as difficult as
the original internal language conjecture, this reduction is an
important step. Since the problem is now formulated in the language of
\(\infty\)-type theories and related concepts, our proof strategy is
easily generalized to internal language conjectures for richer type
theories. When we extend \(\itth\) by some type constructors, we just
extend \(\itth_{\infty}\) in the same way.

A solution to coherence problems in the \(1\)-categorical semantics of
type theories given by \textcite{hofmann1995interpretation} is to
replace a ``non-split'' model, in which equality between types is up
to isomorphism, by an equivalent ``split'' model, in which equality
between types is strict. In our approach, models of \(\itth_{\infty}\)
are like non-split models of \(\itth\), so we consider replacing a
model of \(\itth_{\infty}\) by an equivalent model of
\(\itth\). Splitting techniques for \((\infty,1)\)-categorical
structures have not yet been fully developed except for some
presentable \((\infty,1)\)-categories considered by
\textcite{gepner2017univalence,shulman2019toposes}. Since we have to
split small \((\infty,1)\)-categories which are usually
non-presentable, their results cannot directly apply. However, as he
already mentioned in \parencite[Remark 1.4]{shulman2019toposes},
\citeauthor{shulman2019toposes}'s result on splitting presentable
\((\infty,1)\)-toposes can be used for splitting small
\((\infty,1)\)-categories by embedding them into presheaf
\((\infty,1)\)-toposes.

\paragraph{Organization}

In \cref{sec:preliminaries} we fix notations and remember some
concepts in \((\infty,1)\)-category theory. Relevant concepts to this
paper are \emph{\(\infty\)-cosmoi} \parencite{riehl2018elements},
\emph{compactly generated} \((\infty,1)\)-categories,
\emph{exponentiable} arrows, and \emph{representable maps} between
right fibrations.

We introduce the notion of an \(\infty\)-type theory in
\cref{sec:infty-type-theories}. It is defined as an
\((\infty,1)\)-category with a certain structure, generalizing the
categorical definition of type theories introduced by the second named
author \parencite{uemura2019framework}. There are two important
notions around \(\infty\)-type theories: \emph{models} and
\emph{theories}. The notion of models we have in mind is a
generalization of categories with families
\parencite{dybjer1996internal} and, equivalently, natural models
\parencite{awodey2018natural,fiore2012discrete}. The notion of
theories is close to the essentially algebraic definitions of theories
given by
\textcite{garner2015combinatorial,isaev2017algebraic,voevodsky2014b-system}.

In \cref{sec:theory-model-corr} we prove \(\infty\)-analogue of the
main results of the previous work
\parencite{uemura2019framework}. Given an \(\infty\)-type theory
\(\tth\), we construct a functor that assigns to each model of
\(\tth\) a \(\tth\)-theory called the \emph{internal language} of the
model. The internal language functor has a fully faithful left adjoint
which constructs a \emph{syntactic model} from a \(\tth\)-theory. We
further characterize the image of the left adjoint.

We study some concrete \(\infty\)-type theories in
\cref{sec:corr-betw-type}. The most basic example is
\(\etth_{\infty}\), the \(\infty\)-analogue of Martin-L{\"o}f type
theory with \emph{extensional} identity types. We show that
\(\etth_{\infty}\)-theories are equivalent to
\((\infty,1)\)-categories with finite limits (\cref{etth-dem-lex}),
which is an \(\infty\)-analogue of the result of
\textcite{clairambault2014biequivalence}. This is to be an
intermediate step toward \citeauthor{kapulkin2018homotopy}'s
conjecture, but it also has an interesting corollary. One can derive a
new universal property of the \((\infty,1)\)-category of small
\((\infty,1)\)-categories with finite limits from a universal property
of \(\etth_{\infty}\) (\cref{Lex-ump}). We also study a couple of
examples of \(\infty\)-type theories with dependent function
types. Finally in \cref{sec:intern-lang-left} we prove
\citeauthor{kapulkin2018homotopy}'s conjecture.

\section{Preliminaries}
\label{sec:preliminaries}

\subsection{\(\infty\)-categories}

For concreteness, we will work with \emph{\(\infty\)-categories}, also called \emph{quasicategories} in the literature,  \parencite{joyal2008notes, lurie2009higher, cisinski2019higher} as models for \((\infty,1)\)-categories. An \(\infty\)-category is a simplicial set satisfying certain horn filling conditions. We recollect some standard definitions and notations.

\begin{definition}
\begin{enumerate}
\item Given an \(\infty\)-category \(\cat\) and a simplicial set \(\sh\), we denote \(\Fun(\sh, \cat)\) the internal hom of simplicial sets, which is itself an \(\infty\)-category and models the \(\infty\)-category of functors and natural transformations.

\item For an \(\infty\)-category \(\cat\), we denote by \(\cat^\simeq\) the largest \(\infty\)-groupoid (Kan complex) contained in \(\cat\). Furthermore we write \(\lgpd (\cat,\catI):= \Fun(\cat,\catI)^\simeq\).

\item For an \(\infty\)-category \(\cat\), we denote by \(C^\triangleright\) the join \(\cat \star \Delta^0\).

\item We denote by \(\Cat_\infty\) the \(\infty\)-category of small \(\infty\)-categories. This is obtained as the homotopy coherent nerve of the simplicial category with objects given by small \(\infty\)-categories and hom simplicial sets given by \(\lgpd(\cat, \catI)\).

\item We denote by \(\CAT_\infty\) the \(\infty\)-category of (possibly large) \(\infty\)-categories obtained in a similar way.

\item We denote by \(\Space\) the \(\infty\)-category of small \(\infty\)-groupoids obtained as the homotopy coherent nerve of the simplicial category with objects small Kan complexes and hom simplicial sets given by the internal hom of simplicial sets.
\end{enumerate}
\end{definition}

Although we chose to work with \(\infty\)-categories, we will primarily use the language of the formal category theory of \(\infty\)-categories as expressed by \(\infty\)-cosmoi. Therefore, most of our constructions, statements and proofs are independent of the model.

\subsection{\(\infty\)-cosmoi}

An \emph{\(\infty\)-cosmos} \parencite{riehl2018elements} is, roughly,
a complete \((\infty,2)\)-category with enough structure to do formal
category theory. More concretely, an \(\infty\)-cosmos \(\cosmos\) is
a simplicially enriched category such that for any pair of objects
\(\cat,\catI \in \cosmos\), the hom simplicial set
\(\cosmos(\cat,\catI)\) is an \(\infty\)-category. \(\cosmos\) is also
equipped a class of morphisms called isofibrations, and all small
\((\infty,1)\)-categorical limits are constructible from products and
pullbacks of isofibrations. Moreover, \(\cosmos\) has cotensors with
small simplicial sets \(\sh\cotensor \cat\) characterized by the
equivalence (isomorphism, in fact) of \(\infty\)-categories
\[
\cosmos( \catI, \sh \cotensor \cat) \simeq \Fun ( \sh, \cosmos(\catI,\cat)).
\]

Given an \(\infty\)-category, we may take its \emph{homotopy category}, which is just an ordinary category. Applying this to the hom spaces of an \(\infty\)-cosmos gives rise to a 2-category. Adjunctions and equivalences in \(\infty\)-cosmoi are then defined in the usual way using this 2-category.

\begin{example}
We denote by \(\kCAT_\infty\) the \(\infty\)-cosmos of (possibly large) \(\infty\)-categories. That is, \(\kCAT_\infty\) is the simplicial category with objects (possibly large) \(\infty\)-categories and hom simplicial sets \(\Fun(\cat,\catI)\). The  cotensor \(\sh \cotensor \cat\) in \(\kCAT_\infty\) is given by the functor \(\infty\)-category \(\Fun(\sh ,\cat)\) and adjunctions and equivalences agree with the standard notions of \(\infty\)-categories.
\end{example}

Cartesian fibrations and right fibrations in \(\infty\)-cosmoi are
characterized by analogy with those in complete 2-categories. Here we
prefer to work with versions of these concepts that are invariant
under equivalence. The following definition coincides with
\citeauthor{riehl2018elements}'s when \(\fun\) is an isofibration.

\begin{definition}
A functor \(\fun\colon \cat \to \catI\) in an \(\infty\)-cosmos \(\cosmos\) is said to be a \emph{cartesian fibration} if the functor
\[
(\ev_1, \fun_\ast)\colon \Delta^1 \cotensor \cat \to \cat \times_{\catI} \Delta^1 \cotensor \catI
\]
has a right adjoint with invertible counit. A \emph{fibred functor}
between cartesian fibrations is a morphism in \(\cosmos^{\to}\)
that commutes with the right adjoint of \((\ev_{1}, \fun_{\ast})\). A
cartesian fibration is a \emph{right fibration} if
\((\ev_1, \fun_\ast)\) is an equivalence.

For a small \(\infty\)-category \(\cat\), we denote by
\(\CartFib_{\cat}\subset \Cat_\infty/\cat\) the \(\infty\)-category of
cartesian fibrations over \(\cat\) and fibred functors over
\(\cat\). We denote by \(\RFib_{\cat}\subset \CartFib_{\cat}\) the
full subcategory spanned by the right fibrations over \(\cat\). Note
that any functor between right fibrations over \(\cat\) is
automatically a fibred functor, so \(\RFib_{\cat}\) is a full
subcategory of \(\Cat_{\infty}/\cat\). We
write \(\RFib \subset \cat_{\infty}^{\to}\) for the full subcategory
spanned by the right fibrations.
\end{definition}

\subsection{Compactly generated \(\infty\)-categories}

\begin{definition}[{\textcite[Definition 5.5.7.1 and Theorem
    5.5.1.1]{lurie2009higher}}]
  An \(\infty\)-category \(\cat\) is said to be
  \emph{compactly generated} if it is an
  \(\omega\)-accessible localization of \(\Fun(\catI^{\op}, \Space)\),
  that is, a reflective full subcategory of \(\Fun(\catI^{\op},
  \Space)\) closed under filtered colimits,
  for some small \(\infty\)-category \(\catI\). The subcategory of \(\CAT_{\infty}\) spanned by the compactly generated \(\infty\)-categories and \(\omega\)-accessible right adjoints is denoted by \(\PrR_{\omega}\). We will moreover denote by \(\kPrR_{\omega} \subset \kCAT_{\infty}\) the locally full subcategory spanned by the compactly
  generated \(\infty\)-categories and \(\omega\)-accessible right
  adjoints.
\end{definition}

Recall \parencite[Proposition 5.5.7.6]{lurie2009higher} that \(\PrR_{\omega}\subset \CAT_{\infty}\) is closed under small limits. By definition, compactly generated \(\infty\)-categories are closed in \(\kCAT_{\infty}\) under cotensors with small simplicial sets. Hence, the subcategory
  \(\kPrR_{\omega} \subset \kCAT_{\infty}\) is an \(\infty\)-cosmos, and the inclusion
  \(\kPrR_{\omega} \to \kCAT_{\infty}\) preserves the structures of
  \(\infty\)-cosmoi and reflects equivalences.

\begin{example}
  The \(\infty\)-category \(\Space\) of small spaces is compactly
  generated. The \(\infty\)-category \(\Cat_{\infty}\) of small
  \(\infty\)-categories is compactly generated, and the functor
  \(\lgpd(\Delta^{n}, {-}) : \Cat_{\infty} \to \Space\) sending an
  \(\infty\)-category \(\cat\) to the space of \(n\)-cells of \(\cat\)
  is an \(\omega\)-accessible right adjoint. This is because
  \(\Cat_{\infty}\) is regarded as an \(\omega\)-accessible
  localization of \(\Fun(\Delta^{\op}, \Space)\) using the equivalence
  of quasicategories and complete Segal spaces
  \parencite{joyal2007quasi}.
\end{example}

\begin{example}
  For a small \(\infty\)-category \(\cat\), the \(\infty\)-category
  \(\CartFib_{\cat}\) of cartesian fibrations over \(\cat\) and fibred
  functors over \(\cat\) is compactly generated as
  \(\CartFib_{\cat} \simeq \Fun(\cat^{\op}, \Cat_{\infty})\). The
  forgetful functor \(\CartFib_{\cat} \to \Cat_{\infty}/\cat\) is an
  \(\omega\)-accessible right adjoint. To see this, observe that this
  forgetful functor is the right derived functor of the forgetful
  functor
  \[
    \SSet^{+}/\cat^{\sharp} \to \SSet/\cat
  \]
  which is a right Quillen functor with respect to the cartesian model
  structure and the slice model structure of the Joyal model structure
  on \(\SSet\)\parencite[Proposition 3.1.5.2]{lurie2009higher} or  \parencite[Proposition
  3.1.18]{nguyen2019thesis}. The functor
  \(\SSet^{+}/\cat^{\sharp} \to \SSet/\cat\) preserves filtered
  colimits, and filtered colimits are homotopy colimits in both model
  structures, from which it follows that the right derived functor
  preserves filtered colimits.
\end{example}

\begin{example}
  \label{LAdj-cg}
  We define a subcategory \(\LAdj \subset
  \Cat_{\infty}^{\to}\) to be the pullback
  \[
    \begin{tikzcd}
      \LAdj
      \arrow[rr]
      \arrow[d] & &
      \CartFib_{\Delta^{1}}
      \arrow[d] \\
      \Cat_{\infty}^{\to}
      \arrow[r,"\simeq"'] &
      \coCartFib_{\Delta^{1}}
      \arrow[r] &
      \Cat_{\infty}/\Delta^{1}.
    \end{tikzcd}
  \]
  By construction, \(\LAdj\) is compactly generated, and the forgetful
  functor \(\LAdj \to \Cat_{\infty}^{\to}\) is a conservative,
  \(\omega\)-accessible right adjoint. Since a functor
  \(\fun : \catII \to \Delta^{1}\) that is both a cocartesian
  fibration and a cartesian fibration can be identified with an
  adjunction between the fibers over \(0\) and \(1\), the
  \(\infty\)-category \(\LAdj\) can be described as follows:
  \begin{itemize}
  \item the objects are the functors \(\fun : \cat \to \catI\) that
    have a right adjoint \(\fun^{*}\);
  \item the morphisms
    \((\fun_{1} : \cat_{1} \to \catI_{1}) \to (\fun_{2} : \cat_{2} \to
    \catI_{2})\) are the squares
    \[
      \begin{tikzcd}
        \cat_{1}
        \arrow[r,"\funI"]
        \arrow[d,"\fun_{1}"'] &
        \cat_{2}
        \arrow[d,"\fun_{2}"] \\
        \catI_{1}
        \arrow[r,"\funII"'] &
        \catI_{2}
      \end{tikzcd}
    \]
    satisfying the Beck-Chevalley condition: the canonical natural
    transformation
    \[
      \begin{tikzcd}
        \cat_{1}
        \arrow[r,"\funI"]
        \arrow[dr,Rightarrow,start
        anchor={[xshift=1ex,yshift=-1ex]},end
        anchor={[xshift=-1ex,yshift=1ex]}] &
        \cat_{2} \\
        \catI_{1}
        \arrow[u,"\fun_{1}^{*}"]
        \arrow[r,"\funII"'] &
        \catI_{2}
        \arrow[u,"\fun_{2}^{*}"']
      \end{tikzcd}
    \]
    is invertible.
  \end{itemize}
\end{example}

We use \cref{LAdj-cg} to verify that an \(\infty\)-category whose
objects are small \(\infty\)-categories with a certain structure
defined by adjunction is compactly generated.

\begin{example}
  For a finitely presentable simplicial set \(\sh\), we define
  \(\Lex_{\infty}^{(\sh)}\) to be the pullback
  \[
    \begin{tikzcd}
      \Lex_{\infty}^{(\sh)}
      \arrow[r]
      \arrow[d] &
      [10ex]
      \LAdj
      \arrow[d] \\
      \Cat_{\infty}
      \arrow[r,"\cat \mapsto (\diagonal : \cat \to \sh \cotensor
      \cat)"'] &
      \Cat_{\infty}^{\to}.
    \end{tikzcd}
  \]
  \(\Lex_{\infty}^{(\sh)}\) is the \(\infty\)-category of small
  \(\infty\)-categories with limits of shape \(\sh\). We define the
  \(\infty\)-category \(\Lex_{\infty}\) of small left exact
  \(\infty\)-categories to be the wide pullback of
  \(\Lex_{\infty}^{(\sh)}\) over \(\Cat_{\infty}\) for all finitely
  presentable simplicial sets \(\sh\). By construction,
  \(\Lex_{\infty}^{(\sh)}\) and \(\Lex_{\infty}\) are compactly
  generated, and the forgetful functors to \(\Cat_{\infty}\) are
  conservative, \(\omega\)-accessible right adjoints.
\end{example}

We remark that codomain functors are always cartesian fibrations in
\(\kPrR_{\omega}\).

\begin{proposition}
  \label{PrR-codomain-fibration}
  For a compactly generated \(\infty\)-category \(\cat\),
  the functor \(\cod : \cat^{\to} \to \cat\) is a cartesian fibration
  in \(\kPrR_{\omega}\).
\end{proposition}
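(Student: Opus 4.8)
The plan is to transport the corresponding fact from $\kCAT_{\infty}$ and then verify one accessibility condition. First I would record that the $\infty$-categories occurring in the definition of ``cartesian fibration'' lie in $\kPrR_{\omega}$ and agree with their counterparts in $\kCAT_{\infty}$. Indeed $\Delta^{1}\cotensor\cat=\cat^{\to}$ is compactly generated, since compactly generated $\infty$-categories are closed under cotensors with small simplicial sets; and $\cod=\ev_{1}\colon\cat^{\to}\to\cat$ is a right adjoint (a left adjoint sends $x$ to the arrow out of the initial object of $\cat$, which exists as $\cat$ is cocomplete) preserving all colimits, hence is an $\omega$-accessible right adjoint, i.e.\ a $1$-cell of $\kPrR_{\omega}$. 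The same holds for the functors $\cod$ and $\ev_{1}$ used to form the pullback $\cat^{\to}\times_{\cat}\Delta^{1}\cotensor\cat$, so by closure of $\PrR_{\omega}$ under small limits and the fact that $\kPrR_{\omega}\to\kCAT_{\infty}$ preserves the $\infty$-cosmos structure, the domain $\Delta^{1}\cotensor\cat^{\to}$ and codomain $\cat^{\to}\times_{\cat}\Delta^{1}\cotensor\cat$ of $(\ev_{1},\cod_{\ast})$ are compactly generated and computed as in $\kCAT_{\infty}$.

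Since $\cat$ is complete it has pullbacks, so $\cod\colon\cat^{\to}\to\cat$ is a cartesian fibration in $\kCAT_{\infty}$ \parencite{riehl2018elements}: the functor $(\ev_{1},\cod_{\ast})$ has a right adjoint $R$ with invertible counit, where $R$ sends a cospan $q\to s\leftarrow r$ — an object of $\cat^{\to}\times_{\cat}\Fun(\Delta^{1},\cat)$ — to its pullback square, and the counit is invertible because $(\ev_{1},\cod_{\ast})$ reads off exactly the right column and the bottom row of that square. Because $\kPrR_{\omega}\subset\kCAT_{\infty}$ is locally full, this adjunction with invertible counit restricts to one internal to $\kPrR_{\omega}$ as soon as $(\ev_{1},\cod_{\ast})$ and $R$ are $1$-cells of $\kPrR_{\omega}$; the unit, counit, triangle identities and invertibility of the counit then transport automatically. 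That $(\ev_{1},\cod_{\ast})$ is an $\omega$-accessible right adjoint is routine: each of $\ev_{1}$ and $\cod_{\ast}$ preserves all small limits and all colimits, and the relevant (co)limits in the fiber product are pointwise because both legs to $\cat$ preserve them, so $(\ev_{1},\cod_{\ast})$ preserves all small limits — hence has a left adjoint — and preserves $\omega$-filtered colimits.

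The substantive point is that $R$ is $\omega$-accessible. As a right adjoint it preserves limits, so it remains to show it preserves $\omega$-filtered colimits. All the $\infty$-categories involved are (full subcategories of) functor $\infty$-categories into $\cat$, in which $\omega$-filtered colimits are computed pointwise; hence it suffices that forming the pullback $q\times_{s}r$ commutes with $\omega$-filtered colimits in $\cat$, i.e.\ that $\omega$-filtered colimits commute with finite limits in $\cat$. This holds because $\cat$ is an $\omega$-accessible reflective localization of some presheaf $\infty$-category $\Fun(\catI^{\op},\Space)$: finite limits in $\cat$ are those of $\Fun(\catI^{\op},\Space)$ since $\cat$ is reflective, $\omega$-filtered colimits in $\cat$ are those of $\Fun(\catI^{\op},\Space)$ by closure under filtered colimits, and in the presheaf $\infty$-topos $\Fun(\catI^{\op},\Space)$ — where everything is computed pointwise in $\Space$ — $\omega$-filtered colimits commute with finite limits. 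Therefore $R$ is an $\omega$-accessible right adjoint, the adjunction $(\ev_{1},\cod_{\ast})\dashv R$ with invertible counit lives in $\kPrR_{\omega}$, and $\cod$ is a cartesian fibration there. I expect this last commutation to be the only real content; the rest is bookkeeping with the cosmological inclusion.
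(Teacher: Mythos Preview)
Your proof is correct, and its ``real content'' is exactly the one the paper identifies: finite limits commute with filtered colimits in a compactly generated $\infty$-category. The organization differs, though. The paper does not transport the adjunction from $\kCAT_{\infty}$; instead it argues entirely inside the $\infty$-cosmos $\kPrR_{\omega}$. From the commutation of finite limits with filtered colimits the paper concludes that $\cat$ is \emph{finitely complete in} $\kPrR_{\omega}$, meaning the diagonal $\cat \to \sh \cotensor \cat$ has a right adjoint in $\kPrR_{\omega}$ for every finitely presentable $\sh$; then the statement that $\cod$ is a cartesian fibration follows from the general $\infty$-cosmos fact that this holds for any finitely complete object. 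Your route unpacks this formal implication by hand: you name the right adjoint $R$ explicitly as the pullback functor and verify it is $\omega$-accessible, relying on local fullness of $\kPrR_{\omega}\subset\kCAT_{\infty}$ to lift the adjunction. The paper's route is shorter and yields the intermediate statement ``$\cat$ is finitely complete in $\kPrR_{\omega}$'' as a bonus; yours is more self-contained in that it does not cite the formal $\infty$-cosmos result but re-proves the special case needed.
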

\begin{proof}
  Recall that finite limits commute with filtered colimits
  in any compactly generated \(\infty\)-category
  \(\cat\). This implies that \(\cat\) is finitely complete in the
  \(\infty\)-cosmos \(\kPrR_{\omega}\) (that is, the diagonal functor
  \(\cat \to \sh \cotensor \cat\) has a right adjoint for every
  finitely presentable simplicial set \(\sh\)). Hence, the codomain
  functor \(\cat^{\to} \to \cat\) is a cartesian fibration.
\end{proof}

\subsection{Exponentiable arrows}
\label{sec:exponentiable-arrows}

\begin{definition}
  An arrow \(\arr : \obj \to \objI\) in a left exact
  \(\infty\)-category \(\cat\) is said to be \emph{exponentiable} if
  the pullback functor \(\arr^{*} : \cat/\objI \to \cat/\obj\) has a
  right adjoint. If this is the case, we refer to the right adjoint of
  \(\arr^{*}\) as the \emph{pushforward along \(\arr\)} and denote it by
  \(\arr_{*} : \cat/\obj \to \cat/\objI\).
\end{definition}

\begin{definition}
  For an exponentiable arrow \(\arr : \obj \to \objI\) in a left exact
  \(\infty\)-category \(\cat\), the \emph{associated polynomial
    functor} \(\poly_{\arr} : \cat \to \cat\) is the composite
  \[
    \begin{tikzcd}
      \cat
      \arrow[r,"\obj^{*}"] &
      \cat/\obj
      \arrow[r,"\arr_{*}"] &
      \cat/\objI
      \arrow[r,"\objI_{!}"] &
      \cat
    \end{tikzcd}
  \]
  where \(\obj^{*}\) is the pullback along \(\obj \to \terminal\) and
  \(\objI_{!}\) is the forgetful functor.
\end{definition}

Recall that polynomials can be \emph{composed}
\parencite{gambino2013polynomial,weber2015polynomials}: given two
exponentiable arrows \(\arr_{1} : \obj_{1} \to \objI_{1}\) and
\(\arr_{2} : \obj_{2} \to \objI_{2}\), we have an exponentiable arrow
\(\arr_{1} \otimes \arr_{2}\) such that
\(\poly_{\arr_{1} \otimes \arr_{2}} \simeq \poly_{\arr_{1}} \circ
\poly_{\arr_{2}}\). We may also concretely define \(\arr_{1} \otimes
\arr_{2}\) as follows: \(\cod (\arr_{1} \otimes \arr_{2}) =
\poly_{\arr_{1}} \objI_{2}\); \(\dom (\arr_{1} \otimes \arr_{2})\) is
the pullback
\[
  \begin{tikzcd}
    \dom (\arr_{1} \otimes \arr_{2})
    \arrow[r]
    \arrow[d] &
    \obj_{2}
    \arrow[d, "\arr_{2}"] \\
    \poly_{\arr_{1}} \objI_{2} \times_{\objI_{1}} \obj_{1}
    \arrow[r, "\ev"'] &
    \objI_{2};
  \end{tikzcd}
\]
\(\arr_{1} \otimes \arr_{2}\) is the composite
\(\dom (\arr_{1} \otimes \arr_{2}) \to \poly_{\arr_{1}} \objI_{2}
\times_{\objI_{1}} \obj_{1} \to \poly_{\arr_{1}} \objI_{2} = \cod
(\arr_{1} \otimes \arr_{2})\).

\subsection{Representable maps of right fibrations}
\label{sec:repr-maps-right}

We review the notion of a \emph{representable map} of right
fibrations, which is a generalization of a representable map of
discrete fibrations over a \(1\)-category. We think of a representable
map of right fibrations as an \(\infty\)-categorical analogue of a
natural model of type theory \parencite{awodey2018natural} and a
category with families \parencite{dybjer1996internal}.

\begin{definition}
  We say a map \(\map : \sh \to \shI\) of right fibrations over an
  \(\infty\)-category \(\cat\) is \emph{representable} if it has a
  right adjoint.
\end{definition}

\begin{proposition}
  \label{rfib-slice}
  Let \(\proj : \sh \to \cat\) be a right fibration between
  \(\infty\)-categories. A functor \(\map : \shI \to \sh\) is a right
  fibration if and only if the composite \(\proj\map : \shI \to \cat\)
  is. Consequently, we have a canonical equivalence of
  \(\infty\)-categories
  \[
    \RFib_{\cat}/\sh \simeq \RFib_{\sh}.
  \]
\end{proposition}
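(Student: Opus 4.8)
The plan is to reduce the whole statement to the single claim that $\map$ is a right fibration if and only if $\proj\map$ is; the displayed equivalence of $\infty$-categories will then follow formally. Throughout I write $\catII^{\mathbf 2} := \Delta^1 \cotensor \catII$, and I use the observation that a functor $\fun\colon \catII \to \catI$ is a right fibration precisely when $(\ev_1, \fun_\ast)\colon \catII^{\mathbf 2} \to \catII \times_{\catI} \catI^{\mathbf 2}$ is an equivalence: the side condition that $\fun$ be a cartesian fibration is then automatic, since an equivalence is in particular a right adjoint with invertible counit.

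Since $\proj$ is a right fibration, $(\ev_1, \proj_\ast)\colon \sh^{\mathbf 2} \to \sh \times_{\cat} \cat^{\mathbf 2}$ is an equivalence. It is a morphism over $\sh$ between the two isofibrations $\ev_1\colon \sh^{\mathbf 2} \to \sh$ and the projection $\sh \times_{\cat} \cat^{\mathbf 2} \to \sh$, the latter being the pullback of the isofibration $\ev_1\colon \cat^{\mathbf 2} \to \cat$ along $\proj$. Pulling back along $\map\colon \shI \to \sh$, and using that pullback preserves equivalences between isofibrations over a fixed base, I obtain an equivalence
\[
  \shI \times_{\sh} \sh^{\mathbf 2}
  \ \xrightarrow{\ \simeq\ }\
  \shI \times_{\sh} \bigl( \sh \times_{\cat} \cat^{\mathbf 2} \bigr)
  \ \cong\
  \shI \times_{\cat} \cat^{\mathbf 2},
\]
the last identification being associativity of iterated pullbacks (the middle $\sh$-coordinate is redundant, being pinned down by $\map$).

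The only point that needs care is that this equivalence carries $(\ev_1, \map_\ast)\colon \shI^{\mathbf 2} \to \shI \times_{\sh} \sh^{\mathbf 2}$ to $(\ev_1, (\proj\map)_\ast)\colon \shI^{\mathbf 2} \to \shI \times_{\cat} \cat^{\mathbf 2}$; equivalently, that precomposing the displayed equivalence with $(\ev_1, \map_\ast)$ gives back $(\ev_1, (\proj\map)_\ast)$. This is a direct unwinding of the definitions, using only the naturality identity $\ev_1 \circ \map_\ast = \map \circ \ev_1$, the functoriality $\proj_\ast \circ \map_\ast = (\proj\map)_\ast$, and the universal property of the cotensor $\Delta^1 \cotensor (-)$; I expect this bookkeeping, rather than anything conceptual, to be the bulk of the argument. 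Granting it, two-out-of-three for equivalences shows that $(\ev_1, \map_\ast)$ is an equivalence if and only if $(\ev_1, (\proj\map)_\ast)$ is, which is precisely the claim that $\map$ is a right fibration if and only if $\proj\map$ is.

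Finally, I deduce the equivalence of $\infty$-categories. Recall that $\RFib_{\cat}$ is a \emph{full} subcategory of $\Cat_\infty/\cat$, and likewise $\RFib_{\sh}$ is a full subcategory of $\Cat_\infty/\sh$. Hence, under the canonical identification $(\Cat_\infty/\cat)/\sh \simeq \Cat_\infty/\sh$, the slice $\RFib_{\cat}/\sh$ is the full subcategory of $\Cat_\infty/\sh$ spanned by those $\map\colon \shI \to \sh$ for which $\proj\map$ is a right fibration, whereas $\RFib_{\sh}$ is the full subcategory spanned by those $\map$ that are themselves right fibrations. By the claim just proved these two full subcategories of $\Cat_\infty/\sh$ have the same objects, so they coincide.
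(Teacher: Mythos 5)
Your proof is correct. The paper's own proof of this proposition is literally ``By definition,'' and your argument is just the natural unpacking of that definition --- the characterization of right fibrations via $(\ev_1,\fun_*)$ being an equivalence, stability of equivalences between isofibrations over a base under pullback along $\map$, and fullness of $\RFib_{\cat}\subset\Cat_{\infty}/\cat$ for the slice statement --- so it takes essentially the same (definitional) route, merely spelled out in detail.
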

\begin{proof}
  By definition.
\end{proof}

\begin{corollary}
  \label{rep-right-adjoint}
  A representable map \(\map : \sh \to \shI\) of right fibrations over
  an \(\infty\)-category \(\cat\) is exponentiable, and the
  pushforward along \(\map\) is given by the pullback along the right
  adjoint \(\rarep : \shI \to \sh\) of \(\map\).
  \[
    \begin{tikzcd}
      \RFib_{\cat}/\sh \simeq \RFib_{\sh}
      \arrow[rr,bend right,"\rarep^{*}"',start anchor=south east,end
      anchor=south west] &
      \rotatebox[origin=c]{270}{\(\adj\)} &
      \RFib_{\shI} \simeq \RFib_{\cat}/\shI
      \arrow[ll,bend right,"\map^{*}"',start anchor=north west,end
      anchor=north east]
    \end{tikzcd}
  \]
\end{corollary}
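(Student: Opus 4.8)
The plan is to reduce the statement, via \cref{rfib-slice}, to a fact about restriction of right fibrations along a functor, and then transport the adjunction $\map \dashv \rarep$ through the $2$-functoriality of the (right) fibration construction. By \cref{rfib-slice} we have equivalences $\RFib_{\cat}/\sh \simeq \RFib_{\sh}$ and $\RFib_{\cat}/\shI \simeq \RFib_{\shI}$, under which the base-change functor $\map^{*}\colon \RFib_{\cat}/\shI \to \RFib_{\cat}/\sh$ is identified with the restriction-along-$\map$ functor $\RFib_{\shI} \to \RFib_{\sh}$, $(\shII \to \shI) \mapsto (\sh \times_{\shI} \shII \to \sh)$, and, similarly, pullback along $\rarep$ is identified with restriction along $\rarep$, $\RFib_{\sh} \to \RFib_{\shI}$. (Here we use that $\sh \to \cat$, $\shI \to \cat$, and hence also the composites $\sh \times_{\shI}\shII \to \cat$ and $\shI \times_{\sh}\shII \to \cat$, are right fibrations.) So it suffices to show that restriction along $\map$ is left adjoint to restriction along $\rarep$; this exhibits $\map$ as exponentiable with $\map_{*}$ equal to pullback along $\rarep$.

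The main point is that base change of right fibrations is $2$-functorial, contravariantly in both functors and natural transformations, and therefore carries adjunctions to adjunctions preserving handedness. Concretely: for functors $h, h'\colon X \to Y$ and a natural transformation $\alpha\colon h \Rightarrow h'$, a morphism in $Y$ induces a map of fibres of a right fibration in the \emph{opposite} direction, so $\alpha$ induces $\alpha^{*}\colon (h')^{*} \Rightarrow h^{*}$ between the base-change functors $\RFib_{Y} \to \RFib_{X}$; together with $(h_{2}h_{1})^{*} \simeq h_{1}^{*}h_{2}^{*}$ this makes $\RFib_{(-)}$ a $2$-functor reversing $1$- and $2$-cells, and such a $2$-functor sends a left adjoint to a left adjoint. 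Applying it to $\map \dashv \rarep$, with unit $\eta\colon \mathrm{id}_{\sh} \Rightarrow \rarep\map$ and counit $\varepsilon\colon \map\rarep \Rightarrow \mathrm{id}_{\shI}$, produces the adjunction $\map^{*} \dashv \rarep^{*}$ between $\RFib_{\shI}$ and $\RFib_{\sh}$, with unit $\varepsilon^{*}\colon \mathrm{id} \Rightarrow \rarep^{*}\map^{*} = (\map\rarep)^{*}$ and counit $\eta^{*}\colon \map^{*}\rarep^{*} = (\rarep\map)^{*} \Rightarrow \mathrm{id}$, the triangle identities being transported from those of $\eta$ and $\varepsilon$. (Equivalently, under straightening $\RFib_{X} \simeq \Fun(X^{\op},\Space)$, restriction along $h$ becomes precomposition with $h^{\op}$, and the claim is the standard fact that $\Fun(-,\Space)$ carries the adjunction $\rarep^{\op} \dashv \map^{\op}$ to $({-}\circ\map^{\op}) \dashv ({-}\circ\rarep^{\op})$.)

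Transporting back along \cref{rfib-slice}, the pullback functor $\map^{*}\colon \RFib_{\cat}/\shI \to \RFib_{\cat}/\sh$ thus has a right adjoint, namely pullback along $\rarep$; hence $\map$ is exponentiable and $\map_{*} = \rarep^{*}$ is pullback along $\rarep$, as claimed. The only delicate point is the bookkeeping of variance — one must check that base change along $\map$ ends up as the \emph{left} adjoint, not the right one — and this hinges precisely on a right fibration being contravariantly functorial in its base, i.e.\ on the direction in which $\alpha$ is reversed. Everything else is formal, given \cref{rfib-slice} and the usual $2$-functoriality of the right fibration / presheaf construction.
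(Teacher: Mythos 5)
Your argument is correct and is essentially the proof the paper intends: the paper states this as an unproved corollary of \cref{rfib-slice}, the point being exactly that under the identifications $\RFib_{\cat}/\sh \simeq \RFib_{\sh}$ and $\RFib_{\cat}/\shI \simeq \RFib_{\shI}$ the pullback functor becomes restriction along $\map$, and the (contravariant, 2-cell-reversing) functoriality of $\RFib_{(-)}$ — equivalently precomposition on presheaf $\infty$-categories under straightening — carries the adjunction $\map \dashv \rarep$ to $\map^{*} \dashv \rarep^{*}$. Your variance bookkeeping (including the unit/counit identification) is the right check and comes out correctly.
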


\begin{corollary}
  \label{rep-pullback}
  Representable maps of right fibrations over an \(\infty\)-category
  \(\cat\) are stable under pullbacks: if
  \[
    \begin{tikzcd}
      \sh_{1}
      \arrow[r,"\mapI"]
      \arrow[d,"\map_{1}"'] &
      \sh_{2}
      \arrow[d,"\map_{2}"] \\
      \shI_{1}
      \arrow[r,"\mapII"'] &
      \shI_{2}
    \end{tikzcd}
  \]
  is a pullback in \(\RFib_{\cat}\) and \(\map_{2}\) is representable,
  then \(\map_{1}\) is representable. Moreover, if this is the case,
  the square satisfies the Beck-Chevalley condition.
\end{corollary}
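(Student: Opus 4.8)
The plan is to prove representability of \(\map_1\) via the pointwise criterion for the existence of a right adjoint, and then deduce the Beck--Chevalley condition from the explicit description of pushforwards in \cref{rep-right-adjoint}. First note that \(\sh_1\) lies in \(\RFib_{\cat}\) (the pullback is taken there), so \(\map_1 : \sh_1 \to \shI_1\) is a morphism in \(\RFib_{\cat}\) and hence a right fibration by \cref{rfib-slice}; thus ``\(\map_1\) is representable'', i.e.\ ``\(\map_1\) has a right adjoint'', is a meaningful assertion. By the pointwise criterion for right adjoints, it suffices to exhibit a terminal object in the comma \(\infty\)-category \(\sh_1 \times_{\shI_1} \shI_1/v\) for every object \(v\) of \(\shI_1\), and then the right adjoint \(\rarep_1\) sends \(v\) to this terminal object.

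To produce such terminal objects, I would use the pasting law for pullbacks: since \(\sh_1 \simeq \shI_1 \times_{\shI_2} \sh_2\) over \(\shI_1\), one has
\[
  \sh_1 \times_{\shI_1} \shI_1/v \;\simeq\; \sh_2 \times_{\shI_2} \shI_1/v ,
\]
where \(\shI_1/v \to \shI_2\) is the forgetful functor followed by \(\mapII\). The key point is that \(\mapII : \shI_1 \to \shI_2\) is a morphism of right fibrations over \(\cat\), hence a right fibration by \cref{rfib-slice}; therefore the induced functor \(\shI_1/v \to \shI_2/\mapII v\) is an equivalence. Factoring \(\shI_1/v \to \shI_2\) through it gives
\[
  \sh_1 \times_{\shI_1} \shI_1/v \;\simeq\; \sh_2 \times_{\shI_2} \shI_2/\mapII v ,
\]
and the right-hand side is exactly the comma \(\infty\)-category computing the value at \(\mapII v\) of the right adjoint of \(\map_2\); it has a terminal object because \(\map_2\) is representable. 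Hence \(\map_1\) admits a right adjoint \(\rarep_1\). Moreover the displayed equivalence is natural in \(v\) and compatible with the forgetful projections down to \(\sh_1\) and \(\sh_2\) (the latter via \(\mapI\)), so, taking terminal objects on both sides, it identifies \(\mapI \circ \rarep_1\) with \(\rarep_2 \circ \mapII\).

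For the Beck--Chevalley condition, recall from \cref{rep-right-adjoint} that the pushforward along \(\map_i\) is the pullback functor \(\rarep_i^{*}\). Hence the Beck--Chevalley transformation attached to the square, after these identifications, is the comparison \((\mapI \circ \rarep_1)^{*} \to (\rarep_2 \circ \mapII)^{*}\) induced by the mate \(\rarep_2 \circ \mapII \Rightarrow \mapI \circ \rarep_1\) of the equivalence \(\map_2 \circ \mapI \simeq \mapII \circ \map_1\) (built from the unit of \(\map_1 \dashv \rarep_1\) and the counit of \(\map_2 \dashv \rarep_2\)). By the previous paragraph this mate is an equivalence, so Beck--Chevalley holds.

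I expect the main obstacle to be bookkeeping rather than a conceptual difficulty: one must check that the equivalence of comma \(\infty\)-categories above is genuinely natural in \(v\) and compatible with the forgetful projections, and — the one diagram chase the argument cannot avoid — that the resulting identification of \(\mapI \circ \rarep_1\) with \(\rarep_2 \circ \mapII\) really is the Beck--Chevalley mate, rather than merely \emph{a} natural equivalence between these functors. An alternative and essentially equivalent route is to work through the equivalence \(\RFib_{\cat} \simeq \Fun(\cat^{\op}, \Space)\), observe that representable maps of right fibrations correspond precisely to representable natural transformations of presheaves (the fibre of \(\map\) over each object is a representable presheaf), note that this class is stable under pullback by the pasting law, and then unwind Beck--Chevalley to the evident compatibility of the induced ``context extensions''.
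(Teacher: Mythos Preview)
Your argument is correct and rests on the same key observation as the paper---that \(\mapII\) is a right fibration by \cref{rfib-slice}---but you unwind its consequence pointwise via comma \(\infty\)-categories, whereas the paper packages it in one line: since \(\mapII\) (and hence its pullback \(\mapI\)) is a right fibration, the right adjoint of \(\map_{2}\) lifts to a \emph{fibred} right adjoint of \(\map_{1}\) over \(\mapII\). The fibred formulation yields Beck--Chevalley immediately, since a fibred adjoint is precisely one for which the mate transformation is invertible; this sidesteps the bookkeeping you rightly flag about checking that your equivalence \(\mapI \circ \rarep_{1} \simeq \rarep_{2} \circ \mapII\) is the Beck--Chevalley mate rather than merely some equivalence. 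Your route is more self-contained and does not presuppose the fibred-adjoint lifting lemma; the paper's is terser but leans on that background.
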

\begin{proof}
  By \cref{rfib-slice}, the functor \(\mapII\) is a right
  fibration. Thus, the right adjoint of \(\map_{2}\) lifts to a fibred
  right adjoint of \(\map_{1}\).
  \[
    \begin{tikzcd}
      \sh_{1}
      \arrow[rr,bend left,"\map_{1}"]
      \arrow[d,"\mapI"'] &
      \rotatebox[origin=c]{270}{\(\adj\)} &
      \shI_{1}
      \arrow[ll,dotted,bend left]
      \arrow[d,"\mapII"] \\
      [6ex]
      \sh_{2}
      \arrow[rr,bend left,"\map_{2}"] &
      \rotatebox[origin=c]{270}{\(\adj\)} &
      \shI_{2}
      \arrow[ll,bend left]
    \end{tikzcd}
  \]
\end{proof}

\begin{proposition}
  \label{representable-map-1}
  A map \(\map : \sh \to \shI\) of right fibrations over an
  \(\infty\)-category \(\cat\) is representable if and only if, for
  any section \(\elI : \cat/\objI \to \shI\), the pullback
  \(\elI^{*}\sh\) is a representable right fibration over \(\cat\).
\end{proposition}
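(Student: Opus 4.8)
The plan is to reduce the statement to a pointwise one. For an object $t$ of $\shI$ lying over $\objI = \proj_{\shI}(t) \in \cat$, let $\elI_{t} : \cat/\objI \to \shI$ be the corresponding section. I would prove that $\elI_{t}^{*}\sh$ is a representable right fibration over $\cat$ if and only if the presheaf $\sh^{\op} \to \Space$ sending $s$ to the space of arrows $\map(s) \to t$ in $\shI$ is representable. Granting this, the proposition follows from two standard inputs. The $\infty$-categorical Yoneda lemma for the right fibration $\proj_{\shI} : \shI \to \cat$ identifies sections $\elI : \cat/\objI \to \shI$ over $\cat$, up to equivalence and functorially, with objects $t$ of $\shI$ (taking $t = \elI(1_{\objI})$), compatibly with $\elI^{*}\sh = \elI_{t}^{*}\sh$; and the $\infty$-categorical adjoint functor criterion says that $\map$ admits a right adjoint exactly when that presheaf is representable for every $t$, in which case the representing objects assemble into $\rarep : \shI \to \sh$.

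For the pointwise equivalence I would argue as follows. By \cref{rfib-slice}, $\map$ is itself a right fibration (since $\sh \to \cat$ is), so $\elI_{t}^{*}\sh \to \cat/\objI$ is a right fibration and hence so is $\elI_{t}^{*}\sh \to \cat$; thus, by the standard fact that a right fibration is representable precisely when its total space has a terminal object, $\elI_{t}^{*}\sh$ is representable over $\cat$ iff it has a terminal object. Now the slice projection $\shI_{/t} \to \shI$ is a right fibration, so $\shI_{/t} \to \cat$ is a right fibration whose total space has a terminal object — the identity arrow of $t$, lying over $\objI$ — and the same fact yields an equivalence $\shI_{/t} \simeq \cat/\objI$ over $\cat$; tracking terminal objects identifies the composite $\cat/\objI \simeq \shI_{/t} \to \shI$ with $\elI_{t}$. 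Pulling $\map$ back along this equivalence gives
\[
  \elI_{t}^{*}\sh \;\simeq\; \sh \times_{\shI} \shI_{/t},
\]
the comma $\infty$-category of pairs $(s, \map(s) \to t)$, which is the $\infty$-category of elements of the presheaf above. So $\elI_{t}^{*}\sh$ has a terminal object iff that presheaf is representable, as desired.

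The hard part is not the overall strategy but the compatibility buried in the identification $\shI_{/t} \simeq \cat/\objI$: one has to check not merely that these are abstractly equivalent right fibrations over $\cat$ but that the equivalence sends the canonical section of $\shI_{/t} \to \shI$ to $\elI_{t}$, and, relatedly, that the pointwise representing objects $\rarep(t)$ cohere into a functor rather than a mere assignment on objects. I expect both to be dispatched by the uniqueness clause in the Yoneda lemma for right fibrations and by the standard packaging of the adjoint functor criterion, so I do not anticipate a real obstruction.
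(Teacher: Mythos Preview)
Your proposal is correct and follows essentially the same approach as the paper. The paper's proof is more compressed: it observes directly that, via Yoneda for right fibrations, an arrow \(\map\el \to \elI\) in \(\shI\) corresponds to a commutative square from \(\cat/\obj\) to \(\shI\) over \(\map\), and that \((\el,\arr)\) is a universal arrow from \(\map\) to \(\elI\) precisely when this square is a pullback---which is exactly the representability of \(\elI^{*}\sh\). Your route through the explicit equivalence \(\shI_{/t}\simeq\cat/\objI\) and the identification \(\elI_{t}^{*}\sh\simeq\sh\times_{\shI}\shI_{/t}\) unpacks the same correspondence, and the compatibility you flag at the end is precisely what the paper absorbs into the single sentence ``\((\el,\arr)\) is a universal arrow if and only if the square is a pullback.''
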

\begin{proof}
  For a section \(\elI : \cat/\objI \to \shI\), an arrow
  \(\arr : \map\el \to \elI\) in \(\shI\) for some \(\el \in \sh\)
  corresponds to a square
  \begin{equation}
    \begin{tikzcd}
      \cat/\obj
      \arrow[r,"\el"]
      \arrow[d,"\arr"'] &
      \sh
      \arrow[d,"\map"] \\
      \cat/\objI
      \arrow[r,"\elI"'] &
      \shI.
    \end{tikzcd}
    \label{eq:4}
  \end{equation}
  \((\el, \arr)\) is a universal arrow from \(\map\) to \(\elI\) if
  and only if \cref{eq:4} is a pullback.
\end{proof}

\section{\(\infty\)-type theories}
\label{sec:infty-type-theories}

We introduce notions of an \emph{\(\infty\)-type theory}, a
\emph{theory} over an \(\infty\)-type theory and a \emph{model} of an
\(\infty\)-type theory, translating the previous work of the second
author \parencite{uemura2019framework} into the language of
\(\infty\)-categories. The idea is to extend the functorial semantics
of algebraic theories \parencite{lawvere2004functorial}. Algebraic
theories are identified with categories with finite products, and
models of an algebraic theory are identified with functors into the
category of sets preserving finite products. For type theories, it is
natural to identify models of a type theory with functors into
presheaf categories, because (extensions of) natural models
\parencite{awodey2018natural} and categories with families
\parencite{dybjer1996internal} are diagrams in presheaf
categories. Since representable maps of presheaves play a special role
in the natural model semantics, some arrows in the source category
should be specified to be sent to representable maps. This motivates
the following definitions.

\begin{definition}
  An \emph{\(\infty\)-category with representable maps} is a pair
  \((\cat,\reps)\) where \(\cat\) is an \(\infty\)-category and
  \(\reps \subseteq k(\Delta^1,\cat)\) is a subspace of the space of arrows
  of \(\cat\) satisfying the conditions below. Arrows in \(\reps\) are
  called \emph{representable arrows}.
  \begin{enumerate}
  \item \(\cat\) has finite limits.
  \item All the identities are representable and representable arrows
    are closed under composition.
  \item Representable arrows are stable under pullbacks.
  \item Representable arrows are exponentiable.
  \end{enumerate}
  A \emph{morphism of \(\infty\)-categories with representable maps}
  is a functor preserving representable arrows, finite limits and
  pushforwards along representable arrows.
\end{definition}

\begin{example}
  For a small \(\infty\)-category \(\cat\), the \(\infty\)-category
  \(\RFib_{\cat}\) of small right fibrations over \(\cat\) is an
  \(\infty\)-category with representable maps in which a map is
  representable if it has a right adjoint.
\end{example}

\begin{definition}
  An \emph{\(\infty\)-type theory} is an \(\infty\)-category with
  representable maps whose underlying \(\infty\)-category is small. A
  \emph{morphism of \(\infty\)-type theories} is a morphism of
  \(\infty\)-categories with representable maps.
  By an \emph{\(\nat\)-type theory} for \(1 \le \nat < \infty\), we mean an
  \(\infty\)-type theory whose underlying \(\infty\)-category is an
  \(\nat\)-category.
\end{definition}

\begin{example}
  The type theories in the sense of the previous work
  \parencite{uemura2019framework} are the \(1\)-type theories.
\end{example}

\begin{definition}
  Let \(\tth\) be an \(\infty\)-type theory.
  \begin{itemize}
  \item A \emph{model of \(\tth\)} consists of an \(\infty\)-category
    \(\model(\bas)\) with a terminal object and a morphism of
    \(\infty\)-categories with representable maps
    \(\model : \tth \to \RFib_{\model(\bas)}\).
  \item A \emph{theory over \(\tth\)} or a \emph{\(\tth\)-theory} is a
    left exact functor \(\theory : \tth \to \Space\).
  \end{itemize}
\end{definition}

\begin{example}
  \label{natural-model}
  We will construct in \cref{sec:infty-category-infty} a presentable
  \(\infty\)-category \(\TTh_{\infty}\) of \(\infty\)-type theories
  and their morphisms, so we have various \emph{free constructions} of
  \(\infty\)-type theories. For example, there is an \(\infty\)-type
  theory \(\tthG_{\infty}\) freely generated by one representable
  arrow \(\typeof : \El \to \Ty\). Indeed, the functor
  \(\TTh_{\infty} \to \Space\) that sends an \(\infty\)-type theory
  \(\tth\) to the space of representable arrows in \(\tth\) preserves
  limits and filtered colimits, and thus it is representable by
  presentability. The universal property of \(\tthG_{\infty}\) asserts
  that a morphism \(\tthG_{\infty} \to \cat\) of \(\infty\)-categories
  with representable maps is completely determined by the image of the
  representable arrow \(\typeof \in \tthG_{\infty}\). Thus, a model of
  \(\tthG_{\infty}\) consists of the following data:
  \begin{itemize}
  \item an \(\infty\)-category \(\model(\bas)\) with a terminal
    objects;
  \item a representable map
    \(\model(\typeof) : \model(\El) \to \model(\Ty)\) of right
    fibrations over \(\model(\bas)\).
  \end{itemize}
  In other words, a model of \(\tthG_{\infty}\) is an
  \(\infty\)-categorical analogue of a natural model
  \parencite{awodey2018natural,fiore2012discrete}. One may think of an object
  \(\ctx \in \model(\bas)\) as a context, a section
  \(\sh : \model(\bas)/\ctx \to \model(\Ty)\) as a type over \(\ctx\),
  and a section \(\el : \model(\bas)/\ctx \to \model(\El)\) as a term
  over \(\ctx\). The representability of \(\model(\typeof)\) is used
  for modeling context comprehension: for a section
  \(\sh : \model(\bas)/\ctx \to \model(\Ty)\), the representing object
  for \(\sh^{*}\model(\El)\) is though of as the context
  \((\ctx, x : \sh)\) with \(x\) a fresh variable.

  It is not simple to describe a \(\tthG_{\infty}\)-theory, but we
  could say that the \(\infty\)-category of
  \(\tthG_{\infty}\)-theories is an \(\infty\)-analogue of the
  category of generalized algebraic theories
  \parencite{cartmell1978generalised}. Indeed, the second named author
  showed in \parencite{uemura2022universal} that the category of
  generalized algebraic theories is equivalent to the category of left
  exact functors \(\tthG \to \Set\) where \(\tthG\) is the left exact
  category freely generated by an exponentiable arrow.
\end{example}

In
\cref{sec:infty-category-infty,sec:infty-categ-theor,sec:infty-categ-models}
below, we will construct an \(\infty\)-category \(\TTh_{\infty}\) of
\(\infty\)-type theories, an \(\infty\)-category \(\Th(\tth)\) of
\(\tth\)-theories and an \(\infty\)-category \(\Mod(\tth)\) of models
of \(\tth\). These \(\infty\)-categories are constructed inside the
\(\infty\)-cosmos \(\kPrR_{\omega}\) of compactly generated
\(\infty\)-categories and \(\omega\)-accessible right adjoints. In \cref{sec:univ-prop-modtth}
we give a universal property of \(\Mod(\tth)\) as an object of
\(\CAT_{\infty}/\Lex_{\infty}^{(\emptyset)}\) from which for example
it follows that the assignment \(\tth \mapsto \Mod(\tth)\) takes
colimits to limits. In \cref{sec:slice-infty-type} we see that a slice
of the underlying \(\infty\)-category of an \(\infty\)-type theory is
naturally equipped with a structure of \(\infty\)-type theory and has
a useful universal property.

\subsection{The \(\infty\)-category of \(\infty\)-type theories}
\label{sec:infty-category-infty}

We construct an \(\infty\)-category \(\TTh_{\infty}\) of \(\infty\)-type
theories and their morphisms.

\begin{definition}
  Let \(\Cat^{+}_{\infty}\) be the pullback
  \[
    \begin{tikzcd}
      \Cat^{+}_{\infty}
      \arrow[r]
      \arrow[d] &
      (\Space^\to)_{\le -1}
      \arrow[d,"\cod"] \\
      \Cat_{\infty}
      \arrow[r,"{\lgpd(\Delta^{1}, {-})}"'] &
      \Space
    \end{tikzcd}
  \]
  where \((\Space^\to)_{\le -1}\) denotes the full subcategory of
  \(\Space^{\to}\) spanned by the \((-1)\)-truncated maps of spaces
  which is an \(\omega\)-accessible localization of
  \(\Space^{\to}\). \(\Cat^{+}_{\infty}\) is the \(\infty\)-category
  of small \(\infty\)-categories equipped with a subspace of arrows. We
  define \(\Lex^{+}_{\infty}\) to be the full subcategory of
  \(\Lex_{\infty} \times_{\Cat_{\infty}} \Cat^{+}_{\infty}\) spanned
  by the left exact \(\infty\)-categories with a class of arrows
  closed under composition and stable under pullbacks.
\end{definition}

The inclusion
\(\Lex^{+}_{\infty} \to \Lex_{\infty} \times_{\Cat_{\infty}}
\Cat^{+}_{\infty}\) has a left adjoint by taking the closure of the specified subspace of arrows under
composition and pullbacks, and \(\Lex^{+}_{\infty}\) is closed in
\(\Lex_{\infty} \times_{\Cat_{\infty}} \Cat^{+}_{\infty}\) under
filtered colimits. Hence, \(\Lex^{+}_{\infty}\) is compactly
generated, and the inclusion
\(\Lex^{+}_{\infty} \to \Lex_{\infty} \times_{\Cat_{\infty}}
\Cat_{\infty}^{+}\) is an \(\omega\)-accessible right adjoint.

Let \((\cat, \reps)\) be an object of \(\Lex_{\infty}^{+}\). Since
\(\cat\) has finite limits, we have a functor \(\theta(\cat, \reps)\)
between isofibrations over \(\reps\) whose fiber over
\((\arr : \obj \to \objI) \in \reps\) is the pullback functor
\(\arr^{*} : \cat/\objI \to \cat/\obj\). An \(\infty\)-type theory is
nothing but an object \((\cat, \reps)\) of \(\Lex_{\infty}^{+}\) such
that \(\theta(\cat, \reps)\) has a fiberwise right adjoint. We show
that this condition is equivalent to the condition that the functor
has a right adjoint.

\begin{proposition}
  \label{adjoint-over-groupoid}
  Let
  \[
    \begin{tikzcd}
      \cat
      \arrow[rr,"\fun"]
      \arrow[dr] & &
      \catI
      \arrow[dl] \\
      & \sh
    \end{tikzcd}
  \]
  be a functor between isofibrations in \(\kCAT_{\infty}\) such that
  \(\sh\) is an \(\infty\)-groupoid. The following are equivalent:
  \begin{enumerate}
  \item \label{item:8} the functor \(\fun : \cat \to \catI\) has a
    right adjoint;
  \item \label{item:9} for every point \(\el \in \sh\), the functor
    between fibers \(\fun_{\el} : \cat_{\el} \to \catI_{\el}\) has a
    right adjoint.
  \end{enumerate}
\end{proposition}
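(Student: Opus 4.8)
The plan is to reduce both implications to a single comparison of comma $\infty$-categories, via the standard pointwise criterion for the existence of a right adjoint: a functor $\fun \colon \cat \to \catI$ admits a right adjoint if and only if for every object $\objI \in \catI$ the comma $\infty$-category $\fun \downarrow \objI := \cat \times_{\catI} (\catI/\objI)$ has a terminal object, in which case a right adjoint sends $\objI$ to the image in $\cat$ of that terminal object. Applying this both to $\fun$ and, for each point $\el \in \sh$, to $\fun_{\el} \colon \cat_{\el} \to \catI_{\el}$, the proposition reduces to the claim that for an object $\objI \in \catI$ lying over $\el$ there is an equivalence $\fun \downarrow \objI \simeq \fun_{\el} \downarrow \objI$ over the respective domain projections. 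Granting this, $(2) \Rightarrow (1)$ holds because every object of $\catI$ lies over some point of $\sh$, and $(1) \Rightarrow (2)$ because the objects of $\catI$ over a fixed $\el$ are exactly those of $\catI_{\el}$.

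The equivalence $\fun \downarrow \objI \simeq \fun_{\el} \downarrow \objI$ is the crux, and the only place the hypothesis on $\sh$ is used. The structural fact behind it is that an isofibration over an $\infty$-groupoid is automatically both a cartesian and a cocartesian fibration with equivalences as transition maps: by the isofibration property every edge of the base lifts to an equivalence of the total $\infty$-category, and equivalences are cartesian and cocartesian for any functor. Hence $p$ and the projection $\catI \to \sh$ are cocartesian fibrations, and $\fun$, being a functor over $\sh$, preserves cocartesian edges (over a groupoid these are just the equivalences). Now an object of $\fun \downarrow \objI$ is a pair $(\obj, \arr)$ with $\obj \in \cat$ and $\arr \colon \fun\obj \to \objI$ in $\catI$; its image in $\sh$ is an invertible arrow $p\obj \to \el$, so $\arr$ factors as a cocartesian lift of that arrow followed by an arrow of the fibre $\catI_{\el}$, and transporting $\obj$ along the matching cocartesian lift in $\cat$ turns $(\obj, \arr)$ into an object of $\fun_{\el} \downarrow \objI$ at the cost of contractible extra data. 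To make this clean one notes that $\fun \downarrow \objI$ lies over the slice $\sh/\el$, which is contractible since $\sh$ is an $\infty$-groupoid, that $\fun_{\el} \downarrow \objI$ is its fibre over the identity of $\el$, and that the projection $\fun \downarrow \objI \to \sh/\el$ is built from pullbacks and cotensors of the isofibrations $p$ and $\catI \to \sh$ and hence is itself an isofibration, so that its total $\infty$-category is equivalent to the fibre over the contractible base. This is the one step requiring care; the rest is formal.

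An essentially equivalent route reuses the $\infty$-category $\LAdj$ of \cref{LAdj-cg}. Straightening the two cocartesian fibrations over $\sh$ presents $\fun$ as a natural transformation $\eta$ between functors $\sh \to \CAT_{\infty}$, and condition $(2)$ says precisely that the transpose $\sh \to \CAT_{\infty}^{\to}$ of $\eta$ meets the essential image of $\LAdj \to \CAT_{\infty}^{\to}$ objectwise. Since $\sh$ is an $\infty$-groupoid this transpose factors through $(\CAT_{\infty}^{\to})^{\simeq}$, and the functor $\LAdj^{\simeq} \to (\CAT_{\infty}^{\to})^{\simeq}$ is the inclusion of the connected components consisting of arrows that admit a right adjoint — an equivalence between such arrows automatically satisfies the Beck-Chevalley condition, as the mate of an invertible square built from equivalences is invertible — so the transpose lifts to $\sh \to \LAdj$. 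Projecting along the leg $\LAdj \to \CartFib_{\Delta^{1}}$ packages the fibrewise right adjoints into a natural transformation that unstraightens to a fibred right adjoint of $\fun$, proving $(1)$; conversely a right adjoint of $\fun$ in $\kCAT_{\infty}$ may be pushed, using the codomain isofibration, to one over $\sh$, hence fibred over the groupoid $\sh$, and then restricts fibrewise. Here the coherence point of the previous paragraph reappears as the triviality of Beck-Chevalley over a groupoid.
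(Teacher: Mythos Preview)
Your proof is correct and follows a different route from the paper's. The paper works directly with universal arrows: for $(2)\Rightarrow(1)$ it takes the fibrewise counit $\counit_{\el,\objI} : \fun_{\el}\funI_{\el}(\objI)\to\objI$ and verifies it is universal in all of $\catI$ by noting that the comparison map $\cat(\obj,\funI_{\el}(\objI))\to\catI(\fun(\obj),\objI)$ lives over $\sh(\el',\el)$ and then transporting along the isofibrations to reduce to the fibre over $\id_{\el}$, where it is the given adjunction equivalence; for $(1)\Rightarrow(2)$ it observes that the image of the global counit in $\sh$ is invertible, strictifies the right adjoint to a functor over $\sh$, and restricts to fibres. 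Your first argument instead proves both implications at once via the single equivalence $\fun\downarrow\objI\simeq\fun_{\el}\downarrow\objI$, deduced from contractibility of $\sh/\el$; this is more uniform and makes the role of the groupoid hypothesis transparent, though the claim that $\fun\downarrow\objI\to\sh/\el$ is an isofibration is slightly subtler than your parenthetical suggests, since $\fun$ itself need not be an isofibration---one can instead rewrite the comma as $(\cat\times_{\sh}\sh/\el)\times_{\catI\times_{\sh}\sh/\el}(\catI/\objI)$ and use that $\catI/\objI\to\catI\times_{\sh}\sh/\el$ is a right fibration whenever $\catI\to\sh$ is an inner fibration. Your second argument via straightening and $\LAdj$ is yet another valid route; the key input that $\LAdj^{\simeq}\hookrightarrow(\Cat_{\infty}^{\to})^{\simeq}$ is an inclusion of components does follow, as you say, from equivalences automatically satisfying Beck--Chevalley. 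The paper's approach is the more elementary and self-contained one; yours are more structural.
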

\begin{proof}
  Suppose that each \(\fun_{\el} : \cat_{\el} \to \catI_{\el}\) has a
  right adjoint \(\funI_{\el}\) with counit
  \(\counit_{\el, \objI} : \fun_{\el}(\funI_{\el}(\objI)) \to
  \objI\). It suffices to see that \(\counit_{\el, \objI}\) is
  universal in \(\catI\). Let \(\obj \in \cat_{\el'}\) be an object in
  another fiber and consider the induced map
  \begin{equation*}
    \cat(\obj, \funI_{\el}(\objI)) \to \catI(\fun_{\el'}(\obj), \objI).
  \end{equation*}
  This is a map over \(\sh(\el', \el)\), and thus it suffices to show
  that this is fiberwise an equivalence. Since \(\sh\) is an
  \(\infty\)-groupoid and since \(\cat \to \sh\) and \(\catI \to \sh\)
  are isofibrations, the fibers over \(p \in \sh(\el', \el)\) are
  equivalent to the fibers over \(\id \in \sh(\el, \el)\), but the map
  between the fibers over \(\id\) is the equivalence
  \(\cat_{\el}(\obj, \funI_{\el}(\objI)) \simeq
  \catI_{\el}(\funI_{\el}(\obj), \objI)\).

  Suppose that \(\fun\) has a right adjoint \(\funI : \catI \to \cat\)
  with counit \(\counit : \fun\funI \To \id\). Since \(\sh\) is an
  \(\infty\)-groupoid, the natural transformation
  \[
    \begin{tikzcd}
      \catI
      \arrow[r,"\funI"]
      \arrow[dr,equal,""{name=a0}] &
      \cat
      \arrow[dr]
      \arrow[d,"\fun"]
      \arrow[to=a0,Rightarrow,"\counit"]\\
      & \catI
      \arrow[r] &
      \sh
    \end{tikzcd}
  \]
  is invertible. Then, since \(\cat \to \sh\) and \(\catI \to \sh\)
  are isofibrations, one can replace \(\funI\) and \(\counit\) by a
  functor \(\funI' : \catI \to \cat\) and a natural transformation
  \(\counit' : \fun \funI' \To \id\), respectively, over \(\sh\). Then
  \(\funI'\) and \(\counit'\) give a fiberwise right adjoint of
  \(\fun\).
\end{proof}

\begin{remark}
The proposition also holds more generally when \(\sh\) is an \(\infty\)-category. See \parencite[Proposition 7.3.2.1]{lurie2017algebra}.
\end{remark}

The functor \(\theta(\cat, \reps)\) is constructed as follows. Since
\(\cat\) has finite limits, the functor
\((\Delta^{1} \times \Delta^{1}) \cotensor \cat \to \Lambda^{2}_{2}
\cotensor \cat\) sending a square to its bottom and right edges has a
right adjoint. Composing the right adjoint and the functor
\((\Delta^{1} \times \Delta^{1}) \cotensor \cat \to \Lambda^{2}_{1}
\cotensor \cat\) sending a square to its bottom and left edges, we
have a functor
\begin{equation*}
  \theta' : \Lambda^{2}_{2} \cotensor \cat \to \Lambda^{2}_{1} \cotensor \cat
\end{equation*}
over \(\Delta^{\{1, 2\}} \cotensor \cat\). The functor
\(\theta(\cat, \reps)\) is then the pullback of \(\theta'\) along the
inclusion \(\reps \to \Delta^{\{1, 2\}} \cotensor \cat\). This
construction is functorial and preserves limits and filtered colimits,
yielding a functor
\(\theta : \Lex_{\infty}^{+} \to \Delta^{2} \cotensor \Cat_{\infty}\)
in \(\kPrR_{\omega}\).

\begin{definition}
  We define \(\TTh_{\infty}\) to be the pullback
  \[
    \begin{tikzcd}
      \TTh_{\infty}
      \arrow[rr]
      \arrow[d] & &
      \LAdj
      \arrow[d] \\
      \Lex_{\infty}^{+}
      \arrow[r,"\theta"'] &
      \Delta^{2} \cotensor \Cat_{\infty}
      \arrow[r] &
      \Delta^{\{0, 1\}} \cotensor \Cat_{\infty}.
    \end{tikzcd}
  \]
  By \cref{adjoint-over-groupoid}, the objects of \(\TTh_{\infty}\)
  are precisely the \(\infty\)-type theories. It is also
  straightforward to see that the morphisms of \(\TTh_{\infty}\)
  are precisely the morphisms of \(\infty\)-type theories.
\end{definition}

\subsection{The \(\infty\)-category of theories over an
  \(\infty\)-type theory}
\label{sec:infty-categ-theor}

\begin{definition}
  For an \(\infty\)-type theory \(\tth\), we define \(\Th(\tth)\) to
  be the full subcategory of \(\Fun(\tth, \Space)\) spanned by the
  functors preserving finite limits.
\end{definition}

By definition, \(\Th(\tth)\) is
  compactly generated, and the inclusion
  \(\Th(\tth) \to \Fun(\tth, \Space)\) is an \(\omega\)-accessible
  right adjoint. The \(\infty\)-category \(\Th(\tth)\) has the following alternative definitions:
\begin{itemize}
\item \(\Th(\tth)\) is the cocompletion of \(\tth^{\op}\) under
  filtered colimits;
\item \(\Th(\tth)\) is the \(\omega\)-free cocompletion of
  \(\tth^{\op}\), that is, the initial cocomplete \(\infty\)-category
  equipped with a functor from \(\tth^{\op}\) preserving finite
  colimits.
\end{itemize}

\subsection{The \(\infty\)-category of models of an \(\infty\)-type
  theory}
\label{sec:infty-categ-models}

We construct an \(\infty\)-category \(\Mod(\tth)\) of models of an
\(\infty\)-type theory \(\tth\). The following description of
\(\Mod(\tth)\) is based on unpublished work by John Bourke and the second named author on the
\(2\)-category of \(1\)-models of a \(1\)-type theory.

Let \(\tth\) be an \(\infty\)-type theory. Recall that a functor to a
slice \(\infty\)-category \(\fun' : \cat \to \catI/\objI\) corresponds
to a functor \(\fun : \cat^{\rcone} \to \catI\) that sends
\(\bas \in \cat^{\rcone}\) to \(\objI\). Then a model \(\model\) of
\(\tth\) can be regarded as a functor
\(\model : \tth^{\rcone} \to \Cat_{\infty}\) satisfying the following
conditions:
\begin{enumerate}
\item \label{item:3} \(\model(\bas)\) has a terminal object;
\item \label{item:4} for every object \(\obj \in \tth\), the functor
  \(\model(\obj) \to \model(\bas)\) is a right fibration;
\item \label{item:5} for every finite diagram \(\obj : \sh \to \tth\),
  the canonical functor
  \(\model(\lim_{\sh}\obj) \to
  \lim_{\sh^{\rcone}}\model\obj^{\rcone}\) is an equivalence;
\item \label{item:6} for every representable arrow
  \(\arr : \obj \to \objI\) in \(\tth\), the functor
  \(\model(\arr) : \model(\obj) \to \model(\objI)\) has a right
  adjoint \(\rarep_{\arr} : \model(\objI) \to \model(\obj)\);
\item \label{item:7} for every pair of arrows
  \(\arr : \obj \to \objI\) and \(\arrI : \objI \to \objII\) with
  \(\arrI\) representable, the canonical functor
  \(\model(\arrI_{*}\obj) \to \rarep_{\arrI}^{*}\model(\obj)\) is an
  equivalence (recall that the pushforward along \(\model(\arrI)\) in
  \(\RFib_{\model(\bas)}\) is given by the pullback along
  \(\rarep_{\arrI}\)).
\end{enumerate}
From this description, we will define \(\Mod(\tth)\) as a
subcategory of \(\Fun(\tth^{\rcone}, \Cat_{\infty})\).

\begin{definition}
  We define \(\Mod_{\labelcref{item:3}}(\tth)\) to be the pullback
  \[
    \begin{tikzcd}
      \Mod_{\labelcref{item:3}}(\tth)
      \arrow[r]
      \arrow[d] &
      \Lex_{\infty}^{(\emptyset)}
      \arrow[d] \\
      \Fun(\tth^{\rcone}, \Cat_{\infty})
      \arrow[r,"\ev_{\bas}"'] &
      \Cat_{\infty}.
    \end{tikzcd}
  \]
\end{definition}

\begin{definition}
  For an object \(\obj \in \tth\), we define
  \(\Mod_{\labelcref{item:4}}^{\obj}(\tth)\) to be the pullback
  \[
    \begin{tikzcd}
      \Mod_{\labelcref{item:4}}^{\obj}(\tth)
      \arrow[r]
      \arrow[d,hook] &
      [4ex]
      \RFib
      \arrow[d,hook] \\
      \Fun(\tth^{\rcone}, \Cat_{\infty})
      \arrow[r,"\ev_{(\obj \to {*})}"'] &
      \Cat_{\infty}^{\to}
    \end{tikzcd}
  \]
  and \(\Mod_{\labelcref{item:4}}(\tth)\) to be the wide pullback of
  \(\Mod_{\labelcref{item:4}}^{\obj}(\tth)\) over
  \(\Fun(\tth^{\rcone}, \Cat_{\infty})\) for all objects
  \(\obj \in \tth\).
\end{definition}

\begin{definition}
  For a finite diagram \(\obj : \sh \to \tth\), we define
  \(\Mod_{\labelcref{item:5}}^{(\sh, \obj)}(\tth)\) to be the pullback
  \[
    \begin{tikzcd}
      \Mod_{\labelcref{item:5}}^{(\sh, \obj)}(\tth)
      \arrow[r]
      \arrow[d,hook] &
      [12ex]
      \Cat_{\infty}^{\simeq}
      \arrow[d,hook] \\
      \Fun(\tth^{\rcone}, \Cat_{\infty})
      \arrow[r,"(\ev_{\lim_{\sh}\obj} \To
      \lim_{\sh^{\rcone}}\ev_{\obj^{\rcone}})"'] &
      \Cat_{\infty}^{\to}
    \end{tikzcd}
  \]
  and \(\Mod_{\labelcref{item:5}}(\tth)\) to be the wide pullback of
  \(\Mod_{\labelcref{item:5}}^{(\sh, \obj)}(\tth)\) over
  \(\Fun(\tth^{\rcone}, \Cat_{\infty})\) for all finite diagrams
  \((\sh, \obj : \sh \to \tth)\).
\end{definition}

\begin{definition}
  For a representable arrow \(\arr : \obj \to \objI\) in \(\tth\), we
  define \(\Mod_{\labelcref{item:6}}^{\arr}(\tth)\) to be the pullback
  \[
    \begin{tikzcd}
      \Mod_{\labelcref{item:6}}^{\arr}(\tth)
      \arrow[r]
      \arrow[d] &
      \LAdj
      \arrow[d] \\
      \Fun(\tth^{\rcone}, \Cat_{\infty})
      \arrow[r,"\ev_{\arr}"'] &
      \Cat_{\infty}^{\to}
    \end{tikzcd}
  \]
  and \(\Mod_{\labelcref{item:6}}(\tth)\) to be the wide pullback of
  \(\Mod_{\labelcref{item:6}}^{\arr}(\tth)\) over
  \(\Fun(\tth^{\rcone}, \Cat_{\infty})\) for all representable arrows
  \(\arr\) in \(\tth\).
\end{definition}

\begin{definition}
  We denote by \(\Mod_{-\labelcref{item:7}}(\tth)\) the wide pullback
  of \(\Mod_{\labelcref{item:3}}(\tth)\),
  \(\Mod_{\labelcref{item:4}}(\tth)\),
  \(\Mod_{\labelcref{item:5}}(\tth)\) and
  \(\Mod_{\labelcref{item:6}}(\tth)\) over
  \(\Fun(\tth^{\rcone}, \Cat_{\infty})\). By construction,
  \(\Mod_{-\labelcref{item:7}}(\tth)\) is the \(\infty\)-category of
  functors \(\model : \tth^{\rcone} \to \Cat_{\infty}\) satisfying
  \cref{item:3,item:4,item:5,item:6}.
\end{definition}

\begin{definition}
  For a pair of composable arrows \(\arr : \obj \to \objI\) and
  \(\arrI : \objI \to \objII\) in \(\tth\) with \(\arrI\)
  representable, we define
  \(\Mod_{\labelcref{item:7}}^{(\arr, \arrI)}(\tth)\) to be the
  pullback
  \[
    \begin{tikzcd}
      \Mod_{\labelcref{item:7}}^{(\arr, \arrI)}(\tth)
      \arrow[r]
      \arrow[d,hook] &
      [8ex]
      \Cat_{\infty}^{\simeq}
      \arrow[d,hook] \\
      \Mod_{-\labelcref{item:7}}(\tth)
      \arrow[r,"(\ev_{\arrI_{*}\obj} \To
      \rarep_{\arrI}^{*}\ev_{\obj})"'] &
      \Cat_{\infty}^{\to}
    \end{tikzcd}
  \]
  and \(\Mod(\tth)\) to be the wide pullback of
  \(\Mod_{\labelcref{item:7}}^{(\arr, \arrI)}(\tth)\) over
  \(\Mod_{-\labelcref{item:7}}(\tth)\) for all pairs \((\arr, \arrI)\)
  of composable arrows in \(\tth\) with \(\arrI\) representable.
\end{definition}

By
  construction, the \(\infty\)-category \(\Mod(\tth)\) is compactly
  generated, and the forgetful functor
  \(\Mod(\tth) \to \Fun(\tth^{\rcone}, \Cat_{\infty})\) is a
  conservative, \(\omega\)-accessible right adjoint. Moreover,
  the objects of \(\Mod(\tth)\) are the models of \(\tth\) and the
  morphisms in \(\Mod(\tth)\) are described as follows. Let
  \(\model\) and \(\modelI\) be models of \(\tth\) and
  \(\fun : \model \To \modelI : \tth^{\rcone} \to \Cat_{\infty}\) be a
  natural transformation. Then \(\fun\) is in \(\Mod(\tth)\) if and
  only if the following conditions hold:
  \begin{itemize}
  \item the component \(\fun(\bas) : \model(\bas) \to \modelI(\bas)\)
    preserves terminal objects;
  \item for any representable arrow \(\arr : \obj \to \objI\) in
    \(\tth\), the square
    \[
      \begin{tikzcd}
        \model(\obj)
        \arrow[r,"\fun(\obj)"]
        \arrow[d,"\model(\arr)"'] &
        \modelI(\obj)
        \arrow[d,"\modelI(\arr)"] \\
        \model(\objI)
        \arrow[r,"\fun(\objI)"'] &
        \modelI(\objI)
      \end{tikzcd}
    \]
    satisfies the Beck-Chevalley condition.
  \end{itemize}

\subsection{Universal property of \(\Mod(\tth)\)}
\label{sec:univ-prop-modtth}

We give a universal property of \(\Mod(\tth)\) seen as an object of
\(\CAT_{\infty} / \Lex_{\infty}^{(\emptyset)}\). A consequence is that
the assignment \(\tth \mapsto \Mod(\tth)\) takes colimits of
\(\infty\)-type theories to limits of \(\infty\)-categories over
\(\Lex_{\infty}^{(\emptyset)}\) (\cref{presentation-of-Mod}).

\begin{definition}
  For a functor \(\cat : \idxcat \to \Cat_{\infty}\), we denote by
  \(\Fun(\idxcat, \RFib)_{\cat}\) the full subcategory of
  \(\Fun(\idxcat, \Cat_{\infty})/\cat\) spanned by the natural
  transformations \(\proj : \sh \To \cat : \idxcat \to \Cat_{\infty}\)
  whose components are right fibrations. In other words,
  \(\Fun(\idxcat, \RFib)_{\cat}\) is the fiber of the functor
  \(\Fun(\idxcat, \cod) : \Fun(\idxcat, \RFib) \to \Fun(\idxcat,
  \Cat_{\infty})\) over the object
  \(\cat \in \Fun(\idxcat, \Cat_{\infty})\). We say a map
  \(\map : \sh \to \shI\) in \(\Fun(\idxcat, \RFib)_{\cat}\) is
  \emph{representable} if every component
  \(\map(\idx) : \sh(\idx) \to \shI(\idx)\) is a representable map of
  right fibrations over \(\cat(\idx)\) and if every naturality square
  \[
    \begin{tikzcd}
      \sh(\idx)
      \arrow[r,"\sh(\idxarr)"]
      \arrow[d,"\map(\idx)"'] &
      \sh(\idxI)
      \arrow[d,"\map(\idxI)"] \\
      \shI(\idx)
      \arrow[r,"\shI(\idxarr)"'] &
      \shI(\idxI)
    \end{tikzcd}
  \]
  satisfies the Beck-Chevalley condition.
\end{definition}

\begin{proposition}
  \label{diagram-rep-map-pullback}
  Representable maps in \(\Fun(\idxcat, \RFib)_{\cat}\) are closed under
  composition and stable under pullbacks.
\end{proposition}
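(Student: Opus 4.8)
The plan is to reduce everything to the pointwise statements already established in \cref{rep-pullback,rep-right-adjoint}, together with a routine verification that the Beck--Chevalley conditions compose and pull back. Concretely, a representable map $\map : \sh \to \shI$ in $\Fun(\idxcat,\RFib)_{\cat}$ is the same thing as a natural transformation each of whose components $\map(\idx) : \sh(\idx) \to \shI(\idx)$ admits a right adjoint $\rarep_{\idx}$ as a map of right fibrations over $\cat(\idx)$, with the mates of the naturality squares invertible. By \cref{rep-right-adjoint} the right adjoint $\rarep_{\idx}$ is computed as the pullback along a section-type functor, but for this proof the only fact we need is that the $\rarep_{\idx}$ assemble into a natural transformation $\rarep : \shI \To \sh$: this is exactly the content of the Beck--Chevalley (i.e.\ invertible-mate) condition, which lets us transport the right adjoints along the structure maps of $\idxcat$ and obtain a fibred right adjoint at the level of diagrams.

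For \emph{composition}: given representable $\map : \sh \to \shI$ and $\mapI : \shI \to \shII$, each component $\mapI(\idx)\map(\idx)$ has right adjoint $\rarep^{\map}_{\idx}\rarep^{\mapI}_{\idx}$ by the usual composition of adjunctions, and this is again a map of right fibrations over $\cat(\idx)$ by \cref{rfib-slice}. The mate of the outer naturality square for $\mapI\circ\map$ is the vertical pasting of the mates for $\map$ and for $\mapI$; since each of those is invertible by hypothesis, so is the composite. For \emph{pullbacks}: suppose
\[
  \begin{tikzcd}
    \sh_{1}
    \arrow[r,"\mapIV"]
    \arrow[d,"\map_{1}"'] &
    \sh_{2}
    \arrow[d,"\map_{2}"] \\
    \shI_{1}
    \arrow[r,"\mapV"'] &
    \shI_{2}
  \end{tikzcd}
\]
is a pullback in $\Fun(\idxcat,\RFib)_{\cat}$ with $\map_{2}$ representable. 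Pullbacks in this functor category are computed pointwise, and for each $\idx$ the square is a pullback in $\RFib_{\cat(\idx)}$ with $\map_{2}(\idx)$ representable; hence by \cref{rep-pullback} each $\map_{1}(\idx)$ is representable and the component square satisfies Beck--Chevalley. It remains to check that the naturality squares of $\map_{1}$ satisfy Beck--Chevalley. This follows from a mate-calculus argument: the right adjoint of $\map_{1}(\idx)$ is obtained, as in the proof of \cref{rep-pullback}, by lifting $\rarep^{\map_{2}}_{\idx}$ along $\mapV(\idx)$, and the naturality mate for $\map_{1}$ is then determined by pasting the (invertible) naturality mate for $\map_{2}$ with the (invertible) Beck--Chevalley squares witnessing that $\mapIV$, $\mapV$ are morphisms of right fibrations and that the pointwise squares are pullbacks.

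The only genuinely delicate point is bookkeeping: one must be careful that ``representable map in $\Fun(\idxcat,\RFib)_{\cat}$'' packages \emph{two} layers of Beck--Chevalley data—one for the structure maps $\idxarr : \idx \to \idxI$ of $\idxcat$ and one for the naturality of $\map$ itself—and that in the pullback case these interact. I expect the main obstacle to be a clean diagrammatic organization of this pasting of mates, rather than any new conceptual input; all the substantive work is already done in \cref{rfib-slice,rep-right-adjoint,rep-pullback}. Once the mate diagrams are set up, invertibility propagates by the standard functoriality of mates under horizontal and vertical pasting, and both closure properties follow.
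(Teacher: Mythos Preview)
Your approach is essentially the paper's: reduce to the pointwise statements from \cref{rep-pullback} and then verify the Beck--Chevalley condition for the naturality squares of $\map_{1}$ by a pasting argument. The one place where you are vague and the paper is explicit is the final step of the pullback case. Functoriality of mates tells you that the mate of the composite square
\[
  \begin{tikzcd}
    \sh_{1}(\idx)
    \arrow[r,"\sh_{1}(\idxarr)"]
    \arrow[d,"\map_{1}(\idx)"'] &
    \sh_{1}(\idxI)
    \arrow[r,"\mapIV(\idxI)"]
    \arrow[d,"\map_{1}(\idxI)"] &
    \sh_{2}(\idxI)
    \arrow[d,"\map_{2}(\idxI)"] \\
    \shI_{1}(\idx)
    \arrow[r,"\shI_{1}(\idxarr)"'] &
    \shI_{1}(\idxI)
    \arrow[r,"\mapV(\idxI)"'] &
    \shI_{2}(\idxI)
  \end{tikzcd}
\]
is invertible (rewrite it as the other composite and use the hypotheses), but this does \emph{not} by itself let you cancel the right-hand square to conclude that the mate of the left-hand square is invertible. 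The paper closes this gap by observing that $\mapIV(\idxI) : \sh_{1}(\idxI) \to \sh_{2}(\idxI)$, being a map of right fibrations over $\cat(\idxI)$, is conservative; hence invertibility of the pasted mate forces invertibility of the mate you want. Your phrase ``invertibility propagates by the standard functoriality of mates'' hides exactly this point, and without conservativity (or some equivalent cancellation argument via the pullback universal property of $\sh_{1}(\idxI)$) the argument is incomplete.
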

\begin{proof}
  Let
  \[
    \begin{tikzcd}
      \sh_{1}
      \arrow[r,"\mapI"]
      \arrow[d,"\map_{1}"'] &
      \sh_{2}
      \arrow[d,"\map_{2}"] \\
      \shI_{1}
      \arrow[r,"\mapII"'] &
      \shI_{2}
    \end{tikzcd}
  \]
  be a pullback in \(\Fun(\idxcat, \RFib)_{\cat}\) and suppose that
  \(\map_{2}\) is representable. By \cref{rep-pullback}, every
  \(\map_{1}(\idx) : \sh_{1}(\idx) \to \shI_{1}(\idx)\) is a
  representable map of right fibrations over \(\cat(\idx)\), and the
  square
  \[
    \begin{tikzcd}
      \sh_{1}(\idx)
      \arrow[r,"\mapI(\idx)"]
      \arrow[d,"\map_{1}(\idx)"'] &
      \sh_{2}(\idx)
      \arrow[d,"\map_{2}(\idx)"] \\
      \shI_{1}(\idx)
      \arrow[r,"\mapII(\idx)"'] &
      \shI_{2}(\idx)
    \end{tikzcd}
  \]
  satisfies the Beck-Chevalley condition. It remains to show that, for
  any arrow \(\idxarr : \idx \to \idxI\) in \(\idxcat\), the square
  \[
    \begin{tikzcd}
      \sh_{1}(\idx)
      \arrow[r,"\sh_{1}(\idxarr)"]
      \arrow[d,"\map_{1}(\idx)"'] &
      \sh_{1}(\idxI)
      \arrow[d,"\map_{1}(\idxI)"] \\
      \shI_{1}(\idx)
      \arrow[r,"\shI_{1}(\idxarr)"'] &
      \shI_{1}(\idxI)
    \end{tikzcd}
  \]
  satisfies the Beck-Chevalley condition. Since a map of right
  fibrations over a fixed base is conservative, it suffices to show
  that the composite of squares
  \[
    \begin{tikzcd}
      \sh_{1}(\idx)
      \arrow[r,"\sh_{1}(\idxarr)"]
      \arrow[d,"\map_{1}(\idx)"'] &
      \sh_{1}(\idxI)
      \arrow[r,"\mapI(\idxI)"]
      \arrow[d,"\map_{1}(\idxI)"] &
      \sh_{2}(\idxI)
      \arrow[d,"\map_{2}(\idxI)"] \\
      \shI_{1}(\idx)
      \arrow[r,"\shI_{1}(\idxarr)"'] &
      \shI_{1}(\idxI)
      \arrow[r,"\mapII(\idxI)"'] &
      \shI_{2}(\idxI)
    \end{tikzcd}
  \]
  satisfies the Beck-Chevalley condition, but this is true by the
  Beck-Chevalley condition for \(\map_{2}\).
\end{proof}

\begin{proposition}
  \label{diagram-rep-map-pushforward}
  A representable map in \(\Fun(\idxcat, \RFib)_{\cat}\) is
  exponentiable, and the pushforward is given by the pullback along
  the right adjoint.
\end{proposition}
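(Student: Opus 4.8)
The plan is to reduce the statement, componentwise in $\idxcat$, to \cref{rep-right-adjoint}; the Beck--Chevalley condition built into the definition of a representable map in $\Fun(\idxcat,\RFib)_{\cat}$ is precisely what is needed to glue the componentwise data together into genuine natural transformations and adjunctions.

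First I would rewrite the relevant slice $\infty$-categories. Since each structure map $\sh(\idx)\to\cat(\idx)$ is a right fibration, \cref{rfib-slice} gives equivalences $\RFib_{\cat(\idx)}/\sh(\idx)\simeq\RFib_{\sh(\idx)}$, natural in $\idx$; these assemble to an equivalence $\Fun(\idxcat,\RFib)_{\cat}/\sh\simeq\Fun(\idxcat,\RFib)_{\sh}$ over $\idxcat$, where on the right $\sh$ now denotes the underlying diagram $\idxcat\to\Cat_{\infty}$, and similarly with $\shI$ in place of $\sh$. Under these equivalences the pullback functor $\map^{*}\colon\Fun(\idxcat,\RFib)_{\cat}/\shI\to\Fun(\idxcat,\RFib)_{\cat}/\sh$ is identified with pullback along $\map$, viewed as a natural transformation $\sh\To\shI$ and computed componentwise.

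Next I would produce the right adjoint and transport the adjunction back. By \cref{rep-right-adjoint}, each component $\map(\idx)$ --- a representable map of right fibrations over $\cat(\idx)$ --- has a right adjoint $\rarep(\idx)\colon\shI(\idx)\to\sh(\idx)$ over $\cat(\idx)$, inducing an adjunction $\map(\idx)^{*}\dashv\rarep(\idx)^{*}$ between $\RFib_{\shI(\idx)}$ and $\RFib_{\sh(\idx)}$ with $\rarep(\idx)^{*}$ given by pullback along $\rarep(\idx)$. The Beck--Chevalley condition for $\map$ says exactly that, for every $\idxarr\colon\idx\to\idxI$ in $\idxcat$, the canonical natural transformation $\sh(\idxarr)\,\rarep(\idx)\To\rarep(\idxI)\,\shI(\idxarr)$ is invertible, which is precisely the coherence datum that assembles the $\rarep(\idx)$ into a natural transformation $\rarep\colon\shI\To\sh$ over $\cat$ with $\map\dashv\rarep$ componentwise. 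Consequently the adjunctions $\map(\idx)^{*}\dashv\rarep(\idx)^{*}$ are natural in $\idx$ and assemble to an adjunction $\map^{*}\dashv\rarep^{*}$ between $\Fun(\idxcat,\RFib)_{\shI}$ and $\Fun(\idxcat,\RFib)_{\sh}$, with $\rarep^{*}$ the componentwise pullback along $\rarep$. Transporting back along the equivalences of the first step exhibits a right adjoint to $\map^{*}$, proving simultaneously that $\map$ is exponentiable and that the pushforward along $\map$ is pullback along $\rarep$.

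The step I expect to be the main obstacle is purely coherence-theoretic: verifying that the componentwise equivalences of \cref{rfib-slice} and the componentwise adjunctions supplied by \cref{rep-right-adjoint} are natural in $\idx$ and genuinely assemble into the asserted equivalences and adjunctions of functor $\infty$-categories, and that all of these identifications are mutually compatible. Once this is organised --- for instance by working inside the functor $\infty$-cosmos $\Fun(\idxcat,\kCAT_{\infty})$, in which a componentwise adjunction satisfying the Beck--Chevalley condition is literally an adjunction --- every remaining step is just \cref{rfib-slice} or \cref{rep-right-adjoint} applied componentwise.
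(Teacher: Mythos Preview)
Your proposal is correct and follows essentially the same approach as the paper: the paper's proof is the single line ``The same as \cref{rep-right-adjoint}'', and what you have written is precisely an unpacking of that line---use the functor-category analogue of \cref{rfib-slice} to identify the slices, observe that the Beck--Chevalley condition is exactly what makes the componentwise right adjoints assemble into a right adjoint $\rarep$ of $\map$, and conclude that $\rarep^{*}$ is right adjoint to $\map^{*}$. The coherence issue you flag is real but standard, and the paper evidently regards it as implicit in the phrase ``the same as''.
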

\begin{proof}
  The same as \cref{rep-right-adjoint}.
\end{proof}

\begin{proposition}
  \label{rmcat-of-diagrams}
  For a functor \(\cat : \idxcat \to \Cat_{\infty}\), the
  \(\infty\)-category \(\Fun(\idxcat, \RFib)_{\cat}\) together with
  the class of representable maps is an \(\infty\)-category with
  representable maps. For a functor \(\fun : \idxcat' \to \idxcat\),
  the functor
  \(\fun^{*} : \Fun(\idxcat, \RFib)_{\cat} \to \Fun(\idxcat',
  \RFib)_{\cat\fun}\) defined by the precomposition of \(\fun\) is a
  morphism of \(\infty\)-categories with representable maps.
\end{proposition}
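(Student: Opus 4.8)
The plan is to verify, axiom by axiom, the definition of an $\infty$-category with representable maps for the pair consisting of $\Fun(\idxcat, \RFib)_{\cat}$ and its class of representable maps, reusing \cref{diagram-rep-map-pullback} and \cref{diagram-rep-map-pushforward} for the axioms about representable maps, and exploiting throughout that limits, right adjoints and pushforwards are all computed pointwise over $\idxcat$.

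First I would treat finite limits. Recalling that $\Fun(\idxcat, \RFib)_{\cat}$ is the fiber of $\Fun(\idxcat, \cod)$ over $\cat$, I would note that for each $\idx \in \idxcat$ the $\infty$-category $\RFib_{\cat(\idx)}$ is closed under finite limits in $\Cat_{\infty}/\cat(\idx)$ — it contains the terminal object $\id_{\cat(\idx)}$ and, by stability of right fibrations under pullbacks, is closed under pullbacks — and that the restriction functors $\cat(\idxarr)^{*}\colon \RFib_{\cat(\idxI)} \to \RFib_{\cat(\idx)}$, being pullback functors, preserve these finite limits. From this the pointwise finite limits assemble into a functor $\idxcat \to \RFib$ lying over $\cat$, which one checks is a limit in $\Fun(\idxcat, \RFib)_{\cat}$; the terminal object is the identity transformation $\id\colon\cat\To\cat$. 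Next, the representable maps form a union of connected components of the space of arrows, since the defining conditions — each component being a representable map of right fibrations, and Beck--Chevalley for the naturality squares — are invariant under equivalence of arrows; identities are representable because $\id_{\sh(\idx)}$ is its own right adjoint and the naturality squares of an identity transformation trivially satisfy Beck--Chevalley; closure under composition and stability under pullbacks is exactly \cref{diagram-rep-map-pullback}; and exponentiability is \cref{diagram-rep-map-pushforward}. This settles the first assertion.

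For the second assertion I would first observe that $\fun^{*}$ does land in $\Fun(\idxcat', \RFib)_{\cat\fun}$, since $\cod(\sh\circ\fun) = \cat\circ\fun$ and the components of $\sh\circ\fun$ are right fibrations. Preservation of representable maps is then immediate, as the components and naturality squares of $\fun^{*}\map$ form a subfamily of those of $\map$, so the representability conditions are inherited. Preservation of finite limits holds because, by the previous paragraph, limits on both sides are computed pointwise while $\fun^{*}$ is itself pointwise precomposition, so it commutes with these pointwise limits and their cones. Finally, for pushforwards: by \cref{diagram-rep-map-pushforward} the pushforward along a representable $\map$ is the pullback along its right adjoint $\rarep$; since $\rarep$ is computed pointwise, $\fun^{*}\rarep$ is the right adjoint of $\fun^{*}\map$ (Beck--Chevalley for $\rarep$ following from that for $\map$), and since $\fun^{*}$ commutes with pullbacks, it commutes with the pushforward.

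The only delicate point — though it is entirely routine — will be the bookkeeping showing that the pointwise finite limits and pointwise right adjoints really do assemble into honest functors on $\idxcat$ with the expected universal properties; this is precisely where preservation of finite limits by the restriction functors $\cat(\idxarr)^{*}$ is used. The mathematically substantial content of the proposition, namely that representable maps are stable under composition and pullback and are exponentiable, has already been dispatched in \cref{diagram-rep-map-pullback,diagram-rep-map-pushforward}.
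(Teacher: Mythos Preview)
Your proposal is correct and takes the same approach as the paper, which dismisses the proof with ``By definition'': the substantive content is already contained in \cref{diagram-rep-map-pullback,diagram-rep-map-pushforward}, and you have simply made explicit the routine verifications (pointwise finite limits, identities, that $\fun^{*}$ preserves the structure) that the paper leaves implicit.
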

\begin{proof}
  By definition.
\end{proof}

Let \(\tth\) be an \(\infty\)-type theory and
\(\cat : \idxcat \to \Lex_{\infty}^{(\emptyset)}\) a functor. We have
equivalences
\begin{align*}
  & \quad \CAT_{\infty}/\Lex_{\infty}^{(\emptyset)}((\idxcat, \cat),
  (\Fun(\tth^{\rcone}, \Cat_{\infty}), \ev_{\bas})) \\
  \simeq & \qquad \text{\{transposition\}} \\
  & \quad \{\bas\}/\CAT_{\infty}((\tth^{\rcone}, \{\bas\}),
    (\Fun(\idxcat, \Cat_{\infty}), \cat)) \\
  \simeq & \qquad \text{\{adjunction of join and slice\}} \\
  & \quad \CAT_{\infty}(\tth, \Fun(\idxcat, \Cat_{\infty})/\cat).
\end{align*}

\begin{proposition}
  \label{models-ump}
  Let \(\tth\) be an \(\infty\)-type theory and
  \(\cat : \idxcat \to \Lex_{\infty}^{(\emptyset)}\) a functor. A
  functor \(\fun : \idxcat \to \Fun(\tth^{\rcone}, \Cat_{\infty})\)
  over \(\Lex_{\infty}^{(\emptyset)}\) factors through \(\Mod(\tth)\)
  if and only if its transpose
  \(\widetilde{\fun} : \tth \to \Fun(\idxcat, \Cat_{\infty})/\cat \)
  factors through \(\Fun(\idxcat, \RFib)_{\cat}\) and is a morphism of
  \(\infty\)-categories with representable maps. Consequently, we have
  an equivalence
  \[
    \CAT_{\infty}/\Lex_{\infty}^{(\emptyset)}((\idxcat, \cat),
    (\Mod(\tth), \ev_{\bas})) \simeq \RMCAT_{\infty}(\tth,
    \Fun(\idxcat, \RFib)_{\cat}).
  \]
\end{proposition}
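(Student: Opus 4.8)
The plan is to peel off the defining conditions of $\Mod(\tth)$ one at a time along the chain of natural equivalences displayed just above, which already identifies a functor $\fun\colon\idxcat\to\Fun(\tth^{\rcone},\Cat_{\infty})$ over $\Lex_{\infty}^{(\emptyset)}$ with a functor $\widetilde{\fun}\colon\tth\to\Fun(\idxcat,\Cat_{\infty})/\cat$ \emph{before} imposing any further structure. Writing $\model_{\idx}:=\fun(\idx)\colon\tth^{\rcone}\to\Cat_{\infty}$ for $\idx\in\idxcat$, under this identification $\widetilde{\fun}$ sends an object $\obj\in\tth$ to the natural transformation $\idx\mapsto\bigl(\model_{\idx}(\obj)\to\model_{\idx}(\bas)=\cat(\idx)\bigr)$ and a morphism $\arr\colon\obj\to\objI$ of $\tth$ to the corresponding square of functors over $\idxcat$. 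The proof then amounts to matching, for each of the five kinds of datum used to cut out $\Mod(\tth)$ as a wide pullback, the pointwise-in-$\idxcat$ form of the model axiom (together with the Beck-Chevalley data carried by the transition morphisms $\fun(\idxarr)$, which are required to be morphisms of $\Mod(\tth)$) against the corresponding clause of ``$\widetilde{\fun}$ factors through $\Fun(\idxcat,\RFib)_{\cat}$ and is a morphism of $\infty$-categories with representable maps''.

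In detail: \cref{item:3} holds for every $\model_{\idx}$ and is preserved by every $\fun(\idxarr)$ exactly because $\fun$ is assumed to lie over $\Lex_{\infty}^{(\emptyset)}$, so there is nothing to check. Factoring $\fun$ through $\Mod_{\labelcref{item:4}}(\tth)$ says that $\model_{\idx}(\obj)\to\model_{\idx}(\bas)$ is a right fibration for all $\idx$ and $\obj$, which is precisely the condition that $\widetilde{\fun}$ land in the full subcategory $\Fun(\idxcat,\RFib)_{\cat}$ of $\Fun(\idxcat,\Cat_{\infty})/\cat$, since the components of $\widetilde{\fun}(\obj)$ are these maps. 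Factoring through $\Mod_{\labelcref{item:5}}(\tth)$ says that $\model_{\idx}(\lim_{\sh}\obj)\to\lim_{\sh^{\rcone}}\model_{\idx}\obj^{\rcone}$ is an equivalence for all $\idx$ and all finite diagrams $\obj\colon\sh\to\tth$; since finite limits in $\Fun(\idxcat,\Cat_{\infty})/\cat$ are computed pointwise over $\idxcat$ as these cone limits, and since by \cref{rmcat-of-diagrams} together with closure of $\RFib_{\cat(\idx)}$ under limits they are also the finite limits of $\Fun(\idxcat,\RFib)_{\cat}$, this is exactly preservation of finite limits by $\widetilde{\fun}$. Factoring through $\Mod_{\labelcref{item:6}}(\tth)$ unwinds, through the description of $\LAdj$ in \cref{LAdj-cg}, to the assertion that each $\model_{\idx}(\arr)$ has a right adjoint and that each naturality square of $\fun$ at a representable $\arr$ satisfies the Beck-Chevalley condition; by the definition of a representable map in $\Fun(\idxcat,\RFib)_{\cat}$ this says precisely that $\widetilde{\fun}(\arr)$ is representable for every representable arrow $\arr$ of $\tth$. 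Finally, \cref{item:7} for all $\model_{\idx}$ corresponds to preservation of pushforwards along representable arrows by $\widetilde{\fun}$, via \cref{diagram-rep-map-pushforward}, which identifies the pushforward in $\Fun(\idxcat,\RFib)_{\cat}$ with the pointwise pullback along the pointwise right adjoint --- exactly the description of the pushforward in $\RFib_{\model_{\idx}(\bas)}$ recalled in \cref{item:7}. Assembling these equivalences of conditions gives the displayed biconditional, and since $\Mod(\tth)\hookrightarrow\Fun(\tth^{\rcone},\Cat_{\infty})$ and $\Fun(\idxcat,\RFib)_{\cat}\hookrightarrow\Fun(\idxcat,\Cat_{\infty})/\cat$ are subcategories through which factoring is a property, the displayed chain of equivalences restricts to the asserted equivalence with $\RMCAT_{\infty}(\tth,\Fun(\idxcat,\RFib)_{\cat})$.

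This argument is essentially bookkeeping, and the step that requires the most care is the one for \cref{item:6}: on the $\Mod(\tth)$ side the adjoint datum for a representable $\arr$ lives on each individual model $\model_{\idx}$, while the Beck-Chevalley conditions for the $\idxcat$-naturality squares are an extra constraint on the transition morphisms $\fun(\idxarr)$, whereas on the $\widetilde{\fun}$ side all of this is bundled into the single notion ``$\widetilde{\fun}(\arr)$ is representable in $\Fun(\idxcat,\RFib)_{\cat}$''. One must check that these genuinely agree, and in particular that representability of $\widetilde{\fun}(\arr)$ is not some stronger coherent-right-adjoint-in-$\Fun(\idxcat,\Cat_{\infty})$ condition; this is exactly what the definition of representable maps in $\Fun(\idxcat,\RFib)_{\cat}$ (componentwise right adjoint plus Beck-Chevalley along naturality squares) and \cref{diagram-rep-map-pullback}, \cref{diagram-rep-map-pushforward} are set up to guarantee, so once $\Mod_{\labelcref{item:6}}(\tth)$ is unwound through $\LAdj$ the matching is routine. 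A secondary point, needed for the \cref{item:5} clause, is to confirm that the finite limits of $\Fun(\idxcat,\RFib)_{\cat}$ really are the pointwise cone limits of the underlying functors $\model_{\idx}$, which is where pointwise computation of limits in functor categories and closure of right fibrations over a fixed base under limits enter.
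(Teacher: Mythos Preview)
Your proposal is correct and takes the same approach as the paper: both arguments reduce to unwinding the five defining conditions of \(\Mod(\tth)\) against the clauses of ``morphism of \(\infty\)-categories with representable maps into \(\Fun(\idxcat,\RFib)_{\cat}\)'' under the transposition equivalence. The paper's own proof is the single line ``Immediate from the definition of models of \(\tth\)'', and your write-up is precisely the bookkeeping that this line leaves to the reader; your careful treatment of the \cref{item:6} step (matching the pointwise adjoint plus Beck--Chevalley on transition maps against representability in \(\Fun(\idxcat,\RFib)_{\cat}\)) is the one place where something nontrivial is being checked, and you handle it correctly.
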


\begin{proof}
  Immediate from the definition of models of \(\tth\).
\end{proof}

\begin{corollary}
  \label{presentation-of-Mod}
  \(\Mod : \TTh_{\infty}^{\op} \to
  \CAT_{\infty}/\Lex_{\infty}^{(\emptyset)}\) preserves limits.
\end{corollary}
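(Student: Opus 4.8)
The plan is to deduce the corollary from the universal property of $\Mod(\tth)$ recorded in \cref{models-ump} together with the Yoneda lemma. Write a limit in $\TTh_\infty^{\op}$ as a colimit $\tth=\operatorname*{colim}_{\idx}\tth_{\idx}$ in $\TTh_\infty$, so that we must show the canonical comparison $\Mod(\tth)\to\lim_{\idx}\Mod(\tth_{\idx})$ in $\CAT_\infty/\Lex_\infty^{(\emptyset)}$ is an equivalence. Since an equivalence in $\CAT_\infty/\Lex_\infty^{(\emptyset)}$ is detected after applying $\CAT_\infty/\Lex_\infty^{(\emptyset)}((\idxcat,\cat),{-})$ for all objects $(\idxcat,\cat)$, I would fix a test object $\cat\colon\idxcat\to\Lex_\infty^{(\emptyset)}$ and evaluate both sides by \cref{models-ump}: the left-hand side becomes $\RMCAT_\infty(\tth,\Fun(\idxcat,\RFib)_{\cat})$, and the right-hand side becomes $\lim_{\idx}\CAT_\infty/\Lex_\infty^{(\emptyset)}((\idxcat,\cat),\Mod(\tth_{\idx}))\simeq\lim_{\idx}\RMCAT_\infty(\tth_{\idx},\Fun(\idxcat,\RFib)_{\cat})$ because mapping spaces preserve limits. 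By naturality of the equivalence of \cref{models-ump} in the test object, the comparison map is identified with the canonical map
\[
  \RMCAT_\infty\!\bigl(\operatorname*{colim}_{\idx}\tth_{\idx},\,\mathcal{E}\bigr)\longrightarrow\lim_{\idx}\RMCAT_\infty(\tth_{\idx},\mathcal{E}),\qquad\mathcal{E}:=\Fun(\idxcat,\RFib)_{\cat},
\]
and $\mathcal{E}$ is an $\infty$-category with representable maps by \cref{rmcat-of-diagrams}.

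The corollary thus reduces to showing that, for every $\infty$-category with representable maps $\mathcal{E}$, the functor $\RMCAT_\infty({-},\mathcal{E})\colon\TTh_\infty^{\op}\to\widehat{\Space}$ sends colimits in $\TTh_\infty$ to limits. If $\mathcal{E}$ is a small $\infty$-type theory this is exactly the universal property defining the colimit $\tth=\operatorname*{colim}_{\idx}\tth_{\idx}$ in $\TTh_\infty$. For the general --- in our case large --- $\mathcal{E}$, the point is that a small colimit of small $\infty$-categories with representable maps is again small: the colimit $\operatorname*{colim}_{\idx}\tth_{\idx}$ formed in the very large $\infty$-category $\RMCAT_\infty$ of all $\infty$-categories with representable maps lies in $\TTh_\infty$, hence computes the colimit there as well, so its universal property holds against every object of $\RMCAT_\infty$. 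Combining this with the natural identifications above, the Yoneda lemma yields the corollary.

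The transpositions through \cref{models-ump} and the closing appeal to Yoneda are routine. The step that needs genuine care is the size bookkeeping just described: one must check that the universal property of colimits in $\TTh_\infty$ persists against large $\infty$-categories with representable maps. This is the exact analogue of the relationship between $\Cat_\infty$ and $\CAT_\infty$, with $\TTh_\infty$ playing the role of the subcategory of small objects, closed under small colimits inside $\RMCAT_\infty$.
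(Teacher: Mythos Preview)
Your argument is correct and is precisely the intended elaboration: the paper states the corollary without proof, relying on the representability formula of \cref{models-ump} together with Yoneda, and you have spelled this out, including the one genuine subtlety (that the universal property of colimits in \(\TTh_{\infty}\) must hold against the large target \(\Fun(\idxcat,\RFib)_{\cat}\)). There is nothing to add.
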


\subsection{Slice \(\infty\)-type theories}
\label{sec:slice-infty-type}

\begin{definition}
  For an \(\infty\)-category with representable maps \(\cat\) and an
  object \(\obj \in \cat\), we regard the slice \(\cat/\obj\) as an
  \(\infty\)-category with representable maps in which an arrow is
  representable if it is representable in \(\cat\).
\end{definition}

The goal of this subsection is to show that \(\cat/\obj\) is the
\(\infty\)-category with representable maps obtained from \(\cat\) by
freely adjoining a global section of \(\obj\) (\cref{slice-rep}). We
first recall a universal property of a slice of a left exact
\(\infty\)-category.

\begin{proposition}
  \label{lex-slice}
  Let \(\cat\) be a left exact \(\infty\)-category and
  \(\obj \in \cat\) an object. We denote by
  \(\obj^{*} : \cat \to \cat/\obj\) the pullback functor along
  \(\obj \to \terminal\) and
  \(\diagonal_{\obj} : \terminal \to \obj^{*}\obj\) the arrow in
  \(\cat/\obj\) which is the diagonal arrow
  \(\obj \to \obj \times \obj\) in \(\cat\). For a left exact
  \(\infty\)-category \(\catI\) and a left exact functor
  \(\fun : \cat \to \catI\), the map
  \begin{equation}
    \label{eq:5}
    \cat/\LEX_{\infty}(\cat/\obj, \catI) \ni \funI \mapsto
    \funI(\diagonal_{\obj}) \in \catI(\terminal, \fun\obj)
  \end{equation}
  is an equivalence of spaces.
\end{proposition}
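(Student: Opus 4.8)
The plan is to produce an explicit inverse to the map $\funI\mapsto\funI(\diagonal_{\obj})$. Two structural observations drive the argument. First, any left exact functor $\fun\colon\cat\to\catI$ induces a left exact functor on slices $\overline{\fun}\colon\cat/\obj\to\catI/\fun\obj$ sending $(p\colon A\to\obj)$ to $(\fun p\colon\fun A\to\fun\obj)$ — finite limits in a slice of a left exact $\infty$-category are pullbacks over the chosen object, and these are preserved by $\fun$ — and for any arrow $g\colon Y\to Z$ of $\catI$ the base change $g^{*}\colon\catI/Z\to\catI/Y$ is left exact. Second, the slice $\cat/\obj$ is generated under finite limits by the image of $\obj^{*}$ together with the global section $\diagonal_{\obj}$: for every $(p\colon A\to\obj)\in\cat/\obj$ the square
\[
\begin{tikzcd}
(p\colon A\to\obj) \arrow[r] \arrow[d] & \obj^{*}A \arrow[d,"\obj^{*}p"] \\
\terminal \arrow[r,"\diagonal_{\obj}"'] & \obj^{*}\obj
\end{tikzcd}
\]
is a pullback in $\cat/\obj$, naturally in $(p\colon A\to\obj)$; here the top edge is the unit of the adjunction $\obj_{!}\dashv\obj^{*}$, and on underlying objects one checks that the pullback of $\mathrm{id}_{\obj}\times p\colon\obj\times A\to\obj\times\obj$ along the diagonal of $\obj$ is $p\colon A\to\obj$.

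Given these, I would define the inverse on a point $s\colon\terminal\to\fun\obj$ of $\catI(\terminal,\fun\obj)$ by letting $\funI_{s}$ be the left exact composite
\[
\cat/\obj \xrightarrow{\ \overline{\fun}\ } \catI/\fun\obj \xrightarrow{\ s^{*}\ } \catI/\terminal \simeq \catI .
\]
The assignment $s\mapsto\funI_{s}$ is functorial through the codomain fibration $\cod\colon\catI^{\to}\to\catI$, whose cartesian transport over $s$ realizes $s^{*}$, and so defines a map $\Psi\colon\catI(\terminal,\fun\obj)\to\LEX_{\infty}(\cat/\obj,\catI)$. Since $\overline{\fun}\circ\obj^{*}\simeq(\fun\obj)^{*}\circ\fun$ and $s^{*}\circ(\fun\obj)^{*}\simeq\mathrm{id}_{\catI}$ — the composite $\terminal\xrightarrow{s}\fun\obj\to\terminal$ being the identity and base change being pseudofunctorial — there is an equivalence $\funI_{s}\circ\obj^{*}\simeq\fun$ natural in $s$, so $\Psi$ refines to a map into $\cat/\LEX_{\infty}(\cat/\obj,\catI)$, the fibre over $\fun$ of precomposition with $\obj^{*}$.

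It remains to check the two round-trips. Applying $\overline{\fun}$ to $\diagonal_{\obj}$ produces the diagonal $\diagonal_{\fun\obj}$ of $\catI/\fun\obj$, and its base change along $s$ is $s$ itself (the fibre of the diagonal of $\fun\obj$ over the point $s$), so $\funI_{s}(\diagonal_{\obj})\simeq s$, naturally in $s$. Conversely, given a left exact $\funI\colon\cat/\obj\to\catI$ with a chosen equivalence $\funI\circ\obj^{*}\simeq\fun$, set $s:=\funI(\diagonal_{\obj})$; applying $\funI$ to the pullback square above and using that $\funI$ preserves pullbacks and terminal objects gives $\funI(p\colon A\to\obj)\simeq\fun A\times_{\fun\obj}\terminal=\funI_{s}(p\colon A\to\obj)$, naturally in $(p\colon A\to\obj)$ and compatibly with the chosen equivalences over $\cat$, hence $\funI\simeq\Psi\bigl(\funI(\diagonal_{\obj})\bigr)$ in $\cat/\LEX_{\infty}(\cat/\obj,\catI)$.

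The only real difficulty is coherence: upgrading the pointwise recipe $s\mapsto\funI_{s}$ to a genuine map of spaces, and the two round-trip identities from the homotopy category to actual homotopies. This is exactly what the two structural inputs are for — the codomain fibration makes $s\mapsto s^{*}$, and hence $\Psi$, functorial with no further choices, and the naturality of the pullback presentation of $\cat/\obj$ makes both round-trip equivalences natural by construction — so no ad hoc coherence data enters. The $1$-categorical shadow of the statement, that $\cat/\obj$ is the left exact $\infty$-category obtained from $\cat$ by freely adjoining a global section of $\obj$, is classical; the content here is the $\infty$-categorical bookkeeping.
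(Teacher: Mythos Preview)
Your proof is correct and shares its core with the paper's: both identify the inverse as \(s \mapsto s^{*}\circ(\fun/\obj)\), and both rest on the observation that every object of \(\cat/\obj\) sits in a pullback square built from \(\diagonal_{\obj}\) and the image of \(\obj^{*}\). The difference lies in the verification. The paper first transposes along \(\obj_{!}\dashv\obj^{*}\) to replace the datum ``\(\funI\circ\obj^{*}\simeq\fun\)'' with ``a \emph{cartesian} natural transformation \(\funI\Rightarrow\fun\circ\obj_{!}\)'', then observes that evaluation at the terminal object of \(\cat/\obj\) makes \(\Fun(\cat/\obj,\catI)\to\catI\) a cartesian fibration in which cartesian arrows are exactly the cartesian natural transformations. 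Uniqueness of cartesian lifts then gives contractibility of the fibre over \(s\) in one stroke, and left exactness of the lifted functor is read off from that of \(\fun\circ\obj_{!}\). Your argument instead builds the inverse explicitly and checks both round-trips by hand; this is more concrete but forces you to argue the naturality of those homotopies, which the paper's contractibility formulation avoids entirely. Your pullback square is precisely what underlies the paper's unproved remark that \(\trans\) is invertible iff \(\widetilde{\trans}\) is cartesian, so the two arguments are really the same computation organised differently.
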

\begin{proof}
  An object of \(\cat/\LEX_{\infty}(\cat/\obj, \catI)\) is a left
  exact functor \(\funI : \cat/\obj \to \catI\) equipped with an
  invertible natural transformation
  \(\trans : \funI \circ \obj^{*} \To \fun\). By the adjunction
  \(\obj_{!} \adj \obj^{*}\), such a natural transformation \(\trans\)
  corresponds to a natural transformation
  \(\widetilde{\trans} : \funI \To \fun \circ \obj_{!}\). One can
  check that \(\trans : \funI \circ \obj^{*} \To \fun\) is invertible
  if and only if
  \(\widetilde{\trans} : \funI \To \fun \circ \obj_{!}\) is a
  cartesian natural transformation, that is, any naturality square is
  a pullback. Therefore, the statement is equivalent to that, given a
  global section \(\arr : \terminal \to \fun\obj\), the space of pairs
  \((\funI, \trans)\) consisting of a left exact functor
  \(\funI : \cat/\obj \to \catI\) and a cartesian natural
  transformation \(\trans : \funI \To \fun \circ \obj_{!}\) extending
  \(\arr\) is contractible.

  Since \(\catI\) has finite limits, the evaluation at the terminal
  object of \(\cat/\obj\) defines a cartesian fibration
  \(\Fun(\cat/\obj, \catI) \to \catI\) in which a natural
  transformation \(\trans : \funI_{1} \To \funI_{2}\) is cartesian if
  and only if the naturality square
  \[
    \begin{tikzcd}
      \funI_{1}\objI
      \arrow[r, "\trans"]
      \arrow[d] &
      \funI_{2}\objI
      \arrow[d] \\
      \funI_{1}\terminal
      \arrow[r, "\trans"'] &
      \funI_{2}\terminal
    \end{tikzcd}
  \]
  is a pullback for any object \(\objI \in \cat/\obj\), which
  is equivalent to that \(\trans\) is a cartesian natural
  transformation. Therefore, given a functor
  \(\funI_{2} : \cat/\obj \to \catI\) and an arrow
  \(\arr : \obj' \to \funI_{2}\terminal\), the space of pairs
  \((\funI_{1}, \trans)\) consisting of a functor
  \(\funI_{1} : \cat/\obj \to \catI\) and a cartesian natural
  transformation \(\trans\) extending \(\arr\) is contractible. When
  \(\funI_{2} = \fun \circ \obj_{!}\), the functor \(\funI_{1}\) must
  preserve pullbacks since \(\funI_{2}\) does. If, in addition,
  \(\obj' \simeq \terminal\), then \(\funI_{1}\) preserves terminal
  objects, and thus it is left exact. We conclude that, given an arrow
  \(\arr : \terminal \to \fun\obj \simeq \fun(\obj_{!}\terminal)\),
  the space of pairs \((\funI, \trans)\) consisting of a left exact
  functor \(\funI : \cat/\obj \to \catI\) and a cartesian natural
  transformation \(\trans : \funI \To \fun \circ \obj_{!}\) extending
  \(\arr\) is contractible, as we have a unique cartesian lift
  \(\funI \To \fun \circ \obj_{!}\) and \(\funI\) must be left exact.
\end{proof}

From this proof, we can extract the inverse of the map
\labelcref{eq:5}: it is given by
\[
  \catI(\terminal, \fun\obj) \ni \arr \mapsto \arr^{*} \circ
  \fun/\obj \in \cat/\LEX_{\infty}(\cat/\obj, \catI)
\]
where \(\fun/\obj : \cat/\obj \to \catI/\fun\obj\) is the functor
induced by \(\fun\).

\begin{definition}
  We denote by \(\RMCAT_{\infty}\) the \(\infty\)-category of large
  \(\infty\)-categories with representable maps and their morphisms.
\end{definition}

\begin{proposition}
  \label{slice-rep}
  Let \(\cat\) be an \(\infty\)-category with representable maps and
  \(\obj \in \cat\) an object.
  \begin{enumerate}
  \item \label{item:-1} The functor \(\obj^{*} : \cat \to \cat/\obj\)
    is a morphism of \(\infty\)-categories with representable maps.
  \item \label{item:-2} For an \(\infty\)-category with representable
    maps \(\catI\) and a morphism \(\fun : \cat \to \catI\), the map
    \[
      \cat/\RMCAT_{\infty}(\cat/\obj, \catI) \ni \funI \mapsto
      \funI(\diagonal_{\obj}) \in \catI(\terminal, \fun\obj)
    \]
    is an equivalence of spaces.
  \end{enumerate}
\end{proposition}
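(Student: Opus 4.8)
The plan is to deduce both assertions from the left-exact universal property \cref{lex-slice}, keeping track of the two extra pieces of structure: representable arrows and pushforwards along them. For \labelcref{item:-1} I would verify the three defining conditions of a morphism directly. The functor $\obj^{*}$ is right adjoint to the forgetful functor $\obj_{!}\colon\cat/\obj\to\cat$, hence preserves all limits that exist, in particular finite limits. If $\arr$ is a representable arrow of $\cat$ then $\obj^{*}\arr$ is a pullback of $\arr$, hence representable in $\cat$, and so representable as an arrow of $\cat/\obj$. For pushforwards: given a representable arrow $\arr\colon\objI\to\objII$ of $\cat$, the functor $\cat/\objI\to(\cat/\obj)/(\obj^{*}\objI)$ induced by $\obj^{*}$ becomes, under the identification $(\cat/\obj)/(\obj^{*}\objI)\simeq\cat/(\objI\times\obj)$, the base change along the projection $\objI\times\obj\to\objI$, while $\obj^{*}\arr$ is the base change of $\arr$ along $\objII\times\obj\to\objII$; thus the condition that $\obj^{*}$ preserve pushforwards along $\arr$ is exactly the statement that pushforward along the representable arrow $\arr$ is stable under this base change. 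That in turn holds by the standard distributivity law for pushforwards: the mate of the on-the-nose commutation of the relevant pullback functors is an equivalence, which after adjoint transposition reduces to the Beck--Chevalley equivalence for the shriek functors along the evident pullback square (note that no exponentiability of $\objI\times\obj\to\objI$ is needed, only of $\arr$).

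For \labelcref{item:-2}, recall, as recorded after the proof of \cref{lex-slice}, that the equivalence $\cat/\LEX_{\infty}(\cat/\obj,\catI)\simeq\catI(\terminal,\fun\obj)$ has inverse $\arr\mapsto\arr^{*}\circ(\fun/\obj)$, where $\fun/\obj\colon\cat/\obj\to\catI/\fun\obj$ is the functor induced by $\fun$ and $\arr^{*}\colon\catI/\fun\obj\to\catI$ is the base change along the global section $\arr$. Since $\RMCAT_{\infty}$ is a locally full subcategory of $\LEX_{\infty}$, the space $\cat/\RMCAT_{\infty}(\cat/\obj,\catI)$ is precisely the union of those connected components of $\cat/\LEX_{\infty}(\cat/\obj,\catI)$ whose underlying functor is a morphism of $\infty$-categories with representable maps, and the map in the statement is the restriction of the equivalence above to this subspace. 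Hence it is enough to show that the inverse map already takes values in $\cat/\RMCAT_{\infty}(\cat/\obj,\catI)$: for then the restricted map is a monomorphism of spaces (being a subspace inclusion followed by an equivalence) which is surjective on connected components (its image contains the image of the inverse, namely all of $\catI(\terminal,\fun\obj)$), hence an equivalence.

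So it remains to check that $\arr^{*}\circ(\fun/\obj)$ is a morphism of $\infty$-categories with representable maps whenever $\fun$ is, and for this it suffices to treat the two factors separately. The functor $\fun/\obj$ inherits the property directly from $\fun$, since representable arrows, finite limits, and pushforwards along representable arrows in a slice $\cat/\obj$ are computed from the corresponding data in $\cat$ (together with the new terminal object $\id_{\obj}$). For $\arr^{*}\colon\catI/\fun\obj\to\catI$: it is right adjoint to postcomposition with $\arr$, hence preserves finite limits; it sends a representable arrow to a base change of a representable arrow of $\catI$, hence to a representable arrow; and it preserves pushforwards along representable arrows by the same distributivity law used in \labelcref{item:-1}. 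Therefore $\arr^{*}\circ(\fun/\obj)$ is a morphism, and $\arr^{*}\circ(\fun/\obj)\circ\obj^{*}\simeq\arr^{*}\circ(\fun\obj)^{*}\circ\fun\simeq\fun$ because $\arr^{*}\circ(\fun\obj)^{*}$ is the base change along $\id_{\terminal}$; so it does define a point of $\cat/\RMCAT_{\infty}(\cat/\obj,\catI)$, as required.

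The step I expect to be the main obstacle is the pushforward bookkeeping in \labelcref{item:-1} and in the treatment of $\arr^{*}$: one must be comfortable that pushforward along a representable arrow is stable under arbitrary base change, including base change along non-representable (hence possibly non-exponentiable) arrows, and one must recognize that this single distributivity fact is what forces $\obj^{*}$, $\fun/\obj$, and $\arr^{*}$ to preserve pushforwards. Everything else is the formal observation that \labelcref{item:-2} is just \cref{lex-slice} restricted to the correct union of connected components.
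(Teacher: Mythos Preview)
Your proof is correct and follows essentially the same approach as the paper: for \labelcref{item:-1} the paper simply asserts that ``pullbacks preserve all limits and all pushforwards,'' and for \labelcref{item:-2} it invokes \cref{lex-slice} and observes that the inverse map $\arr\mapsto\arr^{*}\circ(\fun/\obj)$ already lands among morphisms of $\infty$-categories with representable maps. You have spelled out in detail the base-change stability of pushforwards along representable arrows that the paper leaves implicit, but the overall strategy is the same.
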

\begin{proof}
  \Cref{item:-1} is because pullbacks preserve all limits and all pushforwards. \Cref{item:-2} follows from
  \cref{lex-slice}, because
  \(\arr^{*} \circ \fun/\obj : \cat/\obj \to \catI\) is a morphism of
  \(\infty\)-categories with representable maps for every global
  section \(\arr : \terminal \to \fun\obj\).
\end{proof}

\Cref{slice-rep} can be reformulated as follows. Let \(\free{\Box}\)
be the free \(\infty\)-type theory generated by one object \(\Box\)
and \(\free{\widetilde{\Box} : \terminal \to \Box}\) the free
\(\infty\)-type theory generated by one object \(\Box\) and one global
section \(\widetilde{\Box}\) of \(\Box\). By definition, a morphism
\(\free{\Box} \to \cat\) corresponds to an object of \(\cat\), and a
morphism \(\free{\widetilde{\Box} : \terminal \to \Box} \to \cat\)
corresponds to a pair \((\obj, \arr)\) consisting of an object
\(\obj\) of \(\cat\) and a global section
\(\arr : \terminal \to \obj\). Then, for an \(\infty\)-category with
representable maps \(\cat\) and an object \(\obj \in \cat\), we can
form a square
\begin{equation}
  \label{eq:-1}
  \begin{tikzcd}
    \free{\Box}
    \arrow[r,"\obj"]
    \arrow[d] &
    \cat
    \arrow[d,"\obj^{*}"] \\
    \free{\widetilde{\Box} : \terminal \to \Box}
    \arrow[r,"\diagonal_{\obj}"'] &
    \cat/\obj.
  \end{tikzcd}
\end{equation}
\Cref{slice-rep} is equivalent to that, for any \(\infty\)-category
with representable maps, the diagram
\[
  \begin{tikzcd}
    \RMCAT_{\infty}(\cat/\obj, \catI)
    \arrow[r]
    \arrow[d] &
    \RMCAT_{\infty}(\free{\widetilde{\Box} : \terminal \to \Box},
    \catI)
    \arrow[d] \\
    \RMCAT_{\infty}(\cat, \catI)
    \arrow[r] &
    \RMCAT_{\infty}(\free{\Box}, \catI)
  \end{tikzcd}
\]
induced by \cref{eq:-1} is a pullback of spaces. In other words:

\begin{proposition}
  \label{slice-rep-2}
  \label{tth-slice}
  For an \(\infty\)-category with representable maps \(\cat\) and an
  object \(\obj \in \cat\), \cref{eq:-1} is a pushout in
  \(\RMCAT_{\infty}\).
\end{proposition}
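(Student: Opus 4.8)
The plan is to unfold the universal property of a pushout in \(\RMCAT_{\infty}\) and reduce the claim to \cref{slice-rep}, along the lines of the discussion preceding the statement. A commuting square in \(\RMCAT_{\infty}\) is a pushout precisely when, for every \(\infty\)-category with representable maps \(\catI\), the square of mapping spaces obtained by applying \(\RMCAT_{\infty}({-}, \catI)\) is a pullback of spaces. So first I would fix such a \(\catI\) and consider the square
\[
  \begin{tikzcd}
    \RMCAT_{\infty}(\cat/\obj, \catI)
    \arrow[r]
    \arrow[d,"(\obj^{*})^{*}"'] &
    \RMCAT_{\infty}(\free{\widetilde{\Box} : \terminal \to \Box}, \catI)
    \arrow[d] \\
    \RMCAT_{\infty}(\cat, \catI)
    \arrow[r] &
    \RMCAT_{\infty}(\free{\Box}, \catI)
  \end{tikzcd}
\]
obtained from \cref{eq:-1}, in which the left vertical map is restriction along \(\obj^{*} : \cat \to \cat/\obj\); this is a legitimate morphism of \(\infty\)-categories with representable maps by \cref{item:-1}.

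Next I would rewrite the right-hand column using the universal properties of the free objects: \(\RMCAT_{\infty}(\free{\Box}, \catI)\) is the space of objects of \(\catI\), and \(\RMCAT_{\infty}(\free{\widetilde{\Box} : \terminal \to \Box}, \catI)\) is the space of pairs \((\objI, \arr)\) consisting of an object \(\objI \in \catI\) and a global section \(\arr : \terminal \to \objI\), the vertical map being the forgetful one \((\objI, \arr) \mapsto \objI\). Hence the pullback of the bottom and right legs of the square is the space of pairs consisting of a morphism \(\fun : \cat \to \catI\) together with a global section \(\terminal \to \fun\obj\), and the comparison map out of \(\RMCAT_{\infty}(\cat/\obj, \catI)\) sends a morphism \(\funI : \cat/\obj \to \catI\) to \((\funI \circ \obj^{*}, \funI(\diagonal_{\obj}))\). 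It then remains only to prove that this comparison map is an equivalence of spaces.

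I would check this fiberwise over \(\RMCAT_{\infty}(\cat, \catI)\), using the left vertical map on the source and the evident projection on the target. For a fixed morphism \(\fun : \cat \to \catI\), the fiber of the source is by definition the space \(\cat/\RMCAT_{\infty}(\cat/\obj, \catI)\) of morphisms \(\funI : \cat/\obj \to \catI\) together with an equivalence \(\funI \circ \obj^{*} \simeq \fun\); the fiber of the target is \(\catI(\terminal, \fun\obj)\); and under these identifications the induced map on fibers is precisely the map \(\funI \mapsto \funI(\diagonal_{\obj})\) of \cref{item:-2}, which is an equivalence of spaces. A fiberwise equivalence over \(\RMCAT_{\infty}(\cat, \catI)\) is an equivalence, so the square of mapping spaces is a pullback, and therefore \cref{eq:-1} is a pushout in \(\RMCAT_{\infty}\).

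I do not expect a genuine obstacle: all the substance is contained in \cref{lex-slice} and \cref{slice-rep}, and the present statement merely repackages them in terms of pushouts. The step deserving the most care is verifying that the identification of \(\RMCAT_{\infty}(\free{\widetilde{\Box} : \terminal \to \Box}, \catI)\) with the space of pointed objects is compatible with the two edges of the square — that the morphism \(\free{\Box} \to \free{\widetilde{\Box} : \terminal \to \Box}\) induces \((\objI, \arr) \mapsto \objI\), and that the morphism \(\free{\widetilde{\Box} : \terminal \to \Box} \to \cat/\obj\) labelled \(\diagonal_{\obj}\) corresponds to the pointed object \((\obj^{*}\obj, \diagonal_{\obj})\) of \(\cat/\obj\) — but both are immediate from how these morphisms are defined, so there is nothing deep to overcome.
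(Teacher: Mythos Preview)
Your proposal is correct and follows exactly the approach the paper takes: the paper presents this proposition as a direct reformulation of \cref{slice-rep}, noting in the paragraph immediately preceding it that \cref{slice-rep} is equivalent to the square of mapping spaces being a pullback for every \(\catI\), and you have simply spelled out that equivalence in detail. There is no separate proof in the paper beyond that remark, so your write-up is a faithful expansion of what the authors intend.
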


Using \cref{models-ump} and its corollary, we have the following
description of \(\Mod(\tth / \obj)\) for an \(\infty\)-type theory
\(\tth\) and an object \(\obj \in \tth\). We first observe:
\begin{enumerate}
\item \label{item:14} \(\Mod(\free{\Box}) \simeq \RFib'\) where
  \(\RFib'\) is the base change of \(\RFib\) along the forgetful
  functor \(\Lex_{\infty}^{(\emptyset)} \to \Cat_{\infty}\);
\item \label{item:19}
  \(\Mod(\free{\widetilde{\Box} : \terminal \to \Box}) \simeq
  \RFib'_{\bullet}\) where \(\RFib'_{\bullet}\) is the
  \(\infty\)-category of right fibrations \(\sh \to \cat\) with a
  terminal object in \(\cat\) and a global section
  \(\el : \cat \to \sh\).
\end{enumerate}
These follow from \cref{models-ump}, for example
\begin{align*}
  &\CAT_{\infty} / \Lex_{\infty}^{(\emptyset)}((\idxcat, \cat),
  (\Mod(\free{\Box}), \ev_{\bas})) \\
  \simeq{}& \RMCAT_{\infty}(\free{\Box}, \Fun(\idxcat, \RFib)_{\cat}) \\
  \simeq{}& (\Fun(\idxcat, \RFib)_{\cat})^{\simeq} \\
  \simeq{}& \CAT_{\infty} / \Lex_{\infty}^{(\emptyset)}((\idxcat, \cat), (\RFib', \cod))
\end{align*}
for any \(\cat : \idxcat \to \Lex_{\infty}^{(\emptyset)}\). Then, by
\cref{presentation-of-Mod,tth-slice}, we have:

\begin{proposition}
  \label{slice-model}
  For any \(\infty\)-type theory \(\tth\) and any object \(\obj \in
  \tth\), we have a pullback
  \[
    \begin{tikzcd}
      \Mod(\tth/\obj)
      \arrow[rr]
      \arrow[d] & &
      \RFib'_{\bullet}
      \arrow[d] \\
      \Mod(\tth)
      \arrow[r,"\obj^{*}"'] &
      \Mod(\free{\Box})
      \arrow[r, "\simeq"'] &
      \RFib'.
    \end{tikzcd}
  \]
\end{proposition}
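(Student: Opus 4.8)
The plan is to exhibit the square of the statement as the image under \(\Mod\) of the pushout square \cref{eq:-1} specialized to \(\cat = \tth\), and then to read off the corners from the identifications already recorded above.

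First I would check that \cref{eq:-1} with \(\cat = \tth\) lives entirely in \(\TTh_{\infty}\). The free \(\infty\)-type theories \(\free{\Box}\) and \(\free{\widetilde{\Box} : \terminal \to \Box}\) are small, \(\tth\) is small by hypothesis, and \(\tth/\obj\) is small and carries its canonical slice structure of an \(\infty\)-type theory as in \cref{sec:slice-infty-type}. By \cref{tth-slice} the square \cref{eq:-1} is a pushout in \(\RMCAT_{\infty}\). Since \(\TTh_{\infty}\) is the full subcategory of \(\RMCAT_{\infty}\) on the small objects and all four corners lie in it, testing the defining universal property only against objects of \(\TTh_{\infty}\) shows that \cref{eq:-1} is also a pushout in \(\TTh_{\infty}\).

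Next I would apply \(\Mod\). By \cref{presentation-of-Mod} the functor \(\Mod : \TTh_{\infty}^{\op} \to \CAT_{\infty}/\Lex_{\infty}^{(\emptyset)}\) preserves limits, hence sends the pushout of the previous step to a pullback; and since the forgetful functor \(\CAT_{\infty}/\Lex_{\infty}^{(\emptyset)} \to \CAT_{\infty}\) creates pullbacks, we obtain a pullback square
\[
  \begin{tikzcd}
    \Mod(\tth/\obj)
    \arrow[r]
    \arrow[d] &
    \Mod(\free{\widetilde{\Box} : \terminal \to \Box})
    \arrow[d] \\
    \Mod(\tth)
    \arrow[r] &
    \Mod(\free{\Box})
  \end{tikzcd}
\]
in \(\CAT_{\infty}\), whose bottom arrow is \(\Mod\) of the classifying morphism \(\obj : \free{\Box} \to \tth\) and whose right-hand arrow is \(\Mod\) of the canonical morphism \(\free{\Box} \to \free{\widetilde{\Box} : \terminal \to \Box}\).

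Finally I would identify the two right-hand corners via \labelcref{item:14,item:19}, so that \(\Mod(\free{\Box}) \simeq \RFib'\) and \(\Mod(\free{\widetilde{\Box} : \terminal \to \Box}) \simeq \RFib'_{\bullet}\), compatibly with the projections to \(\Lex_{\infty}^{(\emptyset)}\). Under these equivalences the right-hand vertical becomes the forgetful functor \(\RFib'_{\bullet} \to \RFib'\), and the bottom composite \(\Mod(\tth) \to \Mod(\free{\Box}) \simeq \RFib'\) becomes the functor, written \(\obj^{*}\) in the statement, sending a model \(\model\) of \(\tth\) to the right fibration \(\model(\obj) \to \model(\bas)\). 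Substituting these identifications yields exactly the square of the statement. The only point that needs genuine care, rather than bookkeeping, is this last step: one must verify that \(\Mod\) of the two structural morphisms \(\obj : \free{\Box} \to \tth\) and \(\free{\Box} \to \free{\widetilde{\Box} : \terminal \to \Box}\) corresponds, under \labelcref{item:14,item:19}, to the named functors, which follows by unwinding the description of morphisms of models from \cref{sec:infty-categ-models} together with \cref{models-ump}. There is no deeper obstacle; all the substantive content lies in \cref{tth-slice} and \cref{presentation-of-Mod}.
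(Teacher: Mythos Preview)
Your proposal is correct and follows exactly the route the paper takes: apply \cref{presentation-of-Mod} to the pushout square \cref{eq:-1} from \cref{tth-slice}, and then plug in the identifications \labelcref{item:14,item:19}. The one point you make explicit that the paper leaves tacit is the passage from a pushout in \(\RMCAT_{\infty}\) to one in \(\TTh_{\infty}\); this is indeed needed (since \cref{presentation-of-Mod} is stated for \(\TTh_{\infty}\)) and your justification via fullness and smallness of the corners is fine.
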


\section{The theory-model correspondence}
\label{sec:theory-model-corr}

Given an \(\infty\)-type theory \(\tth\), we establish an adjunction
between the \(\infty\)-category of \(\tth\)-theories and the
\(\infty\)-category of models of \(\tth\). The right adjoint assigns
an \emph{internal language} to each model of \(\tth\), and the left
adjoint assigns a \emph{syntactic model} to each \(\tth\)-theory. Not
all models of \(\tth\) are syntactic ones. We give a characterization
of syntactic models.

All the results in this section are \(\infty\)-categorical analogues
of results from the previous work of the second author
\parencite{uemura2019framework}, but proofs are simplified and
improved.
\begin{itemize}
\item In the previous work \((2,1)\)-categorical (co)limits are
  distinguished from \(1\)-categorical (co)limits, but there is no
  such difference in the \(\infty\)-categorical setting.
\item In the previous work the left adjoint of the internal language
  functor is made by hand, but in this work we construct the internal
  language functor inside the \(\infty\)-cosmos \(\kPrR_{\omega}\), so
  it has a left adjoint by definition. Therefore, all we have to do is
  to analyze the unit and counit of the adjunction.
\end{itemize}

Let \(\tth\) be an \(\infty\)-type theory. Since the base
  \(\infty\)-category \(\model(\bas)\) of a model \(\model\) of
  \(\tth\) has a terminal object
  \(\terminal : \Delta^{0} \to \model(\bas)\), we have a natural
  transformation
  \[
    \begin{tikzcd}
      \Mod(\tth)
      \arrow[r]
      \arrow[d] &
      \Fun(\tth^{\rcone}, \Cat_{\infty})
      \arrow[d,"\ev_{\bas}"] \\
      \Delta^{0}
      \arrow[r,"\Delta^{0}"']
      \arrow[ur,Rightarrow,"\terminal",start
      anchor={[xshift=1ex,yshift=1ex]},end
      anchor={[xshift=-1ex,yshift=-1ex]}] &
      \Cat_{\infty}.
    \end{tikzcd}
  \]
  Since \(\Fun(\tth^{\rcone}, \Cat_{\infty})\) is the pullback
  \[
    \begin{tikzcd}
      \Fun(\tth^{\rcone}, \Cat_{\infty})
      \arrow[r]
      \arrow[d,"\ev_{\bas}"'] &
      \Fun(\tth, \Cat_{\infty})^{\to}
      \arrow[d,"\cod"] \\
      \Cat_{\infty}
      \arrow[r,"\diagonal"'] &
      \Fun(\tth, \Cat_{\infty}),
    \end{tikzcd}
  \]
  the functor
  \(\ev_{\bas} : \Fun(\tth^{\rcone}, \Cat_{\infty}) \to
  \Cat_{\infty}\) is a cartesian fibration in
  \(\kPrR_{\omega}\). Thus, the natural transformation \(\terminal\)
  induces an \(\omega\)-accessible right adjoint
  \(\terminal^{*} : \Mod(\tth) \to (\Delta^{0})^{*}\Fun(\tth^{\rcone},
  \Cat_{\infty}) \simeq \Fun(\tth, \Cat_{\infty})\). By the definition
  of a model of \(\tth\), the functor
  \(\terminal^{*} : \Mod(\tth) \to \Fun(\tth, \Cat_{\infty})\) factors
  through
  \(\Th(\tth) = \Lex(\tth, \Space) \subset \Fun(\tth,
  \Cat_{\infty})\). We denote this functor
  \(\Mod(\tth) \to \Th(\tth)\) by \(\IL\). By definition,
  \(\IL(\model)\) is the composite
  \[
    \begin{tikzcd}
      \tth
      \arrow[r,"\model"] &
      \RFib_{\model(\bas)}
      \arrow[r,"\terminal^{*}"] &
      \Space
    \end{tikzcd}
  \]
  where \(\terminal^{*}\sh\) is the fiber over
  \(\terminal \in \model(\bas)\) for a right fibration \(\sh\) over
  \(\model(\bas)\). As the functor \(\IL : \Mod(\tth) \to \Th(\tth)\)
  lies in \(\kPrR_{\omega}\), it has a left adjoint
  \(\SM : \Th(\tth) \to \Mod(\tth)\).

\begin{definition}
For a model \(\model\) of
  \(\tth\), the \(\tth\)-theory \(\IL(\model)\) is called the
  \emph{internal language of \(\model\)}.
For a theory \(K\) over \(\tth\), we call \(\SM(\theory)\) the
  \emph{syntactic model} generated by a \(\tth\)-theory \(\theory\).
\end{definition}

In this section, we prove the following:
\begin{enumerate}
\item \label{item:1} the unit of the adjunction \(\SM \adj \IL\) is
  invertible, so the functor \(\SM : \Th(\tth) \to \Mod(\tth)\) is
  fully faithful;
\item \label{item:2} the essential image of
  \(\SM : \Th(\tth) \to \Mod(\tth)\) is the class of democratic models
  of \(\tth\) defined below.
\end{enumerate}
Consequently, the adjunction \(\SM \adj \IL\) induces an equivalence
between \(\Th(\tth)\) and the full subcategory of \(\Mod(\tth)\)
spanned by the democratic models of \(\tth\). We define the notion of
a democratic model in \cref{sec:democratic-models}. The components of
the unit \(\unit : \id \To \IL\SM\) are completely determined by the
components at the representable \(\tth\)-theories
\({\tth}(\obj, {-})\), because \(\Th(\tth)\) is the cocompletion of
\(\tth^{\op}\) under filtered colimits and the right adjoint
\(\IL : \Mod(\tth) \to \Th(\tth)\) preserves filtered colimits. We
thus study in details the syntactic model generated by a representable
\(\tth\)-theory. In \cref{sec:initial-model} we concretely describe
the initial model of \(\tth\) which is the syntactic model generated
by the initial \(\tth\)-theory \({\tth}(\terminal, {-})\). We then
generalize it in \cref{sec:synt-models-repr} to a description of the
syntactic model generated by an arbitrary representable
\(\tth\)-theory. Finally we prove the main results in
\cref{sec:equiv-theor-democr}.

\subsection{Democratic models}
\label{sec:democratic-models}

For a model \(\model\) of an \(\infty\)-type theory, we think of an
object \(\ctx \in \model(\bas)\) as a context (see
\cref{natural-model}), but contexts from the syntax of type theory
satisfy an additional property: every context is obtained from the
empty context by context comprehension. A model of an \(\infty\)-type
theory satisfying this property is said to be \emph{democratic},
generalizing the notion of a democratic category with families
\parencite{clairambault2014biequivalence}.

\begin{definition}
  Let \(\model\) be a model of \(\tth\), \(\arr : \obj \to \objI\) a
  representable arrow in \(\tth\), \(\ctx \in \model(\bas)\) an object
  and \(\elI : \model(\bas)/\ctx \to \model(\objI)\) a section. Let
  \(\rarep_{\arr} : \model(\objI) \to \model(\obj)\) be the right
  adjoint of \(\model(\arr)\). Then the counit
  \(\ctxproj_{\arr}(\elI) : \model(\arr)(\rarep_{\arr}(\elI)) \to
  \elI\) is a pullback square
  \[
    \begin{tikzcd}
      \model(\bas)/\{\elI\}_{\arr}
      \arrow[r,"\rarep_{\arr}(\elI)"]
      \arrow[d,"\ctxproj_{\arr}(\elI)"'] &
      \model(\obj)
      \arrow[d,"\model(\arr)"] \\
      \model(\bas)/\ctx
      \arrow[r,"\elI"'] &
      \model(\objI).
    \end{tikzcd}
  \]
  We refer to the object \(\{\elI\}_{\arr}\) the \emph{context
    comprehension of \(\elI\) with respect to \(\arr\)}.
\end{definition}

\begin{definition}
  Let \(\model\) be a model of \(\tth\). The class of \emph{contextual
    objects of \(\model\)} is the smallest replete class of objects of
  \(\model(\bas)\) containing the terminal object and closed under
  context comprehension.
\end{definition}

In other words, the contextual objects of
  \(\model\) are inductively defined as follows:
  \begin{itemize}
  \item the terminal object \(\terminal \in \model(\bas)\) is
    contextual;
  \item if \(\ctx \in \model(\bas)\) is a contextual object,
    \(\arr : \obj \to \objI\) is a representable arrow in \(\tth\) and
    \(\elI : \model(\bas)/\ctx \to \model(\objI)\) is a section, then
    the context comprehension \(\{\elI\}_{\arr}\) is contextual;
  \item if \(\ctx \in \model(\bas)\) is a contextual object and
    \(\ctx \simeq \ctxI\), then \(\ctxI\) is contextual.
  \end{itemize}

\begin{definition}
  We call a model \(\model\) \emph{democratic} if all the objects of
  \(\model(\bas)\) are contextual. We denote by \(\Mod^{\dem}(\tth)\) the
  full subcategory of \(\Mod(\tth)\) spanned by the democratic models.
\end{definition}

One can always find a largest democratic model contained in an
arbitrary model of \(\tth\).

\begin{definition}
  For a model \(\model\) of \(\tth\), we define a model
  \(\model^{\heartsuit}\) of \(\tth\) called the \emph{heart of
    \(\model\)} as follows:
  \begin{itemize}
  \item the base \(\infty\)-category \(\model^{\heartsuit}(\bas)\) is
    the full subcategory of \(\model(\bas)\) spanned by the contextual
    objects;
  \item the functor
    \(\model^{\heartsuit} : \tth \to \RFib_{\model^{\heartsuit}(*)}\)
    is the composite with the pullback along the inclusion
    \(\model^{\heartsuit}(\bas) \to \model(\bas)\)
    \[
      \begin{tikzcd}
        \tth
        \arrow[r,"\model"] &
        \RFib_{\model(\bas)}
        \arrow[r] &
        \RFib_{\model^{\heartsuit}(\bas)}.
      \end{tikzcd}
    \]
  \end{itemize}
  \(\model^{\heartsuit}\) is indeed a model of \(\tth\), and the
  inclusion \(\model^{\heartsuit} \hookrightarrow \model\) is a
  morphism of models of \(\tth\). By definition, the functor
  \(\model^{\heartsuit} : \tth \to \RFib_{\model^{\heartsuit}(*)}\)
  preserves finite limits. Since \(\model^{\heartsuit}(\bas)\) is
  closed under context comprehension, for a representable arrow
  \(\arr : \obj \to \objI\) in \(\tth\), the composite
  \(\model^{\heartsuit}(\objI) \hookrightarrow \model(\objI)
  \overset{\rarep_{\arr}}{\longrightarrow} \model(\obj)\) factors
  through \(\model^{\heartsuit}(\obj) \hookrightarrow \model(\obj)\)
  \[
    \begin{tikzcd}
      \model^{\heartsuit}(\objI)
      \arrow[r,dotted]
      \arrow[d,hook] &
      \model^{\heartsuit}(\obj)
      \arrow[d,hook] \\
      \model(\objI)
      \arrow[r,"\rarep_{\arr}"'] &
      \model(\obj).
    \end{tikzcd}
  \]
  This means that \(\model^{\heartsuit}(\arr) :
  \model^{\heartsuit}(\obj) \to \model^{\heartsuit}(\objI)\) has a
  right adjoint, and the square
  \[
    \begin{tikzcd}
      \model^{\heartsuit}(\obj)
      \arrow[r,hook]
      \arrow[d,"\model^{\heartsuit}(\arr)"'] &
      \model(\obj)
      \arrow[d,"\model(\arr)"] \\
      \model^{\heartsuit}(\objI)
      \arrow[r,hook] &
      \model(\objI)
    \end{tikzcd}
  \]
  satisfies the Beck-Chevalley condition. Since the pushforward along
  \(\model^{\heartsuit}(\arr)\) is given by the pullback along its right
  adjoint \(\rarep_{\arr}\), we see that
  \(\model^{\heartsuit} : \tth \to \RFib_{\model^{\heartsuit}(\bas)}\)
  preserves pushforwards along representable maps.
\end{definition}

\begin{proposition}
  \label{heart-of-model}
  For a democratic model \(\model\) of \(\tth\) and an arbitrary model
  \(\modelI\) of \(\tth\), the inclusion
  \(\modelI^{\heartsuit} \hookrightarrow \modelI\) induces an
  equivalence of spaces
  \[
    \Mod^{\dem}(\tth)(\model, \modelI^{\heartsuit}) \simeq
    \Mod(\tth)(\model, \modelI).
  \]
  In other words, \(({-})^{\heartsuit}\) is a right adjoint of the
  inclusion \(\Mod^{\dem}(\tth) \hookrightarrow \Mod(\tth)\).
\end{proposition}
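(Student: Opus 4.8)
The plan is to verify directly that, for a democratic model $\model$ of $\tth$, every morphism $\fun \colon \model \to \modelI$ of models factors uniquely through the inclusion $\modelI^{\heartsuit} \hookrightarrow \modelI$, and that the factorization is again a morphism of models. Since $\model^{\heartsuit} \hookrightarrow \model$ is already known to be a morphism of models, and since the inclusion $\Mod^{\dem}(\tth) \hookrightarrow \Mod(\tth)$ is fully faithful, it suffices to show that the map
\[
  \Mod(\tth)(\model, \modelI^{\heartsuit}) \to \Mod(\tth)(\model, \modelI)
\]
induced by postcomposition with $\modelI^{\heartsuit} \hookrightarrow \modelI$ is an equivalence of spaces, since the left-hand side agrees with $\Mod^{\dem}(\tth)(\model, \modelI^{\heartsuit})$ by fullness of $\Mod^{\dem}(\tth)$. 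So the crux is a lifting-against-the-inclusion statement.

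First I would unwind what a morphism $\fun \colon \model \to \modelI$ entails: a natural transformation of functors $\tth^{\rcone} \to \Cat_{\infty}$ whose base component $\fun(\bas) \colon \model(\bas) \to \modelI(\bas)$ preserves the terminal object and which satisfies the Beck–Chevalley condition at every representable arrow. The key claim is then: \emph{the component $\fun(\bas)$ sends every contextual object of $\model$ to a contextual object of $\modelI$.} This is proved by induction on the generation of contextual objects. The terminal object is handled by the hypothesis that $\fun(\bas)$ preserves terminal objects. For the inductive step, let $\ctx \in \model(\bas)$ be contextual with $\fun(\bas)(\ctx) \in \modelI(\bas)$ contextual, let $\arr \colon \obj \to \objI$ be representable in $\tth$, and let $\elI \colon \model(\bas)/\ctx \to \model(\objI)$ be a section; one must show that $\fun(\bas)$ carries the context comprehension $\{\elI\}_{\arr}$ to a context comprehension over $\fun(\bas)(\ctx)$. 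The section $\elI$ transports along $\fun$ to a section $\fun(\objI) \circ \elI \colon \model(\bas)/\ctx \to \modelI(\objI)$, which, composed with $\fun(\bas)/\ctx \colon \model(\bas)/\ctx \to \modelI(\bas)/\fun(\bas)(\ctx)$ in the reverse direction, needs to be matched with a section $\elI' \colon \modelI(\bas)/\fun(\bas)(\ctx) \to \modelI(\objI)$; here one uses that $\fun(\bas)$ restricted to the contextual part is an equivalence onto $\modelI^{\heartsuit}(\bas)$ once we know the image lands there — more carefully, one argues that because $\fun(\bas)(\ctx)$ is contextual and context comprehension is computed as a counit/pullback, the Beck–Chevalley condition at $\arr$ forces $\fun(\bas)(\{\elI\}_{\arr})$ to be the pullback defining the comprehension of the transported section, hence contextual. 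The closure of contextual objects under equivalence handles the replete case.

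Granting the claim, $\fun(\bas)$ factors through $\modelI^{\heartsuit}(\bas) \hookrightarrow \modelI(\bas)$, and since $\modelI^{\heartsuit}(\objI) \hookrightarrow \modelI(\objI)$ is the pullback of $\modelI^{\heartsuit}(\bas) \hookrightarrow \modelI(\bas)$ along the right fibration $\modelI(\objI) \to \modelI(\bas)$, each component $\fun(\objI) \colon \model(\objI) \to \modelI(\objI)$ also factors through $\modelI^{\heartsuit}(\objI)$, naturally in $\objI$. The Beck–Chevalley conditions for the factored transformation follow from those for $\fun$ together with the Beck–Chevalley squares already established for $\modelI^{\heartsuit} \hookrightarrow \modelI$, so the factorization is a morphism of models. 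Uniqueness of the factorization is automatic because $\modelI^{\heartsuit} \hookrightarrow \modelI$ is (objectwise) fully faithful, hence a monomorphism in $\Fun(\tth^{\rcone}, \Cat_{\infty})$; this also upgrades the bijection-on-objects-of-morphism-spaces to an equivalence of the whole mapping spaces. I expect the main obstacle to be the inductive step of the claim: pinning down, coherently in the $\infty$-categorical setting, that the transported section's context comprehension in $\modelI$ agrees with the image under $\fun(\bas)$ of the original comprehension — this is exactly where the Beck–Chevalley hypothesis on $\fun$ at the representable arrow $\arr$ is essential, and some care is needed to phrase the pullback-pasting argument without choosing strict representatives.
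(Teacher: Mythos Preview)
Your proposal is correct and follows essentially the same approach as the paper: the paper's entire proof is the single sentence ``Because any morphism of models of \(\tth\) preserves contextual objects, any morphism \(\model \to \modelI\) from a democratic model \(\model\) factors through \(\modelI^{\heartsuit}\),'' and what you have written is an expanded version of this, supplying the induction on contextual objects (terminal object via preservation of terminals, context comprehension via the Beck--Chevalley condition) and the factorization/uniqueness via the full-subcategory inclusion being a monomorphism. The brief detour in your inductive step about ``\(\fun(\bas)\) restricted to the contextual part being an equivalence'' is unnecessary and slightly muddled, but you immediately self-correct to the right argument; the Beck--Chevalley condition alone already gives \(\fun(\bas)(\{\elI\}_{\arr}) \simeq \{\fun(\objI)(\elI)\}_{\arr}\), which is all that is needed.
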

\begin{proof}
  Because any morphism of models of \(\tth\) preserves contextual
  objects, any morphism \(\model \to \modelI\) from a democratic
  model \(\model\) factors through \(\modelI^{\heartsuit}\).
\end{proof}

\subsection{The initial model}
\label{sec:initial-model}

\begin{definition}
  Recall that the Yoneda embedding
  \(\yoneda_{\tth} : \tth \to \RFib_{\tth}\) preserves all existing
  limits and pushforwards. Therefore, the pair
  \((\tth, \yoneda_{\tth})\) is regarded as a model of \(\tth\). We
  define the \emph{initial model} \(\IM(\tth)\) to be the heart of the
  model \((\tth, \yoneda_{\tth})\).
\end{definition}

The goal of this subsection is to show that \(\IM(\tth)\) is indeed an
initial object of \(\Mod(\tth)\).

By definition, the model \(\IM(\tth)\) is described as follows:
\begin{itemize}
\item the base \(\infty\)-category is \(\tth_{r}\), the full
  subcategory of \(\tth\) spanned by the objects \(\obj\) such that
  the arrow \(\obj \to \terminal\) is representable;
\item \(\IM(\tth)(\objI) = \tth_{r}/\objI\) defined by the pullback
  \[
    \begin{tikzcd}
      \tth_{r}/\objI
      \arrow[r,hook]
      \arrow[d] &
      \tth/\objI
      \arrow[d] \\
      \tth_{r}
      \arrow[r,hook] &
      \tth
    \end{tikzcd}
  \]
  for \(\objI \in \tth\).
\end{itemize}
Alternatively, the functor \(\IM(\tth) : \tth \to \RFib_{\tth_{r}}\)
is defined as the left Kan extension of the Yoneda embedding
\(\yoneda_{\tth_{r}} : \tth_{r} \to \RFib_{\tth_{r}}\) along the
inclusion \(\tth_{r} \hookrightarrow \tth\).
\[
  \begin{tikzcd}
    \tth_{r}
    \arrow[r,"\yoneda_{\tth_{r}}",""'{name=a0}]
    \arrow[d,hook] &
    \RFib_{\tth_{r}} \\
    \tth
    \arrow[ur,dotted,bend right,"\IM(\tth)"',""{name=a1}]
    \arrow[from=a0,to=a1,dotted,Rightarrow,"\simeq"'{near start}]
  \end{tikzcd}
\]

\begin{theorem}
  For an \(\infty\)-type theory \(\tth\), the model \(\IM(\tth)\) is
  an initial object in \(\Mod(\tth)\).
\end{theorem}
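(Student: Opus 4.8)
The plan is to show that the mapping space $\Mod(\tth)(\IM(\tth),\modelI)$ is contractible for every model $\modelI$ of $\tth$. Write $\cat := \modelI(\bas)$. Since $\modelI : \tth \to \RFib_{\cat}$ is a morphism of $\infty$-categories with representable maps, it sends each object $\gamma$ of $\tth_{r}$ (the full subcategory of $\tth$ on objects with $\gamma \to \terminal$ representable) to a representable right fibration over $\cat$. The Yoneda embedding $\yoneda_{\cat} : \cat \to \RFib_{\cat}$ is fully faithful with essential image the representable right fibrations, so $\modelI$ restricted to $\tth_{r}$ factors, essentially uniquely, as $\yoneda_{\cat} \circ G$ for a functor $G : \tth_{r} \to \cat$; equivalently $\modelI(\gamma) \simeq \cat/G(\gamma)$ as right fibrations over $\cat$. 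As $\modelI$ preserves finite limits and $\modelI(\terminal)$ is the terminal right fibration $\id_{\cat}$, the functor $G$ preserves the terminal object.

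\emph{Existence.} I build a morphism of models $F : \IM(\tth) \to \modelI$ with base functor $G$. Under $\RFib_{\tth_{r}} \simeq \Fun(\tth_{r}^{\op}, \Space)$, the right fibration $\IM(\tth)(\obj)$ over $\tth_{r}$ is the presheaf $\tth({-},\obj)|_{\tth_{r}}$, by the defining description of $\IM(\tth)$ as the left Kan extension of $\yoneda_{\tth_{r}}$. Hence a map $\IM(\tth)(\obj) \to \modelI(\obj)$ lying over $G$ amounts to a family, natural in $\gamma \in \tth_{r}$, of maps from $\tth(\gamma,\obj)$ to the fiber of the right fibration $\modelI(\obj)$ over $G(\gamma)$, which fiber is $\RFib_{\cat}(\cat/G(\gamma),\modelI(\obj)) \simeq \RFib_{\cat}(\modelI(\gamma),\modelI(\obj))$; for this I take the map induced by the action of the functor $\modelI$ on mapping spaces, which is moreover natural in $\obj$. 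This produces a natural transformation $F : \IM(\tth) \To \modelI$ of functors $\tth^{\rcone} \to \Cat_{\infty}$ with $F_{\bas} = G$. It preserves the terminal object at $\bas$ since $G$ does, and it satisfies the Beck-Chevalley condition at every representable arrow $\arr$ of $\tth$ --- a direct check from \cref{rep-right-adjoint} together with the fact that $\modelI$ sends pushforwards along representable arrows to pullbacks along right adjoints, which identifies the relevant mate with an equivalence. Thus $F \in \Mod(\tth)$.

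\emph{Uniqueness --- the main obstacle.} Let $F'$ be any morphism of models $\IM(\tth) \to \modelI$. For $\gamma \in \tth_{r}$ the category $\IM(\tth)(\gamma) = \tth_{r}/\gamma$ has a terminal object, the generic element $\eta_{\gamma} := \id_{\gamma}$. Applying the Beck-Chevalley condition for $F'$ to the representable arrow $\gamma \to \terminal$ and evaluating at the terminal object forces $F'_{\gamma}(\eta_{\gamma})$ to be the terminal object of $\modelI(\gamma)$, which projects in $\cat$ to $G(\gamma)$; by naturality this pins down $F'_{\bas}$ on objects as $G$. Now every object of $\tth_{r}/\obj$ has the form $\IM(\tth)(f)(\eta_{\gamma})$ for some $f : \gamma \to \obj$ with $\gamma \in \tth_{r}$, and every morphism of $\tth_{r}/\obj$ is obtained by applying some $\IM(\tth)(f)$ to the (unique) morphism into a terminal object $\eta_{\gamma'}$; since $F'$ is natural and maps into terminal objects are unique, all of $F'$ is determined by the forced data $F'_{\gamma}(\eta_{\gamma}) \simeq \terminal_{\modelI(\gamma)}$. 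The delicate point is to promote ``$F'$ is rigid on objects and morphisms'' to ``$\Mod(\tth)(\IM(\tth),\modelI)$ is contractible'', i.e.\ to control all higher coherences; the clean formulation is that $\Mod(\tth)(\IM(\tth),\modelI)$ is equivalent to the space of factorizations of $\modelI|_{\tth_{r}}$ through $\yoneda_{\cat}$ --- which is contractible, since $\yoneda_{\cat}$ is fully faithful and one such factorization exists by the first paragraph. Together with the existence of $F$ this gives $\Mod(\tth)(\IM(\tth),\modelI) \simeq *$, so $\IM(\tth)$ is initial.

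(Since left adjoints preserve initial objects and, as a left exact functor $\tth \to \Space$ sends $\terminal$ to $*$, the representable $\tth(\terminal,{-})$ is the initial $\tth$-theory, this also identifies $\IM(\tth)$ with the syntactic model $\SM(\tth(\terminal,{-}))$.)
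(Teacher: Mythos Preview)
Your argument follows essentially the same outline as the paper: factor $\modelI|_{\tth_{r}}$ through the Yoneda embedding $\yoneda_{\cat}$ to obtain the base functor, build the natural transformation from this factorization for existence, and use the Beck--Chevalley condition on representable arrows $\gamma \to \terminal$ to see that any morphism of models must arise this way for uniqueness. The constructions and the key observations (e.g.\ that $\model(\bas)/G(\gamma) \simeq \modelI(\gamma)$ for $\gamma \in \tth_{r}$, and that $\IM(\tth)$ is the left Kan extension of $\yoneda_{\tth_{r}}$) are the same as in the paper.

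The one substantive difference is how you handle higher coherences. You aim to show directly that the mapping space is contractible, and you correctly identify the ``delicate point'': passing from rigidity on objects and morphisms to contractibility. Your resolution --- asserting that $\Mod(\tth)(\IM(\tth),\modelI)$ is equivalent to the space of factorizations of $\modelI|_{\tth_{r}}$ through $\yoneda_{\cat}$ --- is plausible but not proved; it requires showing that the Beck--Chevalley conditions impose no further constraints and that the constructions in each direction are mutually inverse as maps of spaces, not just on $\pi_{0}$. The paper sidesteps this entirely: since $\Mod(\tth)$ has finite limits, by \parencite[Proposition 2.2.2]{nrs2019adjointfunctors} it suffices to check initiality in the homotopy category, i.e.\ existence and uniqueness up to (unspecified) equivalence. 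With that reduction in hand, the arguments you wrote for existence and for ``determined on objects and morphisms'' already finish the proof. So your gap is not a wrong idea but a missing tool; invoking the finite-limits reduction would close it cleanly.
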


\begin{proof}
  We first note that, since \(\Mod(\tth)\) has finite limits, it
  suffices to show that \(\IM(\tth)\) is an initial object in the
  homotopy category of \(\Mod(\tth)\) \parencite[Proposition
  2.2.2]{nrs2019adjointfunctors}, that is, for any model \(\model\) of
  \(\tth\), there exists a morphism \(\IM(\tth) \to \model\) and any
  two morphisms \(\IM(\tth) \to \model\) are equivalent.

  Let \(\model\) be a model of \(\tth\). Suppose that we have a
  morphism \(\funI : \IM(\tth) \to \model\). It is regarded as a pair
  \((\funI(\bas), \funI)\) consisting of a functor
  \(\funI(\bas) : \tth_{r} \to \model(\bas)\) and a natural
  transformation
  \(\funI : \IM(\tth) \To \funI(\bas)^{*}\model : \tth \to
  \RFib_{\tth_{r}}\).

  The Beck-Chevalley condition for a representable arrow
  \(\arrI : \objI \to \objII\) in \(\tth\) means that, for any object
  \((\arr : \obj \to \objII) \in \tth_{r}/\objII\), the square
  \[
    \begin{tikzcd}
      \model(\bas)/\funI(\bas)(\arr^{*}\objI)
      \arrow[r,"\funI(\arr^{*}\objI)(\arrI^{*}\arr)"]
      \arrow[d,"\arr^{*}\arrI"'] &
      [4ex]
      \model(\objI)
      \arrow[d,"\model(\arrI)"] \\
      \model(\bas)/\funI(\bas)(\obj)
      \arrow[r,"\funI(\bas)(\arr)"'] &
      \model(\objII)
    \end{tikzcd}
  \]
  is a pullback. From the special case when \(\objII\) is the terminal
  object, we see that the canonical map
  \(\funI(\objI)(\id_{\objI}) : \model(\bas)/\funI(\bas)(\objI) \to
  \model(\objI)\) is an equivalence for every object
  \(\objI \in \tth_{r}\). In other words, the diagram
  \[
    \begin{tikzcd}
      \tth_{r}
      \arrow[r,"\funI(\bas)"]
      \arrow[d,hook] &
      \model(\bas)
      \arrow[d,"\yoneda_{\model(\bas)}"] \\
      \tth
      \arrow[r,"\model"'] &
      \RFib_{\model(\bas)}
    \end{tikzcd}
  \]
  commutes up to equivalence, and we have an equivalence
  \[
    \begin{tikzcd}
      \tth_{r}
      \arrow[r,"\funI(\bas)"]
      \arrow[d,hook] &
      \model(\bas)
      \arrow[d,"\yoneda_{\model(\bas)}"] \\
      \tth
      \arrow[r,"\model"]
      \arrow[dr,bend right,"\IM(\tth)"',""{name=a0}] &
      \RFib_{\model(\bas)}
      \arrow[d,"\funI(\bas)^{*}"]
      \arrow[from=a0,Rightarrow,"\funI"] \\
      & \RFib_{\tth_{r}}
    \end{tikzcd}
    \simeq
    \begin{tikzcd}
      \tth_{r}
      \arrow[r,"\funI(\bas)"]
      \arrow[ddr,bend right,"\yoneda_{\tth_{r}}"',""{name=a0}] &
      \model(\bas)
      \arrow[d,"\yoneda_{\model(\bas)}"]
      \arrow[from=a0,Rightarrow,"\funI(\bas)_{1}"] \\
      & \RFib_{\model(\bas)}
      \arrow[d,"\funI(\bas)^{*}"] \\
      & \RFib_{\tth_{r}},
    \end{tikzcd}
  \]
  where \(\funI(\bas)_{1}\) is the natural transformation
  \(\funI(\bas)_{\obj, \objI} : \tth_{r}(\obj, \objI) \to
  \model(\bas)(\funI(\bas)(\obj), \funI(\bas)(\objI))\). Hence,
  \(\funI(\bas) : \tth_{r} \to \model(\bas)\) is uniquely determined,
  and then
  \(\funI : \IM(\tth) \To \funI(\bas)^{*}\model : \tth \to
  \RFib_{\tth_{r}}\) is uniquely determined because
  \(\IM(\tth) : \tth \to \RFib_{\tth_{r}}\) is the left Kan extension
  of the Yoneda embedding \(\tth_{r} : \tth_{r} \to \RFib_{\tth_{r}}\)
  along the inclusion \(\tth_{r} \hookrightarrow \tth\). This shows
  that morphisms \(\IM(\tth) \to \model\) are unique up to
  equivalence.

  It suffices now to construct a morphism
  \(\fun : \IM(\tth) \to \model\) of models of \(\tth\). We first
  construct a functor \(\fun(\bas) : \tth_{r} \to \model(\bas)\). For
  an object \(\obj \in \tth_{r}\), the map
  \(\model(\obj) \to \model(\terminal) \simeq \model(\bas)\) of right
  fibrations over \(\model(\bas)\) is representable. Thus, since
  \(\model(\bas)\) has a terminal object, the right fibration
  \(\model(\obj)\) is representable. Hence, the restriction of
  \(\model : \tth \to \RFib_{\model(\bas)}\) along the inclusion
  \(\tth_{r} \to \tth\) factors as a functor
  \(\fun(\bas) : \tth_{r} \to \model(\bas)\) followed by the Yoneda
  embedding.
  \[
    \begin{tikzcd}
      \tth_{r}
      \arrow[r,dotted,"\fun(\bas)"]
      \arrow[d,hook] &
      \model(\bas)
      \arrow[d,"\yoneda_{\model(\bas)}"] \\
      \tth
      \arrow[r,"\model"'] &
      \RFib_{\model(\bas)}
    \end{tikzcd}
  \]
  We then define a natural transformation
  \(\fun : \IM(\tth) \To \fun(\bas)^{*}\model : \tth \to
  \RFib_{\tth_{r}}\) to be the one whose restriction to \(\tth_{r}\)
  is the natural transformation
  \(\fun(\bas) : \yoneda_{\tth_{r}} \To \fun(\bas)^{*}\model\fun(\bas)
  : \tth_{r} \to \RFib_{\tth_{r}}\).
  \[
    \begin{tikzcd}
      \tth_{r}
      \arrow[r,"\fun(\bas)"]
      \arrow[d,hook] &
      \model(\bas)
      \arrow[d,"\yoneda_{\model(\bas)}"] \\
      \tth
      \arrow[r,"\model"]
      \arrow[dr,bend right,"\IM(\tth)"',""{name=a0}] &
      \RFib_{\model(\bas)}
      \arrow[d,"\fun(\bas)^{*}"]
      \arrow[from=a0,Rightarrow,"\fun"] \\
      & \RFib_{\tth_{r}}
    \end{tikzcd}
    \simeq
    \begin{tikzcd}
      \tth_{r}
      \arrow[r,"\fun(\bas)"]
      \arrow[ddr,bend right,"\yoneda_{\tth_{r}}"',""{name=a0}] &
      \model(\bas)
      \arrow[d,"\yoneda_{\model(\bas)}"]
      \arrow[from=a0,Rightarrow,"\fun(\bas)_{1}"] \\
      & \RFib_{\model(\bas)}
      \arrow[d,"\fun(\bas)^{*}"] \\
      & \RFib_{\tth_{r}}
    \end{tikzcd}
  \]

  In order to show that \(\fun\) is a morphism of models of \(\tth\),
  it remains to prove that \(\fun(\bas) : \tth_{r} \to \model(\bas)\)
  preserves terminal objects and that \(\fun\) satisfies the
  Beck-Chevalley condition for representable arrows. The first claim
  is clear by definition. For the second, we have to show that, for
  any representable arrow \(\arrI : \objI \to \objII\) in \(\tth\),
  the square
  \[
    \begin{tikzcd}
      \tth_{r}/\objI
      \arrow[r,"\fun(\objI)"]
      \arrow[d,"\arrI"'] &
      \model(\objI)
      \arrow[d,"\model(\arrI)"] \\
      \tth_{r}/\objII
      \arrow[r,"\fun(\objII)"'] &
      \model(\objII)
    \end{tikzcd}
  \]
  satisfies the Beck-Chevalley condition. It suffices to show that,
  for any arrow \(\arr : \obj \to \objII\) with \(\obj \in \tth_{r}\),
  the composite of squares
  \begin{equation}
    \label{eq:8}
    \begin{tikzcd}
      \tth_{r}/\arr^{*}\objI
      \arrow[r,"\arrI^{*}\arr"]
      \arrow[d,"\arr^{*}\arrI"'] &
      \tth_{r}/\objI
      \arrow[r,"\fun(\objI)"]
      \arrow[d,"\arrI"] &
      \model(\objI)
      \arrow[d,"\model(\arrI)"] \\
      \tth_{r}/\obj
      \arrow[r,"\arr"'] &
      \tth_{r}/\objII
      \arrow[r,"\fun(\objII)"'] &
      \model(\objII)
    \end{tikzcd}
  \end{equation}
  satisfies the Beck-Chevalley condition. By the definition of
  \(\fun\), \cref{eq:8} is equivalent to
  \begin{equation}
    \label{eq:9}
    \begin{tikzcd}
      \tth_{r}/\arr^{*}\objI
      \arrow[r,"\fun(\bas)"]
      \arrow[d,"\arr^{*}\arrI"'] &
      \model(*)/\fun(\bas)(\arr^{*}\objI)
      \arrow[r,"\simeq"]
      \arrow[d,"\fun(\bas)(\arr^{*}\arrI)"'] &
      \model(\arr^{*}\objI)
      \arrow[r,"\model(\arrI^{*}\arr)"]
      \arrow[d,"\model(\arr^{*}\arrI)"] &
      \model(\objI)
      \arrow[d,"\model(\arrI)"] \\
      \tth_{r}/\obj
      \arrow[r,"\fun(\bas)"'] &
      \model(*)/\fun(\bas)(\obj)
      \arrow[r,"\simeq"'] &
      \model(\obj)
      \arrow[r,"\model(\arr)"'] &
      \model(\objII).
    \end{tikzcd}
  \end{equation}
  The right square of \cref{eq:9} satisfies the Beck-Chevalley
  condition by \cref{rep-pullback}. The middle square satisfies the
  Beck-Chevalley condition as the horizontal maps are
  equivalences. The Beck-Chevalley condition for the left square
  asserts that \(\fun(\bas)\) preserves pullbacks of representable
  arrows in \(\tth_{r}\), which is true by the definition of
  \(\fun(\bas)\).
\end{proof}

\subsection{Syntactic models generated by representable theories}
\label{sec:synt-models-repr}

We describe the model \(\SM(\yoneda(\obj))\) for \(\obj \in \tth\),
where
\(\yoneda : \tth^{\op} \to \Th(\tth) \subset \Fun(\tth,
\Space)\) is the Yoneda embedding.

\begin{proposition}
  \label{models-of-slice}
  For an object \(\obj\) of \(\tth\), we have a pullback
  \[
    \begin{tikzcd}
      \Mod(\tth/\obj)
      \arrow[r]
      \arrow[d] &
      \yoneda(\obj)/\Th(\tth)
      \arrow[d] \\
      \Mod(\tth)
      \arrow[r,"\IL"'] &
      \Th(\tth).
    \end{tikzcd}
  \]
\end{proposition}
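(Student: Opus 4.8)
The plan is to deduce this from the description of \(\Mod(\tth/\obj)\) in \cref{slice-model}, by recognizing both \(\Mod(\tth/\obj) \to \Mod(\tth)\) and the base change of \(\yoneda(\obj)/\Th(\tth) \to \Th(\tth)\) along \(\IL\) as one and the same left fibration over \(\Mod(\tth)\). The square itself is the naturality square of \(\IL : \Mod \To \Th\) for the morphism \(\obj^{*} : \tth \to \tth/\obj\), composed with the equivalence \(\Th(\tth/\obj) \simeq \yoneda(\obj)/\Th(\tth)\) over \(\Th(\tth)\) that \cref{lex-slice} provides (its content, with \(\catI = \Space\), being that the fibre of \(\Th(\obj^{*})\) over a \(\tth\)-theory \(\theory\) is \(\theory(\obj)\)); so it commutes by naturality of \(\IL\).

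First one records two elementary facts. Since \(\yoneda(\obj) = \tth(\obj, {-})\) is left exact and \(\Th(\tth) \hookrightarrow \Fun(\tth, \Space)\) is full, the covariant Yoneda lemma gives \(\Th(\tth)(\yoneda(\obj), \theory) \simeq \theory(\obj)\), so the coslice projection \(\yoneda(\obj)/\Th(\tth) \to \Th(\tth)\) is the left fibration classified by \(\ev_{\obj} : \Th(\tth) \to \Space\). Secondly, the forgetful functor \(\RFib'_{\bullet} \to \RFib'\) is the left fibration classified by the functor \(\RFib' \to \Space\) sending a right fibration \(\proj : \sh \to \cat\), with \(\cat\) pointed by a terminal object \(\terminal\), to its fibre \(\terminal^{*}\sh\). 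Indeed, if \(F : \cat^{\op} \to \Space\) classifies \(\proj\), then the space of sections of \(\proj\) is the space of natural transformations from the terminal presheaf to \(F\), i.e. \(\lim_{\cat^{\op}} F \simeq F(\terminal) = \terminal^{*}\sh\) since \(\terminal\) is initial in \(\cat^{\op}\); and unwinding the description of \(\RFib'_{\bullet}\) coming from \cref{models-ump} applied to \(\free{\widetilde{\Box} : \terminal \to \Box}\), a morphism of \(\RFib'_{\bullet}\) is a square whose base component preserves terminal objects together with a homotopy carrying the section to the section — exactly a morphism of the total space — so this fibrewise equivalence upgrades to an equivalence of left fibrations over \(\RFib'\).

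Now, by \cref{slice-model} the map \(\Mod(\tth/\obj) \to \Mod(\tth)\) is the base change of \(\RFib'_{\bullet} \to \RFib'\) along the functor \(\Mod(\tth) \to \RFib'\) carrying a model \(\model\) to \(\model(\obj) \to \model(\bas)\); postcomposing this with \(\terminal^{*}\) gives \(\model \mapsto \terminal^{*}\model(\obj)\), which is precisely \(\ev_{\obj} \circ \IL\) by the definition of the internal language. Since left fibrations are stable under base change,
\[
  \Mod(\tth/\obj) \;\simeq\; \Mod(\tth) \times_{\RFib'} \RFib'_{\bullet} \;\simeq\; \Mod(\tth) \times_{\Th(\tth)} \bigl(\yoneda(\obj)/\Th(\tth)\bigr),
\]
both sides being the left fibration over \(\Mod(\tth)\) classified by \(\ev_{\obj} \circ \IL\); tracing the projections through these equivalences shows the identification is compatible with the two maps into \(\yoneda(\obj)/\Th(\tth)\), so the square is a pullback.

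The main obstacle will be the second of the two facts above: making the informally described \(\infty\)-category \(\RFib'_{\bullet}\) precise enough to identify it with the asserted left fibration, and in particular checking that ``sections of a right fibration \(\simeq\) fibre over the terminal object'' is natural in the right fibration rather than merely a fibrewise equivalence. Everything else is formal — base change of left fibrations and the Yoneda lemma — granting \cref{slice-model,lex-slice}.
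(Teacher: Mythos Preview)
Your proposal is correct and follows essentially the same route as the paper. The only cosmetic difference is packaging: where you speak of ``the left fibration classified by \(\ev_{\obj} \circ \IL\)'' and ``the left fibration classified by \(\terminal^{*}\)'', the paper instead writes the corresponding pullback squares over the universal left fibration \(\terminal/\Space \to \Space\), then pastes them; your worry about the naturality of ``sections \(\simeq\) fibre over \(\terminal\)'' is exactly the content of the paper's pullback \(\RFib'_{\bullet} \simeq \RFib' \times_{\Space} (\terminal/\Space)\), which it asserts directly.
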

\begin{proof}
  Recall (\cref{slice-model}) that we have a pullback
  \[
    \begin{tikzcd}
      \Mod(\tth/\obj)
      \arrow[r]
      \arrow[d] &
      \RFib'_{\bullet}
      \arrow[d] \\
      \Mod(\tth)
      \arrow[r,"\obj^{*}"'] &
      \RFib'.
    \end{tikzcd}
  \]
  For an object \((\sh \to \cat) \in \RFib'\), the fiber of
  \(\RFib'_{\bullet}\) over \((\sh \to \cat)\) is the space of global
  sections of \(\sh\). Since the base \(\infty\)-category \(\cat\) has
  a terminal object \(\terminal\), that space is equivalent to the
  fiber of \(\sh\) over \(\terminal\). In other words, we have a
  pullback
  \[
    \begin{tikzcd}
      \RFib'_{\bullet}
      \arrow[r]
      \arrow[d] &
      \terminal / \Space
      \arrow[d] \\
      \RFib'
      \arrow[r,"\terminal^{*}"'] &
      \Space.
    \end{tikzcd}
  \]
  By the definition of \(\IL\), the composite \(\terminal^{*} \circ
  \obj^{*}\) is equivalent to the composite
  \[
    \begin{tikzcd}
      \Mod(\tth)
      \arrow[r,"\IL"] &
      \Th(\tth)
      \arrow[r,"\ev_{\obj}"] &
      \Space.
    \end{tikzcd}
  \]
  By Yoneda, we have a pullback
  \[
    \begin{tikzcd}
      \yoneda(\obj)/\Th(\tth)
      \arrow[r]
      \arrow[d] &
      \terminal / \Space
      \arrow[d] \\
      \Th(\tth)
      \arrow[r,"\ev_{\obj}"'] &
      \Space,
    \end{tikzcd}
  \]
  and then we get a pullback as in the statement.
\end{proof}

By \cref{models-of-slice}, we get an equivalence
\[
  \Mod(\tth/\obj) \simeq (\yoneda(\obj) \downarrow \IL).
\]
Since \(\SM(\yoneda(\obj))\) is the initial object of
\((\yoneda(\obj) \downarrow \IL)\), it is obtained from the initial
model \(\IM(\tth/\obj)\) of \(\tth/\obj\) by restricting the morphism
of \(\infty\)-categories with representable maps
\(\IM(\tth/\obj) : \tth/\obj \to \RFib_{\IM(\tth/\obj)(\bas)}\) along
\(\obj^{*} : \tth \to \tth/\obj\). We thus have a concrete description
of \(\SM(\yoneda(\obj))\) as follows:
\begin{itemize}
\item the base \(\infty\)-category \(\SM(\yoneda(\obj))(\bas)\) is the
  full subcategory of \(\tth/\obj\) spanned by the representable
  arrows over \(\obj\);
\item for objects \(\objI \in \tth\) and
  \((\arr : \obj' \to \obj) \in \SM(\yoneda(\obj))(\bas)\), the fiber
  of \(\SM(\yoneda(\obj))(\objI)\) over \(\arr\) is
  \(\tth/\obj(\arr, \obj^{*}\objI) \simeq \tth(\obj', \objI)\).
\end{itemize}

\subsection{The equivalence of theories and democratic models}
\label{sec:equiv-theor-democr}

\begin{proposition}
  \label{unit-invertible}
  The unit of the adjunction
  \(\SM \adj \IL : \Th(\tth) \to \Mod(\tth)\) is
  invertible. Consequently, the left adjoint
  \(\SM : \Th(\tth) \to \Mod(\tth)\) is fully faithful.
\end{proposition}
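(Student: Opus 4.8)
The plan is to check that every component of the unit $\unit : \id_{\Th(\tth)} \To \IL\SM$ is an equivalence. The functor $\SM$ is a left adjoint, hence preserves all colimits, and $\IL$, being a morphism in $\kPrR_{\omega}$, preserves filtered colimits; so both $\id_{\Th(\tth)}$ and $\IL\SM$ preserve filtered colimits, and the full subcategory of $\Th(\tth)$ spanned by those $\theory$ with $\unit_{\theory}$ invertible is closed under filtered colimits. Since $\Th(\tth)$ is the cocompletion of $\tth^{\op}$ under filtered colimits, it therefore suffices to prove that $\unit_{\yoneda(\obj)}$ is an equivalence for every representable $\tth$-theory $\yoneda(\obj) = {\tth}(\obj, {-})$ with $\obj \in \tth$ — as already observed just before the statement.

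Fix $\obj \in \tth$. The strategy for $\yoneda(\obj)$ is first to recognize $\unit_{\yoneda(\obj)}$ as a map built from the initial model of $\tth/\obj$ and then to compute it explicitly. By \cref{models-of-slice} there is an equivalence $\Mod(\tth/\obj) \simeq (\yoneda(\obj) \downarrow \IL)$ over $\Mod(\tth)$. The right-hand side is the comma $\infty$-category of the adjunction $\SM \adj \IL$, so its initial object is $(\SM(\yoneda(\obj)), \unit_{\yoneda(\obj)})$; the left-hand side has initial object $\IM(\tth/\obj)$. Carrying $\IM(\tth/\obj)$ across the equivalence, its underlying model of $\tth$ is the restriction of $\IM(\tth/\obj)$ along $\obj^{*} : \tth \to \tth/\obj$, which is $\SM(\yoneda(\obj))$ as already noted in \cref{sec:synt-models-repr}; and — unwinding the proof of \cref{models-of-slice}, which passes through \cref{slice-model} and the pushout \cref{tth-slice} — its comma-category structure map $\yoneda(\obj) \to \IL\SM(\yoneda(\obj))$ corresponds, under the Yoneda equivalence $\Th(\tth)(\yoneda(\obj), {-}) \simeq \ev_{\obj}$, to the global section of the right fibration $\SM(\yoneda(\obj))(\obj)$ determined by $\IM(\tth/\obj)(\diagonal_{\obj})$, where $\diagonal_{\obj} : \terminal \to \obj^{*}\obj$ is the diagonal arrow in $\tth/\obj$. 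By uniqueness of initial objects this structure map is identified with $\unit_{\yoneda(\obj)}$.

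It then remains to compute, using the concrete description of $\SM(\yoneda(\obj))$ from \cref{sec:synt-models-repr}. Its base $\infty$-category is the full subcategory of $\tth/\obj$ on the representable arrows over $\obj$, with terminal object $\id_{\obj}$, and for $\objI \in \tth$ the fiber of $\SM(\yoneda(\obj))(\objI)$ over an object $\arr : \obj' \to \obj$ is $\tth/\obj(\arr, \obj^{*}\objI) \simeq \tth(\obj', \objI)$. Hence $\IL\SM(\yoneda(\obj))(\objI)$, the fiber over the terminal object $\id_{\obj}$, is $\tth/\obj(\id_{\obj}, \obj^{*}\objI) \simeq \tth(\obj, \objI) = \yoneda(\obj)(\objI)$, naturally in $\objI$; this supplies an equivalence $e : \IL\SM(\yoneda(\obj)) \xrightarrow{\ \sim\ } \yoneda(\obj)$. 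Since $\IM(\tth/\obj)$ is a restriction of the Yoneda embedding of its base, the map $\IM(\tth/\obj)(\diagonal_{\obj})$ is postcomposition with $\diagonal_{\obj}$, so its value at $\id_{\obj}$ is $\diagonal_{\obj}$ itself, which under $\tth/\obj(\id_{\obj}, \obj^{*}\obj) \simeq \tth(\obj, \obj)$ becomes $\id_{\obj}$. Therefore $e \circ \unit_{\yoneda(\obj)}$ corresponds under Yoneda to $\id_{\obj} \in \yoneda(\obj)(\obj)$, i.e.\ equals $\id_{\yoneda(\obj)}$, so $\unit_{\yoneda(\obj)}$ is an equivalence. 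Full faithfulness of $\SM$ is then the standard consequence of the unit being invertible.

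The hard part will be the middle step: determining exactly which natural transformation $\yoneda(\obj) \to \IL\SM(\yoneda(\obj))$ the unit is, by chasing through the chain of equivalences behind \cref{models-of-slice} and the pushout square \cref{tth-slice} that underlies it. Once the unit has been matched with the map induced by $\diagonal_{\obj}$, the remaining identification with $\id_{\yoneda(\obj)}$ is routine Yoneda bookkeeping.
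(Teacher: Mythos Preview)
Your proposal is correct and follows essentially the same route as the paper: reduce to representables via filtered-colimit preservation, then use the description of \(\SM(\yoneda(\obj))\) from \cref{sec:synt-models-repr} to identify \(\IL\SM(\yoneda(\obj))\) with \(\yoneda(\obj)\) and the unit with the identity. The paper's proof is extremely terse---it simply asserts from that description that \(\unit_{\yoneda(\obj)}\) is the identity---whereas you supply the bookkeeping the paper omits, tracking the unit through the equivalence \(\Mod(\tth/\obj) \simeq (\yoneda(\obj) \downarrow \IL)\) and matching it with the global section induced by \(\diagonal_{\obj}\); this extra care is warranted but does not constitute a different argument.
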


\begin{proof}
  Since both functors \(\SM\) and \(\IL\) preserves filtered colimits,
  it suffices to show that the unit
  \(\unit_{\theory} : \theory \to \IL(\SM(\theory))\) is invertible
  for every representable functor \(\theory : \tth \to \Space\). From
  the description of \(\SM(\yoneda(\obj))\) in
  \cref{sec:synt-models-repr}, we have that
  \(\IL(\SM(\yoneda(\obj)))(\objI) \simeq \tth(\obj, \objI) =
  \yoneda(\obj)(\objI)\) and \(\unit_{\yoneda(\obj)}\) is just the
  identity.
\end{proof}

\begin{proposition}
  \label{syntactic-model-democratic}
  The functor \(\SM : \Th(\tth) \to \Mod(\tth)\) factors through
  \(\Mod^{\dem}(\tth) \subset \Mod(\tth)\).
\end{proposition}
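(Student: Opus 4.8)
The plan is to reduce to the representable \(\tth\)-theories and then propagate along filtered colimits. Since \(\SM\) is a left adjoint it preserves colimits, and since \(\Th(\tth)\) is the cocompletion of \(\tth^{\op}\) under filtered colimits, every \(\tth\)-theory \(\theory\) is a filtered colimit of representables \(\yoneda(\obj)\) with \(\obj\in\tth\), so \(\SM(\theory)\) is the corresponding filtered colimit in \(\Mod(\tth)\) of the syntactic models \(\SM(\yoneda(\obj))\). It therefore suffices to prove that each \(\SM(\yoneda(\obj))\) is democratic and that \(\Mod^{\dem}(\tth)\subset\Mod(\tth)\) is closed under filtered colimits.

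For the first point I would use the explicit description of \(\SM(\yoneda(\obj))\) from \cref{sec:synt-models-repr}: it is the initial model \(\IM(\tth/\obj)\) of the slice \(\infty\)-type theory \(\tth/\obj\), restricted along the morphism \(\obj^{*}\colon\tth\to\tth/\obj\). In particular its base is \((\tth/\obj)_{r}\), the full subcategory of \(\tth/\obj\) on the representable arrows \(\obj'\to\obj\) of \(\tth\), with terminal object \(\id_{\obj}\); and since \(\obj^{*}\) is a morphism of \(\infty\)-categories with representable maps (\cref{slice-rep}), context comprehension in \(\SM(\yoneda(\obj))\) along a representable arrow \(\arr\) of \(\tth\) coincides with context comprehension in \(\IM(\tth/\obj)\) along \(\obj^{*}\arr\). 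Now in the tautological model \((\tth/\obj,\yoneda_{\tth/\obj})\), and hence in its heart \(\IM(\tth/\obj)\), context comprehension is computed by pullback: the defining counit square is a pullback square of right fibrations whose corners other than the apex lie in the image of \(\yoneda_{\tth/\obj}\), and \(\yoneda_{\tth/\obj}\) preserves pullbacks. I would then feed in the generic global section \(\diagonal_{\obj}\colon\terminal_{\tth/\obj}\to\obj^{*}\obj\) (the diagonal \(\obj\to\obj\times\obj\)), regarded as a section of \(\SM(\yoneda(\obj))(\obj)=\IM(\tth/\obj)(\obj^{*}\obj)\) over the terminal object: for a representable arrow \(\arr\colon\obj'\to\obj\) of \(\tth\) the arrow \(\obj^{*}\arr\) is representable in \(\tth/\obj\), and
\[
  \{\diagonal_{\obj}\}_{\arr}=\{\diagonal_{\obj}\}_{\obj^{*}\arr}=\terminal_{\tth/\obj}\times_{\obj^{*}\obj}\obj^{*}\obj'.
\]
A direct computation of this pullback — pulling \(\obj^{*}\arr\) back along the diagonal — identifies it with the object \((\arr\colon\obj'\to\obj)\) of \(\tth/\obj\). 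Hence every object of \((\tth/\obj)_{r}\) is obtained from the terminal object \(\id_{\obj}\) by a single context comprehension in \(\SM(\yoneda(\obj))\), so it is contextual, and \(\SM(\yoneda(\obj))\) is democratic.

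For the second point, the filtered colimit \(\SM(\theory)\) is computed on bases in \(\Cat_{\infty}\), because the forgetful functor \(\Mod(\tth)\to\Fun(\tth^{\rcone},\Cat_{\infty})\) preserves filtered colimits and \(\ev_{\bas}\) preserves colimits. Each coprojection \(\SM(\yoneda(\obj_{i}))\to\SM(\theory)\) is a morphism of models, hence preserves contextual objects (as in the proof of \cref{heart-of-model}), so every object in its image is contextual in \(\SM(\theory)\). Since \((-)^{\simeq}\colon\Cat_{\infty}\to\Space\) preserves filtered colimits, every object of \(\SM(\theory)(\bas)\) lies, up to equivalence, in the image of some coprojection, and is therefore contextual. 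Thus \(\SM(\theory)\) is democratic, which is what we want.

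The crux is the first point: correctly identifying context comprehension in \(\IM(\tth/\obj)\) with pullback in \(\tth/\obj\), and then recognizing that pulling the representable arrow \(\obj^{*}\arr\) back along the generic section \(\diagonal_{\obj}\) of the slice \(\infty\)-type theory returns exactly \((\arr\colon\obj'\to\obj)\) — this is precisely where the universal property of the slice (\cref{slice-rep}) does the work. Everything after that is routine filtered-colimit bookkeeping.
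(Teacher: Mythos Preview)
Your proof is correct and follows the same overall strategy as the paper: reduce to the representable theories and then close under colimits. The paper's version is considerably more terse. For the first step, the paper simply points to the description of \(\SM(\yoneda(\obj))\) in \cref{sec:synt-models-repr} and asserts democracy; your explicit computation with the generic section \(\diagonal_{\obj}\) is a correct unpacking of that description. For the second step, the paper does not argue directly with filtered colimits as you do; instead it invokes \cref{heart-of-model} to observe that \(\Mod^{\dem}(\tth)\subset\Mod(\tth)\) is coreflective and hence closed under \emph{all} colimits. Your hands-on argument (coprojections preserve contextual objects, \((-)^{\simeq}\) commutes with filtered colimits) is valid and avoids appealing to the coreflection, at the cost of some extra bookkeeping; the paper's route is shorter but uses more of the ambient machinery.
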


\begin{proof}
  Since \(\Mod^{\dem}(\tth) \subset \Mod(\tth)\) is a coreflective
  subcategory by \cref{heart-of-model}, it is closed under
  colimits. Thus, it suffices to show that \(\SM(\yoneda(\obj))\) is
  democratic for every \(\obj \in \tth\), but this follows from the
  description of \(\SM(\yoneda(\obj))\) in
  \cref{sec:synt-models-repr}.
\end{proof}

\begin{proposition}
  \label{IL-dem-conservative}
  The restriction of \(\IL : \Mod(\tth) \to \Th(\tth)\) to
  \(\Mod^{\dem}(\tth) \subset \Mod(\tth)\) is conservative.
\end{proposition}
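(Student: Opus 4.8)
Throughout write $(-)_{\ctx}$ for the fiber of a right fibration over an object $\ctx$, and recall that a map of right fibrations over a fixed base is an equivalence iff it is a fiberwise equivalence. Since the forgetful functor $\Mod(\tth) \to \Fun(\tth^{\rcone}, \Cat_{\infty})$ is conservative, a morphism $\fun \colon \model \to \modelI$ of models is an equivalence iff $\fun(\bas) \colon \model(\bas) \to \modelI(\bas)$ is an equivalence and, for every $\obj \in \tth$, the induced map $\model(\obj) \to \fun(\bas)^{*}\modelI(\obj)$ is a fiberwise equivalence. So, given $\fun \colon \model \to \modelI$ of democratic models with $\IL(\fun)$ invertible, I would call an object $\ctx \in \model(\bas)$ \emph{good} if $\model(\obj)_{\ctx} \to \modelI(\obj)_{\fun(\bas)\ctx}$ is an equivalence for all $\obj \in \tth$; since $\model(\obj)_{\terminal} = \IL(\model)(\obj)$, the hypothesis says exactly that $\terminal$ is good. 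The plan is: (1) show the class of good objects is closed under context comprehension, so that democracy of $\model$ forces every object of $\model(\bas)$ to be good; this settles the fiberwise statement; and (2) deduce that $\fun(\bas)$ is an equivalence by a pair of inductions on contextual objects, using that $\fun(\bas)$ preserves terminal objects and context comprehension together with the defining pullback square for $\{\elI\}_{\arr}$: fully faithfulness follows because one shows inductively that $\model(\bas)/\ctx \to \fun(\bas)^{*}\bigl(\modelI(\bas)/\fun(\bas)\ctx\bigr)$ is an equivalence for every contextual $\ctx$, and essential surjectivity because every contextual object of $\modelI$ is hit, its generating section being lifted along the (now known) fiberwise equivalences.

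The heart of the argument is the following computation. Fix a representable arrow $\arr \colon \obj \to \objI$ in $\tth$, an object $\objII \in \tth$, and the associated polynomial functor $\poly_{\arr} \colon \tth \to \tth$ with its canonical arrow $\pi \colon \poly_{\arr}\objII \to \objI$. For any model $\model$ and any global section $s \in \model(\objI)_{\terminal} = \IL(\model)(\objI)$, the pullback square defining the context comprehension exhibits $\model(\bas)/\{s\}_{\arr}$ as $s^{*}\model(\obj) = \terminal \times_{\model(\objI)} \model(\obj)$ in $\RFib_{\model(\bas)}$, whence $\model(\objII)_{\{s\}_{\arr}} \simeq \RFib_{\model(\bas)}\bigl(s^{*}\model(\obj), \model(\objII)\bigr)$ by the Yoneda lemma. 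Letting $s$ vary and using the standard description of global sections of a polynomial functor, $\cat(\terminal, \poly_{\arr'}X) \simeq \coprod_{b \in \cat(\terminal, \objI')} \cat(b^{*}\obj', X)$ in any finitely complete $\cat$, we obtain in $\cat = \RFib_{\model(\bas)}$
\[
  \bigl(\poly_{\model(\arr)}\model(\objII)\bigr)_{\terminal} \;\simeq\; \coprod_{s \in \IL(\model)(\objI)} \model(\objII)_{\{s\}_{\arr}},
\]
compatibly with the projection to $\IL(\model)(\objI)$. Since $\model$ preserves pullbacks, pushforwards along representable arrows, and hence polynomials, the left-hand side is $\IL(\model)(\poly_{\arr}\objII)$ and the projection is $\IL(\model)(\pi)$. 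Passing to the fiber over $s$,
\[
  \model(\objII)_{\{s\}_{\arr}} \;\simeq\; \IL(\model)(\poly_{\arr}\objII) \times_{\IL(\model)(\objI)} \{s\},
\]
so $\model(\objII)_{\{s\}_{\arr}}$ is reconstructed from the internal language $\IL(\model)$.

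Now I would carry out step (1). Let $\ctx_{0} \in \model(\bas)$ be good, and let $\model^{\ctx_{0}}$ be the model obtained by base change along $\model(\bas)/\ctx_{0} \to \model(\bas)$, which has $\model^{\ctx_{0}}(\bas) = \model(\bas)/\ctx_{0}$ and $\IL(\model^{\ctx_{0}})(\obj) = \model(\obj)_{\ctx_{0}}$; goodness of $\ctx_{0}$ says precisely that $\IL$ of the induced morphism $\model^{\ctx_{0}} \to \modelI^{\fun(\bas)\ctx_{0}}$ is invertible. A context comprehension $\{\elI\}_{\arr}$ over $\ctx_{0}$ in $\model$ is, on underlying objects, a context comprehension of a section over the terminal object in $\model^{\ctx_{0}}$, so applying the displayed computation to $\model^{\ctx_{0}}$ and to $\modelI^{\fun(\bas)\ctx_{0}}$ and using naturality of $\IL$ (and that $\fun$ preserves context comprehension, so $\fun(\bas)\{\elI\}_{\arr} \simeq \{\fun(\objI)\elI\}_{\arr}$) identifies $\model(\objII)_{\{\elI\}_{\arr}} \to \modelI(\objII)_{\fun(\bas)\{\elI\}_{\arr}}$ with the map on fibers of an equivalence of arrows of spaces. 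Hence $\{\elI\}_{\arr}$ is good for every $\objII$, completing step (1), and with it the proof along the lines sketched above.

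The main obstacle is the computation of the second paragraph — identifying $\model(\objII)_{\{s\}_{\arr}}$ with a fiber of $\IL(\model)(\pi)$ — since this is the only point where the structure of a model (finite‑limit and pushforward preservation, hence preservation of polynomials) is essential, converting an assertion about an arbitrary contextual object into one about the internal language. Once this and the bookkeeping around preservation of context comprehension by morphisms of models are in place, the two inductions on contextual objects needed for step (2) are routine.
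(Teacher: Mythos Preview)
Your proposal is correct and follows essentially the same approach as the paper: both establish the fiberwise-equivalence lemma (your ``good objects'') by induction on contextual objects via the polynomial-functor identity \(\model(\objII)_{\{\elI\}_{\arr}} \simeq \model(\poly_{\arr}\objII)_{\ctx'} \times_{\model(\objI)_{\ctx'}} \{\elI\}\), and then deduce full faithfulness and essential surjectivity of \(\fun(\bas)\) by two further inductions on contextual objects. The only organizational difference is that you reduce the induction step to the terminal case by passing to the sliced model \(\model^{\ctx_{0}}\), whereas the paper writes the polynomial computation directly at an arbitrary contextual object; this detour is harmless but does require checking that slicing yields models and morphisms of models, which the paper's direct induction avoids.
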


We first show the following lemma.

\begin{lemma}
  \label{IL-dem-conservative-1}
  Let \(\fun : \model \to \modelI\) be a morphism of models of
  \(\tth\) such that \(\IL(\fun) : \IL(\model) \to \IL(\modelI)\) is
  an equivalence of \(\tth\)-theories. Then, the map
  \[
    \fun(\obj)_{\ctx} : \model(\obj)_{\ctx} \to
    \modelI(\obj)_{\fun(\bas)(\ctx)}
  \]
  is an equivalence of spaces for any contextual object
  \(\ctx \in \model(\bas)\) and any object \(\obj \in \tth\).
\end{lemma}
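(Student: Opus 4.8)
The plan is to induct on the inductive generation of contextual objects. The base case $\ctx = \terminal$ is the hypothesis itself: $\fun(\bas)$ preserves the terminal object, and $\fun(\obj)_{\terminal}$ is by construction the component $\IL(\fun)(\obj)$ of the internal language map, which is an equivalence. In the inductive step we face $\ctx' = \{\elI\}_{\arr}$ with $\ctx$ contextual, $\arr \colon \objI \to \objII$ a representable arrow of $\tth$ and $\elI$ a section over $\ctx$ landing in $\model(\objII)$, and we may assume $\fun(\obj)_{\ctx}$ is an equivalence for every $\obj$.

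First I would reduce to the case $\ctx = \terminal$ by slicing. For any model $\modelI$ of $\tth$ and any $\ctxI \in \modelI(\bas)$ the pair $(\modelI(\bas)/\ctxI,\ \obj \mapsto \ctxI^{*}\modelI(\obj))$ is again a model $\modelI/\ctxI$ of $\tth$: it is $\modelI$ postcomposed with the morphism of $\infty$-categories with representable maps $(\yoneda\ctxI)^{*} \colon \RFib_{\modelI(\bas)} \to \RFib_{\modelI(\bas)}/\yoneda\ctxI \simeq \RFib_{\modelI(\bas)/\ctxI}$ supplied by \cref{slice-rep} and \cref{rfib-slice}. One has $\IL(\modelI/\ctxI)(\obj) = \modelI(\obj)_{\ctxI}$; a morphism $\fun$ of models restricts to a morphism $\fun/\ctxI$ with $\IL(\fun/\ctxI)(\obj) = \fun(\obj)_{\ctxI}$; and, regarding the comprehension projection $\ctx' \to \ctx$ as an object of $\modelI(\bas)/\ctx$, there is an identification $\modelI/\ctx' \simeq (\modelI/\ctx)/\{\elI\}_{\arr}$ in which $\elI$ is now a \emph{global} section of $\objII$. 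As $\IL(\fun/\ctx)$ is an equivalence by the inductive hypothesis, it then suffices to prove the following: if $\fun \colon \model_{1} \to \model_{2}$ is a morphism of models of $\tth$ with $\IL(\fun)$ an equivalence, $\arr \colon \objI \to \objII$ is representable and $\el \in \model_{1}(\objII)_{\terminal}$ is a global section, then $\fun(\obj)_{\{\el\}_{\arr}}$ is an equivalence for every $\obj \in \tth$.

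For this I would bring in the polynomial functor $\poly_{\arr} \colon \tth \to \tth$. The arrow $\obj \to \terminal$ induces $\poly_{\arr}\obj \to \poly_{\arr}\terminal \simeq \objII$, and applying $\IL(\model_{i})$ gives a square, natural in the model, with horizontal maps $\IL(\fun)$ and vertical maps into $\IL(\model_{i})(\objII)$; since $\IL(\fun)$ is an equivalence, so is the induced map on the fibre over any point. The crux is to identify the fibre over $\el \in \IL(\model_{1})(\objII)$ with $\model_{1}(\obj)_{\{\el\}_{\arr}}$, naturally in the model. Because $\model_{1}$ preserves finite limits and pushforwards along representable arrows, $\model_{1}(\poly_{\arr}\obj)$ is the polynomial functor $\poly_{\model_{1}(\arr)}(\model_{1}\obj)$ formed in $\RFib_{\model_{1}(\bas)}$, with its map to $\model_{1}(\objII)$ being the structure map of $\model_{1}(\arr)_{*}(\model_{1}\objI)^{*}\model_{1}\obj$. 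The fibre-over-the-terminal functor $\terminal^{*} \colon \RFib_{\model_{1}(\bas)} \to \Space$ preserves limits, so I can compute the fibre inside $\terminal^{*}$; pulling back along $\el$, using that $\el^{*}\model_{1}(\objI) \simeq \yoneda(\{\el\}_{\arr})$ is exactly the definition of the context comprehension, and invoking the Beck-Chevalley formula for the pushforward along $\model_{1}(\arr)$ (\cref{rep-right-adjoint}, \cref{rep-pullback}), the fibre becomes $\terminal^{*}$ applied to the restriction of $\model_{1}(\obj)$ along the structure functor $\model_{1}(\bas)/\{\el\}_{\arr} \to \model_{1}(\bas)$. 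Since the right adjoint to that structure functor preserves the terminal object, this is the fibre of $\model_{1}(\obj)$ over $\{\el\}_{\arr}$, that is, $\model_{1}(\obj)_{\{\el\}_{\arr}}$. Because a morphism of models satisfies the Beck-Chevalley condition it preserves context comprehension, so $\fun(\bas)(\{\el\}_{\arr}) \simeq \{\IL(\fun)(\el)\}_{\arr}$ and the fibre map is precisely $\fun(\obj)_{\{\el\}_{\arr}}$, which is therefore an equivalence. Closure of the induction under equivalences of contextual objects is immediate.

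The main obstacle is the bookkeeping of the last paragraph: making the Beck-Chevalley calculation precise and, more delicately, checking that the identification of the fibre with $\model_{1}(\obj)_{\{\el\}_{\arr}}$ is natural in the model. One has to remember that $\model_{1}(\bas)$ is only assumed to have a terminal object, so a representable map of right fibrations over it has a right adjoint merely as a functor and not fibrewise; the computation must therefore be carried out directly in $\RFib_{\model_{1}(\bas)}$ rather than fibre by fibre over $\model_{1}(\bas)$.
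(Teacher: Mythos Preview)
Your proof is correct and shares its essential mechanism with the paper's: induction on contextual objects, with the inductive step handled via the polynomial functor \(\poly_{\arr}\), exploiting that a model commutes with \(\poly_{\arr}\) so that sections over \(\{\el\}_{\arr}\) into \(\model(\obj)\) correspond to sections over the previous context into \(\model(\poly_{\arr}\obj)\) lying over \(\el\).

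The difference is organizational. You insert a slicing step, replacing \(\model\) by the sliced model \(\model/\ctx\) so as to reduce the inductive step to the case where the base context is terminal; only then do you run the polynomial computation. The paper skips this reduction entirely: it simply observes that \(\model(\obj)_{\{\el\}_{\arr}}\) sits in a pullback square
\[
  \begin{tikzcd}
    \model(\obj)_{\{\el\}_{\arr}} \arrow[r] \arrow[d] &
    \model(\poly_{\arr}\obj)_{\ctx'} \arrow[d] \\
    \Delta^{0} \arrow[r,"\el"'] &
    \model(\objII)_{\ctx'}
  \end{tikzcd}
\]
already over the context \(\ctx'\), and applies the inductive hypothesis to \(\fun(\poly_{\arr}\obj)_{\ctx'}\) and \(\fun(\objII)_{\ctx'}\) directly. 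This avoids all the machinery you set up---verifying that slices of models are models, that morphisms restrict to slices, and the Beck--Chevalley bookkeeping you flag as the main obstacle. In effect, the paper absorbs your slicing step into the single line ``the sections \(\model(\bas)/\{\el\}_{\arr} \to \model(\obj)\) correspond to the sections of \(\model(\poly_{\arr}\obj) \to \model(\objII)\) over \(\el\)'', which holds over an arbitrary base context just as well as over the terminal one. Your route is valid but longer; the paper's is the same idea executed more directly.
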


\begin{proof}
  By induction on the contextual object \(\ctx \in \model(\bas)\). When
  \(\ctx = \terminal\), the map \(\fun(\obj)_{\terminal}\) is an
  equivalence by assumption. Suppose that \(\ctx = \{\el\}_{\arr}\)
  for some contextual object \(\ctx' \in \model(\bas)\), representable
  arrow \(\arr : \objI \to \objII\) in \(\tth\) and section
  \(\el : \model(\bas)/\ctx' \to \model(\objII)\). Since
  \(\model : \tth \to \RFib_{\model(\bas)}\) commutes with the polynomial
  functor \(\poly_{\arr}\), the sections
  \(\model(\bas)/\{\el\}_{\arr} \to \model(\obj)\) correspond to the
  sections of \(\model(\poly_{\arr}\obj) \to \model(\objII)\) over
  \(\el : \model(\bas)/\ctx' \to \model(\objII)\). Thus,
  \(\model(\obj)_{\{\el\}_{\arr}}\) is the pullback
  \[
    \begin{tikzcd}
      \model(\obj)_{\{\el\}_{\arr}}
      \arrow[r]
      \arrow[d] &
      \model(\poly_{\arr}\obj)_{\ctx'}
      \arrow[d] \\
      \Delta^{0}
      \arrow[r,"\el"'] &
      \model(\objII)_{\ctx'}.
    \end{tikzcd}
  \]
  By the induction hypothesis, \(\fun(\poly_{\arr}\obj)_{\ctx'}\) and
  \(\fun(\objII)_{\ctx'}\) are equivalences, and thus
  \(\fun(\obj)_{\{\el\}_{\arr}}\) is an equivalence.
\end{proof}

\begin{proof}[Proof of \cref{IL-dem-conservative}]
  Let \(\fun : \model \to \modelI\) be a morphism between democratic
  models of \(\tth\) and suppose that
  \(\IL(\fun) : \IL(\model) \to \IL(\modelI)\) is an equivalence of
  \(\tth\)-theories. We show that \(\fun\) is an equivalence of models
  of \(\tth\). Since the forgetful functor
  \(\Mod(\tth) \to \Fun(\tth^{\rcone}, \Cat_{\infty})\) is
  conservative, it suffices to show that
  \(\fun(\obj) : \model(\obj) \to \modelI(\obj)\) is an equivalence of
  \(\infty\)-categories for every object \(\obj \in
  \tth^{\rcone}\). \Cref{IL-dem-conservative-1} implies that the
  square
  \[
    \begin{tikzcd}
      \model(\obj)
      \arrow[r,"\fun(\obj)"]
      \arrow[d] &
      \modelI(\obj)
      \arrow[d] \\
      \model(\bas)
      \arrow[r,"\fun(\bas)"'] &
      \modelI(*)
    \end{tikzcd}
  \]
  is a pullback for every \(\obj \in \tth\). It remains to show that
  the functor \(\fun(\bas) : \model(\bas) \to \modelI(*)\) is fully
  faithful and essentially surjective.

  We show by induction on \(\ctxI\) that
  \(\fun(\bas) : \model(\bas)(\ctx, \ctxI) \to \modelI(*)(\fun(\bas)(\ctx),
  \fun(\bas)(\ctxI))\) is an equivalence of spaces for any objects
  \(\ctx, \ctxI \in \model(\bas)\). The case when \(\ctxI = \terminal\)
  is trivial. Suppose that \(\ctxI = \{\el\}_{\arr}\) for some object
  \(\ctxI' \in \model(\bas)\), representable arrow
  \(\arr : \obj \to \objI\) in \(\tth\) and section
  \(\el : \model(\bas)/\ctxI' \to \model(\objI)\). By definition, we have
  a pullback
  \[
    \begin{tikzcd}
      \model(\bas)(\ctx, \{\el\}_{\arr})
      \arrow[r]
      \arrow[d] &
      \model(\obj)_{\ctx}
      \arrow[d,"\model(\arr)_{\ctx}"] \\
      \model(\bas)(\ctx, \ctxI')
      \arrow[r,"\ctxmor \mapsto \ctxmor^{*}\el"'] &
      \model(\objI)_{\ctx}.
    \end{tikzcd}
  \]
  Then, by the induction hypothesis and \cref{IL-dem-conservative-1},
  the map
  \(\fun(\bas) : \model(\bas)(\ctx, \{\el\}_{\arr}) \to
  \modelI(*)(\fun(\bas)(\ctx), \fun(\bas)(\{\el\}_{\arr}))\) is an
  equivalence.

  Finally, we show by induction on \(\ctxI\) that, for any object
  \(\ctxI \in \modelI(*)\), there exists an object
  \(\ctx \in \model(\bas)\) such that
  \(\fun(\bas)(\ctx) \simeq \ctxI\). The case when
  \(\ctxI = \terminal\) is trivial. Suppose that
  \(\ctxI = \{\elI\}_{\arr}\) for some object
  \(\ctxI' \in \modelI(*)\), representable arrow
  \(\arr : \obj \to \objI\) in \(\tth\) and section
  \(\elI : \modelI(*)/\ctxI' \to \modelI(\objI)\). By the induction
  hypothesis, we have an object \(\ctx' \in \model(\bas)\) such that
  \(\fun(\bas)(\ctx') \simeq \ctxI'\). By
  \cref{IL-dem-conservative-1}, we have a section
  \(\el : \model(\bas)/\ctx' \to \model(\objI)\) such that
  \(\fun(\objI)_{\ctx'}(\el) \simeq \elI\). Then
  \(\fun(\bas)(\{\el\}_{\arr}) \simeq \{\elI\}_{\arr}\).
\end{proof}

\begin{theorem}
  \label{theory-model-correspondence}
  For an \(\infty\)-type theory, the restriction of
  \(\IL : \Mod(\tth) \to \Th(\tth)\) to
  \(\Mod^{\dem}(\tth) \subset \Mod(\tth)\) is an equivalence
  \[
    \Mod^{\dem}(\tth) \simeq \Th(\tth).
  \]
\end{theorem}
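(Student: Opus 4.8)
The plan is to assemble the equivalence from the propositions already established, using the formal fact that an adjunction whose unit is invertible and whose right adjoint is conservative is an adjoint equivalence.

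First I would record that $\IL$ and $\SM$ restrict to an adjunction between $\Th(\tth)$ and $\Mod^{\dem}(\tth)$. By \cref{syntactic-model-democratic} the functor $\SM$ factors through the full subcategory $i\colon\Mod^{\dem}(\tth)\hookrightarrow\Mod(\tth)$, giving $\SM^{\dem}\colon\Th(\tth)\to\Mod^{\dem}(\tth)$ with $i\SM^{\dem}\simeq\SM$; write $\IL^{\dem}:=\IL\circ i$. Since $i$ is fully faithful, for $\theory\in\Th(\tth)$ and $\model\in\Mod^{\dem}(\tth)$ there are natural equivalences $\Mod^{\dem}(\tth)(\SM^{\dem}\theory,\model)\simeq\Mod(\tth)(\SM\theory,i\model)\simeq\Th(\tth)(\theory,\IL i\model)=\Th(\tth)(\theory,\IL^{\dem}\model)$, so $\SM^{\dem}\adj\IL^{\dem}$, and its unit is literally the unit of $\SM\adj\IL$ (again because $i$ is fully faithful). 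By \cref{unit-invertible} this unit is invertible, hence $\SM^{\dem}$ is fully faithful.

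Next I would invoke conservativity. \Cref{IL-dem-conservative} says $\IL^{\dem}$ is conservative, so I can apply the general principle. Denote by $\unit\colon\id\To\IL^{\dem}\SM^{\dem}$ the invertible unit and $\counit\colon\SM^{\dem}\IL^{\dem}\To\id$ the counit. The triangle identity gives $(\IL^{\dem}\counit)\circ(\unit_{\IL^{\dem}})=\id_{\IL^{\dem}}$, and since $\unit_{\IL^{\dem}}$ is invertible, $\IL^{\dem}\counit$ is invertible; conservativity of $\IL^{\dem}$ then forces $\counit$ itself to be invertible. Thus $\SM^{\dem}\adj\IL^{\dem}$ is an adjoint equivalence, and in particular $\IL^{\dem}=\IL|_{\Mod^{\dem}(\tth)}$ is the asserted equivalence $\Mod^{\dem}(\tth)\simeq\Th(\tth)$.

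There is no substantive obstacle left at this stage: all the real work has been done in the preceding results — the conservativity statement of \cref{IL-dem-conservative} together with \cref{IL-dem-conservative-1}, and the concrete computation of $\SM(\yoneda(\obj))$ in \cref{sec:synt-models-repr} that feeds \cref{unit-invertible,syntactic-model-democratic}. The only point that requires a moment's care is that the adjunction and the invertibility of its unit transport correctly along the fully faithful inclusion $\Mod^{\dem}(\tth)\hookrightarrow\Mod(\tth)$, which is immediate; after that the theorem is a purely formal consequence of ``invertible unit plus conservative right adjoint implies adjoint equivalence''.
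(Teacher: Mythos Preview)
Your proof is correct and follows exactly the paper's approach: restrict the adjunction \(\SM \adj \IL\) to \(\Mod^{\dem}(\tth)\) via \cref{syntactic-model-democratic}, use \cref{unit-invertible} for the invertible unit, and then combine \cref{IL-dem-conservative} with the triangle identity to conclude the counit is invertible. The paper's proof is just a terser version of what you wrote.
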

\begin{proof}
  By \cref{syntactic-model-democratic}, the functor
  \(\IL : \Mod^{\dem}(\tth) \to \Th(\tth)\) has the left adjoint
  \(\SM\). By \cref{unit-invertible}, the unit of this adjunction is
  invertible. By \cref{IL-dem-conservative} and the triangle
  identities, the counit is also invertible.
\end{proof}

\section{Correspondence between type-theoretic structures and categorical structures}
\label{sec:corr-betw-type}

We discuss a correspondence between type-theoretic structures and
categorical structures. Given an \(\infty\)-category \(\cat\) whose
objects are small \(\infty\)-categories equipped with a certain
structure and morphisms are structure-preserving functors, we try to
find an \(\infty\)-type theory \(\tth\) such that
\(\Th(\tth) \simeq \cat\). Such an \(\infty\)-type theory \(\tth\) can
be understood in a couple of ways. Type-theoretically, \(\tth\)
provides \emph{internal languages} for \(\infty\)-categories in
\(\cat\). We will find type-theoretic structures corresponding to
categorical structures like finite limits and
pushforwards. Categorically, \(\tth\) gives a \emph{presentation} of
the \(\infty\)-category \(\cat\) as a localization of a presheaf
\(\infty\)-category. Such a presentation has the advantage that the
\(\infty\)-type theory \(\tth\) often has a simple universal property
from which one can derive a universal property of \(\cat\) (see
\cref{Lex-ump} for example).

The fundamental example of such an \(\infty\)-category \(\cat\) is
\(\cat = \Lex_{\infty}\), the \(\infty\)-category of small left exact
\(\infty\)-categories. In \cref{sec:left-exact-infty}, we introduce an
\(\infty\)-type theory \(\etth_{\infty}\) which is an
\(\infty\)-analogue of Martin-L{\"o}f type theory with extensional identity
types. The main result of this section is to establish an equivalence
\(\Th(\etth_{\infty}) \simeq \Lex_{\infty}\), and this is a higher analogue of the
result of \textcite{clairambault2014biequivalence}. To do this, we
need two preliminaries: one is the \emph{representable map classifier}
of right fibrations over a left exact \(\infty\)-category
(\cref{sec:repr-map-class}) which is used for constructing a
democratic model of \(\etth_{\infty}\) out of a left exact
\(\infty\)-category; the other is the notion of \emph{univalence} in
\(\infty\)-categories with representable maps (\cref{sec:univ-repr-arrows})
which for example makes a type constructor unique up to contractible
choice. We also give two other examples \(\cat = \LCCC_{\infty}\), the
\(\infty\)-category of small locally cartesian closed
\(\infty\)-categories (\cref{sec:locally-cart-clos}), and
\(\cat = \TTh_{\infty}\), the \(\infty\)-category of \(\infty\)-type theories
(\cref{sec:infty-type-theories-1}). The latter example shows that the
notion of \(\infty\)-type theories itself can be written in the
\(\infty\)-type-theoretic language.

\subsection{The representable map classifier}
\label{sec:repr-map-class}

In this preliminary subsection, we review a \emph{representable map
  classifier} over a left exact \(\infty\)-category \(\cat\), that is,
a classifying object for the class of representable maps of right
fibrations over \(\cat\).

\begin{definition}
  Let \(\sect\) denote the category
  \[
    \begin{tikzcd}
      & 0
      \arrow[d] \\
      1
      \arrow[ur]
      \arrow[r,equal] &
      1.
    \end{tikzcd}
  \]
  The inclusion \(\Delta^{1} = \{0 \to 1\} \to \sect\)
  induces a functor
  \(\sect \cotensor \cat \to \Delta^{1} \cotensor \cat = \cat^{\to}\)
  for an \(\infty\)-category \(\cat\). Note that
  \(\sect \cotensor \cat\) is the \(\infty\)-category of sections in
  \(\cat\).
\end{definition}

\begin{definition}
  \label{def-rmcls}
  Let \(\cat\) be a left exact \(\infty\)-category. We define
  \(\rmcls_{\cat}\) to be the largest right fibration over \(\cat\)
  contained in the cartesian fibration \(\cod : \cat^{\to} \to \cat\)
  and \(\genrm_{\cat} : \ptrmcls_{\cat} \to \rmcls_{\cat}\) to be the
  pullback
  \[
    \begin{tikzcd}
      \ptrmcls_{\cat}
      \arrow[r,hook]
      \arrow[d,"\genrm_{\cat}"'] &
      \sect \cotensor \cat
      \arrow[d] \\
      \rmcls_{\cat}
      \arrow[r,hook] &
      \cat^{\to}.
    \end{tikzcd}
  \]
  That is, \(\rmcls_{\cat}\) is the wide subcategory of \(\cat^{\to}\)
  whose morphisms are the pullback squares. We refer to
  \(\rmcls_{\cat}\) as the \emph{representable map classifier} over
  \(\cat\) and \(\genrm_{\cat}\) as the \emph{generic representable
    map} of right fibrations over \(\cat\) because of the following
  proposition.
\end{definition}

\begin{proposition}
  \label{rep-map-classifier}
  Let \(\cat\) be a left exact \(\infty\)-category.
  \begin{enumerate}
  \item \(\genrm_{\cat} : \ptrmcls_{\cat} \to \rmcls_{\cat}\) is a
    representable map of right fibrations over \(\cat\).
  \item For any right fibration \(\sh\) over \(\cat\), the map
    \begin{equation*}
      \RFib_{\cat}(\sh, \rmcls_{\cat}) \to (\RFib_{\cat} / \sh)_{r}
    \end{equation*}
    defined by the pullback of \(\genrm_{\cat}\) is an equivalence,
    where \((\RFib_{\cat} / \sh)_{r}\) denotes the subspace of
    \((\RFib_{\cat} / \sh)^{\simeq}\) spanned by the representable maps
    over \(\sh\).
  \end{enumerate}
\end{proposition}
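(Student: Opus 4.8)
The plan is to prove the two parts in turn, with \cref{representable-map-1} as the main tool.

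For (1), I first check that $\genrm_{\cat}$ is a right fibration, so that by \cref{rfib-slice} applied to the right fibration $\rmcls_{\cat}\to\cat$ (the wide subcategory of $\cat^{\to}$ on the cartesian arrows of $\cod$) it is automatically a fibred functor of right fibrations over $\cat$. Although $\sect\cotensor\cat\to\cat^{\to}$ is not a right fibration, its pullback $\genrm_{\cat}$ along $\rmcls_{\cat}\hookrightarrow\cat^{\to}$ is: over a morphism of $\rmcls_{\cat}$, i.e.\ a pullback square, a section of the source arrow restricting to a prescribed section of the target arrow is uniquely determined, since a map into a pullback $Y'\times_{Y}X$ is a pair of maps and both components are forced by the section and compatibility conditions; and the fibre of $\genrm_{\cat}$ over an arrow $p$ is the space of sections of $p$, so $\genrm_{\cat}$ has groupoidal fibres with all arrows cartesian. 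For representability I invoke \cref{representable-map-1}: a section $\elI : \cat/\objI\to\rmcls_{\cat}$ is classified by an arrow $q : W\to\objI$ (an object of $(\rmcls_{\cat})_{\objI}\simeq(\cat/\objI)^{\simeq}$), and unwinding the pullbacks shows that an object of $\elI^{*}\ptrmcls_{\cat}$ lying over $\obj\in\cat$ is an arrow $f : \obj\to\objI$ together with a section of $f^{*}q$, equivalently a lift of $f$ through $q$, equivalently just an arrow $\obj\to W$; hence $\elI^{*}\ptrmcls_{\cat}\simeq\cat/W$ is a representable right fibration over $\cat$ (and the same computation identifies the projection $\cat/W\to\cat/\objI$ with the functor induced by $q$). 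So $\genrm_{\cat}$ is representable, with right adjoint sending $q : W\to\objI$ to the projection $W\times_{\objI}W\to W$ equipped with the diagonal section.

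For (2), by (1) and \cref{rep-pullback} the pullback of $\genrm_{\cat}$ along $\phi\in\RFib_{\cat}(\sh,\rmcls_{\cat})$ is a representable map over $\sh$, so the comparison map is well defined and natural in $\sh$ (contravariantly, via pullback). I then show that both functors $\sh\mapsto\RFib_{\cat}(\sh,\rmcls_{\cat})$ and $\sh\mapsto(\RFib_{\cat}/\sh)_{r}$ on $\RFib_{\cat}^{\op}$ carry colimits to limits. The first is represented by $\rmcls_{\cat}$. For the second, $(\RFib_{\cat}/\sh)_{r}$ is a pullback-stable (\cref{rep-pullback}) subspace of $(\RFib_{\cat}/\sh)^{\simeq}$, and the latter carries colimits to limits because $\RFib_{\cat}\simeq\Fun(\cat^{\op},\Space)$ satisfies descent; so it remains to check that representability of a map of right fibrations is a local condition, namely that given a colimit cocone $\sh_{i}\to\sh$ in $\RFib_{\cat}$ and $t : \shI\to\sh$ whose pullbacks $t_{i}$ to the $\sh_{i}$ are all representable, $t$ is itself representable. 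For this I use \cref{representable-map-1}: any section $\cat/\obj\to\sh$ factors through some $\sh_{i}$ because $\RFib_{\cat}(\cat/\obj,{-})$ preserves colimits (it is evaluation at $\obj$), and then the pullback of $\shI$ along it agrees with a pullback computed inside $\shI_{i}$, which is representable over $\cat$ by applying \cref{representable-map-1} to $t_{i}$.

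Since every object of $\RFib_{\cat}$ is a colimit of representables, it suffices to prove the comparison is an equivalence for $\sh=\cat/\obj$. There $\RFib_{\cat}(\cat/\obj,\rmcls_{\cat})\simeq(\rmcls_{\cat})_{\obj}\simeq(\cat/\obj)^{\simeq}$ by Yoneda, while $(\RFib_{\cat}/(\cat/\obj))_{r}$ is, via \cref{rfib-slice}, the space of right fibrations over $\cat/\obj$ admitting a right adjoint; since $\cat/\obj$ is finitely complete this is the space of representable right fibrations over $\cat/\obj$, again $(\cat/\obj)^{\simeq}$. Under these identifications the comparison sends $(\objI\to\obj)$ to $\cat/\objI\to\cat/\obj$ --- exactly the computation from (1) --- which is sent back to $(\objI\to\obj)$, so the comparison is the identity, hence an equivalence. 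The step I expect to cost the most care is the locality of representability in (2): combining \cref{representable-map-1} with descent in $\RFib_{\cat}$ and the fact that $\RFib_{\cat}(\cat/\obj,{-})$ preserves colimits, so as to know that $(\RFib_{\cat}/\sh)_{r}$ really turns colimits of right fibrations into limits of spaces; everything else is a short diagram chase or a direct unwinding of pullbacks.
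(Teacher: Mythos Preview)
Your proof is correct and follows essentially the same approach as the paper's: both use \cref{representable-map-1} to establish representability of \(\genrm_{\cat}\), both reduce part (2) to representables by showing that \((\RFib_{\cat}/\sh)_{r}\) takes colimits to limits via descent plus the locality of the condition in \cref{representable-map-1}, and both compute the base case as \((\cat/\obj)^{\simeq}\). Your locality argument (factoring a section \(\cat/\obj \to \colim_i \sh_i\) through some \(\sh_i\) because evaluation preserves colimits) makes explicit what the paper leaves as a one-line appeal to \cref{representable-map-1}, and your base-case identification of \((\RFib_{\cat}/(\cat/\obj))_r\) with representable right fibrations over \(\cat/\obj\) (using that a right fibration over a finitely complete base has a right adjoint iff it has a terminal object) is a mild variation on the paper's direct use of \cref{representable-map-1} there, but these are differences of exposition rather than of strategy.
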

\begin{proof}
  We first observe that
  \(\genrm_{\cat} : \ptrmcls_{\cat} \to \rmcls_{\cat}\) is a right
  fibration, that is, the functor
  \begin{equation*}
    (\ev_{1}, (\genrm_{\cat})_{*}) : \Delta^{1} \cotensor \ptrmcls_{\cat} \to
    \ptrmcls_{\cat} \times_{\rmcls_{\cat}} \Delta^{1} \cotensor \rmcls_{\cat}
  \end{equation*}
  is an equivalence. Since \(\Delta^{1} \cotensor \rmcls_{\cat}\) is a
  subcategory of \((\Delta^{1} \times \Delta^{1}) \cotensor \cat\) whose objects are
  the pullback squares, this follows from the universal property of
  pullbacks.

  For the representability of \(\genrm_{\cat}\), we use
  \cref{representable-map-1}. Let
  \(\kappa_{\objI} : \cat / \obj \to \rmcls_{\cat}\) be a section which
  corresponds to an arrow \(\objI \to \obj\) in \(\cat\). We show that
  \(\kappa_{\objI}^{*} \ptrmcls_{\cat}\) is representable by
  \(\objI\). Since the diagonal map
  \(\objI \to \objI \times_{\obj} \objI\) is a section of the first
  projection, it determines a section
  \(\diagonal : \cat / \objI \to \ptrmcls_{\cat}\) such that the diagram
  \begin{equation*}
    \begin{tikzcd}
      \cat / \objI
      \arrow[r, "\diagonal"]
      \arrow[d] &
      \ptrmcls_{\cat}
      \arrow[d, "\genrm_{\cat}"] \\
      \cat / \obj
      \arrow[r, "\kappa_{\objI}"'] &
      \rmcls_{\cat}
    \end{tikzcd}
  \end{equation*}
  commutes. This square is a pullback. Indeed, for an object
  \((\arr : \objII \to \obj) \in \cat / \obj\), the fiber of
  \(\genrm_{\cat}\) over \(\kappa_{\objI}(\arr)\) is the space of sections
  of \(\objII \times_{\obj} \objI \to \objII\) which is equivalent to the
  space of sections of \(\objI \to \obj\) over \(\arr\).

  For the second claim, observe that
  \((\RFib_{\cat} / \colim_{\idx \in \idxcat}\sh_{\idx})_{r} \simeq
  \lim_{\idx \in \idxcat}(\RFib_{\cat} / \sh_{\idx})_{r}\) for any
  diagram \((\sh_{\idx})_{\idx \in \idxcat}\) in
  \(\RFib_{\cat}\). Indeed, since \(\RFib_{\cat}\) is an
  \(\infty\)-topos, we have
  \((\RFib_{\cat} / \colim_{\idx \in \idxcat} \sh_{\idx})^{\simeq} \simeq
  \lim_{\idx \in \idxcat}(\RFib_{\cat} / \sh_{\idx})^{\simeq}\), and this
  equivalence is restricted to representable maps by
  \cref{representable-map-1}. Then it is enough to show that the map
  in the statement is an equivalence in the case when \(\sh\) is
  representable by some \(\obj \in \cat\). By definition
  \(\RFib_{\cat}(\cat / \obj, \rmcls_{\cat}) \simeq (\cat / \obj)^{\simeq}\). By
  \cref{representable-map-1}, \((\RFib_{\cat} / (\cat / \obj))_{r}\)
  is the space of arrows \(\objI \to \obj\) of which the pullback along
  an arbitrary arrow \(\objII \to \obj\) exists, but since \(\cat\) has
  pullbacks this is \((\cat / \obj)^{\simeq}\).
\end{proof}

\begin{remark}
  The representable map classifier in \(\RFib_{\cat}\) exists even
  when \(\cat\) is not left exact. In the above proof, we have seen
  that
  \((\RFib_{\cat} / \colim_{\idx \in \idxcat} \sh_{\idx})_{r} \simeq
  \lim_{\idx \in \idxcat}(\RFib_{\cat} / \sh_{\idx})_{r}\) and that
  \((\RFib_{\cat} / \sh)_{r}\) is essentially small. Then, by
  \parencite[Proposition 5.5.2.2]{lurie2009higher}, the functor
  \(\RFib_{\cat}^{\op} \ni \sh \mapsto (\RFib_{\cat} / \sh)_{r} \in \Space\) is
  representable, and the representing object is the representable map
  classifier. From the concrete construction given in
  \cref{def-rmcls}, the construction of the representable map
  classifier in the case when \(\cat\) is left exact is moreover
  functorial: any left exact functor \(\fun : \cat \to \catI\) induces a
  map of right fibrations \(\rmcls_{\cat} \to \rmcls_{\catI}\) over
  \(\fun\).
\end{remark}

\subsection{Univalent representable arrows}
\label{sec:univ-repr-arrows}

In this preliminary subsection, we extend the notion of a univalent
map in a (presentable) locally cartesian closed \(\infty\)-category
\parencite{gepner2017univalence,rasekh2018objects,rasekh2021univalence} to a notion of a
univalent representable arrow in an \(\infty\)-categories with
representable maps.

\begin{definition}
  For objects \(\obj\) and \(\objI\) of an \(\infty\)-category
  \(\cat\) with finite products, let \(\Map(\obj, \objI) \to \cat\)
  denote the right fibration whose fiber over \(\objII\) is
  \(\cat/\objII(\obj \times \objII, \objI \times \objII) \simeq
  \cat(\obj \times \objII, \objI)\). It is defined by the pullback
  \[
    \begin{tikzcd}
      \Map(\obj, \objI)
      \arrow[r]
      \arrow[d] &
      \cat/\objI
      \arrow[d] \\
      \cat
      \arrow[r,"({-} \times \obj)"'] &
      \cat.
    \end{tikzcd}
  \]
  If \(\Map(\obj, \objI)\) is representable, we denote by
  \(\intern{\Map}(\obj, \objI)\) the representing object. We define
  \(\Equiv(\obj, \objI)\) to be the subfibration of
  \(\Map(\obj, \objI)\) spanned by the equivalences
  \(\obj \times \objII \simeq \objI \times \objII\). If
  \(\Equiv(\obj, \objI)\) is representable, we denote by
  \(\intern{\Equiv}(\obj, \objI)\) the representing object.
\end{definition}

\begin{definition}
  Let \(\arr : \obj \to \objI\) be an arrow in a left exact
  \(\infty\)-category \(\cat\). We regard
  \(\arr \times \objI : \obj \times \objI \to \objI \times \objI\) and
  \(\objI \times \arr : \objI \times \obj \to \objI \times \objI\) as
  objects of \(\cat/\objI \times \objI\) and denote by
  \(\Equiv(\arr)\) the right fibration
  \(\Equiv(\arr \times \objI, \objI \times \arr) \to \cat/\objI \times
  \objI\). If \(\Equiv(\arr)\) representable, we denote by
  \(\intern{\Equiv}(\arr)\) the representing object.
\end{definition}

By definition, an
  arrow \(\objII \to \intern{\Equiv}(\arr)\) corresponds to a triple
  \((\arrI_{1}, \arrI_{2}, \arrII)\) consisting of arrows
  \(\arrI_{1}, \arrI_{2} : \objII \to \objI\) and an equivalence
  \(\arrII : \arrI_{1}^{*}\obj \simeq \arrI_{2}^{*}\obj\) over
  \(\objII\).

\begin{definition}
  Let \(\arr : \obj \to \objI\) be an arrow in a left exact
  \(\infty\)-category \(\cat\) such that \(\Equiv(\arr)\) is
  representable. We have a section
  \(|\id| : \objI \to \intern{\Equiv}(\arr)\) over the diagonal
  \(\diagonal : \objI \to \objI \times \objI\) corresponding to the
  identity \(\id : \obj \to \obj\). We say \(\arr\) is
  \emph{univalent} if the arrow
  \(|\id| : \objI \to \intern{\Equiv}(\arr)\) is an equivalence.
\end{definition}

\begin{proposition}
  \label{univalent-arrow-1}
  Let \(\arr : \obj \to \objI\) be an arrow in a left exact
  \(\infty\)-category \(\cat\) such that \(\Equiv(\arr)\) is
  representable. Let \(\kappa_{\arr} : \cat/\objI \to \rmcls_{\cat}\)
  be the section corresponding to \(\arr\) by Yoneda. The following
  are equivalent:
  \begin{enumerate}
  \item \(\arr\) is univalent:
  \item the square
    \[
      \begin{tikzcd}
        \cat/\objI
        \arrow[r,"\kappa_{\arr}"]
        \arrow[d,"\diagonal"'] &
        \rmcls_{\cat}
        \arrow[d,"\diagonal"] \\
        \cat/\objI \times \objI
        \arrow[r,"\kappa_{\arr} \times \kappa_{\arr}"'] &
        \rmcls_{\cat} \times_{\cat} \rmcls_{\cat}
      \end{tikzcd}
    \]
    is a pullback;
  \item \label{item:10} \(\kappa_{\arr} : \cat/\objI \to \rmcls_{\cat}\) is a
    \((-1)\)-truncated map of right fibrations over
    \(\cat\). Equivalently, for any object \(\objII \in \cat\), the map
    \(\cat(\objII, \objI) \to (\cat / \objII)^{\simeq}\) defined by the
    pullback of \(\arr\) is \((-1)\)-truncated.
  \end{enumerate}
\end{proposition}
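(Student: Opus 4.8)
The plan is to reduce all three conditions to the single assertion that one comparison map is an equivalence. Write $P$ for the pullback in $\RFib_{\cat}$ of the cospan
\[
  \cat/(\objI \times \objI) \longrightarrow \rmcls_{\cat} \times_{\cat} \rmcls_{\cat} \longleftarrow \rmcls_{\cat},
\]
whose left leg is $\kappa_{\arr} \times \kappa_{\arr}$ and whose right leg is the diagonal of $\rmcls_{\cat}$ over $\cat$. The square in the second condition is a pullback exactly when the canonical map $\cat/\objI \to P$ it induces is an equivalence, so it is enough to show that this assertion is equivalent to each of the first and the third condition.

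First I would identify $P$. The fibre of $\rmcls_{\cat}$ over an object $\objII \in \cat$ is $(\cat/\objII)^{\simeq}$, as extracted in the proof of \cref{rep-map-classifier}, and, since $\kappa_{\arr}$ is the section corresponding to $\arr$, it acts on fibres by pullback of $\arr$. Computing fibres, the fibre of $P$ over $\objII$ is the space of triples $(\arrI_{1}, \arrI_{2}, \arrII)$ with $\arrI_{1}, \arrI_{2} : \objII \to \objI$ and $\arrII : \arrI_{1}^{*}\obj \simeq \arrI_{2}^{*}\obj$ over $\objII$; by the remark following the definition of $\intern{\Equiv}(\arr)$ this is the fibre of $\cat/\intern{\Equiv}(\arr)$ over $\objII$, and these identifications are natural in $\objII$, so $P \simeq \cat/\intern{\Equiv}(\arr)$ in $\RFib_{\cat}$. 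Tracing the section $|\id|$ — which by definition corresponds to the identity $\id : \obj \to \obj$, pulling back along $f : \objII \to \objI$ to $\id : f^{*}\obj \to f^{*}\obj$ — through this identification shows that the comparison map $\cat/\objI \to P$ corresponds under the Yoneda embedding $\yoneda_{\cat} : \cat \to \RFib_{\cat}$ to $|\id| : \objI \to \intern{\Equiv}(\arr)$. As $\yoneda_{\cat}$ is fully faithful, $\cat/\objI \to P$ is an equivalence if and only if $|\id|$ is, which is the first condition.

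For the third condition I would invoke the pasting identity $X \times_{Y} X \simeq (X \times_{\cat} X) \times_{Y \times_{\cat} Y} Y$, valid for any map $\kappa : X \to Y$ of right fibrations over $\cat$, with $Y \to Y \times_{\cat} Y$ the diagonal and $X \times_{\cat} X \to Y \times_{\cat} Y$ the map $\kappa \times \kappa$, and compatible with the two comparison maps out of $X$. Taking $X = \cat/\objI$, $Y = \rmcls_{\cat}$ and $\kappa = \kappa_{\arr}$, and using that $\yoneda_{\cat}$ preserves products so that $\cat/\objI \times_{\cat} \cat/\objI \simeq \cat/(\objI \times \objI)$, the right-hand side is identified with $P$ and the diagonal comparison $\cat/\objI \to (\cat/\objI) \times_{\rmcls_{\cat}} (\cat/\objI)$ with the comparison map of the square. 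A map is $(-1)$-truncated precisely when its diagonal is an equivalence, and for maps of right fibrations over $\cat$ both being $(-1)$-truncated and being an equivalence can be checked on fibres; hence $\kappa_{\arr}$ is $(-1)$-truncated if and only if $\cat/\objI \to P$ is an equivalence. The second formulation in the third condition is then immediate, since the fibre of $\kappa_{\arr}$ over $\objII$ is exactly the map $\cat(\objII, \objI) \to (\cat/\objII)^{\simeq}$ given by pullback of $\arr$.

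The one step that demands care is the first: verifying that the iterated pullback defining $P$ really packages up into the triple $(\arrI_{1}, \arrI_{2}, \arrII)$ and that $|\id|$ is carried to the square's comparison map under $P \simeq \cat/\intern{\Equiv}(\arr)$. Once that identification is pinned down, the equivalences with the first and third conditions are formal manipulations with pullback squares and the Yoneda embedding, so I do not anticipate a genuine obstacle beyond organising the diagram chase.
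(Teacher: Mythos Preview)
Your proposal is correct and essentially spells out the argument the paper defers to: the paper's proof is simply a pointer to \textcite[Proposition 3.8 (1)--(3)]{gepner2017univalence}, and your identification of the pullback $P$ with $\cat/\intern{\Equiv}(\arr)$ together with the pasting identity $X \times_{Y} X \simeq (X \times_{\cat} X) \times_{Y \times_{\cat} Y} Y$ is precisely the content of that reference, adapted to the present setting where only representability of $\Equiv(\arr)$ is assumed.
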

\begin{proof}
  The same proof as \parencite[Proposition 3.8
  (1)--(3)]{gepner2017univalence} works only assuming the
  representability of \(\Equiv(\arr)\).
\end{proof}

\begin{example}
  \label{univalent-gen-rep-map}
  For any left exact \(\infty\)-category \(\cat\), the generic
  representable map
  \(\genrm_{\cat} : \ptrmcls_{\cat} \to \rmcls_{\cat}\) is a univalent
  representable map in \(\RFib_{\cat}\) by \cref{rep-map-classifier}.
\end{example}

\begin{proposition}
  \label{equiv-repr}
  Let \(\obj\) and \(\objI\) be objects in a left exact
  \(\infty\)-category \(\cat\) and suppose that \(\obj \times \objII\)
  and \(\objI \times \objII\) are exponentiable in \(\cat/\objII\) for
  any object \(\objII \in \cat\).
  \begin{enumerate}
  \item The right fibration \(\Equiv(\obj, \objI) \to \cat\) is
    representable.
  \item Let \(\catI\) be a left exact \(\infty\)-category and
    \(\fun : \cat \to \catI\) a left exact functor. If \(\fun\) sends
    \(\obj \times \objII\) and \(\objI \times \objII\) to
    exponentiable objects over \(\fun\objII\) and commutes with
    exponentiation by \(\obj \times \objII\) and
    \(\objI \times \objII\) for any \(\objII \in \cat\), then the
    canonical arrow
    \(\fun(\intern{\Equiv}(\obj, \objI)) \to
    \intern{\Equiv}(\fun(\obj), \fun(\objI))\) is an equivalence.
  \end{enumerate}
\end{proposition}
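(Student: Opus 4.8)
The plan is to reduce both parts to the representability of the mapping right fibrations $\Map({-},{-})$ together with the standard description of a space of equivalences as a space of \emph{bi-invertible} maps, i.e.\ arrows equipped with a left inverse and a right inverse.

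First I would record that $\Map(\obj,\objI)$ is representable: taking $\objII = \terminal$ in the hypothesis shows $\obj$ is exponentiable in $\cat$, so there is an exponential object $\objI^{\obj}$ with $\cat(\objII, \objI^{\obj}) \simeq \cat(\objII \times \obj, \objI) = \Map(\obj,\objI)_{\objII}$ naturally in $\objII$; hence $\Map(\obj,\objI) \simeq \yoneda_{\cat}(\objI^{\obj})$, and likewise $\Map(\objI,\obj) \simeq \yoneda_{\cat}(\obj^{\objI})$, $\Map(\obj,\obj) \simeq \yoneda_{\cat}(\obj^{\obj})$ and $\Map(\objI,\objI) \simeq \yoneda_{\cat}(\objI^{\objI})$. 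Next I would use that, in any $\infty$-category, an arrow $f$ is an equivalence if and only if it admits a left inverse together with a right inverse, and that the forgetful map from the space of such data to the mapping space is $(-1)$-truncated with image exactly the equivalences. Applying this fiberwise in the slices $\cat/\objII$, and using that composition and identities are natural in $\objII$, one obtains an identification of right fibrations over $\cat$
\[
  \Equiv(\obj,\objI) \simeq \bigl(\Map(\objI,\obj) \times_{\cat} \Map(\obj,\objI) \times_{\cat} \Map(\objI,\obj)\bigr) \times_{\Map(\obj,\obj) \times_{\cat} \Map(\objI,\objI)} \cat,
\]
where the left-hand map sends $(g_{L}, f, g_{R})$ to $(g_{L} f, f g_{R})$ and the right-hand map is the pair of identity sections. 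This is a finite limit in $\RFib_{\cat}$ of representable right fibrations, so, since $\cat$ is left exact and $\yoneda_{\cat}$ preserves finite limits, it is the $\yoneda_{\cat}$-image of the corresponding finite limit in $\cat$; thus $\Equiv(\obj,\objI)$ is representable, with
\[
  \intern{\Equiv}(\obj,\objI) \simeq \bigl(\obj^{\objI} \times \objI^{\obj} \times \obj^{\objI}\bigr) \times_{\obj^{\obj} \times \objI^{\objI}} \terminal,
\]
the internal composition maps and internal identities arising from the evaluation counits. This would establish part (1).

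For part (2) I would apply part (1) to $\catI$ and the objects $\fun\obj$, $\fun\objI$ (exponentiable in $\catI$ by the $\objII = \terminal$ case of the hypothesis on $\fun$), obtaining the analogous presentation of $\intern{\Equiv}(\fun\obj, \fun\objI)$. On the other hand $\fun$ is left exact, so it preserves the finite-limit presentation of $\intern{\Equiv}(\obj,\objI)$, and since $\fun$ commutes with exponentiation by $\obj$ and by $\objI$ it carries $\obj^{\objI}, \objI^{\obj}, \obj^{\obj}, \objI^{\objI}$ to $\fun(\obj)^{\fun\objI}, \fun(\objI)^{\fun\obj}, \fun(\obj)^{\fun\obj}, \fun(\objI)^{\fun\objI}$ compatibly with the evaluation counits, hence takes the internal composition maps and internal identities of $\cat$ to those of $\catI$. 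Therefore $\fun$ sends the whole limit diagram defining $\intern{\Equiv}(\obj,\objI)$ to the one defining $\intern{\Equiv}(\fun\obj,\fun\objI)$, giving an equivalence between $\fun(\intern{\Equiv}(\obj,\objI))$ and $\intern{\Equiv}(\fun\obj,\fun\objI)$. It then remains to check that this equivalence is the canonical comparison arrow — equivalently, that it classifies the image under $\fun$ of the universal equivalence living over $\intern{\Equiv}(\obj,\objI)$ — by unwinding the universal properties and using that $\fun$ preserves the monomorphism $\Equiv \hookrightarrow \Map$ of right fibrations together with the universal elements on both sides.

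The routine steps are the representability of the $\Map$'s and the finite-limit manipulations; the point that will need care is this last matching, i.e.\ identifying the equivalence produced from the explicit presentation with the canonically defined arrow. A cleaner route sidestepping the explicit formula would be to exhibit the commuting square relating the comparison arrows for $\Equiv$ and for $\Map$ — whose bottom edge $\fun(\intern{\Map}(\obj,\objI)) \to \intern{\Map}(\fun\obj,\fun\objI)$ is an equivalence by hypothesis and whose vertical edges are the monomorphisms $\intern{\Equiv} \hookrightarrow \intern{\Map}$ — and to observe directly that $\fun$ carries the pullback square cutting out $\intern{\Equiv}(\obj,\objI)$ inside $\intern{\Map}(\obj,\objI)$ to the one cutting out $\intern{\Equiv}(\fun\obj,\fun\objI)$.
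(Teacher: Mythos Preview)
Your proposal is correct and follows essentially the same strategy as the paper: both replace \(\Equiv(\obj,\objI)\) by the right fibration \(\BiInv(\obj,\objI)\) of bi-invertible maps and then use exponentiability to represent the latter. Your treatment is simply more explicit---you write down the finite-limit formula for the representing object and worry in detail about identifying the resulting equivalence with the canonical comparison arrow---whereas the paper records only that \(\BiInv(\obj,\objI)\) is representable ``by the exponentiability of \(\obj \times \objII\) and \(\objI \times \objII\)'' and declares part~(2) ``clear from the construction.''
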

\begin{proof}
  The right fibration \(\Equiv(\obj, \objI)\) is equivalent to the
  right fibration \(\BiInv(\obj, \objI)\) of bi-invertible arrows
  whose fiber over \(\objII \in \cat\) is the space of tuples
  \((\arr, \arrI, \unit, \arrII, \counit)\) consisting of arrows
  \(\arr : \obj \times \objII \to \objI \times \objII\) and
  \(\arrI, \arrII : \objI \times \objII \to \obj \times \objII\) over
  \(\objII\) and homotopies \(\unit : \arrI\arr \simeq \id\) and
  \(\counit : \arr\arrII \simeq \id\) over \(\objII\). The right
  fibration \(\BiInv(\obj, \objI)\) is representable by the
  exponentiability of \(\obj \times \objII\) and
  \(\objI \times \objII\). The second assertion is clear from the
  construction of the representing object for \(\BiInv(\obj, \objI)\).
\end{proof}

\begin{corollary}
  Let \(\arr : \obj \to \objI\) be a representable arrow in an
  \(\infty\)-category with representable maps \(\cat\).
  \begin{enumerate}
  \item The right fibration
    \(\Equiv(\arr) \to \cat/\objI \times \objI\) is representable.
  \item If \(\arr\) is univalent, so is \(\fun\arr\) for any morphism
    of \(\infty\)-categories with representable maps
    \(\fun : \cat \to \catI\).
  \end{enumerate}
\end{corollary}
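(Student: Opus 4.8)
The plan is to deduce both statements from \cref{equiv-repr}, applied not to \(\cat\) but to the left exact \(\infty\)-category \(\cat/\objI \times \objI\) (left exact because \(\cat\) has finite limits) together with the two objects \(\arr \times \objI : \obj \times \objI \to \objI \times \objI\) and \(\objI \times \arr : \objI \times \obj \to \objI \times \objI\). This is the natural setting since, by definition, \(\Equiv(\arr)\) is exactly the right fibration \(\Equiv(\arr \times \objI, \objI \times \arr)\) over \(\cat/\objI \times \objI\). The preliminary computation to record is the following: for an object \(t : \objII \to \objI \times \objI\) of \(\cat/\objI \times \objI\), with components \(t_{1}, t_{2} : \objII \to \objI\), the product of \(t\) with \(\arr \times \objI\) in \(\cat/\objI \times \objI\) is \(t_{1}^{*}\obj \to \objII\) and the product of \(t\) with \(\objI \times \arr\) is \(t_{2}^{*}\obj \to \objII\); both are pullbacks of the representable arrow \(\arr\).

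For the first assertion: since representable arrows are stable under pullbacks, \(t_{1}^{*}\obj \to \objII\) and \(t_{2}^{*}\obj \to \objII\) are representable, hence exponentiable, and therefore determine exponentiable objects of the slice \((\cat/\objI \times \objI)/t \simeq \cat/\objII\). This is precisely the hypothesis of \cref{equiv-repr}(1), which then yields that \(\Equiv(\arr) \to \cat/\objI \times \objI\) is representable. Running the same argument in \(\catI\) shows that \(\Equiv(\fun\arr)\) is representable for any morphism \(\fun : \cat \to \catI\), so that \(\intern{\Equiv}(\fun\arr)\) is defined.

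For the second assertion, let \(\fun : \cat \to \catI\) be a morphism of \(\infty\)-categories with representable maps. It induces a left exact functor \(\fun' : \cat/\objI \times \objI \to \catI/\fun\objI \times \fun\objI\); since a representable arrow in \(\cat/\objI \times \objI\) is a representable arrow in \(\cat\) and pushforward along it is computed in \(\cat\), the functor \(\fun'\) preserves representable arrows and pushforwards along them. In particular \(\fun'\) sends \(t_{1}^{*}\obj\) and \(t_{2}^{*}\obj\) to exponentiable objects and commutes with exponentiation by them (exponentiation by a representable object in a slice being built from the pullback functor and the pushforward along that arrow). This is the hypothesis of \cref{equiv-repr}(2), which gives that the canonical arrow \(\fun(\intern{\Equiv}(\arr)) \to \intern{\Equiv}(\fun\arr)\) is an equivalence. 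Because \(\fun\) preserves finite products, it preserves the diagonal \(\diagonal : \objI \to \objI \times \objI\) and the identity \(\id_{\obj}\), hence under this equivalence it carries the section \(|\id| : \objI \to \intern{\Equiv}(\arr)\) to the section \(|\id| : \fun\objI \to \intern{\Equiv}(\fun\arr)\). If \(\arr\) is univalent the former is an equivalence, and since any functor preserves equivalences the latter is too, i.e.\ \(\fun\arr\) is univalent.

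The only real work here is bookkeeping: identifying the products in \(\cat/\objI \times \objI\) with the pullbacks \(t_{i}^{*}\obj\), checking that exponentiability of a representable arrow in \(\cat\) passes to exponentiability of the corresponding object in \(\cat/\objII\), and recognizing that \(\fun\) preserving pushforwards along representable arrows is exactly the input required to invoke \cref{equiv-repr}(2). None of this is deep, but it must be done with some care, since \(\Equiv(\arr)\) lives over \(\cat/\objI \times \objI\) rather than over \(\cat\) itself.
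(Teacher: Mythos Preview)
Your proposal is correct and is essentially the intended argument: the paper gives no proof at all, stating the result as an immediate corollary of \cref{equiv-repr}, and what you have written is exactly the unpacking of that implication (instantiate \cref{equiv-repr} in \(\cat/\objI \times \objI\), identify the relevant products as pullbacks of \(\arr\), and use that morphisms of \(\infty\)-categories with representable maps preserve the pushforwards needed to invoke part (2)). Your bookkeeping is accurate, including the identification of exponentiation by a representable object in a slice with \(x_{*}x^{*}\) and the observation that \(\fun\) carries the section \(|\id|\) to \(|\id|\).
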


\subsection{Left exact \(\infty\)-categories}
\label{sec:left-exact-infty}

We define an \(\infty\)-type theory \(\etth_{\infty}\) whose theories
are equivalent to small left exact \(\infty\)-categories.

\begin{definition}
  Let \(\cat\) be an \(\infty\)-category with representable maps and
  \(\typeof : \El \to \Ty\) a representable arrow in \(\cat\).
  \begin{itemize}
  \item A \emph{\(\Unit\)-type structure on \(\typeof\)} is a
    pullback square of the form
    \begin{equation}
      \label{eq:1}
      \begin{tikzcd}
        \terminal
        \arrow[r, dotted, "\elUnit"]
        \arrow[d,equal] &
        \El
        \arrow[d,"\typeof"] \\
        \terminal
        \arrow[r, dotted, "\Unit"'] &
        \Ty.
      \end{tikzcd}
    \end{equation}
  \item A \emph{\(\dSum\)-type structure on \(\typeof\)} is a
    pullback square of the form
    \begin{equation}
      \label{eq:2}
      \begin{tikzcd}
        \dom(\typeof \otimes \typeof)
        \arrow[r, dotted, "\pair"]
        \arrow[d,"\typeof \otimes \typeof"'] &
        \El
        \arrow[d,"\typeof"] \\
        \cod(\typeof \otimes \typeof)
        \arrow[r, dotted, "\dSum"'] &
        \Ty
      \end{tikzcd}
    \end{equation}
    where \(\otimes\) is the composition of polynomials
    (\cref{sec:exponentiable-arrows}).
  \item An \emph{\(\Id\)-type structure on \(\typeof\)} is a
    pullback square of the form
    \begin{equation}
      \label{eq:3}
      \begin{tikzcd}
        \El
        \arrow[r, dotted, "\refl"]
        \arrow[d,"\diagonal"'] &
        \El
        \arrow[d,"\typeof"] \\
        \El \times_{\Ty} \El
        \arrow[r, dotted, "\Id"'] &
        \Ty.
      \end{tikzcd}
    \end{equation}
  \end{itemize}
\end{definition}

\begin{proposition}
  \label{univalent-type-constructors}
  Let \(\typeof : \El \to \Ty\) be a univalent representable arrow in
  an \(\infty\)-category with representable maps \(\cat\). Then
  \(\Unit\)-type structures, \(\dSum\)-type structures and
  \(\Id\)-type structures are unique up to contractible
  choice. Moreover, we have the following:
  \begin{enumerate}
  \item \label{item:-3} \(\typeof\) has a \(\Unit\)-type structure if
    and only if all the identity arrows are pullbacks of \(\typeof\);
  \item \label{item:-4} \(\typeof\) has a \(\dSum\)-type structure if
    and only if pullbacks of \(\typeof\) are closed under composition;
  \item \label{item:-5} \(\typeof\) has an \(\Id\)-type structure if
    and only if pullbacks of \(\typeof\) are closed under equalizers:
    if \(\arr : \obj \to \objI\) is a pullback of \(\typeof\) and
    \(\arrI_{1}, \arrI_{2} : \obj' \to \obj\) are arrows such that
    \(\arr\arrI_{1} \simeq \arr\arrI_{2}\), then the equalizer
    \(\obj'' \to \obj'\) of \(\arrI_{1}\) and \(\arrI_{2}\) in
    \(\cat/\objI\) is a pullback of \(\typeof\).
  \end{enumerate}
\end{proposition}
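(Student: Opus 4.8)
The plan is to identify, for each of the three kinds of structure, the space of such structures on $\typeof$ with a fibre of the classifying map furnished by univalence; uniqueness is then immediate, and the existence criterion is translated into the stated closure property by elementary pullback pasting.

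Concretely, I would fix one of the three cases and write $(B,q)$ for the pair $(\terminal,\ \id_{\terminal})$, $(\cod(\typeof\otimes\typeof),\ \typeof\otimes\typeof)$, or $(\El\times_{\Ty}\El,\ \diagonal\colon\El\to\El\times_{\Ty}\El)$, respectively. In each case the structure in question is a pullback square with fixed corners whose left leg is $q$ and whose right leg is $\typeof$, the remaining data being the bottom leg $\alpha\colon B\to\Ty$ (in the role of $\Unit$, $\dSum$ or $\Id$), the top leg, a commuting homotopy, and a witness of the pullback property. For a fixed $\alpha$ this remaining data is exactly a limiting cone over the cospan with legs $\typeof\colon\El\to\Ty$ and $\alpha\colon B\to\Ty$, with apex $\dom q$ and with $B$-leg equal to $q$; such cones form the space of equivalences $q\simeq\alpha^{*}\typeof$ in $\cat/B$. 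Summing over $\alpha$, the total space of structures is the fibre over $q$ of
\[
  \cat(B,\Ty)\longrightarrow(\cat/B)^{\simeq},\qquad \alpha\longmapsto\alpha^{*}\typeof .
\]
Since $\typeof$ is representable, $\Equiv(\typeof)$ is representable (the corollary to \cref{equiv-repr}), so univalence of $\typeof$ is equivalent, via \cref{univalent-arrow-1}\,\labelcref{item:10}, to the displayed map being $(-1)$-truncated for every $B$. Hence the space of structures is $(-1)$-truncated, which is the asserted uniqueness up to contractible choice; and it is inhabited exactly when $q$ is a pullback of $\typeof$.

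It then remains to match the three conditions ``$\id_{\terminal}$ is a pullback of $\typeof$'', ``$\typeof\otimes\typeof$ is a pullback of $\typeof$'' and ``$\diagonal\colon\El\to\El\times_{\Ty}\El$ is a pullback of $\typeof$'' with \labelcref{item:-3}, \labelcref{item:-4}, \labelcref{item:-5}. For \labelcref{item:-3}: if $\id_{\terminal}$ is a pullback of $\typeof$ along some $\obj\to\Ty$, then for every $\objI$ the identity $\id_{\objI}$ — being the pullback of $\id_{\terminal}$ along $\objI\to\terminal$ — is a pullback of $\typeof$; the converse is trivial. For \labelcref{item:-4}: by the concrete description of $\otimes$ in \cref{sec:exponentiable-arrows}, $\typeof\otimes\typeof$ is itself a composite of two pullbacks of $\typeof$, and conversely every composite of two pullbacks of $\typeof$ is a pullback of $\typeof\otimes\typeof$; hence ``$\typeof\otimes\typeof$ is a pullback of $\typeof$'' holds precisely when pullbacks of $\typeof$ are closed under composition. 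For \labelcref{item:-5}: the arrow $\diagonal\colon\El\to\El\times_{\Ty}\El$ is the equalizer in $\cat/\Ty$ of the two projections $\El\times_{\Ty}\El\to\El$ — a parallel pair over $\Ty$, since they agree after $\typeof$ — and $\typeof\colon\El\to\Ty$ is a pullback of $\typeof$, so the closure hypothesis of \labelcref{item:-5} forces $\diagonal$ to be a pullback of $\typeof$; conversely, if $\diagonal$ is a pullback of $\typeof$ and $\arr=\fun^{*}\typeof\colon\obj\to\objI$ is any pullback of $\typeof$, then the relative diagonal $\obj\to\obj\times_{\objI}\obj\simeq\fun^{*}(\El\times_{\Ty}\El)$ is a pullback of $\typeof$, and the equalizer of a parallel pair $\arrI_{1},\arrI_{2}\colon\obj'\to\obj$ over $\objI$, being the pullback of this relative diagonal along $(\arrI_{1},\arrI_{2})$, is again a pullback of $\typeof$.

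The step I expect to require the most care is the first reduction: for a fixed bottom arrow $\alpha$ the fillings of the square form not a point but a torsor under $\mathrm{Aut}_{\cat/B}(q)$, and only after summing over $\alpha$ does this ambiguity get absorbed, so that the total space becomes a fibre of the univalence classifier and hence a proposition. The remaining bookkeeping — the universal property of polynomial composition for the $\dSum$-case, and the description of $\diagonal$ as a relative equalizer for the $\Id$-case — is routine once those statements are isolated.
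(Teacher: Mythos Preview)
Your proof is correct and follows the same approach as the paper: uniqueness via \cref{univalent-arrow-1}\,\labelcref{item:10}, and the three closure conditions by elementary pullback manipulations. The paper's own proof consists of the single sentence ``The uniqueness follows from \cref{item:10} of \cref{univalent-arrow-1}. The rests are straightforward.''; you have unpacked exactly what that sentence means and supplied the details the paper omits.
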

\begin{proof}
  The uniqueness follows from \cref{item:10} of
  \cref{univalent-arrow-1}. The rests are straightforward.
\end{proof}

\begin{definition}
  By a \emph{left exact universe} in an \(\infty\)-category with
  representable maps \(\cat\) we mean a univalent representable arrow
  \(\typeof : \El \to \Ty\) equipped with a \(\Unit\)-type structure,
  a \(\dSum\)-type structure and an \(\Id\)-type structure. We denote
  by \(\etth_{\infty}\) the initial \(\infty\)-type theory containing
  a left exact universe \(\typeof : \El \to \Ty\).
\end{definition}

\begin{theorem}
  \label{etth-dem-lex}
  The functor
  \(\ev_{\bas} : \Mod^{\dem}(\etth_{\infty}) \to \Cat_{\infty}\)
  factors through \(\Lex_{\infty}\) and induces an equivalence
  \[
    \Mod^{\dem}(\etth_{\infty}) \simeq \Lex_{\infty}.
  \]
\end{theorem}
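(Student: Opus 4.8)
First I would build an explicit inverse to $\ev_{\bas}$. For a small left exact $\infty$-category $\cat$, the $\infty$-category $\RFib_{\cat}$ is an $\infty$-category with representable maps, and by \cref{rep-map-classifier} and \cref{univalent-gen-rep-map} the generic representable map $\genrm_{\cat}\colon\ptrmcls_{\cat}\to\rmcls_{\cat}$ is a univalent representable arrow in it. Its pullbacks are precisely the representable maps of right fibrations over $\cat$; these contain all identities, are closed under composition, and — since representable right fibrations over a slice of $\cat$ lie in the image of a finite-limit-preserving Yoneda embedding out of a left exact category, and so are stable under finite limits — are closed under equalizers in the sense of \cref{item:-5} of \cref{univalent-type-constructors}. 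Hence $\genrm_{\cat}$ carries $\Unit$-, $\dSum$- and $\Id$-type structures (\cref{univalent-type-constructors}), which are unique up to contractible choice by univalence, so $\genrm_{\cat}$ is a left exact universe in $\RFib_{\cat}$. Carrying this out uniformly, \cref{rmcat-of-diagrams}, \cref{diagram-rep-map-pullback} and \cref{diagram-rep-map-pushforward} make $\Fun(\Lex_{\infty},\RFib)_{\cat}$ an $\infty$-category with representable maps for $\cat$ the forgetful functor $\Lex_{\infty}\to\Cat_{\infty}$, and the pointwise data above glue — using the uniqueness of the choices — into a left exact universe in it; the universal property of $\etth_{\infty}$ gives a morphism of $\infty$-categories with representable maps $\etth_{\infty}\to\Fun(\Lex_{\infty},\RFib)_{\cat}$, i.e.\ by \cref{models-ump} a functor $\Psi\colon\Lex_{\infty}\to\Mod(\etth_{\infty})$ with $\ev_{\bas}\circ\Psi\simeq\id_{\Lex_{\infty}}$ and $\Psi(\cat)(\typeof)\simeq\genrm_{\cat}$.

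Next I would check that the two functors restrict to the relevant subcategories. For $\obj\in\cat$ the right fibration $\cat/\obj\to\cat$ is representable over $\cat$, and the section of $\rmcls_{\cat}$ classifying it has context comprehension $\obj$; thus every object of $\cat$ is contextual and $\Psi$ lands in $\Mod^{\dem}(\etth_{\infty})$. Conversely, in any democratic model $\model$ I claim every arrow of $\model(\bas)$ is a pullback of $\model(\typeof)$: democracy together with the $\Unit$- and $\dSum$-structures makes the class of pullbacks of $\model(\typeof)$ contain all identities, be closed under composition, and contain $\obj\to\terminal$ for every $\obj$; the $\Id$-structure, read off \cref{eq:3}, places every diagonal $\obj\to\obj\times\obj$ in this class (as each object is a pullback of $\model(\typeof)$ over $\terminal$, its diagonal is a fibrewise diagonal of such a pullback); and factoring an arbitrary $f\colon\obj\to\objI$ as the projection $\objI\times\obj\to\objI$ after the graph $\obj\to\objI\times\obj$ — the graph being a pullback of the diagonal of $\objI$ — exhibits $f$ as a composite of maps in the class. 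Since context comprehension along $\model(\typeof)$ always exists, $\model(\bas)$ then has all pullbacks and, having a terminal object, all finite limits; and a morphism of models, satisfying the Beck-Chevalley condition for $\model(\typeof)$, preserves these pullbacks. So $\ev_{\bas}$ restricts to a functor $\Mod^{\dem}(\etth_{\infty})\to\Lex_{\infty}$.

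Finally I would prove $\Psi\circ\ev_{\bas}\simeq\id$. For a democratic model $\model$ with $\cat=\model(\bas)$, the representable map $\model(\typeof)$ is classified by a map $c\colon\model(\Ty)\to\rmcls_{\cat}$ of right fibrations over $\cat$ (\cref{rep-map-classifier}); univalence of $\model(\typeof)$ makes $c$ a $(-1)$-truncated map of right fibrations (\cref{item:10} of \cref{univalent-arrow-1}), and the claim of the previous paragraph shows each component $c_{\objI}\colon\model(\Ty)(\objI)\to(\cat/\objI)^{\simeq}$ is surjective on connected components, so $c$ is an equivalence and $\model(\typeof)\simeq\genrm_{\cat}$. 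Transporting the type structures and using their uniqueness up to contractible choice identifies $\model$ with $\Psi(\cat)$, naturally in $\model$, so $\Psi$ and $\ev_{\bas}$ are mutually inverse and $\ev_{\bas}$ is an equivalence. (By \cref{theory-model-correspondence} this also yields $\Th(\etth_{\infty})\simeq\Lex_{\infty}$.) I expect this last round-trip to be the hard part: one must extract from democracy alone that $\model(\typeof)$ classifies every arrow of the base — the graph argument, where extensionality of $\Id$ is genuinely used — and then upgrade the levelwise identifications $\model\simeq\Psi(\model(\bas))$ to a coherent equivalence, for which one relies on everything in sight being pinned down by universal properties and contractible choices, together with the adjunction and $\infty$-cosmos formalism of \cref{sec:theory-model-corr}.
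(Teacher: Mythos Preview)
Your argument is correct and matches the paper's closely: build the inverse from the representable-map classifier, verify it lands in democratic models, show every arrow of a democratic model's base is a pullback of \(\model(\typeof)\), and conclude by showing the classifying map \(\model(\Ty)\to\rmcls_{\model(\bas)}\) is an equivalence (\((-1)\)-truncated by univalence, essentially surjective by the previous step). The only substantive variation is in the ``every arrow is a display map'' step: the paper deduces it from a right-cancellation lemma (\cref{disp-map-id-cancel}) for pullback-stable classes closed under equalizers, observed via a category-of-fibrant-objects structure on the slice, whereas you give the explicit graph factorization of \(f\) through \(\objI\times\obj\) with the graph a pullback of a diagonal --- these are the same idea unwound. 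One point you leave implicit: asserting that democracy forces each \(\obj\to\terminal\) to be a pullback of \(\model(\typeof)\) uses that \emph{every} representable arrow in \(\etth_{\infty}\), not just \(\typeof\), is already a pullback of \(\typeof\); otherwise context comprehension along some other representable arrow need not yield such a map. The paper isolates this as \cref{etth-representable-arrows}, proved from the initiality of \(\etth_{\infty}\) together with the \(\Unit\)- and \(\dSum\)-structures, and you should cite or reproduce it.
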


\begin{lemma}
  \label{etth-representable-arrows}
  An arrow in \(\etth_{\infty}\) is representable if and only if it is
  a pullback of \(\typeof : \El \to \Ty\).
\end{lemma}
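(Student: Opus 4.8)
The ``if'' direction is immediate: $\typeof$ is representable by construction, and representable arrows in an $\infty$-category with representable maps are stable under pullback, so every pullback of $\typeof$ is representable. For the converse I plan to exploit the initiality of $\etth_{\infty}$. Write $\reps$ for the class of representable arrows of $\etth_{\infty}$ and let $\reps' \subseteq \reps$ be the class of arrows that are pullbacks of $\typeof$; the goal is $\reps \subseteq \reps'$.

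First I would check that $(\etth_{\infty}, \reps')$ is again an $\infty$-type theory. By univalence of $\typeof$, being a pullback of $\typeof$ is a property: by \cref{item:10} of \cref{univalent-arrow-1} the classifying map $\kappa_{\typeof}$ is $(-1)$-truncated, so the arrows that are pullbacks of $\typeof$ form a subspace $\reps'$ of the arrow space $k(\Delta^{1}, \etth_{\infty})$. That identities lie in $\reps'$ and that $\reps'$ is closed under composition are exactly the translations, via \cref{item:-3,item:-4} of \cref{univalent-type-constructors}, of the fact that $\typeof$ carries a $\Unit$-type structure and a $\dSum$-type structure. Stability of $\reps'$ under pullback is automatic (a pullback of a pullback square is a pullback square), and $\reps'$-arrows are exponentiable because $\reps' \subseteq \reps$ and $\reps$-arrows are exponentiable. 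Next, $\typeof$ is still a left exact universe in $(\etth_{\infty}, \reps')$: the pushforward $\typeof_{*}$, hence the polynomial composite $\typeof \otimes \typeof$, and the representing object $\intern{\Equiv}(\typeof)$ occurring in the definition of univalence, are all determined by the underlying $\infty$-category of $\etth_{\infty}$ and the arrow $\typeof$ alone --- not by the ambient class of representable arrows --- so $\typeof$ is $\reps'$-representable and univalent, and the pullback squares \cref{eq:1,eq:2,eq:3} exhibiting the type structures are literally the same squares. Finally, the identity functor $j \colon (\etth_{\infty}, \reps') \to (\etth_{\infty}, \reps)$ is a morphism of $\infty$-type theories, since $\reps' \subseteq \reps$, finite limits are preserved on the nose, and pushforwards along representable arrows agree.

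Now I would invoke the universal property of $\etth_{\infty}$ as the initial $\infty$-type theory containing a left exact universe. It provides a morphism $\iota \colon \etth_{\infty} \to (\etth_{\infty}, \reps')$ of $\infty$-type theories sending the generic left exact universe to $\typeof$. The composite $j \circ \iota \colon \etth_{\infty} \to \etth_{\infty}$ then classifies $\typeof$, with its canonical type structures, as a left exact universe in $\etth_{\infty}$, so by the same universal property $j \circ \iota \simeq \id_{\etth_{\infty}}$. Since $j$ is the identity on underlying $\infty$-categories, the underlying functor of $\iota$ is equivalent to the identity; but $\iota$, being a morphism of $\infty$-categories with representable maps, carries $\reps$ into $\reps'$. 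Hence every representable arrow of $\etth_{\infty}$ is equivalent to a pullback of $\typeof$, and since $\reps'$ is a subspace we conclude $\reps = \reps'$.

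The main obstacle I anticipate is the bookkeeping in the second step: verifying carefully that ``being a pullback of $\typeof$'' is a subspace, and that the data $\typeof \otimes \typeof$, $\intern{\Equiv}(\typeof)$ and the type-structure squares do not change when the class of representable arrows is shrunk from $\reps$ to $\reps'$. Each such point reduces to the observation that the relevant right adjoints and representing objects are pinned down by universal properties referring only to the underlying $\infty$-category; the care needed is to make sure none of these universal properties secretly refers to the class of representable arrows.
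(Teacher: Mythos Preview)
Your proposal is correct and follows essentially the same approach as the paper: define the same underlying \(\infty\)-category with the smaller class \(\reps'\) of pullbacks of \(\typeof\), use the \(\Unit\)- and \(\dSum\)-type structures (via \cref{univalent-type-constructors}) to verify this is again an \(\infty\)-type theory, and then apply initiality to produce a section of the inclusion, forcing \(\reps \subseteq \reps'\). The paper's argument is terser---it does not spell out the subspace condition or the independence of the left-exact-universe data from the ambient class of representable maps---but these are exactly the bookkeeping points you flagged, and your treatment of them is sound.
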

\begin{proof}
  Let \(\etth_{\infty}'\) denote the \(\infty\)-category with
  representable maps whose underlying \(\infty\)-category is the same
  as \(\etth_{\infty}\) and representable arrows are the pullbacks of
  \(\typeof\). As \(\typeof\) is equipped with a \(\Unit\)-type
  structure and a \(\dSum\)-type structure, the representable arrows in
  \(\etth_{\infty}'\) include all the identities and are closed under
  composition by \cref{univalent-type-constructors}, so \(\etth_{\infty}'\) is
  indeed an \(\infty\)-category with representable maps. By the
  initiality of \(\etth_{\infty}\), the inclusion
  \(\etth_{\infty}' \to \etth_{\infty}\) has a section, and thus
  \(\etth_{\infty}' \simeq \etth_{\infty}\).
\end{proof}

\begin{definition}
  Let \(\model\) be a model of an \(\infty\)-type theory \(\tth\). By
  a \emph{display map} we mean an arrow \(\ctxmor : \ctxI \to \ctx\)
  in \(\model(\bas)\) that is equivalent over \(\ctx\) to
  \(\ctxproj_{\arr} : \{\elI\}_{\arr} \to \ctx\) for some
  representable arrow \(\arr : \obj \to \objI\) in \(\tth\) and
  section \(\elI : \model(\bas)/\ctx \to \model(\objI)\). By
  definition, display maps are stable under pullbacks.
\end{definition}

\begin{lemma}
  \label{etth-model-display-maps-1}
  Let \(\model\) be a model of \(\etth_{\infty}\). An arrow \(\ctxmor
  : \ctx_{1} \to \ctx_{2}\) in \(\model(\bas)\) is a display map if and
  only if there exists a pullback of the form
  \[
    \begin{tikzcd}
      \model(\bas)/\ctx_{1}
      \arrow[r,dotted]
      \arrow[d,"\ctxmor"'] &
      \model(\El)
      \arrow[d,"\model(\typeof)"] \\
      \model(\bas)/\ctx_{2}
      \arrow[r,dotted] &
      \model(\Ty).
    \end{tikzcd}
  \]
\end{lemma}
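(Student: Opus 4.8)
The plan is to obtain both implications from a single pasting of pullback squares, combining the square that exhibits a representable arrow of \(\etth_{\infty}\) as a pullback of \(\typeof\) (available by \cref{etth-representable-arrows}) with the counit square that defines context comprehension. Throughout I use that the model \(\model \colon \etth_{\infty} \to \RFib_{\model(\bas)}\), being a morphism of \(\infty\)-categories with representable maps, preserves finite limits and sends representable arrows to representable maps of right fibrations.

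\emph{The ``only if'' direction.} Suppose \(\ctxmor \colon \ctx_{1} \to \ctx_{2}\) is a display map, witnessed by a representable arrow \(\arr \colon \obj \to \objI\) of \(\etth_{\infty}\) and a section \(\elI \colon \model(\bas)/\ctx_{2} \to \model(\objI)\) with \(\ctxmor\) equivalent over \(\ctx_{2}\) to \(\ctxproj_{\arr}(\elI)\). By \cref{etth-representable-arrows} the arrow \(\arr\) sits in a pullback square in \(\etth_{\infty}\) whose right-hand edge is \(\typeof\); applying \(\model\) produces a pullback square in \(\RFib_{\model(\bas)}\) with right-hand edge \(\model(\typeof)\) and left-hand edge \(\model(\arr)\). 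I would paste this onto the counit square of \(\model(\arr) \adj \rarep_{\arr}\) at \(\elI\):
\[
  \begin{tikzcd}
    \model(\bas)/\{\elI\}_{\arr}
    \arrow[r] \arrow[d,"\ctxproj_{\arr}(\elI)"'] &
    \model(\obj) \arrow[r] \arrow[d,"\model(\arr)"] &
    \model(\El) \arrow[d,"\model(\typeof)"] \\
    \model(\bas)/\ctx_{2} \arrow[r,"\elI"'] &
    \model(\objI) \arrow[r] &
    \model(\Ty).
  \end{tikzcd}
\]
By the pasting law for pullbacks the outer rectangle is a pullback, and transporting it along the equivalence \(\model(\bas)/\ctx_{1} \simeq \model(\bas)/\{\elI\}_{\arr}\) over \(\model(\bas)/\ctx_{2}\) (induced by \(\ctxmor \simeq \ctxproj_{\arr}(\elI)\) over \(\ctx_{2}\)) gives a pullback square of exactly the shape in the statement.

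\emph{The ``if'' direction.} Conversely, given a pullback square as in the statement with bottom edge \(\sh \colon \model(\bas)/\ctx_{2} \to \model(\Ty)\), I would check that \(\ctxmor\) is a display map by instantiating the definition with \(\typeof \colon \El \to \Ty\) in the role of the representable arrow and \(\sh\) in the role of the section. Both the counit square of \(\model(\typeof) \adj \rarep_{\typeof}\) at \(\sh\) and the given square are pullbacks of the same cospan \(\model(\El) \to \model(\Ty) \leftarrow \model(\bas)/\ctx_{2}\) (with legs \(\model(\typeof)\) and \(\sh\)), so they are canonically equivalent; in particular there is an equivalence \(\model(\bas)/\ctx_{1} \simeq \model(\bas)/\{\sh\}_{\typeof}\) over \(\model(\bas)/\ctx_{2}\) identifying the leftmost vertical maps.

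The routine but slightly delicate point, which I would single out as the one to get right, appears in both directions: passing between an equivalence of representable right fibrations over \(\model(\bas)/\ctx_{2}\) and an equivalence of the corresponding arrows into \(\ctx_{2}\) in \(\model(\bas)\). This uses the identification \(\RFib_{\model(\bas)}/(\model(\bas)/\ctx_{2}) \simeq \RFib_{\model(\bas)/\ctx_{2}}\) of \cref{rfib-slice} together with the full faithfulness of the Yoneda embedding of \(\model(\bas)/\ctx_{2}\), which sends an object \((\ctx' \to \ctx_{2})\) to the right fibration \(\model(\bas)/\ctx' \to \model(\bas)/\ctx_{2}\). Granting it, the ``if'' direction concludes that \(\ctxmor\) is equivalent over \(\ctx_{2}\) to \(\ctxproj_{\typeof}(\sh)\) and is therefore a display map.
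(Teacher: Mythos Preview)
Your proof is correct and follows exactly the approach the paper intends: the paper's proof is the single line ``By \cref{etth-representable-arrows}'', and you have spelled out precisely the argument that this points to, namely pasting the \(\model\)-image of the pullback square exhibiting a representable arrow as a pullback of \(\typeof\) onto the context-comprehension counit square, together with the uniqueness of pullbacks for the converse. The only additional care you take, the passage between equivalences of representable right fibrations over \(\model(\bas)/\ctx_{2}\) and equivalences of arrows in \(\model(\bas)/\ctx_{2}\) via Yoneda and \cref{rfib-slice}, is implicit in the paper's conventions and correctly handled.
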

\begin{proof}
  By \cref{etth-representable-arrows}.
\end{proof}

\begin{lemma}
  \label{disp-map-id-cancel}
  Let \(\disp\) be a pullback-stable class of arrows in an
  \(\infty\)-category \(\cat\). Suppose that \(\disp\) contains all
  the identity and is closed under composition and equalizers. Then,
  for arrows \(\arr : \obj \to \objI\) and
  \(\arrI : \objI \to \objII\), if \(\arrI\) and \(\arrI\arr\) are in
  \(\disp\), so is \(\arr\).
\end{lemma}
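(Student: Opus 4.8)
The plan is to factor $\arr$ through its graph relative to $\objII$ and to recognise the two factors as pullbacks of morphisms already known to lie in $\disp$, using the equalizer hypothesis to handle a relative diagonal. Form the pullback $\obj \times_{\objII} \objI$ of $\arrI \colon \objI \to \objII$ along $\arrI\arr \colon \obj \to \objII$, and the pullback $\objI \times_{\objII} \objI$ of $\arrI$ along itself (both exist since $\cat$ has finite limits); write $\pi \colon \obj \times_{\objII} \objI \to \objI$ for the projection to the $\objI$-factor and $p_{1}, p_{2} \colon \objI \times_{\objII} \objI \to \objI$ for the two projections. The pair $(\id_{\obj}, \arr)$ determines, by the universal property of $\obj \times_{\objII} \objI$, a morphism $\gamma \colon \obj \to \obj \times_{\objII} \objI$ (the graph of $\arr$ over $\objII$) with $\pi \gamma \simeq \arr$. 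It is cleanest to view all of this inside the slice $\cat / \objII$, in which $\obj \times_{\objII} \objI$ is the product $\obj \times \objI$, $\pi$ is a projection, and $\gamma$ is the graph of the morphism $\arr$.

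The first factor is immediate: $\pi$ is the base change of $\arrI\arr \in \disp$ along $\arrI$, hence $\pi \in \disp$ by pullback-stability. For the second I would invoke the standard fact — valid in any $\infty$-category with the relevant limits — that the graph of a morphism is a pullback of the diagonal of its codomain along ${\arr} \times \id$; read in $\cat / \objII$, this exhibits $\gamma$ as a pullback of the relative diagonal $\diagonal_{\objI / \objII} \colon \objI \to \objI \times_{\objII} \objI$. So, once $\diagonal_{\objI / \objII} \in \disp$ is known, pullback-stability gives $\gamma \in \disp$, and then $\arr \simeq \pi \circ \gamma \in \disp$ by closure under composition.

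To obtain $\diagonal_{\objI / \objII} \in \disp$ I would appeal to the closure of $\disp$ under equalizers in the precise form of \cref{item:-5} of \cref{univalent-type-constructors}. The projections $p_{1}, p_{2} \colon \objI \times_{\objII} \objI \to \objI$ are each a pullback of $\arrI \in \disp$, and $\arrI p_{1} \simeq \arrI p_{2}$ because $\objI \times_{\objII} \objI$ is a fibre product over $\objII$; moreover the equalizer of $p_{1}$ and $p_{2}$ \emph{computed in $\cat / \objII$} is precisely $\diagonal_{\objI / \objII}$, since in that slice $\objI \times_{\objII} \objI$ is a binary product and the equalizer of the two projections of a binary product is its diagonal. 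Applying the equalizer-closure hypothesis with display morphism $\arrI$ and parallel pair $(p_{1}, p_{2})$ then yields $\diagonal_{\objI / \objII} \in \disp$.

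The delicate point — and the one place where the argument differs from its $1$-categorical counterpart — is that every construction above must be carried out relative to $\objII$, i.e.\ inside $\cat / \objII$. In an $\infty$-category a section need not be its own equalizer, and the equalizer in $\cat$ of the two projections of the fibre product $\objI \times_{\objII} \objI$ is not $\objI$ but acquires free-loop contributions from $\objII$; it collapses to the relative diagonal only after passing to the slice. This is exactly why \cref{item:-5} of \cref{univalent-type-constructors} is phrased with the equalizer taken in a slice. With that kept straight, the remaining checks — that $\pi$ and the $p_{i}$ are the claimed pullbacks, and that the graph square is an honest pullback in $\cat / \objII$ — are routine fibrewise verifications, and I expect those to be the only part requiring genuine diagram-chasing.
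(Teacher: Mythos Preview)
Your argument is correct and is the explicit unpacking of the paper's proof. The paper observes that the full subcategory of $\cat/\objII$ on arrows in $\disp$ is an $\infty$-category of fibrant objects (weak equivalences the equivalences, fibrations the arrows in $\disp$) and invokes Brown's factorization lemma; with the trivial path object $\objI \xrightarrow{\id} \objI \xrightarrow{\Delta} \objI \times_{\objII} \objI$ --- available precisely because the relative diagonal lies in $\disp$ via the equalizer hypothesis, which you verify directly --- that factorization is exactly your graph decomposition $\arr \simeq \pi \circ \gamma$.
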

\begin{proof}
  The assumption implies that the full subcategory of \(\cat/\objII\)
  spanned by the arrows \(\obj \to \objII\) in \(\disp\) is an
  \(\infty\)-category of fibrant objects in which the weak
  equivalences are the equivalences and the fibrations are the arrows
  in \(\disp\). Therefore, any arrow between fibrations is equivalent
  to a fibration.
\end{proof}

\begin{lemma}
  \label{etth-dem-model-disp}
  Let \(\model\) be a democratic model of \(\etth_{\infty}\). Then
  every arrow in \(\model(\bas)\) is a display map.
\end{lemma}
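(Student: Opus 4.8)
The plan is to apply \cref{disp-map-id-cancel} to the class $\disp$ of display maps in $\model(\bas)$. Granting for the moment that $\disp$ satisfies the hypotheses of that lemma and that every object $\ctx$ of $\model(\bas)$ has the property that its unique arrow $\ctx \to \terminal$ lies in $\disp$, the lemma follows: for an arbitrary arrow $\ctxmor \colon \ctx_{1} \to \ctx_{2}$, the composite of $\ctxmor$ with the projection $\ctx_{2} \to \terminal$ is the projection $\ctx_{1} \to \terminal$, which is in $\disp$; since $\ctx_{2} \to \terminal$ is also in $\disp$, \cref{disp-map-id-cancel} forces $\ctxmor$ to belong to $\disp$.

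First I would check the hypotheses of \cref{disp-map-id-cancel} for $\disp \subseteq \model(\bas)^{\to}$. Stability under pullbacks holds by definition of a display map. For the remaining conditions I would argue inside the $\infty$-topos $\RFib_{\model(\bas)}$ (which is a presheaf $\infty$-topos, since $\model(\bas)$ is small, and an $\infty$-category with representable maps). As $\model$ is a morphism of $\infty$-categories with representable maps, it preserves finite limits and pushforwards along representable arrows; hence the representable arrow $\model(\typeof)$ of right fibrations over $\model(\bas)$ inherits from $\typeof$ a $\Unit$-type structure, a $\dSum$-type structure and an $\Id$-type structure (the squares \cref{eq:1}, \cref{eq:2}, \cref{eq:3} and the construction of $\typeof \otimes \typeof$ are built precisely from finite limits and pushforwards along representable arrows). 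By the ``straightforward'' parts of \cref{univalent-type-constructors} — namely the arguments showing that a representable arrow with a $\Unit$-type (resp.\ $\dSum$-type, $\Id$-type) structure has pullbacks that contain all identities (resp.\ are closed under composition, under equalizers), which do not use univalence — the pullbacks of $\model(\typeof)$ in $\RFib_{\model(\bas)}$ contain all identities and are closed under composition and equalizers. By \cref{etth-model-display-maps-1}, the fully faithful, limit-preserving Yoneda embedding $\yoneda_{\model(\bas)} \colon \model(\bas) \to \RFib_{\model(\bas)}$, $\ctx \mapsto \model(\bas)/\ctx$, identifies $\disp$ with the class of arrows whose image is a pullback of $\model(\typeof)$; since $\yoneda_{\model(\bas)}$ preserves identities, composition and all existing limits, transporting the above gives that $\disp$ contains identities and is closed under composition and equalizers. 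Here one must observe that the equalizer occurring in ``closed under equalizers'' exists in $\model(\bas)$: computed in $\RFib_{\model(\bas)}$ it is a pullback of the representable map $\model(\typeof)$, hence a representable right fibration, hence an object in the image of $\yoneda_{\model(\bas)}$, and then it is also the equalizer in $\model(\bas)$.

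Next I would prove, by induction on the contextual object $\ctx$, which is available for every object of $\model(\bas)$ because $\model$ is democratic, that $\ctx \to \terminal$ lies in $\disp$. For $\ctx = \terminal$ it is the identity, which is in $\disp$. If $\ctx = \{\elI\}_{\arr}$ for a contextual object $\ctx'$, a representable arrow $\arr$ in $\etth_{\infty}$ and a section $\elI$, then $\ctx \to \terminal$ factors as the display map $\ctxproj_{\arr}(\elI) \colon \ctx \to \ctx'$ followed by $\ctx' \to \terminal$, which is in $\disp$ by the induction hypothesis; closure under composition finishes this case. The case $\ctx \simeq \ctxI$ with $\ctx$ contextual follows from repleteness of $\disp$. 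Combined with the first paragraph, this proves the lemma.

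The step I expect to be the most delicate is the verification of the closure hypotheses of \cref{disp-map-id-cancel}: transporting the identity, composition and equalizer properties of $\model(\typeof)$ from $\RFib_{\model(\bas)}$ down to $\model(\bas)$ along the Yoneda embedding, and in particular making sure the equalizers needed even to state closure under equalizers actually exist in $\model(\bas)$. The remaining steps are routine applications of \cref{disp-map-id-cancel} and \cref{etth-model-display-maps-1}.
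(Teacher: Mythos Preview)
Your proof is correct and follows the same approach as the paper, which simply cites \cref{univalent-type-constructors,etth-model-display-maps-1,disp-map-id-cancel}. You carefully expand on the details the paper leaves implicit---including the induction on contextual objects and the existence in \(\model(\bas)\) of the relevant equalizers---and your observation that the needed implications of \cref{univalent-type-constructors} do not require univalence is correct, though one could equally note that \(\model(\typeof)\) is itself univalent since morphisms of \(\infty\)-categories with representable maps preserve univalence.
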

\begin{proof}
  By
  \cref{univalent-type-constructors,etth-model-display-maps-1,disp-map-id-cancel}.
\end{proof}

\begin{proof}[Proof of \cref{etth-dem-lex}]
  Since display maps are stable under pullbacks and morphisms of
  models commute with pullbacks of display maps,
  \cref{etth-dem-model-disp} implies that the base \(\infty\)-category
  of a democratic model of \(\etth_{\infty}\) has all finite limits
  and that any morphism between democratic models of
  \(\etth_{\infty}\) commutes with finite limits in the base
  \(\infty\)-categories. In other words, the functor
  \(\ev_{\bas} : \Mod^{\dem}(\etth_{\infty}) \to \Cat_{\infty}\)
  factors through \(\Lex_{\infty}\).

  Let \(\cat\) be a left exact \(\infty\)-category. We define a model
  \(\rmcls_{\cat}\) of \(\etth_{\infty}\) by setting
  \(\rmcls_{\cat}(\bas)\) to be \(\cat\) and
  \(\rmcls_{\cat}(\typeof) : \rmcls_{\cat}(\El) \to \rmcls_{\cat}(\Ty)\)
  to be the generic representable map of right fibrations over
  \(\cat\). We have seen in \cref{univalent-gen-rep-map} that the
  generic representable map is univalent. Since \(\cat\) has finite
  limits, representable maps of right fibrations over \(\cat\) are
  closed under equalizers. Thus, by
  \cref{univalent-type-constructors}, \(\rmcls_{\cat}\) is indeed a
  model of \(\etth_{\infty}\). Since the construction of the generic
  representable map for a left exact \(\infty\)-category is functorial, the
  assignment \(\cat \mapsto \rmcls_{\cat}\) is part of a functor
  \[
    \rmcls : \Lex_{\infty} \to \Mod(\etth_{\infty}).
  \]
  The model \(\rmcls_{\cat}\) is democratic as the map
  \(\cat/\obj \to \cat/\terminal\) is representable for every object
  \(\obj \in \cat\).

  We show that the functor
  \(\rmcls : \Lex_{\infty} \to \Mod^{\dem}(\etth_{\infty})\) is an
  inverse of
  \(\ev_{\bas} : \Mod^{\dem}(\etth_{\infty}) \to \Lex_{\infty}\). By
  definition, \(\ev_{\bas} \circ \rmcls \simeq \id\). To show the other
  equivalence \(\rmcls \circ \ev_{\bas} \simeq \id\), let \(\model\) be
  a democratic model of \(\etth_{\infty}\). Since
  \(\rmcls_{\model(\bas)}(\typeof)\) is the generic representable map,
  we have a unique pullback
  \[
    \begin{tikzcd}
      \model(\El)
      \arrow[r, dotted]
      \arrow[d,"\model(\typeof)"'] &
      \rmcls_{\model(\bas)}(\El)
      \arrow[d,"\rmcls_{\model(\bas)}(\typeof)"] \\
      \model(\Ty)
      \arrow[r, dotted, "\map"'] &
      \rmcls_{\model(\bas)}(\Ty).
    \end{tikzcd}
  \]
  It suffices to show that \(\map\) is an equivalence of right
  fibrations over \(\model(\bas)\). Since \(\model(\typeof)\) is
  univalent, the map \(\map\) is \((-1)\)-truncated
  \parencite[Corollary 3.10]{gepner2017univalence}. Recall that the
  objects of \(\rmcls_{\model(\bas)}(\Ty)\) are the arrows of
  \(\model(\bas)\). \Cref{etth-model-display-maps-1} implies that the
  essential image of \(\map\) is the class of display maps in
  \(\model(\bas)\). Then, by \cref{etth-dem-model-disp}, the map
  \(\map\) is essentially surjective and thus an equivalence.
\end{proof}

Consider the image of the arrow \(\typeof : \El \to \Ty\) by the
inclusion
\(\etth_{\infty} \to \Th(\etth_{\infty})^{\op} \simeq
\Mod^{\dem}(\etth_{\infty})^{\op} \simeq \Lex_{\infty}^{\op}\). For a
left exact \(\infty\)-category \(\cat\), we have
\begin{gather*}
  \Th(\etth_{\infty})(\yoneda(\Ty), \IL(\rmcls_{\cat})) \simeq
  \rmcls_{\cat}(\Ty)_{\terminal} \simeq \cat^{\simeq} \\
  \Th(\etth_{\infty})(\yoneda(\El), \IL(\rmcls_{\cat})) \simeq
  \rmcls_{\cat}(\El)_{\terminal} \simeq (\terminal/\cat)^{\simeq}.
\end{gather*}
Hence, the object \(\Ty\) corresponds to the free left exact
\(\infty\)-category \(\free{\Box}\) generated by an object \(\Box\),
the object \(\El\) corresponds to the free left exact
\(\infty\)-category \(\free{\widetilde{\Box} : \terminal \to \Box}\)
generated by an object \(\Box\) and a global section
\(\widetilde{\Box} : \terminal \to \Box\), and the arrow
\(\typeof : \El \to \Ty\) corresponds to the inclusion
\(\inc : \free{\Box} \to \free{\widetilde{\Box} : \terminal \to
  \Box}\). Since
\(\yoneda(\obj)/\Th(\etth_{\infty}) \simeq \Th(\etth_{\infty}/\obj)\),
we see that the inclusion \(\inc\) becomes an exponentiable arrow in
\(\Lex_{\infty}^{\op}\). This makes \(\Lex_{\infty}^{\op}\) an
\(\infty\)-category with representable maps in which the representable
arrows are the pullbacks of \(\inc\), and \(\inc\) is a left exact
universe in \(\Lex_{\infty}^{\op}\). Since \(\Th(\etth_{\infty})\) is
the \(\omega\)-free cocompletion of \(\etth_{\infty}^{\op}\), the
universal property of \(\etth_{\infty}\) gives the following universal
property of \(\Lex_{\infty}\).

\begin{corollary}
  \label{Lex-ump}
  Let \(\cat\) be an \(\infty\)-category with representable
  maps that has all small limits and \(\arr : \obj \to \objI\) a left exact universe in
  \(\cat\). Then there exists a unique morphism of
  \(\infty\)-categories with representable maps
  \(\fun : \Lex_{\infty}^{\op} \to \cat\) that sends \(\inc\) to
  \(\arr\) and preserves small limits.
\end{corollary}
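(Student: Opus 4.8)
The plan is to combine the defining universal property of $\etth_{\infty}$ with the description of $\Th(\etth_{\infty}) \simeq \Lex_{\infty}$ as the $\omega$-free cocompletion of $\etth_{\infty}^{\op}$. Throughout, write $j \colon \etth_{\infty} \to \Th(\etth_{\infty})^{\op} \simeq \Lex_{\infty}^{\op}$ for the dual co-Yoneda embedding; as recorded just before the statement, $j$ is a morphism of $\infty$-categories with representable maps with $j(\typeof) \simeq \inc$, and $\Lex_{\infty}^{\op}$ is an $\infty$-category with representable maps whose representable arrows are exactly the pullbacks of $\inc$.

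First I would feed the left exact universe $\arr$ into the universal property of $\etth_{\infty}$ to obtain a morphism of $\infty$-categories with representable maps $g \colon \etth_{\infty} \to \cat$, unique up to equivalence, with $g(\typeof) \simeq \arr$ as left exact universes; in particular $g$ preserves finite limits. Next I would extend $g$ along $j$. Dualizing the $\omega$-free cocompletion property of $\Th(\etth_{\infty})$ and using that $\cat$ has all small limits (so that $\cat^{\op}$ is cocomplete), precomposition with $j$ induces an equivalence between the space of small-limit-preserving functors $\Lex_{\infty}^{\op} \to \cat$ and the space of finite-limit-preserving functors $\etth_{\infty} \to \cat$. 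This yields a small-limit-preserving functor $\fun \colon \Lex_{\infty}^{\op} \to \cat$, unique up to equivalence, with $\fun \circ j \simeq g$; in particular $\fun(\inc) \simeq \arr$.

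It remains to show that $\fun$ is a morphism of $\infty$-categories with representable maps and is unique with the stated properties. Uniqueness is formal: any small-limit-preserving morphism of $\infty$-categories with representable maps $\fun'$ with $\fun'(\inc) \simeq \arr$ has $\fun' \circ j$ a morphism of $\infty$-categories with representable maps carrying $\typeof$ to $\arr$, hence $\fun' \circ j \simeq g$ by the first step and $\fun' \simeq \fun$ by the second. That $\fun$ preserves representable arrows is immediate: representable arrows of $\Lex_{\infty}^{\op}$ are the pullbacks of $\inc$, $\fun$ preserves pullbacks, $\fun(\inc) \simeq \arr$ is representable, and representable arrows in $\cat$ are stable under pullback.

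The main obstacle is to show that $\fun$ preserves pushforwards along representable arrows. The strategy is to reduce to the image of $j$. Every object of $\Lex_{\infty}^{\op}$ is a cofiltered limit of objects $j(\obj)$, since $\Th(\etth_{\infty})$ is the filtered cocompletion of $\etth_{\infty}^{\op}$; and analogously each slice $\Lex_{\infty}^{\op}/j(\obj)$ --- which by \cref{models-of-slice} together with the identification $\yoneda(\obj)/\Th(\etth_{\infty}) \simeq \Th(\etth_{\infty}/\obj)$ used above is $\Th(\etth_{\infty}/\obj)^{\op}$ --- is generated under cofiltered limits by the evident copy of $\etth_{\infty}/\obj$. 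Since pushforward along a representable arrow is a right adjoint it preserves limits, and $\fun$ preserves all small limits; so for a representable $\psi$ both composites $\psi_{*}$ followed by $\fun$, and $\fun$ followed by $(\fun\psi)_{*}$, viewed as functors $\Lex_{\infty}^{\op}/\dom\psi \to \cat/\fun(\cod\psi)$, preserve limits, and the canonical comparison natural transformation between them is an equivalence once it is so on objects lying in the image of $j$, where it holds because $g = \fun \circ j$ is a morphism of $\infty$-categories with representable maps; the passage to these generators is organized using the Beck--Chevalley conditions of \cref{rep-pullback} and \cref{diagram-rep-map-pushforward}. The technical heart of the argument is exactly this cofiltered-limit reduction and its compatibility with pushforward, for which the stability results of \cref{sec:repr-maps-right} and the slice computations of \cref{sec:slice-infty-type} do the work.
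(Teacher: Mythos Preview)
Your proposal is correct and follows essentially the same approach as the paper: use the universal property of \(\etth_{\infty}\) to get a morphism \(g\), extend it to a small-limit-preserving \(\fun\) via the \(\omega\)-free cocompletion description of \(\Th(\etth_{\infty})\), and then check that \(\fun\) preserves pushforwards along representable arrows by reducing to the image of \(\etth_{\infty}\) using that pushforward functors preserve limits and every object is a limit of objects from \(\etth_{\infty}\). Your treatment of the pushforward step is more detailed than the paper's one-sentence version, but the underlying idea is identical.
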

\begin{proof}
  By the definition of \(\etth_{\infty}\), we have a
  unique morphism of \(\infty\)-categories with representable maps
  \(\overline{\fun} : \etth_{\infty} \to \cat\) that sends \(\typeof\)
  to \(\arr\), which uniquely extends to a limit-preserving functor
  \(\fun : \Lex_{\infty}^{\op} \simeq \Th(\etth_{\infty})^{\op} \to
  \cat\). The functor \(\fun\) sends pushforwards along \(\inc\) to
  pushforwards along \(\arr\) because the pushforward functors
  preserve limits and every object of \(\Th(\etth_{\infty})^{\op}\)
  is a limit of objects from \(\etth_{\infty}\).
\end{proof}

\subsection{Locally cartesian closed \(\infty\)-categories}
\label{sec:locally-cart-clos}

We define an \(\infty\)-type theory \(\etth_{\infty}^{\dProd}\) whose
theories are equivalent to small locally cartesian closed
\(\infty\)-categories.

\begin{definition}
  Let \(\cat\) be an \(\infty\)-category with representable maps and
  \(\typeof : \El \to \Ty\) a representable arrow in \(\cat\). A
  \emph{\(\dProd\)-type structure on \(\typeof\)} is a pullback square
  of the form
  \begin{equation}
    \label{eq:10}
    \begin{tikzcd}
      \poly_{\typeof}\El
      \arrow[r, dotted, "\lam"]
      \arrow[d,"\poly_{\typeof}\typeof"'] &
      \El
      \arrow[d,"\typeof"] \\
      \poly_{\typeof}\Ty
      \arrow[r, dotted, "\dProd"'] &
      \Ty.
    \end{tikzcd}
  \end{equation}
\end{definition}

The following is straightforward.

\begin{proposition}
  \label{univalent-pi-types}
  Let \(\typeof : \El \to \Ty\) be a univalent representable arrow in
  an \(\infty\)-category with representable maps. Then \(\dProd\)-type
  structures on \(\typeof\) are unique up to contractible
  choice. Moreover, there exists a \(\dProd\)-type structure on
  \(\typeof\) if and only if pullbacks of \(\typeof\) are closed under
  pushforwards along pullbacks of \(\typeof\).
\end{proposition}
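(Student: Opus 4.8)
I would treat the two assertions in turn, following closely the pattern of \cref{univalent-type-constructors}. For the uniqueness, the point is that $\Equiv(\typeof)$ is representable --- this holds because $\typeof$ is a representable arrow in an $\infty$-category with representable maps, by the corollary following \cref{equiv-repr} --- so \cref{univalent-arrow-1} applies and univalence of $\typeof$ is equivalent to \cref{item:10} of \cref{univalent-arrow-1}: the section $\kappa_{\typeof} : \cat/\Ty \to \rmcls_{\cat}$ is a $(-1)$-truncated map of right fibrations over $\cat$. Now, up to equivalence a $\dProd$-type structure on $\typeof$ is the same datum as an arrow $\dProd : \poly_{\typeof}\Ty \to \Ty$ together with an identification $\poly_{\typeof}\typeof \simeq \dProd^{*}\typeof$ of right fibrations over $\poly_{\typeof}\Ty$, since the map $\lam$ and the pullback datum in \cref{eq:10} are then forced. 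Hence the space of $\dProd$-type structures is the fiber over $[\poly_{\typeof}\typeof]$ of the map $\cat(\poly_{\typeof}\Ty, \Ty) \to (\cat/\poly_{\typeof}\Ty)^{\simeq}$ obtained by restricting $\kappa_{\typeof}$ to the fiber over $\poly_{\typeof}\Ty \in \cat$. As that restriction is $(-1)$-truncated, this fiber is empty or contractible, which gives the ``unique up to contractible choice'' claim and reduces the existence question to whether $\poly_{\typeof}\typeof$ is equivalent to a pullback of $\typeof$.

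For the ``moreover'' equivalence, the key input is the universal property of the polynomial functor $\poly_{\typeof}$ (\parencite{gambino2013polynomial,weber2015polynomials}): a map $f : \objII \to \poly_{\typeof}\Ty$ classifies a pair consisting of an arrow $A : \objII \to \Ty$ and an arrow $B : A^{*}\El \to \Ty$, and, naturally in $f$, the pullback $f^{*}(\poly_{\typeof}\typeof) \in \cat/\objII$ is the pushforward $(A^{*}\typeof)_{*}(B^{*}\typeof)$ of the pullback $B^{*}\typeof$ of $\typeof$ along the pullback $A^{*}\typeof$ of $\typeof$ (both of these are representable, hence exponentiable, since representable arrows are stable under pullback). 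Granting this, the ``if'' direction is immediate: taking $f = \id_{\poly_{\typeof}\Ty}$ exhibits $\poly_{\typeof}\typeof$ itself as a pushforward of a pullback of $\typeof$ along a pullback of $\typeof$, so if that class of arrows is closed under such pushforwards then $\poly_{\typeof}\typeof$ is a pullback of $\typeof$ and a $\dProd$-type structure exists.

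Conversely, assuming $\poly_{\typeof}\typeof$ is a pullback of $\typeof$, let $p : \obj' \to \obj$ be a pullback of $\typeof$ and $g : \obj'' \to \obj'$ a pullback of $\typeof$, regarded as an object of $\cat/\obj'$; I would choose $A : \obj \to \Ty$ with $A^{*}\typeof \simeq p$, and then, using the induced equivalence $A^{*}\El \simeq \obj'$, choose $B : \obj' \to \Ty$ with $B^{*}\typeof \simeq g$, obtaining $f : \obj \to \poly_{\typeof}\Ty$ with $f^{*}(\poly_{\typeof}\typeof) \simeq p_{*}(g)$. Since the class of pullbacks of $\typeof$ is tautologically stable under pullback, $p_{*}(g)$ is a pullback of $\typeof$. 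This establishes that $\typeof$ carries a $\dProd$-type structure if and only if pullbacks of $\typeof$ are closed under pushforwards along pullbacks of $\typeof$.

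The only genuinely non-formal point --- and the step I would spend the most care on --- is the identification $f^{*}(\poly_{\typeof}\typeof) \simeq (A^{*}\typeof)_{*}(B^{*}\typeof)$, i.e.\ the universal property of the polynomial functor together with its base-change (distributivity, or Beck--Chevalley) coherence in the $\infty$-categorical setting. This is routine and well known $1$-categorically and folklore $\infty$-categorically, so I would either invoke it by citation or, if self-containedness is wanted, derive it directly from the description $\poly_{\typeof} = \Ty_{!}\typeof_{*}\El^{*}$; everything else in the argument is formal bookkeeping with truncatedness and pullback-stability.
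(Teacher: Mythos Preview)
Your proposal is correct and follows exactly the pattern the paper intends: the paper gives no proof beyond the remark ``straightforward'', and your argument is the natural elaboration along the lines of \cref{univalent-type-constructors}, using \cref{item:10} of \cref{univalent-arrow-1} for uniqueness and the universal property of $\poly_{\typeof}$ for the existence characterization. The identification $f^{*}(\poly_{\typeof}\typeof) \simeq (A^{*}\typeof)_{*}(B^{*}\typeof)$ that you flag as the only substantive step is indeed routine from $\poly_{\typeof} = \Ty_{!}\typeof_{*}\El^{*}$ and Beck--Chevalley for the relevant pullback squares, so nothing is missing.
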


\begin{definition}
  Let \(\etth_{\infty}^{\dProd}\) denote the initial \(\infty\)-type
  theory containing a left exact universe \(\typeof : \El \to \Ty\)
  with a \(\dProd\)-type structure.
\end{definition}

\begin{theorem}
  \label{EPi-dem-lccc}
  The functor
  \(\ev_{\bas} : \Mod^{\dem}(\etth_{\infty}^{\dProd}) \to \Cat_{\infty}\)
  factors through the \(\infty\)-category \(\LCCC_{\infty}\) of small locally
  cartesian closed \(\infty\)-categories and induces an equivalence
  \[
    \Mod^{\dem}(\etth_{\infty}^{\dProd}) \simeq \LCCC_{\infty}.
  \]
\end{theorem}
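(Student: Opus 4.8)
The plan is to run the proof of \cref{etth-dem-lex} essentially verbatim, the single new ingredient being the dictionary between $\dProd$-type structures on the generic representable map and local cartesian closedness supplied by \cref{univalent-pi-types}. First I would check that $\ev_{\bas}$ restricted to $\Mod^{\dem}(\etth_{\infty}^{\dProd})$ lands in $\LCCC_{\infty}$. A democratic model $\model$ of $\etth_{\infty}^{\dProd}$ is in particular a democratic model of $\etth_{\infty}$, so $\model(\bas)$ is left exact and, by \cref{etth-dem-model-disp}, every arrow of $\model(\bas)$ is a display map; by \cref{etth-model-display-maps-1} these are precisely the arrows occurring as pullbacks of $\model(\typeof)$ in $\RFib_{\model(\bas)}$. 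Since $\model(\typeof)$ is univalent, \cref{univalent-pi-types} identifies the $\dProd$-type structure on $\model(\typeof)$ with the assertion that pullbacks of $\model(\typeof)$ are closed under pushforward along pullbacks of $\model(\typeof)$. As the Yoneda embedding is fully faithful, preserves finite limits, and has an $\infty$-topos as target, pushing forward $\yoneda(\ctxmor_{0})$ along $\yoneda(\ctxmor_{1})$ inside $\RFib_{\model(\bas)}$, for a composable pair of display maps $\ctxmor_{0}, \ctxmor_{1}$ of $\model(\bas)$, computes the Yoneda image of the pushforward $\ctxmor_{1*}\ctxmor_{0}$ taken in $\model(\bas)$ whenever that pushforward exists; the closure condition therefore forces every display map --- hence, by \cref{etth-dem-model-disp}, every arrow --- of $\model(\bas)$ to be exponentiable, so $\model(\bas) \in \LCCC_{\infty}$. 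A morphism of democratic models preserves finite limits in the base, and via the Beck--Chevalley condition in its definition together with the fact that every arrow is a display map it also preserves pushforwards along display maps, so it is a locally cartesian closed functor; thus $\ev_{\bas}$ factors through $\LCCC_{\infty}$.

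For the inverse, given $\cat \in \LCCC_{\infty}$ I claim that the representable-map-classifier model $\rmcls_{\cat}$ of $\etth_{\infty}$ built in the proof of \cref{etth-dem-lex} carries a $\dProd$-type structure on $\genrm_{\cat}$. Since $\genrm_{\cat}$ is univalent, by \cref{univalent-pi-types} it is enough to show that representable maps of right fibrations over $\cat$ are closed under pushforward in $\RFib_{\cat}$ along representable maps. Given representable maps $\map' : \sh \to \shI$ and $\map : \sh' \to \sh$, I would test representability of $\map'_{*}\sh' \to \shI$ using \cref{representable-map-1}: for a section $\elI : \cat/\objI \to \shI$, Beck--Chevalley for the pullback of $\map'$ along $\elI$ (\cref{rep-pullback,rep-right-adjoint}) identifies $\elI^{*}(\map'_{*}\sh')$ with a pushforward, formed inside $\RFib_{\cat/\objI} \simeq \RFib_{\cat}/(\cat/\objI)$ (\cref{rfib-slice}), of a representable right fibration along a representable map; since $\map'$ and $\map$ restrict over $\cat/\objI$ to maps between representable right fibrations, both come from arrows of the slice $\cat/\objI$, which is again locally cartesian closed, so that pushforward exists in $\cat/\objI$, and --- because the Yoneda embedding of a locally cartesian closed $\infty$-category preserves pushforwards (a short Yoneda-lemma computation using that it preserves pullbacks and that $\RFib_{\cat/\objI}$ is an $\infty$-topos) --- the resulting right fibration is again representable. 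Hence $\elI^{*}(\map'_{*}\sh')$ is a representable right fibration over $\cat$, and so $\map'_{*}\sh'$ is representable. As in \cref{etth-dem-lex} the model $\rmcls_{\cat}$ is democratic, and since a locally cartesian closed functor also meets the extra Beck--Chevalley conditions defining morphisms of $\etth_{\infty}^{\dProd}$-models, the assignment $\cat \mapsto \rmcls_{\cat}$ is a functor $\rmcls : \LCCC_{\infty} \to \Mod^{\dem}(\etth_{\infty}^{\dProd})$.

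It then remains, exactly as in \cref{etth-dem-lex}, to see that $\ev_{\bas}$ and $\rmcls$ are mutually inverse. On objects $\ev_{\bas}\circ\rmcls \simeq \id$ by construction, and for a democratic model $\model$ of $\etth_{\infty}^{\dProd}$ the classifying map $\model(\Ty) \to \rmcls_{\model(\bas)}(\Ty)$ is $(-1)$-truncated by univalence, has essential image the display maps of $\model(\bas)$ by \cref{etth-model-display-maps-1}, hence is essentially surjective and so an equivalence; being an equivalence of $\etth_{\infty}$-models it automatically upgrades to an equivalence of $\etth_{\infty}^{\dProd}$-models, and on mapping spaces the additional Beck--Chevalley condition coming from $\etth_{\infty}^{\dProd}$ corresponds, via \cref{etth-dem-lex}, to the extra requirement cutting out the locally cartesian closed functors among the left exact ones. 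I expect the construction of the inverse to be where the work is: getting the base-change reduction right and verifying that the Yoneda embedding of a locally cartesian closed $\infty$-category preserves pushforwards is exactly what makes ``$\genrm_{\cat}$ admits a $\dProd$-type structure'' genuinely \emph{equivalent} to --- rather than merely implied by --- local cartesian closedness of $\cat$.
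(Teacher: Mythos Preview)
Your proposal is correct and uses the same key ingredient as the paper, namely \cref{univalent-pi-types} to translate between $\dProd$-type structures on the universal representable map and closure of representable maps under pushforward, together with the identification of display maps with all arrows in the democratic case from \cref{etth-dem-model-disp}. The difference is organizational. You rerun the entire proof of \cref{etth-dem-lex}: construct $\rmcls : \LCCC_{\infty} \to \Mod^{\dem}(\etth_{\infty}^{\dProd})$, then show $\ev_{\bas}$ and $\rmcls$ are mutually inverse by repeating the univalence/essential-surjectivity argument. The paper instead leverages the already-established equivalence $\Mod^{\dem}(\etth_{\infty}) \simeq \Lex_{\infty}$ and reduces the problem to showing that the square
\[
  \begin{tikzcd}
    \Mod^{\dem}(\etth_{\infty}^{\dProd}) \arrow[r] \arrow[d] & \LCCC_{\infty} \arrow[d] \\
    \Mod^{\dem}(\etth_{\infty}) \arrow[r,"\simeq"] & \Lex_{\infty}
  \end{tikzcd}
\]
is a pullback, which amounts to checking only two things: a democratic $\etth_{\infty}$-model lifts to $\etth_{\infty}^{\dProd}$ iff its base is locally cartesian closed, and a morphism of such models lifts iff the base functor preserves pushforwards. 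This avoids reproving the mutually-inverse part entirely. Your approach has the minor advantage of spelling out explicitly, via the base-change reduction and \cref{representable-map-1}, why local cartesian closedness of $\cat$ forces representable maps in $\RFib_{\cat}$ to be closed under pushforward---a step the paper absorbs into the sentence ``this is equivalent to that $\model(\bas)$ is locally cartesian closed.'' One small point you pass over: the claim that a democratic model of $\etth_{\infty}^{\dProd}$ restricts to a democratic model of $\etth_{\infty}$ requires that representable arrows in $\etth_{\infty}^{\dProd}$ are still exactly the pullbacks of $\typeof$; the paper notes this analogue of \cref{etth-representable-arrows} explicitly.
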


\begin{proof}
  \Cref{etth-representable-arrows} holds for
  \(\etth_{\infty}^{\dProd}\): an arrow in \(\etth_{\infty}^{\dProd}\)
  is representable if and only if it is a pullback of \(\typeof\). It
  follows from this that the restriction of a democratic model of
  \(\etth_{\infty}^{\dProd}\) along the inclusion
  \(\etth_{\infty} \to \etth_{\infty}^{\dProd}\) is a democratic model
  of \(\etth_{\infty}\). Thus, by \cref{etth-dem-lex}, it suffices to
  show that the composite
  \(\Mod^{\dem}(\etth_{\infty}^{\dProd}) \to
  \Mod^{\dem}(\etth_{\infty}) \overset{\ev_{\bas}}{\longrightarrow}
  \Lex_{\infty}\) factors through \(\LCCC_{\infty}\) and gives rise a
  pullback square
  \[
    \begin{tikzcd}
      \Mod^{\dem}(\etth_{\infty}^{\dProd})
      \arrow[r,dotted]
      \arrow[d] &
      \LCCC_{\infty}
      \arrow[d] \\
      \Mod^{\dem}(\etth_{\infty})
      \arrow[r,"\ev_{\bas}"',"\simeq"] &
      \Lex_{\infty}.
    \end{tikzcd}
  \]
  It suffices to show the following:
  \begin{enumerate}
  \item \label{item:-6} an object \(\model\) in
    \(\Mod^{\dem}(\etth_{\infty})\) is in
    \(\Mod^{\dem}(\etth_{\infty}^{\dProd})\) if and only if
    \(\model(\bas)\) is in \(\LCCC_{\infty}\);
  \item \label{item:-7} for objects
    \(\model, \modelI \in \Mod^{\dem}(\etth_{\infty}^{\dProd})\), a
    morphism \(\fun : \model \to \modelI\) in
    \(\Mod^{\dem}(\etth_{\infty})\) is in
    \(\Mod^{\dem}(\etth_{\infty}^{\dProd})\) if and only if
    \(\fun(\bas) : \model(\bas) \to \modelI(\bas)\) is in
    \(\LCCC_{\infty}\).
  \end{enumerate}

  \Cref{item:-6}. By \cref{univalent-pi-types}, an object
  \(\model \in \Mod^{\dem}(\etth_{\infty})\) is in
  \(\Mod^{\dem}(\etth_{\infty}^{\dProd})\) if and only if
  representable maps of right fibrations over \(\model(\bas)\) are
  closed under pushforwards along representable maps. This is
  equivalent to that the \(\infty\)-category \(\model(\bas)\) is
  locally cartesian closed.

  \Cref{item:-7}. A morphism \(\fun : \model \to \modelI\) in
  \(\Mod^{\dem}(\etth_{\infty})\) between objects from
  \(\Mod^{\dem}(\etth_{\infty}^{\dProd})\) is in
  \(\Mod^{\dem}(\etth_{\infty}^{\dProd})\) if and only if it commutes
  with \(\dProd\)-type structures. Observe that
  \(\model(\dProd) : \model(\poly_{\typeof}\Ty) \to \model(\Ty)\)
  sends a pair of composable arrows \(\arr : \obj \to \objI\) and
  \(\arrI : \objI \to \objII\) in \(\model(\bas)\) to the pushforward
  \(\arrI_{*}\arr : \arrI_{*}\obj \to \objII\). Thus, \(\fun\)
  commutes with \(\dProd\)-type structures if and only if
  \(\fun(\bas) : \model(\bas) \to \modelI(\bas)\) commutes with
  pushforwards.
\end{proof}

\subsection{\(\infty\)-type theories}
\label{sec:infty-type-theories-1}

We define an \(\infty\)-type theory \(\tthR_{\infty}\) whose theories
are equivalent to \(\infty\)-type theories.

\begin{definition}
  Let \(\typeof_{1} : \El_{1} \to \Ty_{1}\),
  \(\typeof_{2} : \El_{2} \to \Ty_{2}\) and
  \(\typeof_{3} : \El_{3} \to \Ty_{3}\) be representable arrows in an
  \(\infty\)-category with representable maps. A
  \emph{\((\typeof_{1}, \typeof_{2}, \typeof_{3})\)-\(\dProd\)-type
    structure} is a pullback of the form
  \begin{equation}
    \label{eq:11}
    \begin{tikzcd}
      \poly_{\typeof_{1}}\El_{2}
      \arrow[r, dotted, "\lam"]
      \arrow[d,"\poly_{\typeof_{1}}\typeof_{2}"'] &
      \El_{3}
      \arrow[d,"\typeof_{3}"] \\
      \poly_{\typeof_{1}}\Ty_{2}
      \arrow[r, dotted, "\dProd"'] &
      \Ty_{3}.
    \end{tikzcd}
  \end{equation}
  Note that if \(\typeof_{3}\) is univalent, then
  \((\typeof_{1}, \typeof_{2}, \typeof_{3})\)-\(\dProd\)-type
  structure are unique up to contractible choice, and there exists a
  \((\typeof_{1}, \typeof_{2}, \typeof_{3})\)-\(\dProd\)-type
  structure if and only if the pushforward of a pullback of
  \(\typeof_{2}\) along a pullback of \(\typeof_{1}\) is a pullback of
  \(\typeof_{3}\).
\end{definition}

\begin{definition}
  We denote by \(\tthR_{\infty}\) the initial \(\infty\)-type theory
  containing the following data:
  \begin{itemize}
  \item a left exact universe \(\typeof : \El \to \Ty\);
  \item a \((-1)\)-truncated arrow \(\Rep \hookrightarrow \Ty\). We
    denote by \(\typeof_{\Rep}\) the pullback of \(\typeof\) along the
    inclusion \(\Rep \hookrightarrow \Ty\);
  \item a \(\Unit\)-type structure and a \(\dSum\)-type
    structure on \(\typeof_{\Rep}\);
  \item a \((\typeof_{\Rep}, \typeof, \typeof)\)-\(\dProd\)-type
    structure.
  \end{itemize}
  Note that the inclusion \(\Rep \hookrightarrow \Ty\) automatically
  commutes with \(\Unit\)-type structures and \(\dSum\)-type
  structures because of univalence.
\end{definition}

\begin{definition}
  Let \(\model\) be a model of \(\tthR_{\infty}\). We say an arrow in
  \(\model(\bas)\) is \emph{representable} if it is a context
  comprehension with respect to \(\typeof_{\Rep}\). Using the
  \((\typeof_{\Rep}, \typeof, \typeof)\)-\(\dProd\)-type structure, we
  see that the pushforward of a display map along a representable map
  exists and is a display map. In particular, if \(\model\) is
  democratic, then \(\model(\bas)\) is an \(\infty\)-type theory and,
  for any morphism \(\fun : \model \to \modelI\) between democratic
  models, \(\fun(\bas) : \model(\bas) \to \modelI(\bas)\) is a
  morphism of \(\infty\)-type theories. Hence, we have a functor
  \[
    \ev_{\bas} : \Mod^{\dem}(\tthR_{\infty}) \to \TTh_{\infty}
  \]
\end{definition}

\begin{theorem}
  \label{tthR-dem-TTh}
  The functor
  \(\ev_{\bas} : \Mod^{\dem}(\tthR_{\infty}) \to \TTh_{\infty}\) is an
  equivalence.
\end{theorem}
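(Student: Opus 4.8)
The plan is to exhibit $\ev_{\bas}\colon\Mod^{\dem}(\tthR_{\infty})\to\TTh_{\infty}$ as a base change of the equivalence $\ev_{\bas}\colon\Mod^{\dem}(\etth_{\infty})\simeq\Lex_{\infty}$ of \cref{etth-dem-lex}, along the forgetful functor $\TTh_{\infty}\to\Lex_{\infty}$ read off from the pullback defining $\TTh_{\infty}$. First I would record the analogue of \cref{etth-representable-arrows} for $\tthR_{\infty}$: since $\typeof$ is a left exact universe, its pullbacks already form a class of representable arrows, so by initiality an arrow of $\tthR_{\infty}$ is representable if and only if it is a pullback of $\typeof$. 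Hence the inclusion $\etth_{\infty}\hookrightarrow\tthR_{\infty}$ identifies the contextual objects, so restriction along it sends democratic models of $\tthR_{\infty}$ to democratic models of $\etth_{\infty}$; by \cref{etth-dem-lex} a democratic model $\model$ of $\tthR_{\infty}$ therefore has $\model(\bas)\in\Lex_{\infty}$, its $\etth_{\infty}$-reduct is forced to be the generic-representable-map model $\rmcls_{\model(\bas)}$, and (by \cref{etth-dem-model-disp}) every arrow of $\model(\bas)$ is a display map. Moreover $\ev_{\bas}$ followed by $\TTh_{\infty}\to\Lex_{\infty}$ agrees with this restriction followed by the equivalence of \cref{etth-dem-lex}, both sending $\model\mapsto\model(\bas)$.

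Next I would identify, over a fixed $\cat\in\Lex_{\infty}$, the extra data upgrading $\rmcls_{\cat}$ to a democratic model of $\tthR_{\infty}$. \emph{(i)} By \cref{rep-map-classifier} and the description of subobjects of $\rmcls_{\cat}$, a $(-1)$-truncated arrow $\Rep\hookrightarrow\rmcls_{\cat}(\Ty)=\rmcls_{\cat}$ of right fibrations over $\cat$ is the same as a pullback-stable subspace $\reps$ of the space of arrows of $\cat$ — exactly the data carried by $\Cat^{+}_{\infty}$ over $\Cat_{\infty}$, restricted to pullback-stable classes. Since $\Rep\hookrightarrow\Ty$ is $(-1)$-truncated, the pullback $\typeof_{\Rep}$ of the univalent $\typeof$ along it is again univalent, because its classifying section factors as a composite of two $(-1)$-truncated maps of right fibrations, so \cref{item:10} of \cref{univalent-arrow-1} applies. \emph{(ii)} Hence by \cref{univalent-type-constructors} a $\Unit$- and a $\dSum$-type structure on $\typeof_{\Rep}$ are unique if they exist, and they exist iff $\reps$ contains the identities and is closed under composition, i.e.\ iff $(\cat,\reps)\in\Lex^{+}_{\infty}$. \emph{(iii)} Likewise the $(\typeof_{\Rep},\typeof,\typeof)$-$\dProd$-type structure is unique if it exists, and — using that in a democratic model every arrow is a pullback of $\typeof$ — it exists iff the pushforward of an arbitrary arrow along an $\reps$-arrow exists and is again an arrow, i.e.\ iff every $\reps$-arrow is exponentiable in $\cat$; by \cref{adjoint-over-groupoid} (applied over the groupoid $\reps$) this is precisely the condition in the defining pullback of $\TTh_{\infty}$ that $\theta(\cat,\reps)$ admit a right adjoint.

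Using \cref{rep-map-classifier}, the universal properties of $\rmcls_{\cat}$ and of the free generators inside $\tthR_{\infty}$, and the universal property of models (\cref{models-ump}), these identifications are natural in $\cat$ and show that the comparison map from $\Mod^{\dem}(\tthR_{\infty})$ into the pullback of $\TTh_{\infty}\to\Lex_{\infty}\xleftarrow{\ \simeq\ }\Mod^{\dem}(\etth_{\infty})$ is an equivalence, i.e.\ that the square
\[
  \begin{tikzcd}
    \Mod^{\dem}(\tthR_{\infty})
    \arrow[r,"\ev_{\bas}"]
    \arrow[d] &
    \TTh_{\infty}
    \arrow[d] \\
    \Mod^{\dem}(\etth_{\infty})
    \arrow[r,"\ev_{\bas}"',"\simeq"] &
    \Lex_{\infty}
  \end{tikzcd}
\]
is a pullback; since the bottom map is an equivalence, so is the top, which is the claim. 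The morphism-level statement is part of the same analysis: by \cref{etth-dem-lex} a morphism of democratic $\tthR_{\infty}$-models is a left exact functor $\fun(\bas)\colon\model(\bas)\to\modelI(\bas)$, and the remaining compatibilities say exactly that $\fun(\bas)$ preserves the classes $\reps$ (compatibility with $\Rep$; the $\Unit$/$\dSum$-compatibilities are automatic by univalence) and commutes with pushforwards along $\reps$-arrows (compatibility with the $\dProd$-structure), i.e.\ that $\fun(\bas)$ is a morphism of $\infty$-type theories.

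I expect the main obstacle to be step \emph{(iii)}: checking that the $(\typeof_{\Rep},\typeof,\typeof)$-$\dProd$-type structure on $\rmcls_{\cat}$ really encodes exponentiability of $\reps$-arrows in $\cat$ itself rather than merely in $\RFib_{\cat}$. This requires transporting the pushforward computation for representable maps of right fibrations (\cref{rep-right-adjoint}, \cref{rep-pullback}) through the Yoneda embedding $\cat\hookrightarrow\RFib_{\cat}$, which preserves and detects the relevant limits and pushforwards, and — relatedly — organizing the fiberwise identifications into an honest equivalence of $\infty$-categories; for the latter the cleanest route is to match the defining pullback of $\TTh_{\infty}$ with the analogous tower obtained by adjoining to $\etth_{\infty}$ first the $(-1)$-truncated arrow $\Rep\hookrightarrow\Ty$, then the $\Unit$- and $\dSum$-structures on $\typeof_{\Rep}$, and finally the $(\typeof_{\Rep},\typeof,\typeof)$-$\dProd$-structure, passing at each stage to democratic models.
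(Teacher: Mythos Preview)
Your proposal is correct and follows the organizational pattern of the paper's proof of \cref{EPi-dem-lccc} (the $\LCCC_{\infty}$ case), exhibiting $\ev_{\bas}$ as a pullback of the known equivalence $\Mod^{\dem}(\etth_{\infty})\simeq\Lex_{\infty}$ along $\TTh_{\infty}\to\Lex_{\infty}$. The paper's own proof is a one-sentence sketch (``Similar to \cref{etth-dem-lex}'') that instead constructs the inverse functor directly: for an $\infty$-type theory $(\cat,\reps)$, take the representable map classifier $\rmcls_{\cat}$ together with the full subfibration $\rmcls_{\cat}(\Rep)\subset\rmcls_{\cat}(\Ty)$ spanned by the arrows in $\reps$, and check that this is a democratic model of $\tthR_{\infty}$ inverse to $\ev_{\bas}$. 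The two routes are equivalent in content---your steps (i)--(iii) are precisely the verifications needed to confirm that this construction lands in $\Mod^{\dem}(\tthR_{\infty})$ and is inverse to $\ev_{\bas}$---but your presentation makes the reduction to \cref{etth-dem-lex} explicit and modular, while the paper's leaves it to the reader. Your flagged concern about step~(iii) is legitimate but resolves as you anticipate: since in a democratic $\etth_{\infty}$-model every arrow is a display map and Yoneda preserves pushforwards along representable maps, the $(\typeof_{\Rep},\typeof,\typeof)$-$\dProd$ condition in $\RFib_{\cat}$ does translate to exponentiability of $\reps$-arrows in $\cat$.
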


\begin{proof}
  Similar to \cref{etth-dem-lex}. For an \(\infty\)-type theory
  \(\cat\), the representable map classifier \(\rmcls_{\cat}(\Ty)\)
  has the full subfibration
  \(\rmcls_{\cat}(\Rep) \subset \rmcls_{\cat}(\Ty)\) spanned by the
  representable arrows in \(\cat\), which defines a democratic model
  of \(\tthR_{\infty}\).
\end{proof}

\section{Internal languages for left exact $\infty$-categories}
\label{sec:intern-lang-left}

In this section, we show \citeauthor{kapulkin2018homotopy}'s
conjecture that the \(\infty\)-category of small left exact
\(\infty\)-categories is a localization of the category of theories
over Martin-L{\"o}f type theory with intensional identity types
\parencite{kapulkin2018homotopy}.

We first introduce a structure of \emph{intensional identity types} in
the context of \(\infty\)-type theory.

\begin{definition}
  Let \(\cat\) be an
  \(\infty\)-category with representable maps and \(\typeof : \El \to
  \Ty\) a representable arrow in \(\cat\). An \emph{\(\Id^{+}\)-type
    structure on \(\typeof\)} is a commutative square of the form
  \begin{equation}
    \label{eq:16}
    \begin{tikzcd}
      \El
      \arrow[r, dotted ,"\refl"]
      \arrow[d, "\diagonal"'] &
      \El
      \arrow[d, "\typeof"] \\
      \El \times_{\Ty} \El
      \arrow[r, dotted, "\Id"'] &
      \Ty
    \end{tikzcd}
  \end{equation}
  equipped with a section \(\elim_{\Id^{+}}\) of the induced arrow
  \[
    (\refl^{*}, \typeof_{*}) : (\Id^{*}\El \To_{\Ty} \Ty^{*} \El) \to
    (\El \To_{\Ty} \Ty^{*} \El) \times_{(\El \To_{\Ty} \Ty^{*} \Ty)}
    (\Id^{*}\El \To_{\Ty} \Ty^{*} \Ty),
  \]
  where \(\To_{\Ty}\) is the exponential in the slice \(\cat /
  \Ty\).
\end{definition}

The codomain of the arrow \((\refl^{*}, \typeof_{*})\) classifies
\emph{lifting problems} for \(\refl\) against \(\typeof\), and the
section \(\elim_{\Id^{+}}\) is considered as a \emph{uniform solution}
to the lifting problems. See
\parencite{awodey2009homotopy,awodey2018natural,kapulkin2021simplicial}
for how this definition is related to syntactically presented
intensional identity types. We note that for an \(\Id\)-type structure
\((\Id, \refl)\), the arrow \((\refl^{*}, \typeof_{*})\) is
invertible, and thus any \(\Id\)-type structure is uniquely extended
to an \(\Id^{+}\)-type structure.

Let \(\itth\) denote the \(1\)-type theory freely generated by a
representable arrow \(\typeof : \El \to \Ty\) equipped with a
\(\Unit\)-type structure, a \(\dSum\)-type structure, and an
\(\Id^{+}\)-type structure. \Textcite{kapulkin2018homotopy}
conjectured that the \(\infty\)-category \(\Lex_{\infty}\) is a
localization of the \(1\)-category \(\Th(\itth)\). Strictly, they work
with contextual categories with a unit type, \(\dSum\)-types, and
intensional identity types instead of theories over \(\itth\) in our
sense, but it is straightforward to see that those contextual
categories are equivalent to democratic models of \(\itth\). They also
gave a specific functor \(\Th(\itth) \to \Lex_{\infty}\) and
conjectured that it is a localization functor. We prove their
conjecture using the theory of \(\infty\)-type theories and the
equivalence \(\Th(\etth_{\infty}) \simeq \Lex_{\infty}\).

We construct the functor
\(\Th(\itth) \to \Lex_{\infty} \simeq \Th(\etth_{\infty})\)
differently from \citeauthor{kapulkin2018homotopy}. A first attempt is
to construct a morphism between \(\itth\) and \(\etth_{\infty}\), but
this fails: since the generating representable arrow \(\typeof\) is
not univalent in \(\itth\), we do not have a morphism
\(\etth_{\infty} \to \itth\); since \(\typeof\) is not \(0\)-truncated
in \(\etth_{\infty}\), we do not have a morphism
\(\itth \to \etth_{\infty}\). We thus introduce an intermediate
\(\infty\)-type theory \(\itth_{\infty}\) defined as the free
\(\infty\)-type theory generated by the same data as \(\itth\) but without
truncatedness. Then
\(\itth\) is the universal \(1\)-type theory under \(\itth_{\infty}\),
and \(\etth_{\infty}\) is the universal \(\infty\)-type theory under
\(\itth_{\infty}\) inverting the morphisms
\(\refl : \El \to \Id^{*}\El\) and
\(|\id| : \Ty \to \intern{\Equiv}(\typeof)\). We thus have a span of
\(\infty\)-type theories
\begin{equation}
  \label{eq:6}
  \begin{tikzcd}
    \itth &
    \itth_{\infty}
    \arrow[l, "\truncmap"']
    \arrow[r, "\locmap"] &
    \etth_{\infty}.
  \end{tikzcd}
\end{equation}
Since any morphism \(\fun : \tth \to \tth'\) between \(\infty\)-type
theories induces an adjunction \(\fun_{!} \adj \fun^{*} : \Th(\tth)
\to \Th(\tth')\) as \(\Th(\tth) = \Lex(\tth, \Space)\), we have a
functor
\begin{equation}
  \label{eq:7}
  \begin{tikzcd}
    \Th(\itth)
    \arrow[r, "\truncmap^{*}"] &
    \Th(\itth_{\infty})
    \arrow[r, "\locmap_{!}"] &
    \Th(\etth_{\infty}).
  \end{tikzcd}
\end{equation}
We define the \emph{weak equivalences} in \(\Th(\itth)\) to be the
morphisms inverted by the functor \(\locmap_{!} \truncmap^{*}\) and
write \(\Loc(\Th(\itth))\) for the localization by the weak
equivalences.

\begin{theorem}
  \label{itth-etth-infty}
  The functor \(\locmap_{!} \truncmap^{*} : \Th(\itth) \to
  \Th(\etth_{\infty})\) induces an equivalence of
  \(\infty\)-categories
  \[
    \Loc(\Th(\itth)) \simeq \Th(\etth_{\infty}).
  \]
  Moreover, the composite \(
  \begin{tikzcd}
    \Th(\itth)
    \arrow[r, "\locmap_{!} \truncmap^{*}"] &
    \Th(\etth_{\infty})
    \arrow[r, "\simeq"] &
    \Lex_{\infty}
  \end{tikzcd}
  \) coincides with the functor considered by \textcite[Conjecture
  3.7]{kapulkin2018homotopy}.
\end{theorem}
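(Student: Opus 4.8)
The plan is to reduce the first statement to a single structural claim: the functor $G := \locmap_{!}\truncmap^{*}$ from $\Th(\itth)$ to $\Th(\etth_{\infty})$ admits a fully faithful right adjoint $R$. Once this is known, $G$ exhibits $\Th(\etth_{\infty})$ as a reflective localization of $\Th(\itth)$, and the class of morphisms it inverts is by definition the class $W$ of weak equivalences; hence $G$ induces an equivalence $\Loc(\Th(\itth)) = \Th(\itth)[W^{-1}] \simeq \Th(\etth_{\infty})$. A preliminary that the adjunction rests on is that $\truncmap^{*}$ itself is fully faithful, which I would deduce from $\truncmap\colon \itth_{\infty} \to \itth$ being the universal morphism out of $\itth_{\infty}$ into a $1$-type theory, together with the description of $\Th(\tth)$ as the cocompletion of $\tth^{\op}$ under filtered colimits.

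The construction of $R$ is where the real work lies. Transporting along $\Th(\etth_{\infty}) \simeq \Lex_{\infty}$ (\cref{etth-dem-lex}) and $\Th(\itth) \simeq \Mod^{\dem}(\itth)$ (\cref{theory-model-correspondence}), $R$ must assign to a small left exact $\infty$-category $\cat$ a democratic model of $\itth$ — an internal language of $\cat$ in the intensional theory — whose underlying left exact $\infty$-category is $\cat$ and to which $G$ applied returns $\cat$. This is precisely the coherence problem flagged in the introduction: the model of $\etth_{\infty}$ on $\cat$ furnished by the representable map classifier $\rmcls_{\cat}$ (from the proof of \cref{etth-dem-lex}) is \emph{not} a model of the intensional theory $\itth$, since $\typeof$ there is far from $0$-truncated, so one must replace it by an equivalent \emph{split} model of $\itth$. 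The replacement will be produced by embedding $\cat$ into a presheaf $\infty$-topos, where \citeauthor{shulman2019toposes}'s splitting theorem for presentable $(\infty,1)$-toposes supplies a strict univalent universe, and then transporting a split universe structure back along the embedding. I expect this splitting step — together with the verification that the resulting $R$ is right adjoint to $G$, using the universal property of syntactic models and the description of $\SM(\yoneda(\obj))$ in \cref{sec:synt-models-repr}, and that the counit $GR \simeq \id$ is invertible, i.e.\ that the splitting is compatible with inverting $\refl$ and $|\id|$ — to be the main obstacle; the surrounding localization argument is then formal.

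For the second statement, both the composite of $G$ with the equivalence $\Th(\etth_{\infty}) \simeq \Lex_{\infty}$ and \citeauthor{kapulkin2018homotopy}'s functor preserve filtered colimits, and $\Th(\itth)$ is generated under filtered colimits by the representable $\itth$-theories $\yoneda_{\itth}(\obj)$, $\obj \in \itth$; so it suffices to compare the two functors on these generators. On $\yoneda_{\itth}(\obj)$ I would unwind $G$ using the description of $\SM(\yoneda(\obj))$ in \cref{sec:synt-models-repr} and the formula for $\truncmap^{*}$, obtaining the finitely presented left exact $\infty$-category carved out by the corresponding syntactic fragment of $\itth$, and compare it with the left exact $\infty$-category that \textcite[Conjecture 3.7]{kapulkin2018homotopy} attach to the associated syntactic contextual category. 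Their functor is the localization of such a contextual category, regarded as a fibration category, at its homotopy equivalences; since $\locmap_{!}$ forces exactly the images of $\refl$ (the homotopy equivalences) and of $|\id|$ (univalence of the universe) to become equivalences, the two constructions agree on generators, hence everywhere.
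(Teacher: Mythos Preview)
Your strategy has a fatal obstruction at the first step: $G = \locmap_{!}\truncmap^{*}$ cannot have a fully faithful right adjoint. Since $\itth$ is a $1$-type theory, $\Th(\itth)$ is the cocompletion of the $1$-category $\itth^{\op}$ under filtered colimits, and is therefore itself a $1$-category (its mapping spaces are filtered colimits of sets, hence discrete). A fully faithful $R : \Th(\etth_{\infty}) \simeq \Lex_{\infty} \to \Th(\itth)$ would then force $\Lex_{\infty}$ to be a $1$-category, which it is not---for instance $\Lex_{\infty}(\free{\Box}, \cat) \simeq \cat^{\simeq}$ is non-discrete for many left exact $\infty$-categories $\cat$. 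So $\Th(\etth_{\infty})$ is not a reflective localization of $\Th(\itth)$, and the argument cannot proceed as outlined. The splitting you sketch is also not functorial: it depends on the choice of a presenting type-theoretic model topos and of a classifying fibration therein, so even ignoring the dimension mismatch, promoting it to a right adjoint $R$ would be problematic.

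The paper avoids any adjoint and works instead with Cisinski's framework of categories with weak equivalences and cofibrations: it equips $\Th(\itth)$ with an explicit class of cofibrations (generated by ``add a type'' and ``add a term'' over a length-$\nat$ context), verifies the axioms of \cref{cof-cat}, shows $G$ is right exact, and checks the \emph{left approximation property}, from which the equivalence $\Loc(\Th(\itth)) \simeq \Th(\etth_{\infty})$ follows. Shulman's splitting is indeed the key input, but it is used only pointwise: for each democratic $\itth_{\infty}$-model $\model$ one constructs some $\Sp\model \in \Mod^{\dem}(\itth)$ with a trivial fibration $\Sp\model \to \model$, so that a cofibrant $\model$ is a retract of $\Sp\model$ and hence already a model of $\itth$ (\cref{itth-infty-cof-obj}). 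This coherence result is what makes $\truncmap^{*}$ preserve the relevant pushouts, hence $G$ right exact. The second assertion is handled directly by \cref{itth-model-localization}, identifying $\locmap_{!}$ with $\model \mapsto \Loc(\model(\bas))$ via their common universal property, rather than by comparison on representable generators.
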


\begin{remark}
  The construction of the functor
  \(\locmap_{!} \truncmap^{*} : \Th(\itth) \to \Th(\etth_{\infty})\)
  is easily generalized to extensions with type-theoretic structures
  such as \(\dProd\)-types, (higher) inductive types, and
  universes. For example, if we extend \(\itth\) with
  \(\dProd\)-types, then we have a span
  \[
    \begin{tikzcd}
      \itth^{\dProd} &
      \itth_{\infty}^{\dProd}
      \arrow[l, "\truncmap"']
      \arrow[r, "\locmap"] &
      \etth_{\infty}^{\dProd}
    \end{tikzcd}
  \]
  by extending \(\itth_{\infty}\) with \(\dProd\)-types. We expect
  that similar results to \cref{itth-etth-infty} can be proved for a
  wide range of extensions of \(\itth\), which is left as future
  work. See \cref{sec:generalizations} for discussion.
\end{remark}

\subsection{Proof of the theorem}
\label{sec:proof-theorem}

This subsection is devoted to the proof of
\cref{itth-etth-infty}. We use \citeauthor{cisinski2019higher}'s
results on localizations of \(\infty\)-categories \parencite[Chapter
7]{cisinski2019higher}. We first give the category \(\Th(\itth)\) the structure of a \emph{category
  with weak equivalences and cofibrations} (we recall the definition below) and show that the functor
\(\locmap_{!} \truncmap^{*}\) is \emph{right exact}. We then show that the functor \(\locmap_{!} \truncmap^{*}\) satisfies the \emph{left approximation property} (also recalled below), which implies that the induced functor on localization is an equivalence.

\begin{definition}
  \label{cof-cat}
  A \emph{category with weak equivalences and cofibrations} is a
  category \(\cat\) equipped with two classes of arrows called
  \emph{weak equivalences} and \emph{cofibrations} satisfying the
  conditions below. An object \(\obj \in \cat\) is \emph{cofibrant} if
  the arrow \(\initial \to \obj\) is a cofibration. An arrow is a
  \emph{trivial cofibration} if it is both a weak equivalence and a
  cofibration.
  \begin{enumerate}
  \item \label{item:cof-cat-1} \(\cat\) has an initial object.
  \item \label{item:cof-cat-2} All the identities are trivial cofibrations, and weak
    equivalences and cofibrations are closed under composition.
  \item \label{item:cof-cat-3} The weak equivalences satisfy the \emph{2-out-of-3} property:
    if \(\arr\) and \(\arrI\) are a composable pair of arrows and if
    two of \(\arr\), \(\arrI\), and \(\arrI \arr\) are weak
    equivalences, then so is the rest.
  \item \label{item:cof-cat-4} (Trivial) cofibrations are stable under pushouts along
    arbitrary arrows between cofibrant objects: if
    \(\obj, \obj' \in \cat\) are cofibrant objects,
    \(\cof : \obj \to \objI\) is a (trivial) cofibration, and
    \(\arr : \obj \to \obj'\) is an arbitrary arrow, then the pushout
    \(\arr_{!} \objI\) exists and the arrow
    \(\obj' \to \arr_{!}  \objI\) is a (trivial) cofibration.
  \item \label{item:cof-cat-5} Any arrow \(\arr : \obj \to \objI\) with cofibrant domain
    factors into a cofibration \(\obj \to \objI'\) followed by a weak
    equivalence \(\objI' \to \objI\).
  \end{enumerate}
\end{definition}

\begin{definition}
  Let \(\cat\) be a category with weak equivalences and cofibrations
  and \(\catI\) an \(\infty\)-category with finite colimits. A functor
  \(\fun : \cat \to \catI\) is \emph{right exact} if it sends trivial
  cofibrations between cofibrant objects to invertible arrows and
  preserves initial objects and pushouts of cofibrations along arrows
  between cofibrant objects. A right exact functor \(\fun : \cat \to
  \catI\) has the \emph{left approximation property} if the following
  conditions hold:
  \begin{enumerate}
  \item an arrow in \(\cat\) is a weak equivalence if and only if it
    becomes invertible in \(\catI\);
  \item for any cofibrant object \(\obj \in \cat\) and any arrow
    \(\arr : \fun(\obj) \to \objI\) in \(\catI\), there exists an
    arrow \(\arr' : \obj \to \objI'\) in \(\cat\) such that
    \(\fun(\objI') \simeq \objI\) under \(\fun(\obj)\).
  \end{enumerate}
\end{definition}

\begin{proposition}
  \label{derived-equiv}
  Any right exact functor \(\fun : \cat \to \catI\) with the left
  approximation property induces an equivalence
  \(\Loc(\cat) \simeq \catI\).
\end{proposition}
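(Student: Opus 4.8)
The plan is to reduce the statement to the $\infty$-categorical form of Waldhausen's approximation theorem developed by \citeauthor{cisinski2019higher} \parencite[Chapter~7]{cisinski2019higher}, after passing to cofibrant objects. First, the first clause of the left approximation property says in particular that $\fun$ inverts every weak equivalence of $\cat$, so the universal property of the localization $\gamma_{\cat} : \cat \to \Loc(\cat)$ produces an essentially unique functor $\bar{\fun} : \Loc(\cat) \to \catI$ with $\bar{\fun} \circ \gamma_{\cat} \simeq \fun$. It then remains to prove that $\bar{\fun}$ is fully faithful and essentially surjective.

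Essential surjectivity is immediate: since $\fun$ is right exact it preserves the initial object, so $\fun(\initial) \simeq \initial_{\catI}$, and given any $\objI \in \catI$ the second clause of the left approximation property, applied to the (cofibrant) object $\initial$ and the canonical arrow $\initial_{\catI} \to \objI$, produces an object of $\cat$ sent by $\fun$ to an object equivalent to $\objI$. For full faithfulness I would first pass to the full subcategory $\cat_{c} \subseteq \cat$ of cofibrant objects, with the weak equivalences and cofibrations induced from $\cat$: factoring each arrow $\initial \to \obj$ through axiom~\labelcref{item:cof-cat-5} shows that every object of $\cat$ admits a weak equivalence from a cofibrant one, so by a standard cofibrant-replacement argument \parencite[Chapter~7]{cisinski2019higher} the inclusion $\cat_{c} \hookrightarrow \cat$ induces an equivalence $\Loc(\cat_{c}) \simeq \Loc(\cat)$. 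On $\cat_{c}$ all of the axioms of \cref{cof-cat} hold with every object cofibrant, the restriction $\fun|_{\cat_{c}}$ is still right exact, and both clauses of the left approximation property restrict to it; so it suffices to show that $\fun|_{\cat_{c}} : \cat_{c} \to \catI$ exhibits $\catI$ as $\Loc(\cat_{c})$.

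This last claim is the heart of the argument, and the step I expect to be the main obstacle. Unwinding it, one uses that trivial cofibrations between cofibrant objects are stable under pushout (axiom~\labelcref{item:cof-cat-4}) together with the factorization axiom~\labelcref{item:cof-cat-5} to get a homotopy calculus of left fractions for the weak equivalences between cofibrant objects, so that each mapping space $\Loc(\cat_{c})(\obj, \objI)$ is a filtered colimit of homotopy mapping spaces in $\cat_{c}$ taken over the weak equivalences out of $\objI$ (the homotopy relation being defined via cylinder objects built from axiom~\labelcref{item:cof-cat-5}). The first clause of the left approximation property pins down exactly which arrows are inverted, and a relative version of the second clause --- lifting not merely arrows but homotopies and higher cells against $\fun$ --- then forces $\bar{\fun}$ to be bijective on all homotopy groups of these mapping spaces. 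Everything outside this mapping-space bookkeeping (the factorization through $\Loc(\cat)$, the cofibrant replacement, essential surjectivity, and the transfer of hypotheses to $\cat_{c}$) is formal, so in the end the proof amounts to invoking \citeauthor{cisinski2019higher}'s Chapter~7 machinery once the hypotheses above have been verified.
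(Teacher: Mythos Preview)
Your proposal is correct and lands in the same place as the paper: the paper's entire proof is the single line ``By \parencite[Proposition 7.6.15]{cisinski2019higher},'' i.e.\ a direct citation of Cisinski's $\infty$-categorical approximation theorem. Your write-up unpacks more of what happens inside that citation (reduction to cofibrant objects, essential surjectivity from the initial object, the calculus-of-fractions description of mapping spaces), but the underlying approach is identical.
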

\begin{proof}
  By \parencite[Proposition 7.6.15]{cisinski2019higher}.
\end{proof}

Our first task will be to show that \(\Th(\itth)\) admits the structure of a category with weak equivalences and cofibrations. We have already defined the weak equivalences in \(\Th(\itth)\) as those
morphisms inverted by \(\locmap_{!} \truncmap^{*}\). We define the
cofibrations in \(\Th(\itth)\) as follows. Recall that
\(\yoneda : \itth^{\op} \to \Th(\itth)\) is the Yoneda embedding and
\(\poly_{\typeof} : \itth \to \itth\) is the polynomial functor
associated with \(\typeof : \El \to \Ty\).

\begin{definition}
  The \emph{generating cofibrations in \(\Th(\itth)\)} are the
  following morphisms:
  \begin{itemize}
  \item \(\yoneda (\poly_{\typeof}^{\nat}(\terminal)) \to \yoneda
    (\poly_{\typeof}^{\nat}(\Ty))\) for \(\nat \ge 0\);
  \item \(\yoneda (\poly_{\typeof}^{\nat}(\typeof)) : \yoneda
    (\poly_{\typeof}^{\nat}(\Ty)) \to \yoneda
    (\poly_{\typeof}^{\nat}(\El))\) for \(\nat \ge 0\).
  \end{itemize}
  The class of \emph{cofibrations in \(\Th(\itth)\)} is the closure of
  the generating cofibrations under retracts, pushouts along arbitrary
  morphisms, and transfinite composition. Cofibrations in
  \(\Th(\itth_{\infty})\) and \(\Th(\etth_{\infty})\) are defined in
  the same way. Note that the functors \(\truncmap_{!}\) and
  \(\locmap_{!}\) preserve generating cofibrations.
\end{definition}

\begin{remark}
  \label{remark-itth-cof}
  Our choice of generating cofibrations in \(\Th(\itth)\) coincides
  with the choice by \textcite{kapulkin2018homotopy}. That is,
  \(\yoneda (\poly_{\typeof}^{\nat}(\terminal))\) is the theory freely
  generated by a context of length \(\nat\),
  \(\yoneda (\poly_{\typeof}^{\nat}(\Ty))\) is the theory freely
  generated by a type over a context of length \(\nat\), and
  \(\yoneda (\poly_{\typeof}^{\nat}(\El))\) is the theory freely
  generated by a term over a context of length \(\nat\). This is
  verified as follows. Let \(\theory\) be an \(\itth\)-theory and let
  \(\model\) be the democratic model of \(\itth\) corresponding to
  \(\theory\) via the equivalence
  \(\Th(\itth) \simeq \Mod^{\dem}(\itth)\). By construction, a
  morphism \(\yoneda \obj \to \theory\) correspond to a global section
  \(\model(\bas) \to \model(\obj)\) for any object \(\obj \in
  \itth\). Then, by the universal property of \(\poly_{\typeof}\), a
  morphism
  \(\ctx : \yoneda (\poly_{\typeof}^{\nat}(\terminal)) \to \theory\)
  corresponds to a list of sections
  \begin{equation}
    \label{eq:13}
    \begin{split}
      \ctx_{1}
      &: \model(\bas) / \{\ctx_{0}\} \to \model(\Ty) \\
      \ctx_{2}
      &: \model(\bas) / \{\ctx_{1}\} \to \model(\Ty) \\
      \vdots \\
      \ctx_{\nat}
      &: \model(\bas) / \{\ctx_{\nat - 1}\} \to \model(\Ty)
    \end{split}
  \end{equation}
  where \(\{\ctx_{0}\} = \terminal\) and
  \(\model(\bas) / \{\ctx_{\idx + 1}\} \simeq \ctx_{\idx}^{*}
  \model(\El)\). Since we think of sections of \(\model(\Ty)\) as
  types, such a list of sections can be regarded as a context of
  length \(\nat\). Under this identification, an extension
  \(\yoneda (\poly_{\typeof}^{\nat}(\Ty)) \to \theory\) of \(\ctx\)
  corresponds to a section
  \(\model(\bas) / \{\ctx_{\nat}\} \to \model(\Ty)\), that is, a type
  over \(\ctx\). Similarly, an extension
  \(\yoneda (\poly_{\typeof}^{\nat}(\El)) \to \theory\) of \(\ctx\)
  corresponds to a term
  \(\model(\bas) / \{\ctx_{\nat}\} \to \model(\El)\), that is, a term
  over \(\ctx\). Hence, morphisms from
  \(\yoneda (\poly_{\typeof}^{\nat}(\terminal))\),
  \(\yoneda (\poly_{\typeof}^{\nat}(\Ty))\), and
  \(\yoneda (\poly_{\typeof}^{\nat}(\El))\) correspond to contexts,
  types, and terms, respectively. In this view, a cofibration in
  \(\Th(\itth)\) is an extension by types and terms, but without any
  equation. In particular, cofibrant \(\itth\)-theories are those
  freely generated by types and terms.
\end{remark}

\begin{theorem}
  \label{itth-cof-cat}
  The classes of cofibrations and weak equivalences endow \(\Th(\itth)\) with the structure of a category with weak equivalences and cofibrations.
\end{theorem}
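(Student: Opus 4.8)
The plan is to verify the five conditions of \cref{cof-cat} for $\Th(\itth)$ one at a time, disposing of the formal ones immediately and isolating the two that carry real content. \emph{Formal conditions.} The $\infty$-category $\Th(\itth) = \Lex(\itth, \Space)$ is compactly generated, hence presentable and in particular cocomplete, so it has an initial object $\initial$; this is condition \labelcref{item:cof-cat-1}. The weak equivalences are by definition the morphisms inverted by $\locmap_{!}\truncmap^{*}$, and the class of morphisms inverted by a fixed functor contains all equivalences, is closed under composition, and satisfies the $2$-out-of-$3$ property; this gives condition \labelcref{item:cof-cat-3} and the weak-equivalence part of condition \labelcref{item:cof-cat-2}. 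The cofibrations form, by construction, the smallest class of morphisms containing the generating cofibrations and closed under retracts, pushouts along arbitrary morphisms, and transfinite composition; a saturated class of this form contains every identity and is closed under composition, which gives the cofibration part of condition \labelcref{item:cof-cat-2} and, in particular, shows that $\initial$ is cofibrant; it is also stable under pushout along arbitrary morphisms and $\Th(\itth)$ has all pushouts, which gives the cofibration part of condition \labelcref{item:cof-cat-4}.

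\emph{Factorization.} For condition \labelcref{item:cof-cat-5} I would run the small object argument for the set of generating cofibrations, which is legitimate since $\Th(\itth)$ is presentable: every $\arr \colon \obj \to \objI$ factors as $\obj \to \objI'$ followed by $\objI' \to \objI$, with the first map a transfinite composite of pushouts of generating cofibrations --- hence a cofibration, and with $\objI'$ cofibrant whenever $\obj$ is --- and with the second map having the right lifting property against every generating cofibration. It then remains to show that any morphism $p$ with this right lifting property is a weak equivalence. Translating through the equivalence $\Th(\itth) \simeq \Mod^{\dem}(\itth)$ of \cref{theory-model-correspondence} and the identification of the generating cofibrations in \cref{remark-itth-cof}, $p$ is a morphism of democratic models of $\itth$ along which every type and every term over every context lifts, together with the corresponding lifting of equalities once the class is closed under pushout; using the $\Id^{+}$-structure to produce the requisite homotopies, I would argue that such a $p$ is inverted by $\locmap_{!}\truncmap^{*}$, hence is a weak equivalence. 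If convenient, this factorization can be established before the remaining condition and used in its proof.

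\emph{The main obstacle: pushout-stability of trivial cofibrations.} The substantive point is condition \labelcref{item:cof-cat-4} for \emph{trivial} cofibrations: for a trivial cofibration $\cof \colon \obj \to \objI$ and an arbitrary $\arr \colon \obj \to \obj'$ between cofibrant objects, the pushout $\obj' \to \arr_{!}\objI$ is a cofibration by the formal part above, and one must show it is again a weak equivalence --- a left-properness statement for the weak equivalences of $\Th(\itth)$. My strategy: $\locmap_{!}$ is a left adjoint, so it preserves the pushout; a pushout of an equivalence along any morphism is an equivalence in any $\infty$-category; and $\locmap_{!}\truncmap^{*}(\cof)$ is an equivalence because $\cof$ is a weak equivalence; so it suffices to know that the restriction functor $\truncmap^{*} = ({-}) \circ \truncmap$ sends the pushout square built from $\cof$ and $\arr$ to a pushout square. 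This is a genuine point to prove, because colimits in $\Lex(\itth, \Space)$ are not computed pointwise, so $\truncmap^{*}$ need not preserve arbitrary colimits. I expect the resolution to be that $\truncmap^{*}$ nonetheless preserves pushouts of cofibrations along maps between cofibrant objects: writing $\cof$ as a transfinite composite of cell attachments reduces the claim to a single pushout of a generating cofibration, and there one checks --- using the description of cofibrant $\itth$-theories as those freely generated by types and terms --- that the pushout in $\Th(\itth)$ is already local enough that the reflection of $\Fun(\itth, \Space)$ onto $\Lex(\itth, \Space)$ does not change it, so that it agrees pointwise with the corresponding pushout over $\itth_{\infty}$. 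Carrying out this preservation property for $\truncmap^{*}$ is, in my estimation, the technical heart of the proof; everything else is bookkeeping.
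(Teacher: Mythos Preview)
Your architecture matches the paper's exactly: the formal axioms are dispatched as you say; factorization is obtained from the small object argument together with the claim that maps with the right lifting property against the generating cofibrations are weak equivalences; and pushout-stability of trivial cofibrations reduces to showing that \(\truncmap^{*}\) preserves pushouts of cofibrations along maps between cofibrant objects. The paper states precisely these two reductions as \cref{itth-triv-fib} and \cref{itth-cof-obj}.

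Where your proposal has genuine gaps is in resolving those two propositions. For trivial fibrations, the paper does not argue directly from the \(\Id^{+}\)-structure as you suggest; instead it identifies \(\locmap_{!}\) on a democratic \(\itth\)-model \(\model\) with the localization \(\Loc(\model(\bas))\) of the underlying category of fibrant objects (using Szumi\l{}o's theorem), shows the unit \(\truncmap^{*}\theory \to \locmap^{*}\locmap_{!}\truncmap^{*}\theory\) is itself a trivial fibration (term lifting here requires the path-category result of van den Berg--Moerdijk that sections in the localization are represented by honest sections), and combines this with the separate lemma that in \(\Th(\etth_{\infty})\) every morphism is a cofibration. Your sketch ``produce the requisite homotopies from \(\Id^{+}\)'' does not supply these ingredients.

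For the preservation of pushouts by \(\truncmap^{*}\), your idea of checking that a single cell attachment is ``already local enough that the reflection of \(\Fun(\itth,\Space)\) onto \(\Lex(\itth,\Space)\) does not change it'' is essentially a restatement of the coherence problem rather than a solution: one must show that attaching a type or term to a cofibrant \(\itth_{\infty}\)-theory never creates higher homotopy. The paper makes this precise as \cref{itth-infty-cof-obj} --- every cofibrant democratic model of \(\itth_{\infty}\) already lies in \(\Mod^{\dem}(\itth)\) --- and proves it by a retract argument after constructing, for each democratic \(\itth_{\infty}\)-model \(\model\), a democratic \(\itth\)-model \(\Sp\model\) together with a trivial fibration \(\Sp\model \to \model\). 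The construction of \(\Sp\model\) is the technical heart and relies on Shulman's theorem that every \(\infty\)-topos is presented by a type-theoretic model topos, together with a delicate replacement argument (\cref{split-repl-sigma}) using a univalent universe of \(\card\)-small fibrations to strictify the \(\Unit\)- and \(\dSum\)-type structures. None of this is visible in your proposal, and without it the argument does not close.
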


By definition, \(\Th(\itth)\) satisfies
\cref{item:cof-cat-1,item:cof-cat-2,item:cof-cat-3} of \cref{cof-cat},
and cofibrations are stable under arbitrary pushouts. To make
\(\Th(\itth)\) a category with weak equivalences and cofibrations, it
remains to verify the stability of trivial cofibrations under pushouts
and the factorization axiom. The former is true if the functor
\(\locmap_{!}  \truncmap^{*} : \Th(\itth) \to \Th(\etth_{\infty})\)
preserves initial objects and pushouts of cofibrations along morphisms
between cofibrant objects. Note that this also implies that the
functor \(\locmap_{!}  \truncmap^{*}\) must be right exact. Since
\(\locmap_{!}\) preserves all colimits, it suffices to show that
\(\truncmap^{*}\) has this property. For the latter, we introduce the notion of \emph{trivial fibration}.

\begin{definition}
  A morphism \(\map : \theory \to \theoryI\) in \(\Th(\itth)\) is a
  \emph{trivial fibration} if it has the right lifting property
  against cofibrations: for any commutative square
  \[
    \begin{tikzcd}
      \sh
      \arrow[r, "\mapI"]
      \arrow[d, "\cof"'] &
      \theory
      \arrow[d, "\map"] \\
      \shI
      \arrow[r, "\mapII"'] &
      \theoryI
    \end{tikzcd}
  \]
  in which \(\cof\) is a cofibration, there exists a morphism
  \(\mapIII : \shI \to \theory\) such that \(\map \mapIII = \mapII\)
  and \(\mapIII \cof = \mapI\). Trivial fibrations in
  \(\Th(\itth_{\infty})\) and \(\Th(\etth_{\infty})\) are defined in
  the same way. By a standard argument in model category theory,
  \(\map\) is a trivial fibration if and only if it has the right
  lifting property against generating cofibrations.
\end{definition}

By the \emph{small object argument}, we know that any morphism in
\(\Th(\itth)\) factors into a cofibration followed by a trivial
fibration. Thus, to show that \(\Th(\itth)\) satisfies the factorization axiom it is enough to show that trivial fibrations are
inverted by \(\locmap_{!} \truncmap^{*}\). In conclusion, theorem \ref{itth-cof-cat} will follow from the following two propositions.

\begin{proposition}
  \label{itth-cof-obj}
  The functor \(\truncmap^{*} : \Th(\itth) \to \Th(\itth_{\infty})\)
  preserves initial objects and pushouts of cofibrations along
  morphisms between cofibrant objects.
\end{proposition}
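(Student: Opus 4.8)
The plan is to reduce the statement to a computation with the corepresentable theories that generate the cofibrations, working through the equivalence $\Th(\tth)\simeq\Mod^{\dem}(\tth)$. First, $\truncmap^{*}$ is restriction along the morphism $\truncmap\colon\itth_{\infty}\to\itth$, and it preserves filtered colimits, since the restriction functor $\Fun(\itth,\Space)\to\Fun(\itth_{\infty},\Space)$ preserves all colimits and the inclusions $\Th(\tth)\hookrightarrow\Fun(\tth,\Space)$ preserve filtered colimits; it also trivially preserves retracts. By definition every cofibration of $\Th(\itth)$ is a retract of a transfinite composite of pushouts of generating cofibrations, and when its endpoints are cofibrant every intermediate object occurring in such a composite is again cofibrant. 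Pasting pushout squares and arguing by transfinite induction on the cell structure, it therefore suffices to show that $\truncmap^{*}$ preserves the initial object and that it preserves the pushout $\yoneda_{\itth}(B)\sqcup_{\yoneda_{\itth}(A)}\theory$ of a generating cofibration $\yoneda_{\itth}(A)\to\yoneda_{\itth}(B)$ (whose endpoints are automatically cofibrant) along an arbitrary morphism $\yoneda_{\itth}(A)\to\theory$ with $\theory$ cofibrant; preservation of these two kinds of colimit together with preservation of filtered colimits propagates cofibrancy of the relevant objects along $\truncmap^{*}$, so the induction closes up.

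The heart of the matter is the behaviour of $\truncmap^{*}$ on the corepresentables $\yoneda_{\itth}(\poly_{\typeof}^{\nat}(\terminal))$, $\yoneda_{\itth}(\poly_{\typeof}^{\nat}(\Ty))$ and $\yoneda_{\itth}(\poly_{\typeof}^{\nat}(\El))$. Since $\truncmap$ is a morphism of $\infty$-type theories it fixes $\typeof\colon\El\to\Ty$ and commutes with $\poly_{\typeof}$, so it carries each of these objects of $\itth$ to the corresponding object of $\itth_{\infty}$. I claim that $\truncmap$ induces an equivalence $\itth_{\infty}(X,Y)\simeq\itth(\truncmap X,\truncmap Y)$ for each such $X$ and every $Y\in\itth_{\infty}$, so that $\truncmap^{*}\yoneda_{\itth}(X)\simeq\yoneda_{\itth_{\infty}}(X)$. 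To see this, note that for a democratic model $\model$ of $\itth$ the theory $\truncmap^{*}\IL(\model)$ is the internal language of the model $\model\circ\truncmap$ of $\itth_{\infty}$, which has the same base $\infty$-category as $\model$ and the same interpretation of $\typeof$, hence of every polynomial in $\typeof$; combined with the description of $\SM(\yoneda(\obj))$ in \cref{sec:synt-models-repr} and the reading in \cref{remark-itth-cof} of morphisms out of $\yoneda_{\itth}(\poly_{\typeof}^{\nat}(-))$ as contexts, types and terms of depth $\nat$, this identifies both sides of the displayed map with the same such data inside $Y$, with $\truncmap$ realising the identification. In particular $\truncmap^{*}$ sends the initial object $\yoneda_{\itth}(\terminal)=\yoneda_{\itth}(\poly_{\typeof}^{0}(\terminal))$ of $\Th(\itth)$ to the initial object of $\Th(\itth_{\infty})$ — which is the first assertion of the proposition — and it sends each generating cofibration of $\Th(\itth)$ to the corresponding generating cofibration of $\Th(\itth_{\infty})$.

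For the pushouts, present $\yoneda_{\itth}(B)\sqcup_{\yoneda_{\itth}(A)}\theory$, with $\yoneda_{\itth}(A)\to\yoneda_{\itth}(B)$ a generating cofibration and $\theory$ cofibrant, as the free extension of $\theory$ by a single new type, resp. term, over the context of depth $\nat$ singled out by the attaching morphism; under the models picture of the previous paragraph this is an explicit slice-type construction that enlarges the interpretation of $\typeof$ in the model corresponding to $\theory$ while leaving its base $\infty$-category untouched. Because $\truncmap^{*}$ does not alter this construction and, by the previous paragraph, already matches the corepresentables and the generating cofibrations, it preserves this pushout, i.e.\ $\truncmap^{*}\bigl(\yoneda_{\itth}(B)\sqcup_{\yoneda_{\itth}(A)}\theory\bigr)\simeq\yoneda_{\itth_{\infty}}(B)\sqcup_{\yoneda_{\itth_{\infty}}(A)}\truncmap^{*}\theory$. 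Together with the reduction of the first paragraph this proves the proposition.

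The step I expect to be the main obstacle is the equivalence $\itth_{\infty}(X,Y)\simeq\itth(\truncmap X,\truncmap Y)$ for the polynomial objects $X=\poly_{\typeof}^{\nat}(\terminal),\poly_{\typeof}^{\nat}(\Ty),\poly_{\typeof}^{\nat}(\El)$: it amounts to the assertion that $1$-truncating $\itth_{\infty}$ to $\itth$ does not change the spaces of contexts, types and terms built over an arbitrary object, and it is the one point where one must argue from the specific way $\itth$ arises from $\itth_{\infty}$ rather than by formal bookkeeping with pushout squares and the theory--model correspondence.
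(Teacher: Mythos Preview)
Your reduction in the first paragraph is fine, and you correctly identify the crux: the equivalence \(\truncmap^{*}\yoneda_{\itth}(X)\simeq\yoneda_{\itth_{\infty}}(X)\) for \(X=\poly_{\typeof}^{\nat}(\terminal),\,\poly_{\typeof}^{\nat}(\Ty),\,\poly_{\typeof}^{\nat}(\El)\). But the argument you sketch for this equivalence does not establish it. Unwinding, \(\truncmap^{*}\yoneda_{\itth}(X)\) is the functor \(Y\mapsto\itth(\truncmap X,\truncmap Y)\), so the claim is that \(\truncmap\) induces equivalences \(\itth_{\infty}(X,Y)\to\itth(\truncmap X,\truncmap Y)\) for \emph{all} \(Y\in\itth_{\infty}\). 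Your argument, however, tests only against democratic models of \(\itth\) (equivalently, theories in the image of \(\truncmap^{*}\)); it says nothing about arbitrary \(Y\in\itth_{\infty}\), which is what Yoneda requires. Equivalently, since \(\truncmap^{*}\) is fully faithful, your claim is exactly that \(\yoneda_{\itth_{\infty}}(X)\) lies in the essential image of \(\truncmap^{*}\), i.e.\ that these particular cofibrant \(\itth_{\infty}\)-theories are already \(\itth\)-theories. That is not a formality: it is the statement that the hom-spaces \(\itth_{\infty}(X,Y)\) are already discrete, which is precisely the coherence problem.

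The paper does not argue this directly. Instead it proves the stronger \cref{itth-infty-cof-obj}: \emph{every} cofibrant object of \(\Mod^{\dem}(\itth_{\infty})\) lies in \(\Mod^{\dem}(\itth)\). The proof constructs, for any democratic \(\itth_{\infty}\)-model \(\model\), a democratic \(\itth\)-model \(\Sp\model\) together with a trivial fibration \(\Sp\model\to\model\), using \textcite{shulman2019toposes}'s presentation of \(\RFib_{\model(\bas)}\) by a type-theoretic model topos and a univalent-universe argument to strictify the \(\Unit\)- and \(\dSum\)-type structures (\cref{split-repl-sigma}); a cofibrant \(\model\) then retracts onto \(\Sp\model\). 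Once \cref{itth-infty-cof-obj} is in hand, \cref{itth-cof-obj} is immediate: initial objects and the relevant pushouts are cofibrant, hence lie in the full subcategory \(\Mod^{\dem}(\itth)\subset\Mod^{\dem}(\itth_{\infty})\), so the inclusion preserves them. Your step~3 has the same gap: even granting the claim on corepresentables, knowing that the pushout \(\yoneda_{\itth_{\infty}}(B)\sqcup_{\yoneda_{\itth_{\infty}}(A)}\truncmap^{*}\theory\) in \(\Th(\itth_{\infty})\) lands back in \(\Th(\itth)\) is again \cref{itth-infty-cof-obj}, not something the ``explicit slice-type construction'' delivers.
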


\begin{proposition}
  \label{itth-triv-fib}
  Trivial fibrations in \(\Th(\itth)\) are inverted by the functor
  \(\locmap_{!}  \truncmap^{*} : \Th(\itth) \to
  \Th(\etth_{\infty})\).
\end{proposition}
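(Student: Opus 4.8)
The plan is to unwind what a trivial fibration is concretely, to recognise $\locmap_{!}\truncmap^{*}$ as computing an $\infty$-categorical localization, and then to invert trivial fibrations via an approximation criterion. First I would reformulate trivial fibrations in model-theoretic terms. Under the equivalence $\Th(\itth)\simeq\Mod^{\dem}(\itth)$ and the identification of the generating cofibrations with the free theories on a context, on a type over a context, and on a term over a context (\cref{remark-itth-cof}), a morphism $\map\colon\theory\to\theoryI$ is a trivial fibration precisely when the corresponding morphism of democratic models $\fun\colon\model\to\modelI$ satisfies: for every contextual object $\ctx\in\model(\bas)$ the map $\model(\Ty)_{\ctx}\to\modelI(\Ty)_{\fun(\bas)(\ctx)}$ is surjective on connected components, and for every such $\ctx$ and every type over $\ctx$ the induced map on spaces of terms is surjective on connected components. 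Informally, $\fun$ hits, up to equivalence, every display map over an object in its image together with every section of such a display map.

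Next I would describe the functor $\locmap_{!}\truncmap^{*}$ on democratic models. The functor $\truncmap^{*}$ reinterprets a democratic $\itth$-model as a democratic $\itth_{\infty}$-model with the same base $\infty$-category and the same $\model(\Ty)$ and $\model(\El)$, since in $\itth$ as in $\itth_{\infty}$ every representable arrow is a pullback of $\typeof$ (by the argument of \cref{etth-representable-arrows}, which uses only the $\Unit$- and $\dSum$-structures), so the display maps and the contextual objects are unchanged. The functor $\locmap_{!}$ then inverts $\refl$ and $|\id|$; combining this with $\Th(\etth_{\infty})\simeq\Lex_{\infty}$ (\cref{etth-dem-lex}), one identifies $\locmap_{!}\truncmap^{*}(\IL(\model))$ with the left exact $\infty$-category obtained by localizing the base $\model(\bas)$ --- equipped with the display maps as fibrations and the homotopy equivalences determined by the $\Id^{+}$-structure as weak equivalences, which make it a category of fibrant objects --- at its weak equivalences, and sends $\fun$ to the induced functor between these localizations. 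This identification is essentially the content of the second assertion of \cref{itth-etth-infty}, and is to be carried out independently of the present proposition.

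It then remains to show that a functor of categories of fibrant objects $\fun(\bas)\colon\model(\bas)\to\modelI(\bas)$ satisfying the conditions of the first step induces an equivalence on $\infty$-localizations, for which I would invoke \citeauthor{cisinski2019higher}'s approximation criterion. The surjectivity on type-fibers, together with induction on contextual objects, shows that every object of $\modelI(\bas)$ is equivalent to one in the image of $\fun(\bas)$; the surjectivity on term-fibers, together with induction on the length of a context, shows that every morphism between objects in the image of $\fun(\bas)$, as well as every homotopy between two such morphisms, is realised up to homotopy by $\fun(\bas)$. These are exactly the hypotheses of the approximation theorem, whence $\locmap_{!}\truncmap^{*}(\map)$ is an equivalence.

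The main obstacle is the middle step: making rigorous that $\locmap_{!}\truncmap^{*}$ computes the $\infty$-localization of the category of fibrant objects of contexts. This requires care about the heart operation (so that $\truncmap^{*}$ really lands in democratic models), about the interaction of $\locmap_{!}$ with pushforwards, and about why forcing $\refl$ and $|\id|$ to be invertible localizes at precisely the $\Id^{+}$-homotopy equivalences rather than at some larger class; one must also check that the surjectivity conditions of the first step survive the passage to $\itth_{\infty}$-models and the application of $\locmap_{!}$, so that the approximation theorem is genuinely applicable.
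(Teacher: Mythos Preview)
Your route differs substantially from the paper's. The paper never passes through the localization $\Loc(\model(\bas))$ for this proposition. Instead it argues formally with lifting properties: given a trivial fibration $\map$ in $\Th(\itth)$, it forms the naturality square for the unit $\unit:\id\Rightarrow\locmap^{*}\locmap_{!}$ applied to $\truncmap^{*}\map$. The horizontal maps $\unit_{\truncmap^{*}\theory}$ and $\unit_{\truncmap^{*}\theoryI}$ are trivial fibrations (\cref{itth-unit-triv-fib}), and $\truncmap^{*}\map$ is a trivial fibration by adjointness; since the domains of the generating cofibrations are cofibrant, a short diagram chase shows $\locmap^{*}\locmap_{!}\truncmap^{*}\map$ is a trivial fibration, hence so is $\locmap_{!}\truncmap^{*}\map$ in $\Th(\etth_{\infty})$, and there trivial fibrations are invertible (\cref{etth-triv-fib}). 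No analysis of the induced functor between base-category localizations is needed. What your approach buys is a more geometric picture; what the paper's buys is that the argument stays entirely at the level of theories and lifting properties, and the only model-theoretic input is packaged into the two lemmas.

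Your plan is viable, and your step~2 is indeed \cref{itth-model-localization}, which the paper proves independently of the present proposition, so there is no circularity. But step~3 has a gap. The approximation property in \citeauthor{cisinski2019higher}'s sense (the dual of what the paper invokes in \cref{derived-equiv}) is not ``essential surjectivity plus fullness on morphisms and on homotopies'': it requires that $\fun(\bas)$ \emph{reflect} weak equivalences, and that every arrow in $\modelI(\bas)$ with fibrant codomain in the image lift, up to weak equivalence, to $\model(\bas)$. You verify neither of these as stated. What your conditions actually point toward is a DK-equivalence, but even for that, surjectivity on morphisms and on one level of homotopies is insufficient: you need \emph{equivalences} of mapping spaces, hence bijectivity on every $\pi_n$. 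This can be salvaged, since term lifting applies equally to terms of iterated identity types and therefore yields lifting (and reflection) of homotopies at all levels; but the sentence ``these are exactly the hypotheses of the approximation theorem'' is not correct as written, and the argument needs to be completed.
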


We begin by proving \cref{itth-triv-fib}. It can be broken into the
following two lemmas.

\begin{lemma}
  \label{etth-triv-fib}
  In \(\Th(\etth_{\infty})\), the trivial fibrations are precisely the
  invertible morphisms. Equivalently, all the morphisms are
  cofibrations.
\end{lemma}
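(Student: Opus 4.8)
The plan is to reduce the lemma to the assertion that every trivial fibration in $\Th(\etth_{\infty})$ is invertible, and to prove that assertion by transporting along the equivalence $\Th(\etth_{\infty}) \simeq \Mod^{\dem}(\etth_{\infty}) \simeq \Lex_{\infty}$ of \cref{etth-dem-lex}. The two formulations in the statement are equivalent by a standard weak factorization system argument: by the small object argument the generating cofibrations generate a weak factorization system on the presentable $\infty$-category $\Th(\etth_{\infty})$ whose right class is the class of trivial fibrations, so that the cofibrations are precisely the morphisms with the left lifting property against all trivial fibrations; since an equivalence has the right lifting property against every morphism, ``every trivial fibration is invertible'' is therefore equivalent to ``every morphism is a cofibration'', and one direction of the former, that equivalences are trivial fibrations, is immediate.

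To prove that a trivial fibration $\map \colon \theory \to \theoryI$ is invertible, first transport: under the equivalence above it corresponds to a left exact functor $\fun \colon \cat \to \catI$ between small left exact $\infty$-categories, $\cat$ and $\catI$ being the bases of the associated democratic models $\model$ and $\modelI$. Arguing exactly as in \cref{remark-itth-cof}, but with $\etth_{\infty}$ in place of $\itth$, a morphism out of $\yoneda(\poly_{\typeof}^{\nat}(\terminal))$ (resp.\ $\yoneda(\poly_{\typeof}^{\nat}(\Ty))$, resp.\ $\yoneda(\poly_{\typeof}^{\nat}(\El))$) into an $\etth_{\infty}$-theory with democratic model $\model$ is a context of length $\nat$ in $\model(\bas)$ (resp.\ such a context together with a type over it, resp.\ together with a term of that type); and by \cref{etth-dem-model-disp} every arrow in the base of a democratic model of $\etth_{\infty}$ is a display map, so a context of length $\nat$ is just a string of $\nat$ composable arrows ending at $\terminal$, a type over it an arbitrary arrow into the last object $\ctx$, and a term of it a section of that arrow. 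Hence the right lifting property of $\map$ against the generating cofibrations unwinds to: (i) for any context of length $\nat$ in $\cat$ with last object $\ctx$, every arrow into $\fun\ctx$ in $\catI$ is the image under $\fun$ of an arrow into $\ctx$ in $\cat$; and (ii) for any such context and any arrow $p$ into $\ctx$ in $\cat$, every section of $\fun p$ in $\catI$ is the image under $\fun$ of a section of $p$ in $\cat$.

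From (i) with $\nat = 0$ the functor $\fun$ is essentially surjective. For full faithfulness, fix $\obj, \objI \in \cat$; a map $\obj \to \objI$ is a section over the context $\obj$ of the constant type $\obj \times \objI \to \obj$, so since $\fun$ preserves finite products, (ii) shows $\cat(\obj, \objI) \to \catI(\fun\obj, \fun\objI)$ is surjective on $\pi_{0}$. The higher homotopy is handled by the $\Id$-type structure of $\etth_{\infty}$: for terms $\arr_{1}, \arr_{2}$ of a type over a context with last object $\ctx$, the space of terms of $\Id(\arr_{1}, \arr_{2})$ over $\ctx$ is the space of identifications between $\arr_{1}$ and $\arr_{2}$ in the space of terms of that type (the $\Id$-structure being given by the equalizer, see \cref{univalent-type-constructors}), and $\fun$, being a morphism of $\etth_{\infty}$-models, preserves it. Feeding into (ii) first the constant type and then the iterated identity types $\Id(\arr_{1}, \arr_{2})$, $\Id$ of those, and so on, one shows by induction on $\nat$ that $\fun$ induces a bijection $\pi_{\nat}(\cat(\obj, \objI), f) \to \pi_{\nat}(\catI(\fun\obj, \fun\objI), \fun f)$ for all $\nat \ge 0$ and all basepoints $f$, surjectivity coming from lifting a term and injectivity from lifting a term of an identity type that witnesses that two lifts agree. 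By Whitehead's theorem $\cat(\obj, \objI) \to \catI(\fun\obj, \fun\objI)$ is an equivalence, so $\fun$ is fully faithful, hence---being essentially surjective---an equivalence, and $\map$ is invertible.

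The step requiring the most care is the inductive one: arranging the iterated $\Id$-type computations so that the $\nat$-th one genuinely computes the $\nat$-th homotopy group of the relevant mapping space---in particular identifying the space of terms of $\Id(f, f)$ over a context with the loop space $\Omega_{f}\cat(\obj, \objI)$---and matching the resulting data with the hypotheses of Whitehead's theorem for the (a priori disconnected) mapping spaces. Everything else is a routine unwinding of the definitions of \cref{sec:infty-categ-theor,sec:infty-categ-models} together with the results of \cref{sec:left-exact-infty}.
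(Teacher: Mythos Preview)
Your argument is correct, but the paper proves the lemma by a quite different and more formal route. Rather than transporting to \(\Lex_{\infty}\) and running a Whitehead argument, the paper shows directly that every morphism is a cofibration: the \(\Id\)-type structure makes the diagonal \(\El \to \El \times_{\Ty} \El\) a pullback of \(\typeof\), so the codiagonal \(\yoneda(\poly_{\typeof}^{\nat}(\El \times_{\Ty} \El)) \to \yoneda(\poly_{\typeof}^{\nat}(\El))\) is a cofibration, and univalence does the same for the codiagonal of \(\yoneda(\poly_{\typeof}^{\nat}(\Ty))\); hence the codiagonal of every generating cofibration is a cofibration, which by a standard argument gives right cancellation for cofibrations, reducing the claim to ``every object is cofibrant'', which follows because the codomains of the generating cofibrations generate \(\Th(\etth_{\infty})\) under colimits. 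Your approach has the virtue of being concrete and of explaining \emph{semantically} why the result holds, and it only uses lifting against the generating cofibrations for \(n \le 1\); on the other hand it relies on the heavy equivalence \(\Th(\etth_{\infty}) \simeq \Lex_{\infty}\) and, as you note, leaves some bookkeeping in the Whitehead step. The paper's proof is self-contained at the level of the weak factorization system, avoids any Whitehead-type induction, and makes transparent exactly which features of \(\etth_{\infty}\) are used (the \(\Id\)-type structure and univalence each contribute one codiagonal).
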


\begin{lemma}
  \label{itth-unit-triv-fib}
  For any \(\itth\)-theory \(\theory\), the unit
  \(\truncmap^{*} \theory \to \locmap^{*} \locmap_{!} \truncmap^{*}
  \theory\) is a trivial fibration.
\end{lemma}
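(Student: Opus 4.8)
The plan is to verify that the unit $\eta\colon \truncmap^{*}\theory \to \locmap^{*}\locmap_{!}\truncmap^{*}\theory$ has the right lifting property against the two families of generating cofibrations of $\Th(\itth_{\infty})$. Since every theory in sight is left exact, the defining property of the Yoneda embedding gives $\Th(\itth_{\infty})(\yoneda(X), M) \simeq M(X)$, so RLP against the generating cofibrations (which are $\yoneda$ of the arrows $\poly_{\typeof}^{\nat}(\Ty) \to \poly_{\typeof}^{\nat}(\terminal)$ and $\poly_{\typeof}^{\nat}(\typeof)\colon \poly_{\typeof}^{\nat}(\El) \to \poly_{\typeof}^{\nat}(\Ty)$ in $\itth_{\infty}$) translates, using that $\truncmap$ preserves finite limits, representable arrows and pushforwards, into the statement that for every $\nat$ the two maps
\[
  \theory(\poly_{\typeof}^{\nat}\Ty) \to \theory(\poly_{\typeof}^{\nat}\terminal) \times_{B(\poly_{\typeof}^{\nat}\terminal)} B(\poly_{\typeof}^{\nat}\Ty), \qquad \theory(\poly_{\typeof}^{\nat}\El) \to \theory(\poly_{\typeof}^{\nat}\Ty) \times_{B(\poly_{\typeof}^{\nat}\Ty)} B(\poly_{\typeof}^{\nat}\El)
\]
are surjective on $\pi_{0}$, where $B = \locmap^{*}\locmap_{!}\truncmap^{*}\theory$. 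Letting $\model$ be the democratic model of $\itth$ corresponding to $\theory$ under $\Th(\itth) \simeq \Mod^{\dem}(\itth)$, \cref{remark-itth-cof} identifies the domains with the spaces of contexts of length $\nat$, of types over such contexts, and of terms over such types, in the contextual category $\model(\bas)$.

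Next I would compute the targets. Under $\Th(\etth_{\infty}) \simeq \Lex_{\infty}$ (\cref{etth-dem-lex}), $\locmap_{!}\truncmap^{*}\theory$ corresponds to a small left exact $\infty$-category $\cat_{\theory}$, and, by the computation of $\Th(\etth_{\infty})(\yoneda({-}), {-})$ recalled after \cref{etth-dem-lex}, $B(\poly_{\typeof}^{\nat}{-})$ computes the spaces of contexts, types (arrows into a context) and terms (sections of such arrows) in $\cat_{\theory}$. The crucial claim is that $\cat_{\theory}$ is canonically equivalent to the $\infty$-categorical localization of $\model(\bas)$ at the class of homotopy equivalences produced by the $\Id^{+}$-structure: the democratic model of $\itth_{\infty}$ underlying $\truncmap^{*}\theory$ has base $\model(\bas)$ with representable-map structure pulled back along $\truncmap$, and $\locmap_{!}$ inverts exactly $\refl$ and $|\id|$, which on the model side forces precisely those homotopy equivalences to become invertible. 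As in \cref{sec:equiv-theor-democr}, it suffices to establish this for representable $\theory$ — where $\model$ is the explicit initial model $\IM(\itth/\obj)$ and both sides admit concrete descriptions — and then extend along filtered colimits, since $\locmap_{!}$, $\truncmap^{*}$ and $\IL$ all preserve them.

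Granting the identification, the surjectivity statements follow. The localization functor $\model(\bas) \to \cat_{\theory}$ is essentially surjective; since the analogue of \cref{etth-representable-arrows} holds for $\itth$, display maps in $\model(\bas)$ are exactly the types, and display maps over a fixed object are the fibrations of the fibration-category structure, so every object of $\cat_{\theory}/\Gamma$, for $\Gamma$ the image of a syntactic context, is the image of a type over $\Gamma$ in $\theory$ — this is the first surjectivity. For the second, a term in $\cat_{\theory}$ of a type $A \to \Gamma$ over a syntactic context is a section of the display map $A \to \Gamma$ in the localization; the $\Id^{+}$-structure equips $A \to \Gamma$ with path lifting, so this homotopy section rectifies to an honest section $\Gamma \to A$ in $\model(\bas)$, i.e.\ a term of $A$ in $\theory$ mapping to it.

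I expect the main obstacle to be the identification in the second paragraph: matching the abstract (Bousfield-type) localization $\locmap_{!}$ with the concrete localization of $\model(\bas)$, as a fibration category, at homotopy equivalences. This is where the homotopy theory of intensional identity types genuinely enters, and it also underlies the rectification-of-sections step, which relies on $\model(\bas)$ being a well-behaved fibration category supporting transport along paths — a feature of $\Id^{+}$-types rather than bare $\Id$-types. By contrast, the reduction in the first paragraph and the essential surjectivity for types are routine once the dictionary is set up.
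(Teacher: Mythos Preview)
Your overall strategy matches the paper's: reduce the trivial-fibration condition to type and term lifting, identify the codomain of the unit with the localization $\Loc(\model(\bas))$ of the base contextual category, then use the fibration-category structure for type lifting and the path-lifting supplied by $\Id^{+}$-types for term lifting.

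The one place where your plan diverges is the identification step you flag as the main obstacle. The paper does not argue via representables and filtered colimits; instead it proves directly (as a separate lemma, \cref{itth-model-localization}) that for every left exact $\infty$-category $\cat$ one has
\[
  \Mod^{\dem}(\itth_{\infty})(\model, \rmcls_{\cat}) \simeq \Lex_{\infty}(\model(\bas), \cat),
\]
by writing down mutually inverse maps. This yields $\locmap_{!}\model \simeq \Loc(\model(\bas))$ at once, by comparison with Szumi{\l}o's universal property of the localization. Your filtered-colimit route would require knowing that $\model \mapsto \Loc(\model(\bas))$ preserves filtered colimits of democratic models, which is plausible but not obvious; the paper's representability argument sidesteps this entirely and is both shorter and more robust.

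For the term-lifting step, the paper is slightly more precise than your sketch: it observes that $\model(\bas)$ is not merely a category of fibrant objects but a \emph{path category} in the sense of van den Berg--Moerdijk (equivalently, a tribe), and cites their result that in a path category every section in the localization is represented by a strict section. Your ``$\Id^{+}$ gives path lifting, so homotopy sections rectify'' is exactly the right intuition, and the path-category lemma is what packages it cleanly.
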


\begin{proof}[Proof of \cref{itth-triv-fib}]
  Let \(\map : \theory \to \theoryI\) be a trivial fibration in
  \(\Th(\itth)\). Consider the naturality square
  \[
    \begin{tikzcd}
      \truncmap^{*} \theory
      \arrow[r, "\unit_{\truncmap^{*} \theory}"]
      \arrow[d, "\truncmap^{*} \map"'] &
      \locmap^{*} \locmap_{!} \truncmap^{*} \theory
      \arrow[d, "\locmap^{*} \locmap_{!} \truncmap^{*} \map"] \\
      \truncmap^{*} \theoryI
      \arrow[r, "\unit_{\truncmap^{*} \theoryI}"'] &
      \locmap^{*} \locmap_{!} \truncmap^{*} \theoryI
    \end{tikzcd}
  \]
  where \(\unit\) is the unit of the adjunction
  \(\locmap_{!} \adj \locmap^{*}\). By an adjoint argument,
  \(\truncmap^{*} \map\) is a trivial fibration. By
  \cref{itth-unit-triv-fib}, \(\unit_{\truncmap^{*} \theory}\) and
  \(\unit_{\truncmap^{*} \theoryI}\) are trivial fibrations. Since the
  domains of the generating cofibrations are cofibrant, it follows
  that \(\locmap^{*} \locmap_{!} \truncmap^{*} \map\) is a trivial
  fibration. Then, again by an adjoint argument,
  \(\locmap_{!} \truncmap^{*} \map\) is a trivial fibration and thus
  invertible by \cref{etth-triv-fib}.
\end{proof}

\Cref{etth-triv-fib} is straightforward.

\begin{proof}[Proof of \cref{etth-triv-fib}]
  Since the representable arrow \(\typeof\) in \(\etth_{\infty}\) has
  an \(\Id\)-type structure, the diagonal
  \(\El \to \El \times_{\Ty} \El\) is a pullback of \(\typeof\). This
  implies that the codiagonal
  \(\yoneda (\poly_{\typeof}^{\nat}(\El \times_{\Ty} \El)) \to \yoneda
  (\poly_{\typeof}^{\nat}(\El))\) in \(\Th(\etth_{\infty})\) is a
  cofibration for \(\nat \ge 0\). Similarly, the univalence of
  \(\typeof\) implies that the codiagonal
  \(\yoneda (\poly_{\typeof}^{\nat}(\Ty \times \Ty)) \to \yoneda
  (\poly_{\typeof}^{\nat}(\Ty))\) in \(\Th(\etth_{\infty})\) is a cofibration
  for \(\nat \ge 0\). Hence, for any generating cofibration
  \(\cof : \sh \to \shI\) in \(\Th(\etth_{\infty})\), the codiagonal
  \(\shI +_{\sh} \shI \to \shI\) is a cofibration, and thus
  cofibrations in \(\Th(\etth_{\infty})\) are closed under
  codiagonal. It then follows that cofibrations in
  \(\Th(\etth_{\infty})\) has the right cancellation property: for a
  composable pair of morphisms \(\map\) and \(\mapI\), if \(\map\) and
  \(\mapI \map\) are cofibrations, then so is \(\mapI\). Therefore, it
  suffices to show that all the objects of \(\Th(\etth_{\infty})\) are
  cofibrant. One can show that
  \(\yoneda (\poly_{\typeof}^{\nat}(\Ty))\)'s and
  \(\yoneda (\poly_{\typeof}^{\nat}(\El))\)'s generate
  \(\Th(\etth_{\infty})\) under colimits. Since they are cofibrant,
  all the objects are cofibrant.
\end{proof}

For \cref{itth-cof-obj,itth-unit-triv-fib}, we need analysis of the
functors \(\truncmap^{*}\) and \(\locmap_{!}\). We work with democratic
models instead of theories via the equivalence
\(\Mod^{\dem}(\tth) \simeq \Th(\tth)\)
(\cref{theory-model-correspondence}). We first note that the functors
\(\truncmap^{*} : \Mod(\itth) \to \Mod(\itth_{\infty})\) and
\(\locmap^{*} : \Mod(\etth_{\infty}) \to \Mod(\itth_{\infty})\) are
fully faithful. More precisely, the models of \(\itth\) are the models
\(\model\) of \(\itth_{\infty}\) such that \(\model(\Ty)\) and
\(\model(\El)\) are \(0\)-truncated objects in
\(\RFib_{\model(\bas)}\), and the models of \(\etth_{\infty}\) are the
models \(\model\) of \(\itth_{\infty}\) such that the map
\(\model(\refl) : \model(\El) \to \model(\Id^{*} \El)\) is invertible
and the representable map \(\model(\typeof)\) is a univalent. It is
also clear from this description that the functors \(\truncmap^{*}\)
and \(\locmap^{*}\) preserve democratic models. Hence, we may identify
the functors \(\truncmap^{*} : \Th(\itth) \to \Th(\itth_{\infty})\)
and \(\locmap^{*} : \Th(\etth_{\infty}) \to \Th(\itth_{\infty})\) with
the inclusions
\(\Mod^{\dem}(\itth) \subset \Mod^{\dem}(\itth_{\infty})\) and
\(\Mod^{\dem}(\etth_{\infty}) \subset \Mod^{\dem}(\itth_{\infty})\),
respectively.

To prove \cref{itth-unit-triv-fib}, we concretely describe
\(\locmap_{!} \model \in \Mod^{\dem}(\etth_{\infty})\) for a
democratic model \(\model\) of \(\itth\). By a standard argument in
the categorical semantics of homotopy type theory
\parencite[e.g.][Theorem 3.2.5]{avigad2015homotopy}, the base category
\(\model(\bas)\) is a category of fibrant objects whose fibrations are
the display maps and whose weak equivalences are homotopy equivalences
defined by the identity types. By the result of
\textcite{szumilo2014two}, the localization \(\Loc(\model(\bas))\) has
finite limits, and the localization functor
\(\model(\bas) \to \Loc(\model(\bas))\) is left exact. The
construction \(\model \mapsto \Loc(\model(\bas))\) is the one
considered by \textcite[Conjecture 3.7]{kapulkin2018homotopy}, and
thus the following lemma implies the second assertion of
\cref{itth-etth-infty}.

\begin{lemma}
  \label{itth-model-localization}
  The functor
  \[
    \locmap_{!} : \Mod^{\dem}(\itth) \subset
    \Mod^{\dem}(\itth_{\infty}) \to \Mod^{\dem}(\etth_{\infty}) \simeq
    \Lex_{\infty}
  \]
  is naturally equivalent to the functor
  \(\model \mapsto \Loc(\model(\bas))\).
\end{lemma}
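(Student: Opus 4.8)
The plan is to pin down $\locmap_{!}\model$ by its universal property and match it, for every small left exact $\infty$-category $\cat$, against exact functors out of the category of fibrant objects underlying $\model$. First I would record the input from the categorical semantics of homotopy type theory: for a democratic model $\model$ of $\itth$, the base $\infty$-category $\model(\bas)$ carries the structure of a category of fibrant objects in which the fibrations are the display maps and the weak equivalences are the homotopy equivalences defined by the identity types \parencite[Theorem 3.2.5]{avigad2015homotopy}. By \textcite{szumilo2014two}, $\Loc(\model(\bas))$ is then finitely complete, the localization functor $\model(\bas) \to \Loc(\model(\bas))$ is exact, and for any small left exact $\infty$-category $\cat$ precomposition with the localization functor identifies $\Lex_{\infty}(\Loc(\model(\bas)), \cat)$ with the space of exact functors $\model(\bas) \to \cat$, where $\cat$ is regarded as the category of fibrant objects in which every object is fibrant, every arrow is a fibration, and the weak equivalences are the equivalences.

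Via the equivalence $\Mod^{\dem}(\etth_{\infty}) \simeq \Lex_{\infty}$ of \cref{etth-dem-lex} it suffices to produce a natural equivalence $\locmap_{!}\model \simeq \rmcls_{\Loc(\model(\bas))}$ in $\Mod^{\dem}(\etth_{\infty})$. As explained just before this lemma, $\locmap^{*}$ identifies $\Mod^{\dem}(\etth_{\infty})$ with a reflective subcategory of $\Mod^{\dem}(\itth_{\infty})$ and $\locmap_{!}$ is the corresponding reflector, so by the Yoneda lemma it is enough to exhibit, naturally in $\cat \in \Lex_{\infty}$, an equivalence
\[
  \Mod^{\dem}(\itth_{\infty})(\model, \locmap^{*}\rmcls_{\cat}) \simeq \Lex_{\infty}(\Loc(\model(\bas)), \cat),
\]
the right-hand side being $\Mod^{\dem}(\etth_{\infty})(\rmcls_{\Loc(\model(\bas))}, \rmcls_{\cat})$ under \cref{etth-dem-lex}. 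I would obtain this by unwinding what a morphism of $\itth_{\infty}$-models $\model \to \locmap^{*}\rmcls_{\cat}$ is: a functor $\fun \colon \model(\bas) \to \cat$ together with a Beck-Chevalley-compatible natural transformation $\model \To \fun^{*}\rmcls_{\cat}$ respecting the $\Unit$-, $\dSum$- and $\Id^{+}$-type structures. Since $\rmcls_{\cat}$ is the representable map classifier and every context of a democratic model is an iterated comprehension along pullbacks of $\typeof$ (an analogue of \cref{etth-representable-arrows}), such data amounts to a functor $\fun$ preserving the terminal object and sending display maps to arrows of $\cat$ compatibly with pullbacks, that is, preserving fibrations and pullbacks along fibrations. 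Because the target model lies in $\Mod^{\dem}(\etth_{\infty})$, where $\refl$ is inverted, compatibility with the $\Id^{+}$-type structure forces $\fun$ to send homotopy equivalences to equivalences. Hence $\fun$ is precisely an exact functor of categories of fibrant objects, and conversely every exact functor $\model(\bas) \to \cat$ supplies, through the representable map classifier of $\cat$, such a morphism of $\itth_{\infty}$-models; the two assignments are mutually inverse. Combining this with the identification from \textcite{szumilo2014two} gives the displayed equivalence, and tracing through the constructions shows that it is natural in $\cat$ and in $\model$, whence $\locmap_{!}\model \simeq \Loc(\model(\bas))$ and also the second assertion of \cref{itth-etth-infty}.

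The hard part will be the unwinding in the last step: making precise that a morphism of $\itth_{\infty}$-models valued in $\locmap^{*}\rmcls_{\cat}$ coincides on the nose with an exact functor of categories of fibrant objects $\model(\bas) \to \cat$, and in particular that compatibility with the $\Id^{+}$-type structure, whose elimination term encodes a uniform solution to lifting problems against $\typeof$, together with the extensionality of the identity types in the target, forces the inversion of homotopy equivalences. This is a repackaging of the coherence already present in the natural-model semantics of intensional identity types, but it is the one point where the argument is not purely formal. Once it is in place, the remaining steps are bookkeeping with universal properties established earlier in the paper.
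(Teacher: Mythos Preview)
Your proposal is correct and follows essentially the same approach as the paper: both establish, for each left exact $\infty$-category $\cat$, a natural equivalence between $\Mod^{\dem}(\itth_{\infty})(\model,\rmcls_{\cat})$ and the space of left exact (equivalently, exact in the fibration-category sense) functors $\model(\bas)\to\cat$, then conclude by matching universal properties. The paper's proof is terser---it writes the target space simply as $\Lex_{\infty}(\model(\bas),\cat)$ and leaves the passage through Szumi{\l}o's equivalence implicit---whereas you spell out the fibration-category framing and the role of the $\Id^{+}$-structure in forcing inversion of homotopy equivalences, but the content is the same.
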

\begin{proof}
  Let \(\cat\) be a left exact \(\infty\)-category and let
  \(\rmcls_{\cat}\) be the corresponding democratic model of
  \(\etth_{\infty}\). We construct an equivalence of spaces
  \begin{equation}
    \label{eq:12}
    \Mod^{\dem}(\itth_{\infty})(\model, \rmcls_{\cat}) \simeq
    \Lex_{\infty}(\model(\bas), \cat).
  \end{equation}
  Then we see that \(\locmap_{!} \model\) and \(\Loc(\model(\bas))\)
  has the same universal property. Given a morphism
  \(\fun : \model \to \rmcls_{\cat}\) of models of \(\itth_{\infty}\),
  since the weak equivalences in \(\model(\bas)\) is defined by the
  intensional identity types, and since the intensional identity types
  become extensional one in \(\cat\), the underlying functor
  \(\fun_{\bas} : \model(\bas) \to \cat\) is left exact. This defines
  one direction of \cref{eq:12}. For the other, let
  \(\fun_{\bas} : \model(\bas) \to \cat\) be a left exact
  functor. Recall that \(\rmcls_{\cat}(\Ty)\) is the right fibration
  of arrows in \(\cat\) and that \(\rmcls_{\cat}(\El)\) is the right
  fibration of sections in \(\cat\). Then we can construct maps
  \(\fun_{\Ty} : \model(\Ty) \to \rmcls_{\cat}(\Ty)\) and
  \(\fun_{\El} : \model(\El) \to \rmcls_{\cat}(\El)\) of right
  fibrations over \(\fun_{\bas}\) by context comprehension followed by
  \(\fun_{\bas}\). It is straightforward to see that these define a
  morphism \(\model \to \rmcls_{\cat}\) of democratic models of
  \(\itth_{\infty}\). The two constructions are mutually
  inverses.
\end{proof}

We characterize trivial fibrations of democratic models of
\(\itth_{\infty}\) in the same way as \textcite{kapulkin2018homotopy}.

\begin{lemma}
  A morphism \(\fun : \model \to \modelI\) in
  \(\Mod^{\dem}(\itth_{\infty})\) is a trivial fibration if and only
  if the following conditions are satisfied:
  \begin{description}
  \item[Type lifting] for any object \(\ctx \in \model(\bas)\)
    and any section
    \(\sh : \modelI(\bas) / \fun(\ctx) \to \modelI(\Ty)\), there
    exists a section \(\sh' : \model(\bas) / \ctx \to \model(\Ty)\)
    such that \(\fun(\sh') \simeq \sh\);
  \item[Term lifting] for any object \(\ctx \in \model(\bas)\), any
    section \(\sh : \model(\bas) / \ctx \to \model(\Ty)\), and any
    section \(\el : \modelI(\bas) / \fun(\ctx) \to \modelI(\El)\) over
    \(\fun(\sh)\), there exists a section
    \(\el' : \model(\bas) / \ctx \to \model(\El)\) over \(\sh\) such
    that \(\fun(\el') \simeq \el\) over \(\fun(\sh)\).
  \end{description}
\end{lemma}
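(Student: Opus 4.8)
The plan is to unwind the right lifting property that defines a trivial fibration against the two families of generating cofibrations, transporting everything along the equivalence \(\Mod^{\dem}(\itth_{\infty}) \simeq \Th(\itth_{\infty})\) of \cref{theory-model-correspondence} and using the concrete description of maps out of the generating objects.

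Two preliminary facts carry the whole argument. First, exactly as for \(\itth\), a morphism of \(\Th(\itth_{\infty})\) is a trivial fibration if and only if it has the right lifting property against the generators \(\yoneda(\poly_{\typeof}^{\nat}(\terminal)) \to \yoneda(\poly_{\typeof}^{\nat}(\Ty))\) and \(\yoneda(\poly_{\typeof}^{\nat}(\typeof)) : \yoneda(\poly_{\typeof}^{\nat}(\Ty)) \to \yoneda(\poly_{\typeof}^{\nat}(\El))\) for \(\nat \ge 0\); this is the usual reduction of lifting against the saturation of a class to lifting against its generators. Second, the argument of \cref{etth-representable-arrows} goes through for \(\itth_{\infty}\) as well — it uses the freeness of \(\itth_{\infty}\) and the \(\Unit\)- and \(\dSum\)-type structures on \(\typeof\), but not univalence — so every representable arrow of \(\itth_{\infty}\) is a pullback of \(\typeof\); hence the contextual objects of a democratic model \(\model\) of \(\itth_{\infty}\) are precisely the iterated context comprehensions along \(\typeof\), and by democracy these exhaust \(\model(\bas)\).

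Granting this, the analysis of \cref{remark-itth-cof} transfers to \(\itth_{\infty}\) with the same proof (it uses only the equivalence \(\Th \simeq \Mod^{\dem}\), the identification of \(\Th(\itth_{\infty})(\yoneda\obj, \IL(\model))\) with the space of global sections of \(\model(\obj)\), and the universal property of \(\poly_{\typeof}\)): a map \(\yoneda(\poly_{\typeof}^{\nat}(\terminal)) \to \model\) is a length-\(\nat\) context comprehension, hence an object \(\ctx \in \model(\bas)\) together with a presentation; an extension of it to \(\yoneda(\poly_{\typeof}^{\nat}(\Ty)) \to \model\) is a section \(\sh : \model(\bas)/\ctx \to \model(\Ty)\); and an extension of \emph{that} to \(\yoneda(\poly_{\typeof}^{\nat}(\El)) \to \model\) is a section of \(\model(\El)\) over \(\sh\). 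Now inspect the two triangles of a lifting square for \(\fun : \model \to \modelI\): against the first family the upper triangle pins the lift down to a section over \(\ctx\) and the lower one forces it to map to the prescribed type under \(\fun\), so right lifting against the first family for all \(\nat\) — equivalently, by the preliminary fact, for all \(\ctx \in \model(\bas)\) — is exactly the \textbf{Type lifting} condition; the same inspection for the second family yields exactly the \textbf{Term lifting} condition. Conjoining the two gives the stated characterization.

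The argument is essentially a translation, so the only point that warrants attention — and hence the main, if mild, obstacle — is the interplay between presentations and democracy. A map out of \(\yoneda(\poly_{\typeof}^{\nat}(\terminal))\) remembers a particular length-\(\nat\) tower presenting \(\ctx\), whereas Type and Term lifting refer only to \(\ctx\); so one must check that the lifting datum produced (the section \(\sh'\), resp.\ \(\el'\)) depends on \(\ctx\) alone and that every object of \(\model(\bas)\) genuinely arises as such a tower. Both follow from the two preliminary facts: the comparison maps of \cref{remark-itth-cof} are equivalences of spaces, so changing the presentation alters the relevant mapping spaces only through a contractible space of choices, and the reduction of representable arrows to pullbacks of \(\typeof\) combined with democracy makes the contextual towers cover \(\model(\bas)\). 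After this the identification of the two lifting conditions with the two families of generating lifting problems is immediate, and I foresee no further difficulty.
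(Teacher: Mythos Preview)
Your proof is correct and follows essentially the same approach as the paper: translate the lifting problem against the generating cofibrations through the equivalence \(\Th(\itth_{\infty}) \simeq \Mod^{\dem}(\itth_{\infty})\), invoke the analysis of \cref{remark-itth-cof} to identify the generators with contexts, types, and terms, and use democracy for the converse direction. You are in fact more explicit than the paper on two points it leaves implicit --- that the argument of \cref{etth-representable-arrows} carries over to \(\itth_{\infty}\) (needed so that contextual objects coincide with iterated \(\typeof\)-comprehensions), and that the lifting datum is independent of the chosen presentation of \(\ctx\).
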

\begin{proof}
  Let \(\theory\) be the \(\itth_{\infty}\)-theory corresponding to
  \(\model\), that is, \(\theory(\obj)\) is the space of global
  sections of \(\model(\obj)\) for \(\obj \in \itth_{\infty}\). As we
  saw in \cref{remark-itth-cof}, a morphism
  \(\ctx : \yoneda (\poly_{\typeof}^{\nat}(\terminal)) \to \theory\)
  corresponds to a list of sections \labelcref{eq:13}, and extensions
  \(\yoneda (\poly_{\typeof}^{\nat}(\Ty)) \to \theory\) and
  \(\yoneda (\poly_{\typeof}^{\nat}(\El)) \to \theory\) of \(\ctx\)
  correspond to sections
  \(\model(\bas) / \{\ctx_{\nat}\} \to \model(\Ty)\) and
  \(\model(\bas) / \{\ctx_{\nat}\} \to \model(\El)\),
  respectively. Then, type lifting and term lifting implies the right
  lifting property against the generating cofibrations. The converse
  is also true because, since \(\model\) is democratic, any object of
  \(\model(\bas)\) is of the form \(\{\ctx_{\nat}\}\) for some list of
  sections \labelcref{eq:13}.
\end{proof}

\begin{proof}[Proof of \cref{itth-unit-triv-fib}]
  We check type lifting and term lifting along the unit
  \(\unit : \model \to \locmap_{!} \model \simeq \Loc(\model(\bas))\)
  for a democratic model \(\model\) of \(\itth\). Type lifting is
  immediate because any object in \(\Loc(\model(\bas)) / \unit(\ctx)\)
  is represented by a fibration \(\sh \to \ctx\) in
  \(\model(\bas)\). For term lifting, we also need the fact that
  \(\model(\bas)\) is not only a category of fibrant objects but also
  a \emph{tribe} \parencite{joyal2017clans} and in particular a
  \emph{path category} \parencite{vandenberg2018exact}. In this
  special case, a section of \(\unit(\sh) \to \unit(\ctx)\) in
  \(\Loc(\model(\bas))\) for a fibration \(\sh \to \ctx\) in
  \(\model(\bas)\) is represented by a section in \(\model(\bas)\) by
  \parencite[Corollary 2.19]{vandenberg2018exact}.
\end{proof}

This concludes the proof that trivial fibrations are weak equivalences in \(\Th(\itth)\). It remains to show \cref{itth-cof-obj}, which follows from the following theorem.

\begin{theorem}
  \label{itth-infty-cof-obj}
  Any cofibrant object of \(\Mod^{\dem}(\itth_{\infty})\) belongs to
  \(\Mod^{\dem}(\itth)\).
\end{theorem}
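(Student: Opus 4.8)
The plan is to verify that a cofibrant democratic model $\model$ of $\itth_{\infty}$ satisfies the defining condition for membership in $\Mod^{\dem}(\itth) \subseteq \Mod^{\dem}(\itth_{\infty})$ recalled above, namely that $\model(\Ty)$ and $\model(\El)$ are $0$-truncated objects of $\RFib_{\model(\bas)}$; concretely, that for every $\ctx \in \model(\bas)$ the spaces $\model(\Ty)_{\ctx}$ and $\model(\El)_{\ctx}$ of types and of terms over $\ctx$ are sets. First I would reduce to two basic cases. A cofibrant object is a retract of a transfinite composite of pushouts of the generating cofibrations of $\Th(\itth_{\infty})$ starting from the initial model $\IM(\itth_{\infty})$. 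By \cref{heart-of-model} the inclusion $\Mod^{\dem}(\itth_{\infty}) \subseteq \Mod(\itth_{\infty})$ is coreflective, so these colimits agree with those in $\Mod(\itth_{\infty})$, and since the forgetful functor to $\Fun(\itth_{\infty}^{\rcone}, \Cat_{\infty})$ preserves filtered colimits one has $(\colim_{\alpha}\model_{\alpha})(\obj) \simeq \colim_{\alpha}\model_{\alpha}(\obj)$ in $\Cat_{\infty}$ for a transfinite composite. Now $\Mod^{\dem}(\itth)$ is closed under retracts (fibrewise, a retract of a $0$-truncated right fibration is $0$-truncated) and under transfinite composites (the fibre of $\colim_{\alpha}\model_{\alpha}(\Ty)$ over an object of $\colim_{\alpha}\model_{\alpha}(\bas)$ coming from stage $\alpha_{0}$ is a filtered colimit of the sets $\model_{\alpha}(\Ty)_{\ctx_{\alpha}}$, hence a set, and likewise for $\El$). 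So it suffices to prove: (a) $\IM(\itth_{\infty}) \in \Mod^{\dem}(\itth)$; and (b) a pushout in $\Mod^{\dem}(\itth_{\infty})$ of a model in $\Mod^{\dem}(\itth)$ along a generating cofibration again lies in $\Mod^{\dem}(\itth)$.

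For (a), by the description in \cref{sec:initial-model} the base $\IM(\itth_{\infty})(\bas)$ is the full subcategory of $\itth_{\infty}$ on the contextual objects, and for $\objI \in \itth_{\infty}$ the right fibration $\IM(\itth_{\infty})(\objI)$ has fibre $\itth_{\infty}(\obj, \objI)$ over a contextual object $\obj$; so (a) says that $\itth_{\infty}(\obj, \Ty)$ and $\itth_{\infty}(\obj, \El)$ are sets for every contextual $\obj$. I would prove this by identifying the sub-structure of $\itth_{\infty}$ given by the contextual objects together with the types and terms over them with the classical syntax of $\itth$, which is a $1$-type theory. Concretely one builds, by mutual induction, a set of contexts, for each context a set of types over it, and for each type a set of terms of it, using at each stage only that the $\Unit$-, $\dSum$- and $\Id^{+}$-type structures contribute finitely many operations and equations and no generating higher cells; the universal property of $\itth_{\infty}$ then exhibits this $1$-type theory as a truncation of $\itth_{\infty}$ preserving contextual objects and the types and terms over them, forcing the mapping spaces in question to be sets. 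Granting this, $\IM(\itth_{\infty})$ is a model of $\itth$, and by initiality it is equivalent to the syntactic category $\IM(\itth)$.

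For (b), I would describe the two pushouts model-theoretically. Since $\SM$ preserves colimits and, by \cref{sec:synt-models-repr}, $\SM(\yoneda(\poly_{\typeof}^{\nat}(\terminal)))$, $\SM(\yoneda(\poly_{\typeof}^{\nat}(\Ty)))$ and $\SM(\yoneda(\poly_{\typeof}^{\nat}(\El)))$ are the free models on a context of length $\nat$, on such a context with a type over it, and with a term of that type respectively, pushing $\model$ out along $\yoneda(\poly_{\typeof}^{\nat}(\terminal)) \to \yoneda(\poly_{\typeof}^{\nat}(\Ty))$ freely adjoins a new type over the designated length-$\nat$ context of $\model$, and pushing out along $\yoneda(\poly_{\typeof}^{\nat}(\Ty)) \to \yoneda(\poly_{\typeof}^{\nat}(\El))$ freely adjoins a new term of the designated type. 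In either case $\model'(\bas)$, together with the presheaves of types and of terms of $\model'$, is obtained from that of $\model$ by the syntactic operation of adjoining a fresh constant, so the inductive bookkeeping of (a), run relative to $\model$ rather than to the empty signature, shows that the fibres of $\model'(\Ty)$ and $\model'(\El)$ stay sets whenever those of $\model(\Ty)$ and $\model(\El)$ were. With (a) and the closure properties above this proves \cref{itth-infty-cof-obj}, and hence, as explained in the text, \cref{itth-cof-obj}.

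The hard part is the inductive analysis of (a), which is then reused in (b): one must control the homotopy type of the space of types, and of terms, over a context in a freely generated $\infty$-type-theoretic model and show it stays discrete. This is not formal, since $\itth_{\infty}$ is a genuine $\infty$-type theory with non-discrete mapping spaces in general; it amounts to reconstructing the syntax of $\itth$ inside $\itth_{\infty}$, the point being that the weak ($\Id^{+}$) identity types together with the $\dSum$- and $\Unit$-structures, imposed as bare pullback squares and sections rather than as strict equations, still generate no higher cells among the types and terms of a cofibrant model. Everything else is bookkeeping with colimits in $\Mod^{\dem}(\itth_{\infty})$ and with the explicit descriptions of $\IM$ and of $\SM$ on representables.
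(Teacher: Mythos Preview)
Your proposal has a genuine gap: the ``inductive analysis'' you invoke in (a) and (b) is precisely the coherence theorem you are trying to prove, and you do not actually carry it out. In (a), the universal property of $\itth_{\infty}$ gives you a morphism $\truncmap : \itth_{\infty} \to \itth$, but it does not by itself force $\itth_{\infty}(\obj,\Ty)$ or $\itth_{\infty}(\obj,\El)$ to be sets for contextual $\obj$; knowing that a $1$-categorical truncation exists tells you nothing about discreteness of the mapping spaces before truncation. In (b), the pushout in $\Mod^{\dem}(\itth_{\infty})$ of a model of $\itth$ along a generating cofibration is \emph{not} computed pointwise, and your assertion that it is ``obtained from that of $\model$ by the syntactic operation of adjoining a fresh constant'' is exactly what needs proof. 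Showing that this $\infty$-categorical pushout lands back in $\Mod^{\dem}(\itth)$ is equivalent to showing that $\truncmap^{*}$ preserves such pushouts, which is \cref{itth-cof-obj}---and in the paper that proposition is \emph{derived from} \cref{itth-infty-cof-obj}, not used to prove it. Your proposal is therefore circular at its core step.

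The paper takes an entirely different, semantic route. Given an arbitrary democratic model $\model$ of $\itth_{\infty}$, it constructs a \emph{split replacement} $\Sp\model \in \Mod^{\dem}(\itth)$ together with a trivial fibration $\counit : \Sp\model \to \model$. The construction presents the $\infty$-topos $\RFib_{\model(\bas)}$ by a type-theoretic model topos $\catX$ in the sense of \textcite{shulman2019toposes}, lifts $\model(\typeof)$ to a fibration $\typeof_{\catX}$ in $\catX$, and then shows (using univalent universes in $\catX$ and the fact that $\catX$ is right proper with cofibrations the monomorphisms) that $\typeof_{\catX}$ can be chosen with strict $\Unit$-, $\dSum$-, and $\Id^{+}$-type structures. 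The heart of the resulting model of $\itth$ is $\Sp\model$. If $\model$ is cofibrant, the trivial fibration $\counit$ admits a section, exhibiting $\model$ as a retract of $\Sp\model$; since $\Mod^{\dem}(\itth)$ is closed under retracts, $\model \in \Mod^{\dem}(\itth)$. No induction on the cell structure of $\model$ is needed, and no direct analysis of free $\infty$-type theories is performed.
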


\begin{proof}[Proof of \cref{itth-cof-obj}]
  Initial objects are cofibrant, and the pushout of a cofibration
  along a morphism between cofibrant objects is cofibrant. Thus, by
  \cref{itth-infty-cof-obj},
  \(\Mod^{\dem}(\itth) \subset \Mod^{\dem}(\itth_{\infty})\) is closed
  under these colimits.
\end{proof}

\Cref{itth-infty-cof-obj} is the hardest
part. We may think of this theorem as a form of \emph{coherence
  problem}. A general democratic model of \(\itth_{\infty}\) may
contain a lot of non-trivial homotopies, but \cref{itth-infty-cof-obj}
says that all the homotopies in a cofibrant democratic model of
\(\itth_{\infty}\) are trivial.

A successful approach to coherence problems in the categorical
semantics of type theory is to replace a non-split model by a split
model
\parencite{hofmann1995interpretation,lumsdaine2015local}. Following
them, we construct, given a democratic model \(\model\) of
\(\itth_{\infty}\), a democratic model \(\Sp \model\) of \(\itth\)
equipped with a trivial fibration \(\counit : \Sp \model \to
\model\). Then \cref{itth-infty-cof-obj} follows from a retract
argument.

The construction of \(\Sp \model\) crucially relies on
\citeauthor{shulman2019toposes}'s result of replacing any
(Grothendieck) \(\infty\)-topos by a well-behaved model category
called a \emph{type-theoretic model topos}
\parencite{shulman2019toposes}. Let \(\model\) be a democratic model
of \(\itth_{\infty}\). Recall that it consists of a base
\(\infty\)-category \(\model(\bas)\), a representable map
\(\model(\typeof) : \model(\El) \to \model(\Ty)\) of right fibrations
over \(\model(\bas)\), and some other structures. Since the
\(\infty\)-category \(\RFib_{\model(\bas)}\) is an \(\infty\)-topos,
it is a localization
\(\locmap_{\catX} : \catX \to \RFib_{\model(\bas)}\) of some
type-theoretic model topos \(\catX\) \parencite[Theorem
11.1]{shulman2019toposes}. Then there exists a fibration
\(\typeof_{\catX} : \El_{\catX} \to \Ty_{\catX}\) between fibrant
objects in \(\catX\) such that
\(\locmap_{\catX}(\typeof_{\catX}) \simeq \model(\typeof)\). We will
choose \(\typeof_{\catX}\) that has a \(\Unit\)-type structure, a
\(\dSum\)-type structure, and an \(\Id^{+}\)-type structure so that it
induces a model of \(\itth\).

We remind the reader that the type-theoretic model topos \(\catX\) has
nice properties by definition \parencite[Definition
6.1]{shulman2019toposes}: the underlying \(1\)-category is a
Grothendieck \(1\)-topos, the cofibrations are the monomorphisms, and
the model structure is right proper. The right properness in
particular implies that the localization functor
\(\locmap_{\catX} : \catX \to \RFib_{\model(\bas)}\) preserves
pullbacks of fibrations and pushforwards of fibrations along
fibrations used in the definitions of \(\Unit\)-type, \(\dSum\)-type,
and \(\Id^{+}\)-type structures.

\begin{lemma}
  \label{lemma-1}
  For any cofibration \(\cof : \sh \to \shI\) between fibrant objects
  in \(\catX\) and for any fibration \(\fib : \shXI \to \shX\) in
  \(\catX\), the induced map
  \[
    (\cof^{*}, \fib_{*}) : \shXI^{\shI} \to \shXI^{\sh}
    \times_{\shX^{\sh}} \shX^{\shI}
  \]
  is a fibration.
\end{lemma}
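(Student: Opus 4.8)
The plan is to run the standard ``pushout-product'' argument for the Leibniz exponential, supplying by hand the Quillen data that is not automatic here (since a type-theoretic model topos need not be a cartesian monoidal model category) from right properness together with the fibrancy of $S$ and $T$. The first step I would record is: for any fibrant object $K$ of $\catX$, the functor $({-})\times K$ preserves trivial cofibrations. Indeed it preserves monomorphisms, hence cofibrations; and if $f\colon A\to B$ is a weak equivalence then $f\times\id_K\colon A\times K\to B\times K$ is the pullback of $f$ along the projection $B\times K\to B$, which is a fibration because it is itself the pullback of $K\to\terminal$ along $B\to\terminal$ and $K$ is fibrant, so $f\times\id_K$ is a weak equivalence by right properness. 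Equivalently, $({-})^K$ preserves fibrations for fibrant $K$, which is the half of the Quillen-pair criterion for the exponential adjunction between $({-})\times K$ and $({-})^K$ that we will use.

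Now $(i^*,p_*)$ is precisely the Leibniz exponential of the cofibration $i$ and the fibration $p$, and the cartesian-closed adjunction between $({-})\times T$ and $({-})^T$ turns a lifting problem of a map $j\colon A\to B$ against $(i^*,p_*)$ into a lifting problem of $p$ against the pushout-product map $Q\to B\times T$, where $Q=(B\times S)\cup_{A\times S}(A\times T)$. Hence it suffices to show that this pushout-product map is a trivial cofibration whenever $j$ is, and then use that $p$, being a fibration, lifts against it. That it is a cofibration is the standard fact that monomorphisms in a topos are closed under pushout-product. That it is a weak equivalence follows by a two-out-of-three argument in the defining pushout square: $j\times\id_S\colon A\times S\to B\times S$ is a trivial cofibration by the first paragraph (as $S$ is fibrant), hence so is its pushout $A\times T\to Q$; since composing this pushout with the pushout-product map $Q\to B\times T$ yields $j\times\id_T\colon A\times T\to B\times T$, which is a trivial cofibration because $T$ is fibrant, the pushout-product map is a weak equivalence. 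This is exactly where both fibrancy hypotheses are spent.

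I do not expect a deep obstacle; the one delicate point is the first step — extracting ``exponentiation by a fibrant object preserves fibrations'', equivalently ``$({-})\times K$ is left Quillen for $K$ fibrant'', from right properness alone. Without this, the bare facts that $\catX$ is a topos and that its cofibrations are the monomorphisms would not suffice, since a general Cisinski model structure on a topos need not be a cartesian monoidal model structure. Everything else is routine bookkeeping with the adjunction transposition and the pushout-product square.
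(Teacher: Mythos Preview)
Your proposal is correct and is essentially the same argument as the paper's: both reduce by adjunction to showing that the pushout-product of the given cofibration with an arbitrary trivial cofibration is a trivial cofibration, use that monomorphisms in a topos are closed under pushout-product for the cofibration half, and use right properness together with fibrancy of the two objects (so that projections off them are fibrations) to see that the relevant products with a trivial cofibration remain weak equivalences, concluding by 2-out-of-3. Your write-up is in fact more explicit than the paper's about why right properness plus fibrancy yields the needed preservation step.
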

\begin{proof}
  By an adjoint argument, it suffices to show that for any trivial
  cofibration \(\cof' : \sh' \to \shI'\), the induced map
  \[
    (\cof', \cof) : \sh' \times \shI \amalg_{\sh' \times \sh} \shI'
    \times \sh \to \shI' \times \shI
  \]
  is a trivial cofibration. Since the class of cofibrations are the
  class of monomorphisms in the Grothendieck \(1\)-topos \(\catX\),
  the map \((\cof', \cof)\) is a cofibration. Since \(\sh\) and
  \(\shI\) are fibrant and since the model structure is right proper,
  the maps \(\cof' \times \sh : \sh' \times \sh \to \shI' \times \sh\)
  and \(\cof' \times \shI : \sh' \times \shI \to \shI' \times \shI\)
  are weak equivalences. By 2-out-of-3, the map \((\cof', \cof)\) is a
  weak equivalence.
\end{proof}

\begin{lemma}
  For any choice of \(\typeof_{\catX}\), there exists an
  \(\Id^{+}\)-type structure on \(\typeof_{\catX}\) sent by
  \(\locmap_{\catX} : \catX \to \RFib_{\model(\bas)}\) to the
  \(\Id^{+}\)-type structure on \(\model(\typeof)\).
\end{lemma}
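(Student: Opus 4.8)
The plan is to obtain the $\Id^{+}$-type structure by forming a path object for $\typeof_{\catX}$ inside the model category $\catX$ and then transporting it along $\locmap_{\catX}$. First I would factor the diagonal $\diagonal \colon \El_{\catX} \to \El_{\catX} \times_{\Ty_{\catX}} \El_{\catX}$ — a map between fibrant objects, since $\typeof_{\catX}$ is a fibration between fibrant objects — using the weak factorization system of $\catX$ as an acyclic cofibration $\refl_{\catX} \colon \El_{\catX} \to \Id_{\catX}$ followed by a fibration $\Id_{\catX} \to \El_{\catX} \times_{\Ty_{\catX}} \El_{\catX}$. This produces the identity-type family and the reflexivity term. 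For the eliminator, apply \cref{lemma-1} with the cofibration $\refl_{\catX}$ and the fibration $\typeof_{\catX}$: the induced Leibniz map $(\refl_{\catX}^{*}, (\typeof_{\catX})_{*})$ is a fibration, and since $\refl_{\catX}$ is moreover a weak equivalence the same argument (equivalently, the pushout--product axiom) upgrades it to a \emph{trivial} fibration. Its codomain is built from pullbacks of fibrations and pushforwards of fibrations along fibrations, hence is fibrant, and every object of $\catX$ is cofibrant because the cofibrations are the monomorphisms; therefore this trivial fibration admits a section $\elim_{\Id^{+}}$. Together with $\refl_{\catX}$ this is an $\Id^{+}$-type structure on $\typeof_{\catX}$.

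It remains to arrange that $\locmap_{\catX}$ carries this structure to the \emph{given} $\Id^{+}$-type structure on $\model(\typeof)$. Here I use that $\locmap_{\catX} \colon \catX \to \RFib_{\model(\bas)}$ is the localization of $\catX$ at its weak equivalences, so it preserves weak equivalences and finite homotopy limits and — by right properness, as recorded just before \cref{lemma-1} — also pullbacks of fibrations and pushforwards of fibrations along fibrations; moreover, being such a localization, it induces equivalences on mapping spaces between fibrant objects, so any map or section in $\RFib_{\model(\bas)}$ between objects presented by fibrant objects of $\catX$ lifts to $\catX$, uniquely up to homotopy. Consequently $\locmap_{\catX}$ sends the factorization above to a factorization of $\model(\El) \to \model(\El) \times_{\model(\Ty)} \model(\El)$ through $\locmap_{\catX}(\Id_{\catX})$ by an anodyne map and a fibration, that is, to a path object equipped with a uniform $J$-eliminator for $\model(\typeof)$ in the $\infty$-topos $\RFib_{\model(\bas)}$. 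Path objects in an $\infty$-topos are unique up to equivalence, compatibly with reflexivity, so $\locmap_{\catX}(\Id_{\catX})$ is identified with the identity-type family of $\model$; transporting $\model(\elim_{\Id^{+}})$ along this identification yields a section of $(\refl_{\catX}^{*},(\typeof_{\catX})_{*})$ up to homotopy, and since that map is a trivial fibration this section rectifies to an honest one whose image under $\locmap_{\catX}$ is $\model(\elim_{\Id^{+}})$. This produces the required structure.

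The main obstacle is precisely this last compatibility step: it is easy to put \emph{some} $\Id^{+}$-type structure on $\typeof_{\catX}$, but matching it coherently with the specific structure already present on $\model(\typeof)$ requires (i) the uniqueness of path objects in $\RFib_{\model(\bas)}$ to pin down the identity-type family, (ii) the identification of $\locmap_{\catX}$ with a localization at weak equivalences, so that the reflexivity term and the eliminator can be lifted from $\RFib_{\model(\bas)}$ back to $\catX$, and (iii) the strictification of these homotopy-coherent lifts, which in turn relies on $(\refl_{\catX}^{*}, (\typeof_{\catX})_{*})$ being a trivial fibration — hence on \cref{lemma-1}, on $\refl_{\catX}$ being an acyclic cofibration, and on every object of $\catX$ being cofibrant. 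The only genuinely nontrivial external input is \citeauthor{shulman2019toposes}'s presentation of $\RFib_{\model(\bas)}$ by a type-theoretic model topos, which has already been invoked.
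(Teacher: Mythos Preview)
Your approach has a genuine gap in the compatibility step. You factor the diagonal as an acyclic cofibration $\refl_{\catX}$ followed by a fibration, so $\locmap_{\catX}(\refl_{\catX})$ is an \emph{equivalence} in $\RFib_{\model(\bas)}$; consequently the $\Id^{+}$-type structure you produce on $\model(\typeof)$ is one in which $\refl$ is invertible, i.e.\ essentially an $\Id$-type structure in the sense of \cref{eq:3}. But the given $\Id^{+}$-type structure on $\model(\typeof)$ is part of the data of a model of $\itth_{\infty}$, not of $\etth_{\infty}$, and there is no reason for $\model(\refl)$ to be an equivalence. Concretely, any democratic model of $\itth$ (viewed via $\truncmap^{*}$) has $\model(\El)$ and $\model(\Id^{*}\El)$ $0$-truncated in $\RFib_{\model(\bas)}$, and $\model(\refl)$ is then an equivalence only if it is an isomorphism, which fails for genuinely intensional identity types. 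Your appeal to ``uniqueness of path objects in an $\infty$-topos'' therefore does not identify $\locmap_{\catX}(\Id_{\catX})$ with the given $\model(\Id)$: the latter is simply not a path object in general. The subsequent transport-and-rectify step then has nothing to transport along.

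The paper avoids this entirely by working in the opposite direction: rather than constructing a new structure in $\catX$ and matching afterwards, it \emph{lifts the given data}. One first lifts the commutative square \cref{eq:16} for $\model(\typeof)$ to a square for $\typeof_{\catX}$, using that all objects of $\catX$ are cofibrant and the relevant targets are fibrant. The lifted $\refl$ is then a split monomorphism (its composite with the projection $\Id^{*}\El_{\catX} \to \El_{\catX}\times_{\Ty_{\catX}}\El_{\catX} \to \El_{\catX}$ is the identity), hence a cofibration since cofibrations in $\catX$ are the monomorphisms. Now \cref{lemma-1}, applied in the slice $\catX/\Ty_{\catX}$, shows that $(\refl^{*},\typeof_{*})$ is a fibration with fibrant codomain, and one lifts the \emph{given} section $\elim_{\Id^{+}}$ along it. No uniqueness argument is needed: compatibility with the given structure holds by construction. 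Note also that your upgrade of \cref{lemma-1} to a trivial-fibration statement would require a pushout-product axiom that is not part of the definition of a type-theoretic model topos; the paper only needs the fibration conclusion, which is all \cref{lemma-1} provides.
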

\begin{proof}
  Since all the objects in \(\catX\) are cofibrant and
  \(\typeof_{\catX}\) is a fibration between fibrant objects, the
  commutative square \labelcref{eq:16} for \(\model(\typeof)\) can be
  lifted to one for \(\typeof_{\catX}\). The map
  \(\refl : \El \to \Id^{*} \El\) is a monomorphism in \(\catX\) and
  thus a cofibration. Applying \cref{lemma-1} for the slice
  \(\catX / \Ty\) instead of \(\catX\), we see that the induced map
  \((\refl^{*}, \typeof_{*})\) is a fibration. The codomain of the map
  \((\refl^{*}, \typeof_{*})\) is fibrant by the right
  properness. Hence, the section of \((\refl^{*}, \typeof_{*})\) for
  \(\model(\typeof)\) can be lifted to one for \(\typeof_{\catX}\).
\end{proof}

\begin{lemma}
  \label{split-repl-sigma}
  One can choose \(\typeof_{\catX}\) that has a \(\Unit\)-type
  structure and a \(\dSum\)-type structure sent by
  \(\locmap_{\catX} : \catX \to \RFib_{\model(\bas)}\) to those
  structures on \(\model(\typeof)\).
\end{lemma}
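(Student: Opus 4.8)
The plan is to realize $\typeof_{\catX}$ as a pullback of one of the universes of the type-theoretic model topos $\catX$, chosen large enough to be strictly closed under $\dSum$- and $\Unit$-types, along a classifying map whose domain is a fibrant object presenting $\model(\Ty)$ and closed under the corresponding operations. First, in $\catX$ fix a universe $\pi : \tilde{U} \to U$ between fibrant objects, for a sufficiently large regular cardinal, classifying the relatively small fibrations and carrying strict $\Unit$- and $\dSum$-type structures: a global section $u : \terminal \to U$ with $u^{*}\pi \cong \id_{\terminal}$, and a section $s : \cod(\pi \otimes \pi) \to U$ with $s^{*}\pi \cong \pi \otimes \pi$, using that $\pi \otimes \pi$ is again relatively small. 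Since $\model(\typeof)$ is a representable map of right fibrations over a small $\infty$-category, it is relatively small, and one may pick a fibrant presentation $\typeof_{\catX}^{(0)} : \El_{\catX}^{(0)} \to \Ty_{\catX}^{(0)}$ of it in $\catX$ by a relatively small fibration; then $\pi$ classifies $\typeof_{\catX}^{(0)}$, giving $\chi_{0} : \Ty_{\catX}^{(0)} \to U$ with $\chi_{0}^{*}\pi \cong \typeof_{\catX}^{(0)}$ and $\locmap_{\catX}(\chi_{0})^{*}\locmap_{\catX}(\pi) \simeq \model(\typeof)$. Because $\model(\typeof)$ already carries a $\dSum$- and a $\Unit$-type structure and $\locmap_{\catX}(\pi)$ is univalent (being an object classifier in $\RFib_{\model(\bas)}$), the map $\locmap_{\catX}(\chi_{0})$ commutes, up to homotopy, with the $\Sigma$- and unit-operations of $\locmap_{\catX}(\pi)$.

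Second, I would upgrade $\chi_{0}$ to a map $\chi : \Ty_{\catX} \to U$ with $\Ty_{\catX}$ fibrant, $\locmap_{\catX}(\Ty_{\catX}) \simeq \model(\Ty)$, $\locmap_{\catX}(\chi) \simeq \locmap_{\catX}(\chi_{0})$, and $\chi$ strictly compatible with $u$ and with $s$. Starting from $\chi_{0}$, one forms at each successor stage the fibration $\typeof_{\catX}^{(n)} := \chi_{n}^{*}\pi$, the pair-of-types object $\cod(\typeof_{\catX}^{(n)} \otimes \typeof_{\catX}^{(n)})$ together with the composite to $U$ obtained from $s$ and the evident comparison, and then enlarges $\Ty_{\catX}^{(n)}$ to $\Ty_{\catX}^{(n+1)}$ so that this composite factors strictly through a map into $\Ty_{\catX}^{(n+1)}$ and so that $u$ factors through $\Ty_{\catX}^{(n+1)}$; one does this by a coherence construction in the style of the local-universes method \parencite{lumsdaine2015local}, arranging — by right properness of $\catX$, preservation of the relevant pullbacks, pushforwards and colimits by $\locmap_{\catX}$, and the fact that $\model(\Ty)$ is already closed under $\dSum$- and $\Unit$-types — that the presented right fibration is unchanged at every stage. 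Passing to the colimit over a sufficiently long ordinal and taking a compatible fibrant replacement yields $\chi$, and one sets $\typeof_{\catX} := \chi^{*}\pi$; then $\locmap_{\catX}(\typeof_{\catX}) \simeq \locmap_{\catX}(\chi)^{*}\locmap_{\catX}(\pi) \simeq \model(\typeof)$.

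Finally, pulling back the strict $\Unit$- and $\dSum$-type structures on $\pi$ along $\chi$ produces $\Unit$- and $\dSum$-type structures on $\typeof_{\catX}$, and since $\chi$ and the universe's structure maps localize to $\locmap_{\catX}(\chi_{0})$ and to the canonical structure of the univalent classifier $\locmap_{\catX}(\pi)$, while $\model(\typeof)$ is classified by $\locmap_{\catX}(\chi_{0})$ compatibly with these operations, the resulting structures are sent by $\locmap_{\catX}$ to the given $\Unit$- and $\dSum$-type structures on $\model(\typeof)$ — using univalence of $\locmap_{\catX}(\pi)$ once more to see that the induced operations on pullbacks are forced. The main obstacle is the second step: producing a fibrant presentation $\Ty_{\catX}$ of $\model(\Ty)$ that is strictly closed under the universe's $\Sigma$- and unit-operations \emph{without} enlarging the presented right fibration. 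This is the coherence core of the argument, and it is precisely where one must combine the pre-existing $\dSum$- and $\Unit$-type structures on $\model(\typeof)$ with the special features of type-theoretic model toposes (monomorphisms as cofibrations, right properness, and a good supply of structured universes).
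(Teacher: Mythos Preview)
Your overall shape---pull back a univalent universe $\pi$ in the type-theoretic model topos along a classifying map, then strictify the map's compatibility with the universe's $\dSum$ and $\Unit$ operations---aims at the right target, and you correctly isolate the second step as the crux. But that step, as written, has a genuine gap. The construction you sketch is not the local-universes method (which strictifies \emph{substitution}, not closure under type formers), and the transfinite iteration has two problems. First, at each successor stage you must adjoin the universe's strict $\dSum$ of two types already in $\Ty_{\catX}^{(n)}$ \emph{and} a path identifying it with the $\dSum$-type that $\model(\Ty)$ already possesses; keeping $\chi_{n+1}$ strictly compatible with $s$ across such adjoined paths is not automatic. Second, and more seriously, your final ``compatible fibrant replacement'' of the colimit destroys the very property you built up: the replacement introduces new points of $\Ty_{\catX}$ not lying in any $\Ty_{\catX}^{(n)}$, and there is no reason the extension of $\chi$ to these points factors the universe's $\dSum$ strictly. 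You would need to interleave fibrant replacement with the closure iteration, and then argue convergence---none of which is indicated.

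The paper sidesteps iteration entirely with a single factorization. Choose any fibrant lift $\typeof_{\catX}$ and a regular cardinal $\card$ making it $\card$-small; let $\typeof_{\catX}^{\card}$ be Shulman's univalent universe of $\card$-small fibrations. Factor the classifying map $\Ty_{\catX} \to \Ty_{\catX}^{\card}$ as a weak equivalence $\Ty_{\catX} \to \Ty_{\catX}'$ followed by a fibration, and set $\typeof_{\catX}'$ to be the pullback of $\typeof_{\catX}^{\card}$ along the fibration part. By univalence one may take $\Ty_{\catX}'$ so that maps into it classify triples (map to $\Ty_{\catX}$, $\card$-small fibration, equivalence between the two induced fibrations); equivalently, the square from $\typeof_{\catX}'$ to $\typeof_{\catX}$ is the \emph{generic} homotopy pullback from a $\card$-small fibration. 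Now the given $\Unit$- and $\dSum$-structures on $\model(\typeof)$ lift to homotopy pullbacks from $(\typeof_{\catX}')^{\otimes n}$ to $\typeof_{\catX}$ for $n = 0, 2$; since composition of polynomials preserves $\card$-smallness, each such homotopy pullback factors as a \emph{strict} pullback of $\typeof_{\catX}'$ followed by the generic one. This yields the strict structures in one step, with no transfinite construction and no fibrant replacement to undo.
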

\begin{proof}
  A \(\Unit\)-type structure and a \(\dSum\)-type structure on
  \(\typeof\) are a pullback of the form
  \[
    \begin{tikzcd}
      \dom (\typeof^{\otimes \nat})
      \arrow[r, dotted, "\pair_{\nat}"]
      \arrow[d, "\typeof^{\otimes \nat}"'] &
      \El
      \arrow[d, "\typeof"] \\
      \cod (\typeof^{\otimes \nat})
      \arrow[r, dotted, "\dSum_{\nat}"'] &
      \Ty
    \end{tikzcd}
  \]
  for \(\nat = 0\) and \(\nat = 2\), respectively, where
  \(\typeof^{\otimes \nat}\) is the \(\nat\)-fold composition of the
  polynomial \(\typeof\). Since \(\model\) is a model of
  \(\itth_{\infty}\), the map \(\model(\typeof)\) is equipped with
  such pullbacks in \(\RFib_{\model(\bas)}\). However, they gives rise
  to only \emph{homotopy} pullbacks
  \begin{equation}
    \label{eq:18}
    \begin{tikzcd}
      \dom (\typeof_{\catX}^{\otimes \nat})
      \arrow[r, dotted, "\pair_{\nat}"]
      \arrow[d, "\typeof_{\catX}^{\otimes \nat}"'] &
      \El_{\catX}
      \arrow[d, "\typeof_{\catX}"] \\
      \cod (\typeof_{\catX}^{\otimes \nat})
      \arrow[r, dotted, "\dSum_{\nat}"'] &
      \Ty_{\catX}
    \end{tikzcd}
  \end{equation}
  in \(\catX\), and thus
  \(\typeof_{\catX}\) need not have \(\Unit\)-type and \(\dSum\)-type
  structures.

  The idea of fixing this issue is to replace \(\typeof_{\catX}\) by
  another fibration
  \(\typeof_{\catX}' : \El_{\catX}' \to \Ty_{\catX}'\) between fibrant
  objects such that the pullbacks of \(\typeof_{\catX}'\) are the
  homotopy pullbacks of \(\typeof_{\catX}\). Let \(\card\) be a
  regular cardinal such that \(\typeof_{\catX}\) is
  \(\card\)-small. By \parencite[Theorem 5.22]{shulman2019toposes},
  there exists a fibration
  \(\typeof_{\catX}^{\card} : \El_{\catX}^{\card} \to
  \Ty_{\catX}^{\card}\) between fibrant objects that classifies
  \(\card\)-small fibrations. Moreover, \(\typeof_{\catX}^{\card}\)
  satisfies the univalence axiom with respect to the model
  structure. Since \(\typeof_{\catX}\) is \(\card\)-small, we have a
  pullback
  \[
    \begin{tikzcd}
      \El_{\catX}
      \arrow[r, dotted]
      \arrow[d, "\typeof_{\catX}"'] &
      \El_{\catX}^{\card}
      \arrow[d, "\typeof_{\catX}^{\card}"] \\
      \Ty_{\catX}
      \arrow[r, dotted, "\inc"'] &
      \Ty_{\catX}^{\card}.
    \end{tikzcd}
  \]
  Factor \(\inc\) into a weak equivalence
  \(\inc' : \Ty_{\catX} \to \Ty_{\catX}'\) followed by a fibration
  \(\proj : \Ty_{\catX}' \to \Ty_{\catX}^{\card}\), and define
  \(\typeof_{\catX}' : \El_{\catX}' \to \Ty_{\catX}'\) to be the
  pullback of \(\typeof_{\catX}^{\card}\) along \(\proj\). Since
  \(\typeof_{\catX}^{\card}\) satisfies the univalence axiom, we can
  choose \(\Ty_{\catX}'\) such that the maps \(\sh \to \Ty_{\catX}'\)
  correspond to the triples \((\shI_{1}, \shI_{2}, \map)\) consisting
  of maps \(\shI_{1} : \sh \to \Ty_{\catX}\) and
  \(\shI_{2} : \sh \to \Ty_{\catX}^{\card}\) and a weak equivalence
  \(\map : \shI_{1}^{*} \El_{\catX} \to \shI_{2}^{*}
  \El_{\catX}^{\card}\) over \(\sh\). In particular, we have a
  \emph{generic} homotopy pullback from a \(\card\)-small fibration
  \begin{equation}
    \label{eq:17}
    \begin{tikzcd}
      \El_{\catX}'
      \arrow[r]
      \arrow[d, "\typeof_{\catX}'"'] &
      \El_{\catX}
      \arrow[d, "\typeof_{\catX}"] \\
      \Ty_{\catX}'
      \arrow[r, "\fib"'] &
      \Ty_{\catX}
    \end{tikzcd}
  \end{equation}
  in the sense that any homotopy pullback from a \(\card\)-small
  fibration to \(\typeof_{\catX}\) factors into a strict pullback
  followed by the homotopy pullback \labelcref{eq:17}.

  We now construct \(\Unit\)-type and \(\dSum\)-type structures on
  \(\typeof_{\catX}'\). There are homotopy pullbacks as in
  \cref{eq:18} for \(\nat = 0\) and \(\nat = 2\) sent by
  \(\locmap_{\catX} : \catX \to \RFib_{\model(\bas)}\) to the
  \(\Unit\)-type and \(\dSum\)-type structures, respectively, on
  \(\model(\typeof)\). Since \(\typeof_{\catX}\) is the pullback of
  \(\typeof_{\catX}'\) along the weak equivalence
  \(\inc' : \Ty_{\catX} \to \Ty_{\catX}'\), one can construct a
  commutative square
  \[
    \begin{tikzcd}
      \dom (\typeof_{\catX}^{\otimes \nat})
      \arrow[r, dotted]
      \arrow[d, "\typeof_{\catX}^{\otimes \nat}"'] &
      \dom ((\typeof_{\catX}')^{\otimes \nat})
      \arrow[d, "(\typeof_{\catX}')^{\otimes \nat}"] \\
      \cod (\typeof_{\catX}^{\otimes \nat})
      \arrow[r, dotted] &
      \cod ((\typeof_{\catX}')^{\otimes \nat})
    \end{tikzcd}
  \]
  in which the horizontal maps are weak equivalences. Then we have a
  homotopy pullback from \((\typeof_{\catX}')^{\otimes \nat}\) to
  \(\typeof_{\catX}\), which factors into a strict pullback followed
  by the homotopy pullback \labelcref{eq:17} because the composition
  of polynomials preserves \(\card\)-smallness. By construction, this
  strict pullback is sent by
  \(\locmap_{\catX} : \catX \to \RFib_{\model(\bas)}\) to the
  \(\Unit\)-type structure on \(\model(\typeof)\) when \(\nat = 0\)
  and to the \(\dSum\)-type structure on \(\model(\typeof)\) when
  \(\nat = 2\).
\end{proof}

By the preceding lemmas, we can choose \(\typeof_{\catX}\) that has a
\(\Unit\)-type structure, a \(\dSum\)-type structure, and an
\(\Id^{+}\)-type structure. Then we have a morphism of
\(1\)-categories with representable maps \(\itth \to \catX\), and we
define \(\Sp \model\) to be the heart of the model of \(\itth\)
defined by the composite \(\itth \to \catX \to \RFib_{\catX}\) with
the Yoneda embedding. Concretely, the base category
\((\Sp \model)(\bas)\) is the full subcategory of \(\catX\) spanned by
the objects \(\ctx\) such that the map \(\ctx \to \terminal\) is a
composite of pullbacks of \(\typeof_{\catX}\), and the sections
\((\Sp \model)(\bas) / \ctx \to (\Sp \model)(\Ty)\) and
\((\Sp \model)(\bas) / \ctx \to (\Sp \model)(\El)\) are the maps
\(\ctx \to \Ty_{\catX}\) and \(\ctx \to \El_{\catX}\), respectively,
in \(\catX\).

Since the localization functor
\(\locmap_{\catX} : \catX \to \RFib_{\model(\bas)}\) sends
\(\typeof_{\catX}\) to the representable map \(\model(\typeof)\) and
preserves pullbacks of \(\typeof_{\catX}\) along maps between fibrant
objects, the restriction of \(\locmap_{\catX}\) to
\((\Sp \model)(\bas)\) factors through the Yoneda embedding
\(\model(\bas) \to \RFib_{\model(\bas)}\). Let
\(\counit_{\bas} : (\Sp \model)(\bas) \to \model(\bas)\) be the
induced functor.
\[
  \begin{tikzcd}
    (\Sp \model)(\bas)
    \arrow[r, dotted, "\counit_{\bas}"]
    \arrow[d, hook] &
    \model(\bas)
    \arrow[d, hook, "\yoneda"] \\
    \catX
    \arrow[r, "\locmap_{\catX}"'] &
    \RFib_{\model(\bas)}
  \end{tikzcd}
\]
The functor \(\locmap_{\catX}\) also induces maps
\(\counit_{\Ty} : (\Sp \model)(\Ty) \to \model(\Ty)\) and
\(\counit_{\El} : (\Sp \model)(\El) \to \model(\El)\) of right
fibrations over \(\counit_{\bas}\), and these define a morphism
\(\counit : \Sp \model \to \model\) of models of \(\itth_{\infty}\).

\begin{lemma}
  The morphism \(\counit : \Sp \model \to \model\) is a trivial
  fibration.
\end{lemma}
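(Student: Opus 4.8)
The plan is to apply the preceding characterization of trivial fibrations of democratic models of $\itth_{\infty}$ to $\counit\colon \Sp\model \to \model$ — which is indeed a morphism of democratic models of $\itth_{\infty}$ — namely to verify \textbf{type lifting} and \textbf{term lifting}. Both reduce to the fact that $\locmap_{\catX}$ exhibits $\RFib_{\model(\bas)}$ as the $\infty$-localization of the model category $\catX$ at its weak equivalences, so that morphisms of $\catX$ between cofibrant--fibrant objects realize, up to homotopy, all maps between their images, and likewise sections of a fibration between fibrant objects realize all $\infty$-categorical sections (the latter using that the slice of $\catX$ over a fibrant object presents the corresponding $\infty$-categorical slice). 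The objects occurring are of the right kind: every object of $\catX$ is cofibrant, and any $\ctx \in (\Sp\model)(\bas)$ is fibrant because $\ctx \to \terminal$ is a composite of pullbacks of the fibration $\typeof_{\catX}$ between fibrant objects. Recall also that $\locmap_{\catX}(\typeof_{\catX}) \simeq \model(\typeof)$ and, by the defining factorization of $\counit_{\bas}$ through the Yoneda embedding, that $\locmap_{\catX}(\ctx) \simeq \model(\bas)/\counit_{\bas}(\ctx)$.

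For type lifting, let $\ctx \in (\Sp\model)(\bas)$ and let $\sh$ be a section $\model(\bas)/\counit_{\bas}(\ctx) \to \model(\Ty)$; equivalently, $\sh$ is a map $\locmap_{\catX}(\ctx) \to \model(\Ty) \simeq \locmap_{\catX}(\Ty_{\catX})$ in $\RFib_{\model(\bas)}$. Since $\ctx$ is cofibrant and $\Ty_{\catX}$ is fibrant, $\sh$ is, up to equivalence, the image under $\locmap_{\catX}$ of a morphism $\ctx \to \Ty_{\catX}$ of $\catX$; by the explicit description of $\Sp\model$, this morphism is a section $\sh'\colon (\Sp\model)(\bas)/\ctx \to (\Sp\model)(\Ty)$ with $\counit(\sh') \simeq \sh$.

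For term lifting, fix in addition such a section $\sh'$, presented by a morphism $\arr\colon \ctx \to \Ty_{\catX}$ of $\catX$, together with a section $\el\colon \model(\bas)/\counit_{\bas}(\ctx) \to \model(\El)$ of $\model(\typeof)$ lying over $\locmap_{\catX}(\arr) \simeq \counit(\sh')$. Form the pullback $\fib\colon \arr^{*}\El_{\catX} \to \ctx$, a fibration between fibrant objects of $\catX$. Since $\locmap_{\catX}$ preserves pullbacks of fibrations (right properness), $\locmap_{\catX}(\fib)$ is the pullback $(\locmap_{\catX}\arr)^{*}\model(\El) \to \locmap_{\catX}(\ctx)$, whose sections are exactly the sections of $\model(\typeof)$ over $\locmap_{\catX}(\arr)$ — in particular $\el$ is one. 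By the slice version of the previous fact, applied in $\catX/\ctx$ where $\id_{\ctx}$ is cofibrant and $\fib$ is fibrant, $\el$ is, up to equivalence over $\locmap_{\catX}(\arr)$, the image of an honest section of $\fib$ in $\catX$; by the description of $\Sp\model$ this section is a term $\el'\colon (\Sp\model)(\bas)/\ctx \to (\Sp\model)(\El)$ over $\sh'$ with $\counit(\el') \simeq \el$ over $\counit(\sh')$. Hence $\counit$ satisfies both conditions and is a trivial fibration.

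The main obstacle is really the term-lifting step, where a coherent $\infty$-categorical section of $\model(\typeof)$ must be rigidified to a strict section of the fibration $\typeof_{\catX}$; this is the precise point at which the passage to the type-theoretic model topos does its work. Making it rigorous requires the two localization facts to be stated carefully — that $\locmap_{\catX}$ is the localization of $\catX$ at the weak equivalences, so that homotopy classes of maps between fibrant objects of $\catX$ compute the corresponding mapping-space components in $\RFib_{\model(\bas)}$, and that this persists after slicing over a fibrant object — together with $\locmap_{\catX}$'s preservation of pullbacks of fibrations (from right properness). With these in hand the argument is a routine unwinding of the definition of trivial fibration and of the explicit description of $\Sp\model$.
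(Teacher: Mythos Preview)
Your proposal is correct and follows essentially the same approach as the paper: verify type lifting and term lifting by using that \(\locmap_{\catX}\) is a localization, so maps in \(\catX\) between cofibrant and fibrant objects represent all maps in \(\RFib_{\model(\bas)}\). You spell out the term-lifting step (passing to the slice \(\catX/\ctx\) and lifting a section of the pullback fibration) in considerably more detail than the paper, which simply remarks that term lifting ``can be checked in the same way.''
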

\begin{proof}
  We verify type lifting and term lifting. To give type lifting, let
  \(\ctx \in (\Sp \model)(\bas)\) be an object and
  \(\sh : \model(\bas) / \counit_{\bas}(\ctx) \to \model(\Ty)\) a
  section. Since
  \(\model(\bas) / \counit_{\bas}(\ctx) \simeq \locmap_{\catX}(\ctx)\)
  and \(\model(\Ty) \simeq \locmap_{\catX}(\Ty_{\catX})\), the section
  \(\sh\) is represented by some map \(\ctx \to \Ty_{\catX}\) in
  \(\catX\), that is, a section
  \((\Sp \model)(\bas) / \ctx \to (\Sp \model)(\Ty)\). Term lifting
  can be checked in the same way.
\end{proof}

\begin{proof}[Proof of \cref{itth-infty-cof-obj}]
  Let \(\model\) be a cofibrant democratic model of
  \(\itth_{\infty}\). Then we have a section of the trivial fibration
  \(\counit : \Sp \model \to \model\). Since
  \(\Mod^{\dem}(\itth) \subset \Mod^{\dem}(\itth_{\infty})\) is closed
  under retracts, \(\model\) belongs to \(\Mod^{\dem}(\itth)\).
\end{proof}

In conclusion we have shown that \(\Th(\itth)\) is a category with weak equivalences and cofibrations. Moreover, \cref{itth-cof-obj} also implies that \(\locmap_{!}\truncmap^{*}\) is left exact. Thus, to show that this map induces an equivalence after localization, it is enough to show the left approximation property. Since the first axiom is
satisfied by definition, we only have to show the second. But this is now an easy task using \cref{etth-triv-fib,itth-unit-triv-fib,itth-infty-cof-obj}.

\begin{lemma}
  \label{itth-left-approximation}
  For any cofibrant \(\itth\)-theory \(\theory\) and any morphism
  \(\map : \locmap_{!} \truncmap^{*} \theory \to \theoryI\) in
  \(\Th(\etth_{\infty})\), there exists a morphism
  \(\map' : \theory \to \theoryI'\) in \(\Th(\itth)\) such that
  \(\locmap_{!}  \truncmap^{*} \theoryI' \simeq \theoryI\) under
  \(\locmap_{!}  \truncmap^{*} \theory\).
\end{lemma}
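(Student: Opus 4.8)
The plan is to verify the second condition of the left approximation property for the functor \(\locmap_{!}\truncmap^{*}\); the first holds by the very definition of the weak equivalences in \(\Th(\itth)\). Throughout we pass to democratic models via \(\Th(\tth)\simeq\Mod^{\dem}(\tth)\) and identify \(\truncmap^{*}\), \(\locmap^{*}\) with the full inclusions \(\Mod^{\dem}(\itth)\subseteq\Mod^{\dem}(\itth_{\infty})\supseteq\Mod^{\dem}(\etth_{\infty})\). Let \(\theory\) be a cofibrant \(\itth\)-theory and \(\map:\locmap_{!}\truncmap^{*}\theory\to\theoryI\) a morphism in \(\Th(\etth_{\infty})\). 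First I transpose \(\map\) across \(\locmap_{!}\adj\locmap^{*}\) to a morphism \(\widetilde{\map}:\truncmap^{*}\theory\to\locmap^{*}\theoryI\) in \(\Th(\itth_{\infty})\). Since cofibrant \(\itth\)-theories are freely generated by types and terms (\cref{remark-itth-cof}), the generating cofibrations of \(\Th(\itth)\) have cofibrant domains and codomains, and \(\truncmap^{*}\), being precomposition along a finite-limit-preserving functor, preserves filtered colimits and, by \cref{itth-cof-obj}, initial objects and pushouts of cofibrations along morphisms between cofibrant objects, the object \(\truncmap^{*}\theory\) is cofibrant in \(\Th(\itth_{\infty})\).

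Next, by the small object argument in \(\Th(\itth_{\infty})\), I factor \(\widetilde{\map}\) as \(\truncmap^{*}\theory\xrightarrow{j}\mathcal{J}\xrightarrow{p}\locmap^{*}\theoryI\) with \(j\) a cofibration and \(p\) a trivial fibration. Then \(\mathcal{J}\) is cofibrant (composite \(\initial\to\truncmap^{*}\theory\xrightarrow{j}\mathcal{J}\)), so by \cref{itth-infty-cof-obj} it lies in \(\Mod^{\dem}(\itth)\); write \(\mathcal{J}=\truncmap^{*}\theoryI'\) with \(\theoryI'\in\Th(\itth)\), and, using that \(\truncmap^{*}\) is fully faithful on democratic models, write \(j=\truncmap^{*}\map'\) for a unique \(\map':\theory\to\theoryI'\). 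I claim \(\map'\) is the sought morphism. It remains to prove that the transpose \(\overline{p}:\locmap_{!}\truncmap^{*}\theoryI'\to\theoryI\) of \(p\) is an equivalence: granting this, from \(p\circ j=\widetilde{\map}\) one gets, untransposing, \(\overline{p}\circ\locmap_{!}\truncmap^{*}(\map')\simeq\map\), which exhibits \(\locmap_{!}\truncmap^{*}\theoryI'\simeq\theoryI\) under \(\locmap_{!}\truncmap^{*}\theory\).

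To see that \(\overline{p}\) is an equivalence it suffices, by \cref{etth-triv-fib}, to show it is a trivial fibration in \(\Th(\etth_{\infty})\), i.e.\ that it has the right lifting property against the generating cofibrations. Every generating cofibration \(C\to D\) of \(\Th(\etth_{\infty})\) is \(\locmap_{!}\) of a generating cofibration \(C_{\infty}\to D_{\infty}\) of \(\Th(\itth_{\infty})\) with \(C_{\infty}\), \(D_{\infty}\) cofibrant, since \(\locmap_{!}\) preserves generating cofibrations. Given a commutative square from \(C\to D\) to \(\overline{p}\), transpose it across \(\locmap_{!}\adj\locmap^{*}\) to a square from \(C_{\infty}\to D_{\infty}\) to \(\locmap^{*}\overline{p}\). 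The unit \(\unit_{\truncmap^{*}\theoryI'}:\truncmap^{*}\theoryI'\to\locmap^{*}\locmap_{!}\truncmap^{*}\theoryI'\) is a trivial fibration by \cref{itth-unit-triv-fib}, and \(p=\locmap^{*}(\overline{p})\circ\unit_{\truncmap^{*}\theoryI'}\); lifting the top edge \(C_{\infty}\to\locmap^{*}\locmap_{!}\truncmap^{*}\theoryI'\) of the transposed square along \(\unit_{\truncmap^{*}\theoryI'}\) (possible as \(C_{\infty}\) is cofibrant) turns it into a square from \(C_{\infty}\to D_{\infty}\) to the trivial fibration \(p\); a lift of the latter exists because \(C_{\infty}\to D_{\infty}\) is a cofibration, and post-composing this lift with \(\unit_{\truncmap^{*}\theoryI'}\) and transposing back yields the required lift. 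Hence \(\overline{p}\) is a trivial fibration, and therefore invertible, which finishes the proof. The one step with genuine content is this last one — that the trivially-fibered replacement \(p:\truncmap^{*}\theoryI'\to\locmap^{*}\theoryI\) obtained inside \(\Th(\itth_{\infty})\) becomes an equivalence after \(\locmap_{!}\) — where the three cited lemmas are used in concert; the only delicate point is the bookkeeping of the double transposition across \(\locmap_{!}\adj\locmap^{*}\), everything else (the factorization, the appeal to \cref{itth-infty-cof-obj}, and the cofibrancy of \(\truncmap^{*}\theory\)) being routine given the machinery already in place.
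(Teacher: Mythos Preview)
Your proof is correct and takes a genuinely different route from the paper's. The paper proceeds cell by cell: using \cref{etth-triv-fib} it writes \(\map\) as a transfinite composite of pushouts of generating cofibrations \(\locmap_{!}\cof\), then at each stage lifts the attaching map \(\sh\to\locmap^{*}\locmap_{!}\truncmap^{*}\theory\) through the unit (a trivial fibration by \cref{itth-unit-triv-fib}) to obtain an attaching map into \(\truncmap^{*}\theory\), and pushes out in \(\Th(\itth)\); \cref{itth-infty-cof-obj} is used only to identify the units on the cofibrant cells with equivalences so that \(\locmap_{!}\truncmap^{*}\) carries the new pushout to the old one. You instead globalize the construction: transpose once, factor \(\widetilde{\map}\) in \(\Th(\itth_{\infty})\) as cofibration followed by trivial fibration, invoke \cref{itth-infty-cof-obj} to recognize the middle term as lying in \(\Th(\itth)\), and then run essentially the argument of \cref{itth-triv-fib} (now with codomain \(\locmap^{*}\theoryI\) rather than something of the form \(\truncmap^{*}(-)\)) to show the trivial fibration becomes invertible after \(\locmap_{!}\). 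Your approach trades the transfinite bookkeeping for two extra ingredients: the cofibrancy of \(\truncmap^{*}\theory\) in \(\Th(\itth_{\infty})\), and the final lifting argument for \(\overline{p}\). The first is correct but your justification is terse; the cleanest way to see it is that the unit \(\yoneda_{\itth_{\infty}}(\poly_{\typeof}^{n}X)\to\truncmap^{*}\truncmap_{!}\yoneda_{\itth_{\infty}}(\poly_{\typeof}^{n}X)=\truncmap^{*}\yoneda_{\itth}(\poly_{\typeof}^{n}X)\) is invertible on these cofibrant representables (by \cref{itth-infty-cof-obj}), so \(\truncmap^{*}\) carries generating cofibrations to generating cofibrations, and then \cref{itth-cof-obj} plus preservation of filtered colimits lets you rebuild the cell structure. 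Both arguments rest on the same three lemmas; yours is slightly slicker, the paper's slightly more explicit about where each cell goes.
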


\begin{proof}
  Let \(\map : \locmap_{!} \truncmap^{*} \theory \to \theoryI\) be a
  morphism in \(\Th(\etth_{\infty})\) where \(\theory\) is a cofibrant
  \(\itth\)-theory. By \cref{etth-triv-fib} and the small object
  argument, \(\map\) is written as a transfinite composite of pushouts
  of generating cofibrations. Thus, it suffices to prove the case when
  \(\map\) is a pushout of a generating cofibration. Let us assume
  that \(\map\) is a pushout of the form
  \begin{equation}
    \label{eq:14}
    \begin{tikzcd}
      \locmap_{!} \sh
      \arrow[r, "\mapI"]
      \arrow[d, "\locmap_{!} \cof"'] &
      \locmap_{!} \truncmap^{*} \theory
      \arrow[d, "\map"] \\
      \locmap_{!} \shI
      \arrow[r, "\mapII"'] &
      \theoryI
    \end{tikzcd}
  \end{equation}
  where \(\cof : \sh \to \shI\) is one of the generating cofibrations
  in \(\Th(\itth_{\infty})\). Since \(\sh\) is cofibrant, the
  transpose \(\sh \to \locmap^{*} \locmap_{!} \truncmap^{*} \theory\)
  of \(\mapI\) factors through the unit
  \(\truncmap^{*} \theory \to \locmap^{*} \locmap_{!} \truncmap^{*}
  \theory\) by \cref{itth-unit-triv-fib}. Let
  \(\mapI' : \truncmap_{!}  \sh \to \theory\) be the transpose of the
  induced morphism \(\sh \to \truncmap^{*} \theory\) and take the
  pushout
  \begin{equation}
    \label{eq:15}
    \begin{tikzcd}
      \truncmap_{!} \sh
      \arrow[r, "\mapI'"]
      \arrow[d, "\truncmap_{!} \cof"'] &
      \theory
      \arrow[d] \\
      \truncmap_{!} \shI
      \arrow[r] &
      \theoryI'.
    \end{tikzcd}
  \end{equation}
  By \cref{itth-infty-cof-obj}, the units
  \(\sh \to \truncmap^{*} \truncmap_{!} \sh\) and
  \(\shI \to \truncmap^{*} \truncmap_{!}  \shI\) are invertible, and
  \(\locmap_{!} \truncmap^{*}\) sends the pushout \labelcref{eq:15} to
  the pushout \labelcref{eq:14} since \(\theory\) is cofibrant. Hence,
  \(\locmap_{!} \truncmap^{*} \theoryI'\) is equivalent to
  \(\theoryI\) under \(\locmap_{!}  \truncmap^{*} \theory\).
\end{proof}

\begin{proof}[Proof of \cref{itth-etth-infty}]
  By \cref{itth-cof-cat}, the category \(\Th(\itth)\) is
  a category with weak equivalences and cofibrations, and \cref{itth-cof-obj} implies that the functor
  \(\locmap_{!} \truncmap^{*} : \Th(\itth) \to \Th(\etth_{\infty})\)
  is right exact. We checked the left approximation property in
  \cref{itth-left-approximation}. Thus, by \cref{derived-equiv},
  \(\locmap_{!} \truncmap^{*}\) induces an equivalence
  \(\Loc(\Th(\itth)) \simeq \Th(\etth_{\infty})\).
\end{proof}

\subsection{Generalizations}
\label{sec:generalizations}

We end this section with discussion about generalizations of
\cref{itth-etth-infty}. Let \(\widetilde{\itth}\) be an extension of
\(\itth\) with some type-theoretic structures such as \(\dProd\)-types
and (higher) inductive types, and we similarly define extensions
\(\widetilde{\itth}_{\infty}\) and \(\widetilde{\etth}_{\infty}\) of
\(\itth_{\infty}\) and \(\etth_{\infty}\), respectively. We have a
span
\[
  \begin{tikzcd}
    \widetilde{\itth} &
    \widetilde{\itth}_{\infty}
    \arrow[l, "\truncmap"']
    \arrow[r, "\locmap"] &
    \widetilde{\etth}_{\infty}
  \end{tikzcd}
\]
and ask if the functor
\(\locmap_{!} \truncmap^{*} : \Th(\widetilde{\itth}) \to
\Th(\widetilde{\etth}_{\infty})\) induces an equivalence
\[
  \Loc(\Th(\widetilde{\itth})) \simeq \Th(\widetilde{\etth}_{\infty}).
\]
Most part of the proof of \cref{itth-etth-infty} works also for this
case, but we have to modify
\cref{itth-model-localization,split-repl-sigma}. For
\cref{itth-model-localization}, we need to find a
\(\infty\)-categorical structure corresponding to
\(\widetilde{\etth}_{\infty}\)-theories and show that the localization
\(\Loc(\model(\bas))\) for a democratic model \(\model\) of
\(\widetilde{\itth}\) has that structure. For example, in the case
when \(\widetilde{\itth}\) is the extension \(\itth^{\dProd}\) of
\(\itth\) with \(\dProd\)-types satisfying function extensionality
in the sense of \parencite[Section 2.9]{hottbook}, we have
\(\Th(\etth_{\infty}^{\dProd}) \simeq \LCCC_{\infty}\)
(\cref{EPi-dem-lccc}), and by the results of
\textcite{kapulkin2017locally}, \(\Loc(\model(\bas))\) is indeed
locally cartesian closed. When we extend \(\itth\) with
(higher) inductive types, the corresponding \(\infty\)-categorical
structure will be some form of pullback-stable initial algebras. For \cref{split-repl-sigma}, we have to choose
the fibration \(\typeof_{\catX}\) such that it also has the
type-theoretic structures that \(\widetilde{\itth}\) has. In the case
of \(\widetilde{\itth} = \itth^{\dProd}\), one might want to choose
the regular cardinal \(\card\) in the proof of \cref{split-repl-sigma} such that \(\card\)-small
fibrations are closed under pushforwards. However, there is no
guarantee of the existence of such a regular cardinal within the same
Grothendieck universe, unless the Grothendieck universe is
\emph{\(1\)-accessible}, that is, there are unboundedly many
inaccessible cardinals \parencite{monaco2019dependent}. Nevertheless,
the existence of \(\Sp \model\) in a larger universe is enough to
prove \cref{itth-infty-cof-obj}, and thus we have the second part of
Conjecture 3.7 of \textcite{kapulkin2018homotopy} under an extra
assumption on universes.

\begin{theorem}
  Suppose that our ambient Grothendieck universe is \(1\)-accessible
  or contained in a larger universe. Then the functor \(\locmap_{!}
  \truncmap^{*} : \Th(\itth^{\dProd}) \to
  \Th(\etth_{\infty}^{\dProd})\) induces an equivalence of
  \(\infty\)-categories
  \[
    \Loc(\Th(\itth^{\dProd})) \simeq \Th(\etth_{\infty}^{\dProd})
    \simeq \LCCC_{\infty}.
  \]
\end{theorem}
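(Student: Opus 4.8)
The plan is to run the argument of \cref{itth-etth-infty} essentially verbatim, making only the two modifications flagged in the preceding discussion. First I would equip $\Th(\itth^{\dProd})$ with the structure of a category with weak equivalences and cofibrations exactly as in \cref{itth-cof-cat}: the generating cofibrations are the Yoneda images of $\poly_{\typeof}^{n}(\terminal) \to \poly_{\typeof}^{n}(\Ty)$ and of $\poly_{\typeof}^{n}(\typeof)$, the cofibrations are their saturation under retracts, pushouts, and transfinite composition, and the weak equivalences are the morphisms inverted by $\locmap_{!}\truncmap^{*}$. The formal parts of the proof then transcribe directly: \cref{etth-triv-fib} (all morphisms of $\Th(\etth_{\infty}^{\dProd})$ are cofibrations, using only the $\Id$- and univalence structures that $\etth_{\infty}^{\dProd}$ still carries), the characterization of trivial fibrations of democratic models by type and term lifting, \cref{itth-unit-triv-fib}, and the left approximation property \cref{itth-left-approximation} all go through word for word since none of them mentions $\dProd$-types. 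Hence, by \cref{derived-equiv}, everything reduces to upgrading \cref{itth-model-localization,itth-infty-cof-obj} to the $\dProd$-setting and combining with $\Th(\etth_{\infty}^{\dProd}) \simeq \LCCC_{\infty}$ from \cref{EPi-dem-lccc}.

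For the analogue of \cref{itth-model-localization}, given a democratic model $\model$ of $\itth^{\dProd}$ (with function extensionality), the base $\model(\bas)$ is not merely a category of fibrant objects but a $\dProd$-tribe, so by \textcite{kapulkin2017locally} its localization $\Loc(\model(\bas))$ is locally cartesian closed and the localization functor preserves finite limits and pushforwards. The same mapping-space computation as in the proof of \cref{itth-model-localization}, now using that $\rmcls_{\cat}$ carries a $\dProd$-type structure whenever $\cat \in \LCCC_{\infty}$, identifies $\locmap_{!}\model$ with $\Loc(\model(\bas))$ and shows that the composite with the equivalence $\Th(\etth_{\infty}^{\dProd}) \simeq \LCCC_{\infty}$ recovers the functor of \textcite{kapulkin2018homotopy}. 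For the analogue of \cref{itth-infty-cof-obj}, the split replacement $\Sp\model$ of a democratic model of $\itth_{\infty}^{\dProd}$ must additionally carry a $\dProd$-type structure lifting the one on $\model(\typeof)$, and this is where the universe hypothesis enters. Following \cref{split-repl-sigma}, one picks a regular cardinal $\card$ with $\typeof_{\catX}$ being $\card$-small and uses the univalent $\card$-small classifier $\typeof_{\catX}^{\card}$ in $\catX$; to transport a $\dProd$-type structure to the replacement $\typeof_{\catX}'$ one needs $\card$-small fibrations to be closed under pushforward along fibrations. If the ambient Grothendieck universe is $1$-accessible such a $\card$ exists inside it \parencite{monaco2019dependent}; otherwise one chooses $\card$ in a larger Grothendieck universe $\mathcal{V}'$ containing ours. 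Either way, the construction of \cref{split-repl-sigma} together with the $\Id^{+}$-lifting lemma (which is universe-insensitive, relying only on right properness and \cref{lemma-1}) produces $\typeof_{\catX}$ with $\Unit$-, $\dSum$-, $\Id^{+}$-, and $\dProd$-type structures, hence a morphism $\itth^{\dProd} \to \catX$ of $1$-categories with representable maps, a (possibly large) democratic model $\Sp\model$ of $\itth^{\dProd}$, and a trivial fibration $\counit : \Sp\model \to \model$ with type and term lifting checked as before. A cofibrant $\model$ then admits a section of $\counit$, and since $\Mod^{\dem}(\itth^{\dProd}) \subset \Mod^{\dem}(\itth_{\infty}^{\dProd})$ is closed under retracts—an inclusion cut out by the absolute conditions of $0$-truncatedness of $\Ty$ and $\El$, so unaffected by enlarging the universe—the retract argument forces $\model \in \Mod^{\dem}(\itth^{\dProd})$. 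This gives the analogue of \cref{itth-cof-obj}, hence right exactness of $\locmap_{!}\truncmap^{*}$, and \cref{derived-equiv} finishes the proof.

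The hard part is exactly the set-theoretic bookkeeping in the splitting construction: arranging a single regular cardinal that simultaneously bounds $\typeof_{\catX}$ and yields a univalent, $\dProd$-closed $\card$-small classifier in the type-theoretic model topos $\catX$, and then verifying that passing to a larger universe when no such cardinal exists internally disturbs none of the facts used elsewhere—right properness of $\catX$ and \textcite{shulman2019toposes}'s presentation theorem, the equivalence $\Mod^{\dem}(\tth) \simeq \Th(\tth)$, and the retract-closure of $\Mod^{\dem}(\itth^{\dProd})$. Once these are in place, the remainder of the argument is a faithful transcription of the $\itth$ versus $\etth_{\infty}$ case, and the stated equivalence $\Loc(\Th(\itth^{\dProd})) \simeq \Th(\etth_{\infty}^{\dProd}) \simeq \LCCC_{\infty}$ follows.
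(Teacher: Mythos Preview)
The proposal is correct and follows essentially the same approach as the paper. The paper does not give a separate proof for this theorem but sketches precisely this argument in the surrounding discussion: transcribe the proof of \cref{itth-etth-infty}, replacing \cref{itth-model-localization} by an appeal to \textcite{kapulkin2017locally} and \cref{EPi-dem-lccc}, and replacing \cref{split-repl-sigma} by a choice of $\card$ for which $\card$-small fibrations are closed under pushforwards (available under the $1$-accessibility or larger-universe hypothesis), with the observation that having $\Sp\model$ in a possibly larger universe still suffices for the retract argument behind \cref{itth-infty-cof-obj}.
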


The current proof of \cref{split-repl-sigma} has some issues when
generalizing it. As we have seen, it could cause a rise in universe
levels. Furthermore, the same proof does not work when we extend
\(\itth\) with (higher) inductive types, because having (higher)
inductive types is not a closure property. One possible approach to
these issues is to refine the construction of \(\Sp \model\). The
current construction does not depend on the choice of a type-theoretic
model topos \(\catX\) that presents \(\RFib_{\model(\bas)}\), but
there should be a convenient choice to work with. Another approach is
to give a totally different proof of \cref{itth-infty-cof-obj} without
the use of \(\Sp \model\). There has been a syntactic approach to
coherence problems initiated by \textcite{curien1993substitution}. In
this approach, coherence problems are solved by rewriting techniques,
and we expect that it works for a wide range of type-theoretic
structures without a rise of universe levels. Of course, we first have
to develop nice syntax for \(\infty\)-type theories, and this is not
obvious.

\section{Conclusion and future work}
\label{sec:concl-future-work}

We introduced \(\infty\)-type theories as a higher dimensional
generalization of type theories and as an application proved
\citeauthor{kapulkin2018homotopy}'s conjecture that the
\(\infty\)-category of small left exact \(\infty\)-categories is a localization
of the category of theories over Martin-L{\"o}f type theory with
intensional identity types \parencite{kapulkin2018homotopy}. The
technique developed in this paper also works for the internal language
conjecture for locally cartesian closed \(\infty\)-categories, but further
generalization including (higher) inductive types is left as future
work.

\subsection{Syntax for \(\infty\)-type theories}
\label{sec:syntax-infty-type}

Coherence problems are often solved by syntactic arguments
\parencite{curien1993substitution}. Therefore, syntactic presentations
of \(\infty\)-type theories will be helpful for solving internal
language conjectures for structured \(\infty\)-categories. We have not
figured out syntax for \(\infty\)-type theories. Here we consider one
possibility based on \emph{logical frameworks}.

In the previous work \parencite{uemura2019framework}, the author
introduced a logical framework to define type theories. For every
signature \(\sig\) in that logical framework, the syntactic category
\(\synrm(\sig)\) is naturally equipped with a structure of a category
with representable maps and satisfies a certain universal property. To
define \(\infty\)-type theories syntactically, we modify the logical
framework as follows:
\begin{itemize}
\item the new logical framework has intensional identity types instead
  of extensional identity types;
\item dependent product types indexed over representable types satisfy
  the function extensionality axiom.
\end{itemize}

\begin{remark}
  A similar kind of framework is used by \textcite[Section
  7]{bocquet2020coherence} to represent space-valued models of a type
  theory.
\end{remark}

\begin{proposition}
  Let \(\sig\) be a signature in this new logical framework.
  \begin{enumerate}
  \item \label{item:12} The syntactic category \(\synrm(\sig)\) is
    equipped with a structure of a fibration category.
  \item \label{item:13} The localization \(\Loc(\synrm(\sig))\) is
    equipped with a structure of an \(\infty\)-category with
    representable maps.
  \end{enumerate}
\end{proposition}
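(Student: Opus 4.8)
The plan is to equip $\synrm(\sig)$ with the homotopical structure coming from the intensional identity types, and then to transport the dependent products indexed over representable types through the localization. For part \labelcref{item:12}, I would declare the fibrations of $\synrm(\sig)$ to be the \emph{telescopes}, that is, the finite composites of context extensions by types, and the weak equivalences to be the homotopy equivalences defined by the identity types $\Id$. Checking the fibration-category axioms is then a routine syntactic verification of the kind carried out in \parencite{avigad2015homotopy,vandenberg2018exact}: the empty context is terminal; substitution provides pullbacks along fibrations and shows that fibrations and acyclic fibrations are stable under pullback; fibrations contain the isomorphisms and are closed under composition by construction; the identity types $\Id_A$ are path objects, giving the factorization of any map into a weak equivalence followed by a fibration; and the elimination rule for $\Id$ yields $2$-out-of-$6$ for the weak equivalences. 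The function extensionality axiom for the representable $\Pi$-types enters here only to make the homotopy relation a congruence compatible with those $\Pi$-types, so that $\synrm(\sig)$ is in fact a \emph{path category} in the sense of \textcite{vandenberg2018exact} rather than merely a category of fibrant objects; this is what will be needed in the next part.

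For part \labelcref{item:13}, I would first apply \textcite{szumilo2014two} (see also \textcite{kapulkin2019internal}) to conclude that $\Loc(\synrm(\sig))$ has finite limits and that the localization functor $\synrm(\sig) \to \Loc(\synrm(\sig))$ preserves the terminal object and pullbacks of fibrations; this already gives condition~(1) in the definition of an $\infty$-category with representable maps. I then take $\reps \subseteq k(\Delta^{1}, \Loc(\synrm(\sig)))$ to be the union of those connected components containing (the image of) a \emph{representable telescope} --- a telescope built only from representable types. This is a genuine subspace by construction; it contains the identities; it is closed under composition, because composites of representable telescopes are representable telescopes; and it is stable under pullback, because a homotopy pullback of a representable telescope along an arbitrary map is, after factoring that map as a weak equivalence followed by a fibration, a substitution of a representable telescope and hence again equivalent to a representable telescope. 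Thus conditions~(2) and~(3) hold. For condition~(4), exponentiability, I would use the dependent products indexed over representable types: in $\synrm(\sig)$ they give a strict pushforward along representable fibrations, and function extensionality guarantees that this strict operation computes the homotopy pushforward; using the compatibility of $\Loc$ with slices of a fibration category, passing to $\Loc(\synrm(\sig))$ turns it into a genuine right adjoint to the pullback functor along a representable map. This is a version --- restricted to the subclass of representable fibrations --- of the argument of \textcite{kapulkin2017locally} that the quasicategory presented by a fibration category with $\Pi$-types is locally cartesian closed.

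The main obstacle is this last, exponentiability step: one must adapt the argument of \textcite{kapulkin2017locally} so that it applies to a \emph{subclass} of the fibrations rather than to all of them, verify that the homotopy pushforward along a representable fibration is preserved by passage to $\Loc$, and check the Beck--Chevalley compatibility implicit in the assertion that $(\Loc(\synrm(\sig)), \reps)$ is an $\infty$-category with representable maps, in the spirit of the analyses in \cref{itth-model-localization,split-repl-sigma}. A smaller but real subtlety along the way is that the slice $\synrm(\sig)/\ctx$ over a context $\ctx$ should again be the syntactic category of an extended signature, so that its localization is $\Loc(\synrm(\sig))/\ctx$; this is what legitimizes computing the pushforwards slice-wise.
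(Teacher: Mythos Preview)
Your proposal is correct and follows the same approach as the paper: cite \parencite{avigad2015homotopy} for the fibration-category structure from intensional identity types, and then adapt the locally-cartesian-closed argument of \parencite{kapulkin2017locally,cisinski2019higher} to the subclass of representable fibrations to obtain the representable-map structure on the localization. The paper's own proof is a two-sentence sketch pointing at exactly these references; you have simply spelled out in more detail what ``in the same way as'' means, including the definition of the representable arrows in $\Loc(\synrm(\sig))$ and the role of function extensionality, and you correctly flag the adaptation of \textcite{kapulkin2017locally} to a subclass of fibrations as the point where the real work lies.
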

\begin{proof}
  It is known \parencite{avigad2015homotopy} that the syntactic
  category of a type theory with intensional identity types is a
  fibration category. The second claim is proved in the same way as
  the fact that the localization of a locally cartesian closed
  fibration category is a locally cartesian closed \(\infty\)-category
  \parencite{kapulkin2017locally,cisinski2019higher}.
\end{proof}

We expect that the syntactic \(\infty\)-category with representable maps
\(\Loc(\synrm(\sig))\) satisfies a universal property analogous to
\parencite[Theorem 5.17]{uemura2019framework} so that the logical
framework with intensional identity types provides syntactic
presentations of \(\infty\)-type theories.


\printbibliography

\end{document}